\def\C{{\mathbb{C}}}
\def\F{{\mathbb{F}}}
\def\N{{\mathbb{N}}}
\def\0{{\mathbb{O}}}
\def\Q{{\mathbb{Q}}}
\def\R{{\mathbb{R}}}
\def\Z{{\mathbb{Z}}}
\def\wh{\widehat}
\def\wt{\widetilde}
\def\cA{{\mathcal A}}
\def\cB{{\mathcal B}}
\def\cC{{\mathcal C}}
\def\cE{{\mathcal E}}
\def\cF{{\mathcal F}}
\def\cG{{\mathcal G}}
\def\cH{{\mathcal H}}
\def\cI{{\mathcal I}}
\def\cK{{\mathcal K}}
\def\cL{{\mathcal L}}
\def\cS{{\mathcal S}}
\def\cT{{\mathcal T}}
\def\cV{{\mathcal V}}
\def\Ker{{\rm Ker \, }}
\def\Id{{\rm Id \, }}
\def \supp {\hbox{Supp}}
\newcommand{\norm}[1]{{\left\|{#1}\right\|}}
\newcommand{\abs}[1]{{\left|{#1}\right|}}
\newcommand{\scal}[1]{{\left\langle{#1}\right\rangle}}
\newcommand{\set}[1]{{\left\{{#1}\right\}}}
\newcommand{\actson}{\curvearrowright}
\newcommand{\rd}{{\,\mathrm d}}
\newcommand{\Ra}{{\Rightarrow}}
\def\fig{ \centerline{Fig. \the\count200\cGlobal\advance\count200 by 1}}
\newtheorem{thm}{Theorem}[section]
\newtheorem{cor}[thm]{Corollary}
\newtheorem{lem}[thm]{Lemma}
\newtheorem{prop}[thm]{Proposition}
\newtheorem*{thm*}{Theorem}
\newtheorem*{lem*}{Lemma}
\newtheorem*{cor*}{Corollary}
\theoremstyle{definition}
\newtheorem{defn}[thm]{Definition}
\newtheorem{ex}[thm]{Example}
\newtheorem{exs}[thm]{Examples}
\theoremstyle{remark}
\newtheorem{rem}[thm]{Remark}
\newtheorem{rems}[thm]{Remarks}
\title[Groupoids exactness and the weak containment problem]{Groupoids exactness and the weak containment problem}
\author{Claire Anantharaman-Delaroche}
\address{Institut Denis Poisson\newline
\indent Universit\'e d'Orl\'eans, Universit\'e de Tours et CNRS-UMR 7013,\newline
\indent Route de Chartres, B.P. 6759; 
F-45067 Orl\'eans Cedex 2, France}
\email{claire.anantharaman@univ-orleans.fr}
\subjclass[2020]{Primary  22A22, 46L89 ; Secondary 46L55, 43A07}
\keywords{\emph{Groupoids, exactness, amenability at infinity, weak containment, fibrewise compactifications}}
\begin{document}

\begin{abstract} 
Our purpose is to study  in the setting of locally compact grou\-poids the analogues of the well-known equivalent definitions of exactness for discrete groups. Our best results are obtained for a class of \'etale groupoids that we call  inner amenable. For locally compact groups this notion coincides with a classical notion of inner amenability. We give examples of such groupoids. Whether all \'etale groupoids have this property is still unknown. For  inner amenable \'etale groupoids we extend what is known for discrete  groups in proving the equivalence of six natural notions of exactness: (1) strong amenability at infinity; (2) amenability at infinity; (3) nuclearity of the uniform  algebra of the groupoid; (4) exactness of this $C^*$-algebra; (5) exactness of the groupoid in the sense of Kirchberg-Wassermann; (6) exactness of the reduced $C^*$-algebra of the groupoid. We   give several illustrations of these results.

 One of our motivations for this study of exactness is that it plays a crucial role in examining the relationships between the amenability of a groupoid and the fact that its full and reduced $C^*$-algebras coincide. This  is highlighted in the review we give of the results obtained by several authors on this subject.
  We end our monograph with open questions and an appendix on fibrewise compactifications whose study is needed because  our work requires to extend from discrete groups to any \'etale groupoid $\cG$ the notion of Stone-\v Cech compactification on which $\cG$ acts.

 \end{abstract}
\maketitle

\tableofcontents
\renewcommand{\sectionmark}[1]{}

\section*{\textbf{\textsc{Introduction}}}

This monograph aims to present the state of current knowledge on the two problems below concerning locally compact groupoids:
\begin{itemize}
\item [(A)] compare the different notions of exactness for them;
\item [(B)] study the weak containment problem, that is, whether the equality of the full and reduced $C^*$-algebras of a locally compact groupoid implies its amenability.
\end{itemize}

A groupoid whose  full and reduced $C^*$-algebras coincide is said to have the {\it weak containment property} (WCP).
It has been discovered over the last 10 years that the notion of exactness was involved in the study of problem (B).  

For a locally compact group $G$, these problems have now a long history. The positive answer to the question (B) dates back to the 1960s  \cite{Hul}. The study of the first question was initiated by Kirchberg at the end of the 1970s \cite{Kir78} and continued in the 1990s in a series of papers \cite{KW95, KW99, KW99bis}.  Motivated by the study of the continuity of fibrewise crossed product $C^*$-bundles \cite{Rie}, Kirchberg and Wassermann introduced in \cite{KW99} the notion of exact locally compact group, that we call KW-{\it exactness}\footnote{KW stands of course for Kirchberg and Wassermann.}: roughly speaking, this  means that  the reduced crossed  product functor preserves $G$-equivariant short exact sequences of $C^*$-algebras (see Definition \ref{def:exact1}). In \cite{KW99} it is proved, among many other results, that a discrete group $\Gamma$ is KW-exact if and only if the corresponding reduced $C^*$-algebra $C^*_{r}(\Gamma)$ is exact. Soon after it was proved, still for a discrete group, that the exactness of $C^*_{r}(\Gamma)$  (called $C^*$-{\it exactness} of the group) is equivalent to the nuclearity of the uniform Roe $C^*$-algebra $C^*_{u}(\Gamma)$ (known to be isomorphic  to the reduced crossed product $\ell^\infty(\Gamma)\rtimes_r \Gamma$) and also to the fact (called {\it amenability at infinity}) that the group $\Gamma$ admits an amenable action on a compact space (see \cite{G-K, GKAd, Oza}). This was extended in \cite{AD02} to the case of any  locally compact group $G$ which is {\it inner amenable} in the sense of  Definition \ref{def:IA}.  Recently, it was shown \cite{BCL, OS20} that KW-exactness implies amenability at infinity for any locally compact group $G$. This  last property does not always imply inner amenability: for instance connected non-amenable groups give a counterexample (see \cite{LR87}, \cite[Remark 5.10]{AD02}). The only remaining open problem for locally compact groups is whether $C^*$-exactness implies KW-exactness.
Current knowledge is summarized by the diagram below:

$$\xymatrix{ \hbox{amen. at } \infty\ar@{=>}[rd]|{/}\ar@{<=>}[d]\ar@{<=}[r]&\hbox{amenable}\ar@{<=>}[r]\ar@{=>}[d] &\hbox{(WCP)}\\
\hbox{KW-exact}\ar@{=>}[d]&\hbox{inner amenable}&\\
\hbox{$C^*$-exact}\ar@/_2pc/@{=>}[u]_?&}$$

We can speak without ambiguity of an exact discrete group, since all definitions are equivalent in this case. These groups are uniformly embeddable into a Hilbert space and therefore the Novikov higher signature conjecture holds for them \cite{Yu, HR, H00, STY}.

The notion of amenability, the Baum-Connes and the Novikov conjectures have been generalized to locally compact groupoids (see \cite{AD-R} as regards to amenability and \cite{Tu99, Tu99bis, TuSurvey} for the conjectures).  Concerning the extension of the notion of exactness to groupoids the situation is not so clear. We presented some results on this subject in a talk given at the MSRI in 2000 \cite{AD00}, but we hadn't written down all the details at that time.  Following the publication of Willett's article \cite{Wil15}, it became clear that the notion of exactness for groupoids plays an important role in order to have a positive answer to the problem (B) above.  This encouraged us to post on arXiv in 2016 \cite{AD16} a first version of our text.  We offer here a revised version which takes into account recent advances on the subject, some extracts of which have been summarized in the article \cite{AD23}.

For locally compact groupoids one can define, as for groups, the notions of {\it amenability}, {\it inner amenability}, KW-{\it exactness, $C^*$-exactness} (see Subsections \ref{subsec:amenable}, \ref{subsection:IA}, \ref{subsec:exact}). 

Concerning {\it ame\-nability at infinity} there is a subtlety which led us to also introduce the notion of {\it strong amenability of infinity }(Definition \ref{def:sai}). In addition, another notion of exactness, that we called {\it inner exactness} in \cite{AnD16} comes into play here (see Definition \ref{def:inamen}). The relations between these notions that always hold are the following ones:

$$\xymatrix{&\hbox{strongly amen. at } \infty\ar@{<=}[rd]\ar@{=>}[d]_b&&\\
& \hbox{amen. at } \infty\ar@{=>}[rd]|{/}\ar@{=>}[d]_c\ar@{<=}[r]&\hbox{amenable}\ar@{=>}[r]^a\ar@{=>}[d] &\hbox{(WCP)}\\
\hbox{inner exact}\ar@{<=}[r]&\hbox{KW-exact}\ar@{=>}[d]_d&\hbox{inner amenable}&\\
&\hbox{$C^*$-exact}&&}$$

Our main concern is  to study conditions that are sufficient to reverse the arrows a,b,c,d and to give examples.

 Let us give now some details on the content of this text. The analogue of a discrete group is an \'etale groupoid (see Section \ref{subset:etaleg}). We are especially interested in these groupoids.  Therefore, in this introduction, to make easier the discussion, $\cG$  will be a second countable \'etale groupoid, although the fact that $\cG$ is \'etale is not always necessary, as specified in the text.

As already said, a discrete group $\Gamma$ is  amenable at infinity if  and only if it has an amenable action on a compact space $Y$. This is equivalent to the fact that the natural action of $\Gamma$ on its Stone-\v Cech compactification $\beta \Gamma$ is amenable. An action of $\cG$  involves a fibre space $(Y,p)$ over the space $X= \cG^{(0)}$ of units of $\cG$ (see Definition \ref{def:Gspace}).  The analogue of compactness is the property for $p$ to be proper, in which case we say that $(Y,p)$ is {\it fibrewise compact}, following the terminology of \cite[Definition 3.1]{James}. We say that $\cG$ is {\it amenable at infinity}  if it has an amenable action on a fibrewise compact fibre space. Like for genuine locally compact spaces, a fibre space $(Y,p)$  has a greatest fibrewise compactification called the {\it Stone-\v Cech fibrewise compactification} of $(Y,p)$ (see Remark \ref{rem:f_c}). The groupoid $\cG$ being \'etale (an important assumption here), we show in Proposition \ref{prop:max_min} that this compactification carries a natural $\cG$-action ({\it i.e.}, is a $\cG$-space). We view $\cG$ as left $\cG$-space fibred on $X$ by the range map $r$ and we denote  by $(\beta_r \cG, r_\beta)$ its Stone-\v Cech fibrewise compactification\footnote{We point out that in our definition of a fibre space $(Y,p)$ we do not require $p$ to be open since in general $r_\beta : \beta_r \cG \to \cG^{(0)}$ is not open (see the remark \ref{rem:compG}). In this respect, we follow the convention of \cite[Definition 2.1, Remark 2.2]{Will19}.}. We do not know whether the amenability at infinity of $\cG$ implies that its natural action on $(\beta_r \cG, r_\beta)$ is amenable. This holds if and only if $\cG$ has an amenable action on a fibrewise compact fibre space $(Y,p)$ such that $p$ admits a continuous section. If this is the case we say that $\cG$ is {\it strongly amenable at infinity}. For groups, this makes no difference. Although $\beta_r\cG$ is a rather ugly space, the nice feature of strong amenability at infinity is that it has an intrinsic characterization,  in term of positive definite kernels on the subspace $\cG *_r\cG$ of pairs  $(\gamma,\gamma_1)\in \cG\times \cG$ with the same range (Theorem \ref{prop:amen_inf}).

 The reduced $C^*$-algebra  $C^*_{r}(\beta_r\cG \rtimes \cG)$ of the semi-direct  product groupoid $\beta_r\cG \rtimes \cG$ is canonically isomorphic  to  the groupoid analogue $C^*_{u}(\cG)$ of the uniform Roe algebra  generated by the finite propagation operators in case of a discrete group (Theorem \ref{prop:Roe}). This $C^*$-algebra $C^*_{u}(\cG)$ has the advantage over $C^*_{r}(\beta_r\cG \rtimes \cG)$ to be more elementarily defined.

   We show that $\cG$ is KW-exact whenever it is amenable at infinity and that, if $\cG$ is KW-exact, then its reduced $C^*$-algebra $C^*_{r}(\cG)$ is exact (Proposition \ref{prop:equiv})\footnote{See \cite[Theorem 6.14]{Lal15} for a more general result.}.   This property is called {\it $C^*$-exactness} of $\cG$. The proofs are easy adaptations of the known proof for  groups (see \cite[Theorem 7.2]{AD02} for instance). 
  The links between the different notions of exactness that we have introduced are stated in Theorem \ref{cor:ai_ex}. To establish that all these notions are equivalent, it would be necessary to be able to prove that  $C^*$-exactness implies (strong) amenability at infinity. 

 Let us give a quick idea of the proof of this fact for a discrete group $\Gamma$, which    uses the following easy observation:  if $\Phi : C^*_{r}(\Gamma)\to \cB(\ell^2(\Gamma))$ is a completely positive map such that $\Phi(\delta_s) = 0$  except for $s$ in a finite subset $F$ of $\Gamma$ (where $\delta_s$ is the Dirac mass at $s\in \Gamma$), then the kernel $k: (s,t)\in \Gamma\times \Gamma \to \C$ defined by $k(s,t) = \scal{\delta_{s^{-1}}, \Phi(\delta_{s^{-1}t}) \delta_{t^{-1}}}$ is positive definite on $\Gamma\times\Gamma$ and has its support in the tube $\set{(s,t)\in \Gamma\times \Gamma: s^{-1}t\in F}$. The first important ingredient of the proof,   essentially contained in \cite{G-K}, is that the  $C^*$-exactness of $\Gamma$ allows the construction of nets of such kernels that converges to 1 uniformly on tubes. The second ingredient of the proof is the characteri\-zation of the amenability at infinity of $\Gamma$ in term of the existence of such nets \cite{HR,Oza,AD02}. 
 
As already mentioned, this characterization  has an analogue in the setting of \'etale groupoids (Theorem \ref{prop:amen_inf}) in term of positive definite kernels on   $\cG *_r\cG$.
On the other hand, in the groupoid case, the defi\-nition of a  positive definite kernel $k$ on $\cG *_r\cG$, starting from a completely positive map $\Phi$ from $C^*_{r}(\cG)$ into  the $C^*$-algebra (analogous to $\cB(\ell^2(\Gamma))$) containing the regular representation $\Lambda$ of $C^*_{r}(\cG)$  defined in Section \ref{sec:full}, raises difficulties. Indeed, for $\gamma$, $\gamma_1$ with the same range, we need an analogue $f_{\gamma^{-1}\gamma_1} \in C_{r}^*(\cG)$ of $\delta_{s^{-1}t}$. We observe that if $\Gamma$ is a discrete group and if $g$ denotes the characteristic function of the diagonal of $\Gamma\times \Gamma$ then $\delta_{s^{-1}t}$ is the function $y\mapsto g(s^{-1}t,y)$ defined on $\Gamma$. The function $g$ is (of course) continuous and positive definite on the product group $\Gamma\times \Gamma$. Moreover this function is properly supported in the sense that for every compact ({\it i.e.}, finite) subset $F$ of $\Gamma$, the intersections of the support of $g$ with $F\times \Gamma$ and with $\Gamma \times F$ are compact. Following an idea of Jean Renault developed in an unpublished note, we introduce, for a locally compact groupoid, a property similar to the existence of $g$,  that we call {\it inner amenability} (see Definition \ref{def:wia}). The definition is justified by the fact that, for locally compact groups, it coincides with inner amenability in the sense of \cite{LP}.  This includes every discrete group. On the other hand a connected  locally compact group is  inner amenable if and only if it is amenable \cite{LR87}. 
Inner amenability   insures the existence of functions on the groupoid product $\cG\times \cG$ that behave like the characteristic function of the diagonal for $\Gamma\times \Gamma$.

 Our main result states that if $\cG$ is  a  second countable inner amenable \'etale groupoid, then  the following six conditions are equivalent: (1) strong amenability at infinity; (2) amenability at infinity; (3) nuclearity of the uniform  algebra of the groupoid; (4) exactness of this $C^*$-algebra; (5) KW-exactness of $\cG$; (6) exactness of the reduced $C^*$-algebra  (Theorem \ref{thm:equiv}).  This is applied to show (Corollary \ref{cor:main1}) that if $\cG$ is such that there exists a locally proper (see Definition \ref{defn:lpropre}) continuous homomorphism from itself into an exact discrete group, then the above six  conditions hold. We also prove that for any locally compact groupoid which is equivalent to an \'etale inner amenable groupoid, the conditions (2), (5) and (6) are equivalent (Corollary \ref{cor:equiv}).
 
 One very important feature of countable discrete exact groups is that, for them, the Baum-Connes assembly map with coefficients is split injective \cite{H00}, and therefore they satisfy the Novikov conjecture. More generally, this holds true for second countable strongly amenable at infinity \'etale groupoids \cite{Bon20, BP24, Mao}.

Non-exact discrete groups are quite exotic. The first examples were exhibited by Gromov in \cite{Gro}, often named Gromov monster groups. New examples have been constructed by Osajda \cite{Osa, Osa18}. But still, there are very few examples, whereas in the setting of \'etale groupoids, it is easy to find examples of KW-exact and of non KW-exact groupoids.
Let us consider  the case of an  \'etale groupoid $\cG$ which is a groupoid group bundle $(\cG(x))_{x\in X}$  (Definition \ref{def:bundle_groupoid_cont}). Recall that $\cG$ is amenable if and only if each group $\cG(x)$ is amenable \cite{AD-R, Ren_13}, and in this case $C_{r}^*(\cG)$ is a continuous field of $C^*$-algebras, with fibres $C^*_{r}(\cG(x))$, $x\in X$ \cite{Ram, LR}. The discussion relative to exactness is more subtle. If $\cG$ is KW-exact, then $C_{r}^*(\cG)$ is a still a continuous field of $C^*$-algebras (see Corollary \ref{cor:cont_field}).  Since $C^*_{r}(\cG)$ is then exact,  each quotient $C^*$-algebra $C^{*}_r(\cG(x))$ is exact and therefore the groups $\cG(x)$ are exact. On the other hand, the exactness of the groups $\cG(x)$ is not sufficient to ensure that $C^*_{r}(\cG)$ is exact. A particularly simple example is given by the construction due to  Higson, Lafforgue and Skandalis \cite{HLS} of group bundle groupoids (called HLS-groupoids) associated with the data of an approximated  discrete group $(\Gamma, N_k)$  as defined  in \ref{ex:HLS}. If $\Gamma$ is a non-amenable exact group, the corresponding HLS-groupoid is a bundle of exact groups whose reduced $C^*$-algebra is not exact (see Proposition \ref{prop:HLSbis}).

  A  locally compact groupoid has the {\it weak containment property}, in short the (WCP), if its full and reduced $C^*$-algebras coincide. The  HLS-groupoids  gave the first examples of groupoids  having the (WCP) without being amenable \cite{Wil15}, thus solving by the negative a long-standing open question. 
 It is the lack of a weak form of exactness ({\it inner exactness}, see Definition \ref{def:inamen})  that prevents an HLS-groupoid  having the (WCP) from being amenable (see Proposition \ref{prop:HLS}).

 In the recent years, a lot of work has been carried out to understand the influence of exactness in the study of the problem (B), starting with the paper \cite{Mat}. Very recently, it was established that the amenability of an \'etale groupoid is equivalent to  the (WCP) added with strong amenability at infinity \cite{Kra}. In case the orbit space relative to the action of the groupoid on its space of units is $T_0$, one can weaken the strong amenability at infinity assumption and replace it by inner exactness. This holds not only for \'etale groupoids but   for any locally compact groupoid with $T_0$-orbit space (see \cite{Bon} and Theorem \ref{thm:bon} in this text) and extends the well-known Hulanicki's result for locally compact groups \cite{Hul}.

Since the partial transformation groupoid $\Gamma\ltimes X$ associated with a partial action of an exact discrete group $\Gamma$ on a locally compact space $X$ is strongly amenable at infinity (Proposition \ref{prop:partial}), we see that  $\Gamma\ltimes X$ is amenable if and only if it has the (WCP). This extends Matsumura's result \cite{Mat} and give another proof of the same fact obtained in \cite{BFS}. 

An aspect that has not been considered in this paper is the behaviour of crossed products under the action of an exact groupoid on a $C^*$-algebra, for which we refer to \cite{Lal15}. For additional results about KW-exact groupoids we also refer to \cite{Lal17}. We point out that KW-exact groupoids are simply called exact groupoids in \cite{Lal15, Lal14, Lal17}.

This monograph is mainly organized in two parts. Part 1 (Sections 1 to 6) concerns exclusively  groupoids. Their $C^*$-algebras only appear in Part 2 (Section 7 to 12). We end this introduction by a brief summary of the content of this text.

In the first section we provide the definitions and the notation we need in the rest of the text. We develop in particular the notions of action and partial action of groupoids and of semi-direct products (subsections \ref{subsec-Action} to  \ref{subsec:semi-direct}).  Then we introduce in Subsection \ref{subset:etaleg} the notion of \'etale groupoid, for which inverse semigroup actions provide interesting examples (subsection \ref{subsec:ISA}). Groupoid group bundles  defined in Subsection \ref{subsubsect:gbg} are also an important family of examples. This is followed in subsection \ref{subsec:ext-groupoids} by a detailed analysis of some subtleties in the definition  of groupoid extensions.   In all this examples  we focus on the description of their classical Haar systems. Finally, in Subsection \ref{subsec:gen-mor} the notion of homomorphism between groupoids is extended to that of generalized morphism, and in particular faithful and locally proper such morphisms.

Section  \ref{sec:EquGr} discusses two notions  of  equivalence for groupoids: similarity and the most important one, simply called equivalence of groupoids.  The purpose of this section is to later obtain some informations on the invariance of the various pro\-per\-ties of groupoids studied  in this text. The latter notion includes many interesting examples as shown in Subsection \ref{subsec:eqgroupoid}. These two notions don't coincide in general, but this holds true for ample groupoids.  

 Section \ref{sec:2} is devoted to the generalization of the $\Gamma$-equivariant  compactifications of  a locally compact space on which the discrete group $\Gamma$ acts by homeomorphisms, when replacing $\Gamma$ by an \'etale groupoid $\cG$. The analogue for $\cG$ of the Stone-\v Cech  compactification of $\Gamma$ on which $\Gamma$ acts by translations will play a very important role in the sequel.
 
 The notion of amenable groupoid has been widely studied in several publications. However, since amenable actions are so present in our text, we devote Section \ref{sec:Amen} to giving some informations on this subject. In Subsections \ref{subsec:amenable} and \ref{subsec:stable} we recall several equivalent definitions of amenability and then point out some of its permanence properties. Amenability of extensions is studied in detail in Subsection \ref{subsec:amenext} with an application to the amenability of partial actions of discrete groups.
 
 Inner amenability is introduced and studied in Section \ref{section:IA}. After having first established  several permanence properties of this notion in Subsection \ref{subsection:IA}, we discuss in Subsection \ref{subsec:aboutIA} its invariance  under two forms of equivalence. Despite positive examples given in this section, we don't know  whether all \'etale groupoids are inner amenable.
 
 Section \ref{sec:ameninf} deals with the first notion of exactness that interests us in this text, namely amenability at infinity.  Various properties  of this structure are studied in Subsection \ref{def:ameninf}, followed by the study of the case of extensions in the subsection \ref{subsec:iue}. The two last subsections \ref{subsec:aietale} and \ref{subsec:charai} are limited to the case of \'etale groupoids, ending with a handy characterization of amenability at infinity,.

 Section \ref{groupoid-alg} is devoted to the description of the various $C^*$-algebras associated with a groupoid once a left Haar system is fixed: full and reduced $C^*$-algebras in Subsection \ref{sec:full}, the uniform  $C^*$-algebra in Subsection \ref{subsec:Roe}, and crossed products relative to actions on $C^*$-algebras in Subsection \ref{subsect:Gaction}.

  Section \ref{sect:ext} contains our first approach to various definitions of exactness, the classical ones in Subsection \ref{subsec:exact}, and a new one in Subsection \ref{subsec:inex}. We called it inner exactness in \cite{AnD16} since it only take  into account the action of a groupoid on its unit space. It is a weak notion of exactness that holds for instance for every locally compact group. However, for a HLS-groupoid it is equivalent to its amenability.

In Section \ref{sec:comparison}  we are starting the comparison between the different forms of exactness previously introduced. Generalities are presented for locally compact groupoids with Haar system in Subsection \ref{subsec:comparison}. The case of  étale groupoids is treated in the subsection \ref{subsec:compar-etale} where a chain of implications, going from amenability to infinity to the weakest notion of $C^*$-exactness is established. This is followed by several examples in Subsection \ref{subsec:examples}.

In Section \ref{sect:eiai} we show the equivalence, for \'etale inner amenable groupoids, of all the notions of exactness studied in the previous section.

 In Section \ref{sec:WC} the role of exactness is highlighted in the study of the weak containment problem: first, inner exactness for locally compact groupoids with a $T_0$ orbit space (subsection \ref{subsec:ia}), then strong amenability at infinity for \'etale groupoids (subsection \ref{subsec:tg}).
 
 Open questions and comments are discussed in Section \ref{sec:open}.

The appendix  is a purely topological part concerning the various fibrewise compactifications  of a fibre space $(Y,p)$. It can be read independently of the rest of the paper. Several different notions of fibrewise compactifications have been studied in the past (see \cite[\S 8]{James}).   Here we consider  a more restrictive si\-tuation since we want our fibrewise compactifications to be locally compact  and to contain $Y$ as a dense open subspace. The compactifications by adding appropriate ultrafilters that are constructed in \cite{James} are not convenient for our purpose. Our compactifications are obtained as Gelfand spectra of well-chosen abelian $C^*$-algebras that we characterize.  In particular, there is a smallest one and a greatest one. They give rise respectively to the Alexandroff fibrewise compactification $(Y^{+}_p, p^+)$ and to the Stone-\v{C}ech fibrewise compactification $(\beta_p Y, p_\beta)$ of $(Y,p)$. 

We  point out that there are still several important points that remain to be clarified\footnote{For a more precise explanation of the issues, we refer to the section \ref{sec:open}.}:
\begin{itemize}
\item Is KW-exactness equivalent to (strong) amenability at infinity for locally compact groupoids, as it is the case for locally compact groups?
\item Is $C^*$-exactness equivalent to (strong) amenability at infinity for \'etale grou\-poids, as it is the case for discrete groups?
\item Is it true that amenability is equivalent to the (WCP) + inner exactness for any locally compact grou\-poid (thus generalizing Hulanicki's theorem \cite{Hul})?
\item At least, is  it true that amenability is equivalent to the (WCP) + (strong) amenability at infinity for any locally compact groupoid?
\end{itemize}

\noindent{\bf Conventions.} In order to exclude pathological cases, in this text  locally compact spaces are always assumed to be Hausdorff and second countable even if it is not always expli\-citly mentioned and needed, with some obvious exceptions, like Stone-\v{C}ech compactifications. By a measure on a locally compact space, we always mean a Radon measure.

Throughout, we only consider locally compact groupoids and we assume that their range and source maps   are  open. In Part 2, from Section 7 to Section 12, the groupoids are implicitly endowed with a left Haar system.

For us, a homomorphism between $C^*$-algebras preserves the algebraic operations and also the involution.

\noindent{\bf Acknowledgments.} In this monograph, many  results are due  to nu\-me\-rous researchers, besides  some new results from me. I have tried to cite the original sources to the best of my knowledge. I apologize for the inevitable omissions.

 I thank Christian B\"onicke  and Jean Renault who authorized me to include some of their private results pertaining to the subject of this monograph. I am also grateful to Julian Kranz for interesting discussions on the topic.

\newpage

\part{}

\section{\textbf{\textsc{Preliminaries: definitions and examples}}}

In this section we recall some more or less basic facts on groupoids, as well as some useful constructions and examples.

\subsection{Groupoids} We assume that the reader is familiar with the basic definitions about groupoids. For  details we refer to \cite{Ren_book}, \cite{Pat}, \cite{Will19}. Let us recall some  notation and terminology.  A {\it groupoid} consists of a set $\cG$, a subset $\cG^{(0)}$ called the set of {\it units}, two maps $r,s :\cG \to \cG^{(0)}$ called respectively the {\it range} and {\it source} maps,  a {\it composition law} $(\gamma_1,\gamma_2) \in \cG^{(2)} \mapsto \gamma_1\gamma_2\in \cG$, where
$$\cG^{(2)} = \set{(\gamma_1,\gamma_2)\in \cG\times \cG : s(\gamma_1) = r(\gamma_2)},$$
and an {\it inverse} map $\gamma\mapsto \gamma^{-1}$. These operations satisfy obvious rules, such as the facts   that the composition law ({\it i.e.}, product) is associative, that the elements of $\cG^{(0)}$ act as units ({\it i.e.}, $r(\gamma)\gamma = \gamma = \gamma s(\gamma)$), that $\gamma\gamma^{-1} = r(\gamma)$, $\gamma^{-1}\gamma = s(\gamma)$, and so on (see \cite[Definition 1.1]{Ren_book}). Given a subset $E$ of $\cG^{(0)}$ we set $\cG^E = r^{-1}(E)$, $\cG_E = s^{-1}(E)$\index{$\cG^E$, $\cG_E$} and $\cG(E) = \cG^E\cap \cG_E$\index{$\cG(E)$}. When $E= \set{x}$ we rather write $\cG^x$, $\cG_x$ \index{$\cG^x$, $\cG_x$} and $\cG(x)$ respectively\index{$\cG(x)$}. Usually, $X$ will denote the set of units of $\cG$.

 A subgroupoid $\cH$ of  a groupoid $\cG$ is a subset of $\cG$ which is stable under product and inverse. For instance,  let $E$ be a subset of  $\cG^{(0)}$. Then $\cG(E)$ is a  subgroupoid of $\cG$ called the {\it reduction of $\cG$ by $E$}\index{reduction of a groupoid}. When $E$ is reduced to a single element $x$, then $\cG(x)$ is a  group called the {\it isotropy group of $\cG$ at $x$}\index{isotropy group}.

A {\it locally compact groupoid}\index{locally compact groupoid} is a groupoid $\cG$ equipped with a  locally compact topology such  that the structure maps are continuous, where $\cG^{(2)}$ has the topology induced by $\cG\times\cG$ and $\cG^{(0)}$ has the topology induced by $\cG$. Since $\cG$ is Hausdorff, $\cG^{(0)}$ is a closed subset of $\cG$.  Furthermore, {\it we assume that the range (and therefore the source) map is open}.

We say that a locally compact groupoid $\cG$ is {\it proper} \index{proper groupoid}if the map $r\times s: \cG \to \cG^{(0)}\times \cG^{(0)}$ is proper.

 A  {\it locally compact subgroupoid}\index{subgroupoid (locally compact)} $\cH$ of $\cG$ is a subgroupoid of $\cG$ that is locally compact with respect to the induced topology ({\it i.e.}, if and only if it is locally closed) and such that the restrictions of the range and source maps to $\cH$ are open.  
 For instance, $\cG^{(0)}$ is a closed subgroupoid of $\cG$. Note also that if $E$ is an {\it invariant locally compact subset} \index{invariant subset of units} of $\cG^{(0)}$ (that is, $r(\gamma)\in E$ if and only if $s(\gamma)\in E$), then the reduction $\cG(E)$ is a locally compact subgroupoid. 
 
 \begin{rem} Let us recall that a locally compact subgroup of a locally compact group is closed. This is not the case in the more general setting of groupoids. In particular, an open subgroupoid is not necessarily closed, even if it has the same units as the ambient groupoid.
  \end{rem}

 Let $\cG$ and $\cH$ be two locally compact groupoids. A {\it homomorphism} \index{homomorphism of groupoids} $c:\cG\to\cH$ is a continuous map such that for every $(\gamma,\eta)\in \cG^{(2)}$ we have $(c(\gamma),c(\eta))\in \cH^{(2)}$ and $c(\gamma\eta)=c(\gamma)c(\eta)$. It follows that $c(\cG^{(0)}) \subset\cH^{(0)}$ and we denote by $c^{(0)}:\cG^{(0)}\to \cH^{(0)}$ \index{$c^{(0)}$} the restriction of $c$ to $\cG^{(0)}$. For $\gamma\in \cG$, we have $c^{(0)}(r(\gamma)) = r(c(\gamma))$ and $c^{(0)}(s(\gamma)) = s(c(\gamma))$.

 Given a locally compact space $Y$, we denote by $\cC_b(Y)$ \index{$\cC_b(Y)$} the algebra of continuous bounded complex valued functions on $Y$, by $\cC_0(Y)$ \index{$\cC_0(Y)$: $Y$ locally compact space} its subalgebra of functions vanishing at infinity, and by $\cC_c(Y)$ \index{$\cC_c(Y)$} the subalgebra of continuous functions with compact support. The support of $f \in \cC_b(Y)$ will be denoted by $\supp(f)$\index{$\supp(f)$}.

 \begin{defn}\label{def:Haar} Let $\cG$ be a locally compact groupoid. A (left) {\it Haar system}\index{Haar system} on $\cG$ is a family $ \lambda=(\lambda^x)_{x\in X}$ of measures on $\cG$, indexed by the set $X= \cG^{(0)}$ of units, satisfying the following conditions:
 \begin{itemize}
 \item {\it Support}: $\lambda^x$ has exactly $\cG^x$ as support, for every $x\in X$;
 \item {\it Continuity}: for every $f\in \cC_c(\cG)$, the function $x\mapsto  \lambda(f)(x)=\int_{\cG }f\rd\lambda^{x}$ is continuous;
\item {\it Invariance}: for $\gamma\in \cG$ and $f\in \cC_c(\cG)$, we have 
$$\int_{\cG} f(\gamma\gamma_1) \rd\lambda^{s(\gamma)}(\gamma_1) =  \int_{\cG} f(\gamma_1) \rd\lambda^{r(\gamma)}(\gamma_1).$$
\end{itemize}
\end{defn}

Let $\cG$ be a locally compact groupoid which has a Haar system $\lambda$. If $E$ is an  invariant locally compact subset  of $\cG^{(0)}$,  then the reduction $\cG(E)$ has a Haar system, namely the restriction of $x\mapsto \lambda^x$ to $E$.   Note that for a Haar system to exist,  the range (and therefore the source) map of $\cG$ must be open \cite[Proposition 2.4, Chapter 1]{Ren_book}.

 The most basic examples of locally compact groupoids are locally compact groups, {\it locally compact spaces} (or diagonal equivalence relations) and {\it trivial equiva\-lence relations} (or {\it pair groupoids})\index{trivial equivalence relation}\index{pairgroupoid}. These examples admit Haar systems. For a locally compact group it is its Haar measure, unique up to a multiplicative positive constant. For a locally compact space $\cG = X$, we have $\cG^{(0)} = X$ and $\cG^x = \set{x}$ for every $x\in X$. Then any map $x\mapsto \lambda^x = f(x)\delta_x$, where $f$ is any positive continuous function and $\delta_x$ is the Dirac mass\index{$\delta_x$: Dirac mass at $x$} at $x$, is a Haar system. For the trivial equivalence relation $\cG= X\times X$ on $X$ we have $\cG^{(0)} = X$ and $\cG^x = \set{x}\times X$. Then $x\mapsto \lambda^x = \delta_x\times \mu$, where $\mu$ is any measure on $X$ whose support is $X$, is a Haar system.

Given two locally compact groupoids $\cG$ and $\cH$, the {\it product} $\cG\times \cH$ is a locally compact groupoid \index{product groupoid} under the product topology and the coordinatewise operations. Its space of units is identified with $\cG^{(0)}\times\cH^{(0)}$. If $\cG$ and $\cH$ have a  Haar system, then $\cG\times \cH$ has a Haar system, namely the product of the two Haar systems.

Among the examples that we are going to describe, a very important class is that of semi-direct pro\-ducts. For their study, we first need to introduce actions and partial actions of groupoids.

\subsection{Actions of groupoids on locally compact spaces}\label{subsec-Action} Let $X$ be a locally compact space. A {\it fibre space}\index{fibre space} over $X$ is a pair $(Y,p)$ where $Y$ is a locally compact space  and $p$ is  a continuous  map from  $Y$ into $X$. For $x\in X$ we denote by $Y^x$ the {\it fibre} $p^{-1}(x)$. 

Let $ (Y_i,p_i)$, $i=1,2$, be two fibre spaces over $X$. We denote by $Y_{1}   
 \,_{p_1}\!\!*_{p_2} Y_2$ (or $Y_1*Y_2$ when there is no ambiguity) the {\it fibred product}\index{fibred product} 
 $$\set{(y_1,y_2)\in Y_1\times Y_2: p_1(y_1) = p_2(y_2)}$$
  equipped  with the topology induced by the product topology. For subsets $A_1$ and $A_2$ of $Y_1$ and $Y_2$ respectively, we use similarly the notation $A_{1}   \,_{p_1}\!\!*_{p_2} A_2$.

 A {\it morphism} from $(Y_1, p_1)$\index{morphism of fibre spaces} into $(Y_2,p_2)$ is a continuous map $f: Y_1\to Y_2$ such that $p_2\circ f = p_1$.

The following observation will be useful later.
 
\begin{lem}\label{lem:easy} Let Let $ (Y_i,p_i)$, $i=1,2$ as above. We denote by $\pi_1: Y_{1} \,_{p_1}\!\!*_{p_2} Y_2 \to Y_1$ the first projection.
\begin{itemize}
\item[(i)] If $p_2$ is open, then $\pi_1$ is open;
\item[(ii)] If $p_2$ is proper, then $\pi_1$ is proper.
\end{itemize}
\end{lem}

\begin{proof} Assume first that $p_2$ is open and let us show that the  map $\pi_1$ is  open.  Let $\Omega$ be an open subset of $ Y_{1}\, _{p_1}\!\!*_{p_2} Y_2 \to Y_1$. Let $y_0 \in \pi_1(\Omega)$ and let $z_0\in Y_2$ be such that $(y_0,z_0)\in \Omega$. There exist open neighborhoods $U$ of $y_0$ and $V$ of $z_0$ such that $U\,_{p_1}\!*_{p_2} V\subset \Omega$. Then we have $y_0\in p_{1}^{-1}(p_2(V))\cap U\subset \pi_1(\Omega)$ and $p_{1}^{-1}(p_2(V))\cap U$ is an open neighborhood of  $y_0$ since $p_2$ is open.

Assume now that $p_2$ is proper. Then $\pi_1$ is proper since for every compact subset of $Y_1$ we have $\pi_1^{-1}(K) = K\,_{p_1}\!\!*_{p_2} p_2^{-1}(p_1(K))$.
\end{proof}

 \begin{defn}\label{def:Gspace} Let $\cG$ be a locally compact groupoid. A {\it left} $\cG$-{\it space} \index{left $\cG$-space} is a fibre space $(Y,p)$ over $X = \cG^{(0)}$, equipped with a continuous map $(\gamma, y) \mapsto \gamma y$ from $\cG\,_s\!*_pY$ into $Y$, satisfying the following conditions:
 \begin{itemize}
 \item $p(\gamma y) = r(\gamma)$ for $(\gamma, y) \in \cG\,_s\!*_pY$, and $p(y)y = y$ for $y\in Y$;
 \item if $(\gamma_1, y) \in \cG\,_s\!*_pY$ and $(\gamma_2,\gamma_1)\in \cG^{(2)}$, then $(\gamma_2\gamma_1)y = \gamma_2(\gamma_1 y)$.
 \end{itemize}
 \end{defn}
  The map $p$ will be called the {\it moment map}\index{moment map} of the $\cG$-space. We will also say that the map $(\gamma, y) \mapsto \gamma y$ is a {\it $\cG$-action on} $Y$ and we will use the notation $\cG\actson(Y,p)$ or simply $\cG\actson Y$\index{$\cG\actson Y$}.
  
  \begin{defn}\label{def:principal} One says that the $\cG$-action on $Y$ is {\it free}\index{free action} if $\gamma y = y$ implies  $\gamma = p(y)$, and that it is {\it proper}\index{proper action} if the map $(\gamma,y) \mapsto ( \gamma y, y)$ from $\cG\,_s\!*_pY$ to $Y\times Y$ is proper. When $\cG\actson Y$ is free and proper with an open surjective moment map, one says that $Y$ is a {\it principal $\cG$-bundle}. \index{principal  $\cG$-bundle}
\end{defn}
  
 A {\it  $\cG$-equivariant morphism}  from a left $\cG$-space $(Y_1,p_1)$ to a left $\cG$-space $(Y_2,p_2)$ is a continuous morphism $f: (Y_1,p_1) \to (Y_2,p_2)$ such that $f(\gamma y) = \gamma f(y)$ for every $(\gamma, y) \in \cG\,_s\!*_{p_1}Y$.
  
 {\it Right $\cG$-spaces}\index{right $\cG$-space} are defined similarly. A right $\cG$-space $(Y,p)$ can be viewed as a left $\cG$-space by setting $\gamma y = y \gamma^{-1}$ if $r(\gamma) = p(y)$.  This allows left and right actions  to be treated in the same way. {\it Without further precisions, a $\cG$-space will be a left $\cG$-space} but we will also deal with right $\cG$-spaces from time to time. 
 
 Let us observe that $(\cG,r)$ is a left $\cG$-space in an obvious way, as well as $X= \cG^{(0)}$. It this latter case, the action of $\gamma\in s^{-1}(x)$ onto $x\in X$ will be denoted by $\gamma\cdot x$, \index{$\gamma\cdot x$}in order to distinguish it from $\gamma x = \gamma$.  By definition, we have $\gamma\cdot x = r(\gamma)$. We also note that if $(Y,p)$ is a left $\cG$-space, then $p$ is $\cG$-equivariant: $p(\gamma y) = \gamma\cdot p(y)$, and therefore $p(Y)$ is an invariant subset of $X$. 
 
 \begin{rem} We warn the reader that our definition of $\cG$-space is different from the  definition sometimes found in the literature where it is required that $p$ is an open map. We do not assume this property because the moment map of the action of a locally compact groupoid on its Stone-\v Cech fibrewise compactification is not always open (see  remarks \ref{rem:notopen},   \ref{rem:compG}). Moreover, it is a fact that, even if $p$ is not open, the corresponding semi-direct product groupoid defined later has an open range (see Lemma \ref{lem:open}).
 
 When it is not necessary we do not assume that $p$ is surjective. \end{rem}
 
 The following lemma will be useful later. Let $(Y,p)$ be a left $\cG$-space. For $g\in \cC_c(\cG)$ and $f\in \cC_b(Y)$ we define the convolution product  $g*f$ \index{$g*f$} by
 $$(g*f)(y) = \int_{\cG} g(\gamma) f(\gamma^{-1}y) \rd \lambda^{p(y)}(\gamma).$$
 
 \begin{lem}\label{lem:convol} If $f\in \cC_c(Y)$ $($resp. $f\in \cC_0(Y)$$)$ we have $g*f\in \cC_c(Y)$ $($resp. $g*f\in \cC_0(Y)$$)$.
 \end{lem}
 
 \begin{proof} By approximation, it suffices to consider the case $f\in \cC_c(Y)$. The main point is to show the continuity of $g*f$. We follow the proof given in \cite[Proposition 1.33]{Will19}. We first observe that the function $h:(\gamma,y)\mapsto  g(\gamma) f(\gamma^{-1}y)$ is continuous with compact support on $\cG \,_r\!*_p Y$. Since $\cG\times Y$ is second countable, hence normal, we can extend $h$ to an element of $\cC_b(\cG\times Y)$. After multiplication by a continuous function with compact support that is identically $1$  on the support of $h$  we get 
 a function $h'\in \cC_c(\cG\times Y)$ such that $h'(\gamma,y) = h(\gamma,y)$ for all $(\gamma,y) \in \cG \,_r\!*_p Y$. Therefore we may assume that $h \in \cC_c(\cG\times Y)$ and finally that $h$ is of the form $(\gamma, y) \mapsto g'(\gamma)f'(y)$ with $g'\in \cC_c(\cG)$ and $f'\in\cC_c(Y)$ since the algebra of finite sums of such functions is dense in $\cC_c(\cG\times Y)$. The conclusion follows immediately.
  \end{proof}
 
\subsection{Partial actions of groupoids on locally compact spaces}\label{subsec:PartialAction}  Partial actions of groups were introduced in the 1990s. They encompass many interesting examples (see \cite{Exel17}). In particular, we will see in the subsection \ref{subsec:ISA} that many semigroups provide partial transformation groupoids. We will need in the proof of Proposition \ref{prop:partial} the more general notion of partial action of a locally compact groupoid. 

\begin{defn}\label{def:part top}$($\cite[Definition 5.1]{AD20}$)$ Let $\cG$ be a locally compact groupoid with  $\cG^{(0)} = X$. Let $Y$ be a locally compact space. A {\it partial} (left) {\it action of $\cG$ on }\index{partial action} $Y$ is the data of a  fibre space $p: Y\to X$ equipped with a pair $\boldsymbol{\beta} = ((\beta_\gamma)_{\gamma\in \cG}, (Y_\gamma)_{\gamma\in \cG})$, where for $\gamma\in \cG$, $Y_\gamma$ is an open subset of $Y^{r(\gamma)} = p^{-1}(r(\gamma))$ and $\beta_\gamma$ is a homeomorphism from $Y_{\gamma^{-1}}$ onto $Y_\gamma$, such that
\begin{itemize}
\item[(i)] $Y_x = Y^x$ and $\beta_x = \Id_{Y_x}$ for $x\in p(Y)$;
\item[(ii)] $\beta_{\gamma\gamma_1}$ is an extension of $\beta_{\gamma}\circ \beta_{\gamma_1}$ if $s(\gamma) = r(\gamma_1)$;
\item[(iii)] $C = \set{(y,\gamma)\in Y_p \!*_r \cG: y\in Y_\gamma}$ is open in $Y_p\! *_r \cG$;
\item[(iv)] the map $(y,\gamma) \mapsto \beta_{\gamma^{-1}}(y)$ is continuous from $C$ into $Y$.
\end{itemize}
\end{defn}

\begin{rem} (see \cite[Lemma 1.2]{QR}) Let $\boldsymbol{\beta} : \cG \actson Y$ be a partial action. Then we have
\begin{itemize}
\item[(1)] $\beta_{\gamma^{-1}} = (\beta_{\gamma})^{-1}$ for every $\gamma\in \cG$.
\item[(2)] $\beta_{\gamma}(Y_{\gamma^{-1}} \cap Y_{\gamma_1}) = Y_\gamma\cap Y\gamma\gamma_1$ if $s(\gamma) = r(\gamma_1)$;
\item[(3)] $\beta_\gamma\circ \beta_{\gamma_1}$  is a homeomorphism from $Y_{\gamma_{1}^{-1}} \cap Y_{\gamma_{1}^{-1}\gamma^{-1}}$ onto $Y_\gamma \cap Y_{\gamma\gamma_1}$ if $s(\gamma) = r(\gamma_1)$, and $\beta_\gamma\circ \beta_{\gamma_1}(y)  = \beta_{\gamma\gamma_1}(y)$ if $y\in Y_{\gamma_{1}^{-1}} \cap Y_{\gamma_{1}^{-1}\gamma^{-1}}$.
\end{itemize}
\end{rem}

\begin{rem} The map $(y,\gamma)\mapsto (\gamma^{-1},y)$ is a homeomorphism from $Y_p \!*_r \cG$ onto $\cG_s \! *_p Y$ sending $C$ onto $C' = \set{(\gamma,y)\in \cG_s \! *_p Y: y\in Y_{\gamma^{-1}}}$.

Condition (iv) means that $(y,\gamma) \mapsto (\gamma,\beta_{\gamma^{-1}}(y))$ is continuous from $C$ onto $C'$. It is a homeomorphism with inverse map $(\gamma,y) \mapsto (\beta_\gamma(y), \gamma)$.
\end{rem}

Of course, a $\cG$-action $\cG\actson (Y,p)$ is a particular example of partial action, where $Y_\gamma = Y^{r(\gamma)}$ for all $\gamma\in \cG$.

\begin{ex}\label{ex:reducpartial} Let $\alpha: G\actson Y$ be a continuous left action of a locally compact group $G$ on a locally compact space $Y$. Let $X$ be an open subset of $Y$. For $g\in G$ we set $X_g = X\cap \alpha_g(X)$ and we denote by $\beta_g$ the restriction of $\alpha_g$ to $X_{g^{-1}}$. Then $\boldsymbol{\beta } = (\set{X_g}_{g\in G}, \set{\beta_g}_{g\in G})$  is a partial action of $G$ on $X$. We will say that this action is the {\it reduction of $\alpha$ to} $X$. We observe that, for every $g\in G$ the graph $\set{(x,\beta_g(x)) : x\in X_{g^{-1}}}$ is closed into $X\times X$.

Let us consider the group $\Z_2 = \set{0,1}$, $X= ]-1,+\infty[$ and $X_1= ]-1,1[$. The partial action of $\Z_2$ on $X$ such that $\beta_1 = \Id_{X_1}$ is not a reduction to $X$, whereas the partial action such that $\beta_1(x) = -x$ for $x\in X_1$ is the reduction to $X$ of the global action of $\Z_2$ on $\R$ such that $\beta_1(x) = -x$ for all $x\in \R$.

The study of partial group actions that are reductions of a global action  will be developed in the example \ref{ex:Abd}.
\end{ex}

\subsection{Semi-direct product groupoids associated with partial actions}\label{subsec:semi-direct} 
Let $\boldsymbol{\beta }: \cG \actson Y$ be a partial action.  We define as follows a new locally compact groupoid $\cG_\beta$, called the {\it semi-direct pro\-duct groupoid} or the {\it transformation groupoid}\footnote{when we want to emphasize that it is a  semi-direct product groupoid relative to an action on a space. Compare with Subsection \ref{subsec:gg}.}\index{semi-direct product groupoid} \index{transformation groupoid} associated to $\boldsymbol{\beta }: \cG \actson Y$. It is the topological subspace 
$$\cG_\beta= \set{(x,\gamma,y): \gamma\in \cG, y\in Y_{\gamma^{-1}}, x= \beta_{\gamma}(y)}$$
of $Y\times \cG\times Y$ equipped with the following operations\footnote{We use the notation ${\underline r}$, ${\underline s}$ when we want to distinguish the range and source maps of $\cG_\beta$ from those of $\cG$.}:  ${\underline r}(x,\gamma,y) = x$ (where $x$ is identified with $(x,p(x),x)$), ${\underline s}(x,\gamma,y) = y$, \index{$\underline r$}\index{$\underline s$}the composition law is given by $(x,\gamma,y)(y,\eta,z) = (x,\gamma\eta,y)$ and the inverse is given by $(x,\gamma,y)^{-1} = (y,\gamma^{-1},x)$.
This construction dates back to \cite{Abad04} for partial group actions.

Note that the map $(x,\gamma,y) \mapsto (\gamma,y)$ is a homeomorphism from this space $\cG_\beta$ onto the space $C'= \set{(\gamma,y)\in \cG\times Y: y \in Y_{\gamma^{-1}}}$ (viewed as a topological subspace of $\cG\times Y$) on which one can read the structure of groupoid in another familiar way (after having identified $Y$ with  $\set{(p(y),y):y\in Y}$): 
$${\underline r}(\gamma,y) =\beta_\gamma(y),\quad {\underline s}(\gamma,y) = y,\quad (\gamma,y)^{-1} = (\gamma^{-1}, \beta_\gamma(y)),\quad (\gamma,\beta_\eta(y))(\eta,y) = (\gamma\eta,y).$$
We denote by $\cG\ltimes_\beta Y$ \index{$\cG\ltimes_\beta Y$} this version of $\cG_\beta$.

Of course, we can equivalently consider the isomorphism $(x,\gamma,y) \mapsto (x,\gamma)$, defi\-ning the isomorphic version $C=Y\rtimes_\beta \cG$\index{$Y\rtimes_\beta \cG$} of the groupoid $\cG_\beta$.  Again we identify   $(Y\rtimes_\beta \cG)^{(0)}$ with $Y$.      The range of $(y,\gamma)$ is $y$ and its source is $\beta_{\gamma^{-1}}(y)$.  The product is given by
 $$(y,\gamma)(\beta_{\gamma^{-1}}(y),\gamma_1) = (y, \gamma\gamma_1).$$
 The inverse is given by
 $$(y,\gamma)^{-1} = (\beta_{\gamma^{-1}}(y),\gamma^{-1} ).$$
 
 In particular, the map $(\gamma,y)\mapsto (\beta_\gamma(y), \gamma)$ is an isomorphism from the groupoid $\cG\ltimes_\beta Y$ onto the groupoid $Y \rtimes_\beta \cG$. 
  
  Let us observe that the groupoid $Y\rtimes_\beta \cG$ is  locally compact  since $Y\!_p\!*_r \cG$ is closed into $Y\times \cG$ and $C$ is open into $Y\!_p\!*_r \cG$. Moreover, as shown below, its range and source maps are open The same observation applies for $\cG\ltimes_\beta Y$.

\begin{rem} Usually, we will drop the index $\beta$ in the notation of  semi-direct product groupoids, when there is no ambiguity. We will use one or the other version of the semi-direct product depending on the context. We will often write $\gamma y$ instead  of $\beta_\gamma(y)$, and we write $r$ (resp. $s$) instead of $\underline r$ (resp. $\underline s$),  for the range (resp. source) map on semi-direct product groupoids when there is no risk of confusion.
\end{rem}

\begin{rem}\label{rem:right} All this applies in particular to the case of a global left action $\beta:\cG\actson Y$.  Recall that as topological spaces $Y\rtimes_\beta  \cG= Y   
 \,_p\!*_r \cG$  and $\cG\ltimes_\beta Y = \cG \,_s\!*_p Y$. 
 
 Whereas we only work with left partial actions, sometimes, right global actions will be considered. To   a {\it right} global action $\alpha : \cG\actson Y$  the corresponding the semi-direct product groupoid $Y\rtimes_\alpha \cG$ \index{$Y\rtimes_\alpha \cG$} is defined as  $Y \,_p\!*_r \cG$ equipped with the following structure:
$$r(y,\gamma) = y; \,\,s(y,\gamma) = y\gamma; \,\,(y,\gamma)(y\gamma,\gamma_1) = (y,\gamma\gamma_1); \,\,(y,\gamma)^{-1} = (y\gamma,\gamma^{-1}).$$
Then $Y\rtimes_\alpha \cG$ is nothing else than $Y\rtimes_{\beta} \cG$, where $\beta: (\gamma,y)\mapsto \gamma y = y\gamma^{-1}$ is  the left global action associated with $\alpha$.
\end{rem}

\begin{lem}\label{lem:open} Let $\boldsymbol{\beta }: \cG \actson (Y,p)$ be a partial action.The range and source maps of the groupoid $Y\rtimes_\beta \cG$ are open. \end{lem}

\begin{proof}  This follows from Lemma \ref{lem:easy}, since $r$ is open and $C = Y\rtimes_\beta \cG$ is open in $Y\!_p\!*_r\cG$.
\end{proof}

Let $\boldsymbol{\beta }: \cG \actson (Y,p)$ be a partial action and let us  assume that $\cG$ is given with a  Haar system $\lambda =(\lambda^x)_{x\in X}$ where $X = \cG^{(0)}$. We are  going to associate a canonical   Haar system for $Y\rtimes_\beta \cG$.
We set $C_y = \set{\gamma\in \cG: (y,\gamma) \in C} = \set{\gamma\in \cG: y\in Y_\gamma}$. Then we have $(Y\rtimes_\beta \cG)^y = \set{y} \times C_y$. Since $C$ is an open subset of $Y\!_p\!*_r\cG$, we see that $C_y$ is an open subset of $\cG^{p(y)}$.

When $B = \set{y}\times B_1$ is a Borel subset of $\set{y} \times C_y$, we set $\tilde{\lambda}^y(B) = \lambda^{p(y)}(B_1)$.

\begin{prop}$($\cite[Proposition 2.2]{Abad04}$)$\label{prop:Abad2.2} The family $\tilde\lambda =(\tilde{\lambda}^y)_{y\in Y}$ is a Haar system on $Y\rtimes_\beta \cG$.
\end{prop}

\begin{proof} Since  $C_y$ is open in $\cG^{p(y)}$ we see that the support of $\tilde{\lambda}^y$ is $ \set{y} \times C_y$. 
Let us show the left invariance, meaning that for every $f\in \cC_c(Y\rtimes \cG)$ and for every $(y,\gamma) \in Y\rtimes_\beta \cG$ we have
\begin{equation}\label{eq:leftinv}
\int_{(Y\rtimes\cG)^{\gamma^{-1}y}} f((y,\gamma)(y_1,\gamma_1))\rd \tilde{\lambda}^{\gamma^{-1}y}(y_1,\gamma_1) = \int_{(Y\rtimes\cG)^{y}} f(y_1,\gamma_1) \rd \tilde{\lambda}^y(y_1,\gamma_1).
\end{equation}
The right-hand side is
$$\int_{(Y\rtimes\cG)^{y}} f(y_1,\gamma_1) \rd \tilde{\lambda}^y(y_1,\gamma_1) = \int_{\cG^{r(\gamma)}} \mathds{1}_{C_y}(\gamma_1)f(y,\gamma_1) \rd {\lambda}^{r(\gamma)}(\gamma_1).$$

As for the left-hand side   in \eqref{eq:leftinv}, we integrate on the set of $(y_1,\gamma_1)$ such that $y_1 = \gamma^{-1} y\in Y_{\gamma_1}$, that is on the set of $( \gamma^{-1} y, \gamma_1)$ with 
$$y\in \gamma (Y_{\gamma^{-1}}\cap Y_{\gamma_1}) = Y_\gamma\cap Y_{\gamma\gamma_1}.$$
It follows that
\begin{align*}\int_{(Y\rtimes\cG)^{\gamma^{-1}y}} f((y,\gamma)(y_1,\gamma_1))\rd \tilde{\lambda}^{\gamma^{-1}y}(y_1,\gamma_1)& =
\int_{\cG^{s(\gamma)} } \mathds{1}_{C_{\gamma^{-1}y}}(\gamma_1)f(y,\gamma\gamma_1) \rd {\lambda}^{s(\gamma)}(\gamma_1)\\
&=\int_{\cG^{s(\gamma)} }\mathds{1}_{C_y}(\gamma\gamma_1)f(y,\gamma\gamma_1) \rd {\lambda}^{s(\gamma)}(\gamma_1).
\end{align*}
This proves the invariance.

It remains to show that for $f\in \cC_c(Y\rtimes_\beta \cG)$ the map 
$$y\mapsto \int_{(Y\rtimes_\beta\cG)^{y}} f(y,\gamma) \rd \tilde{\lambda}^y(y,\gamma)= \int_{\cG^{p(y)}} \mathds{1}_{C_y}(\gamma)f(y,\gamma) \rd {\lambda}^{p(y)}(\gamma) $$ is continuous.

We have $$\int_{\cG^{p(y)}} \mathds{1}_{C_y}(\gamma)f(y,\gamma) \rd {\lambda}^{p(y)}(\gamma) = \int_{\cG^{p(y)}}f(y,\gamma) \rd \lambda^{p(y)}(\gamma)$$ since the support of $f$ is contained in $C$. 

We will show that for every $f\in \cC_c(Y\times_\beta\cG)$, the function 
$$y\mapsto \int_{\cG^{p(y)}}f(y,\gamma) \rd \lambda^{p(y)}(\gamma)$$
is continuous. This will conclude the proof, after having observed  that  every function $f$ in $\cC_c(C)$ can be extended as an element of $\cC_c(Y\times \cG)$. Indeed, $C$ being open into $Y\!_p\!*_r\cG$ we first extend $f$ as an element of $\cC_c(Y\!_p\!*_r\cG)$ and then we use   the Tietze extension theorem \cite[Lemma 1.42]{Will}).

Let $f$ be in $\cC_c(Y\times \cG)$ and let $\Omega_1\times \Omega_2$ be an open neighborhood of the support of $f$, where $\Omega_1$ and $\Omega_2$ are relatively compact. We have 
$$M=\sup_{y\in \Omega_1}\lambda^{p(y)}(\Omega_2) < +\infty.$$
 Let us consider the algebra $\cC$ linearly generated by the functions of the form 
$$(y,\gamma)\in Y\times\cG \mapsto  (h\otimes k)(y,\gamma) = h(y)k(\gamma)$$
 where $h,k$ are continuous with compact support in $\Omega_1$ and $\Omega_2$ respectively. Using the Stone-Weierstrass theorem, given $\varepsilon >0$ there is $g\in \cC$ such that $\norm{f-g}_\infty <\varepsilon$. It follows that
 $$\abs{\int_{\cG^{p(y)}}f(y,\gamma)\rd \lambda^{p(y)}(\gamma) -  \int_{\cG^{p(y)}}g(y,\gamma)\rd \lambda^{p(y)}(\gamma)} \leq M\varepsilon$$
 for every $y\in Y$.
 Therefore, it suffices to show that 
 $$y\mapsto \int_{\cG^{p(y)}} h(y)k(\gamma) \rd \lambda^{p(y)}(\gamma)$$
 is continuous for $h\in \cC_c(Y)$ and $k\in \cC_c(\cG)$. But this is obvious.
 \end{proof}
 
Note that the proof is straightforward for a global action $\cG\actson Y$, because we have $C_y = \cG^{p(y)}$  and $\tilde{\lambda}^y= \delta_y\times \lambda^{p(y)}$ for all $y\in Y$.

For more details on partial actions see for instance \cite{Exel17}, \cite{McC}, \cite{Abad04}.

 \begin{rem}\label{rem:partial?}   Let $\alpha: G\actson Y$ be a continuous left action of a locally compact group $G$ on a locally compact space $Y$. Let $X$ be an open subset of $Y$. The reduction $\cG(X)$ of the groupoid $\cG= G \ltimes_\alpha Y $ is the semi-direct product groupoid $G\ltimes_\beta X$ associated to the partial action $\boldsymbol{\beta } = (\set{X_g}_{g\in G}, \set{\beta_g}_{g\in G})$ where, for $g\in G$, we set $X_g= X\cap \alpha_g(X)$ and where we denote by $\beta_g$ the restriction of $\alpha_g$ to $X_{g^{-1}}$. 
 \end{rem}

We will use later on several  observations  about semi-direct product groupoids. As a first basic observation, given a locally compact  groupoid $\cH$, viewed as acting to the left on $\cH^{(0)}$, we have $\cH^{(0)}\rtimes \cH \simeq \cH$. This applies in particular to any semi-direct product $\cH= Z\rtimes \cG$ for a global action $\cG\actson Z$ and we have $Z \rtimes (Z\rtimes \cG) \simeq Z\rtimes \cG$. We will make this identification when necessary. More generally we have:

\begin{lem}\label{lem:idengr} Let $(Z,q)$ be a left $\cG$-space  and let $Y$ be locally compact space.  
\begin{itemize}
\item[(i)] Assume that  $Y$ is a left $\cG$-space and that there is a $\cG$-equivariant map $p: Y\to Z$. Then $Y$ has a canonical structure  of left $Z\rtimes \cG$-space, with moment map $p$,  defined by $(z,\gamma)y = \gamma y$ when $\gamma^{-1}z = p(y)$. Moreover $(y,\gamma) \mapsto (y, (p(y),\gamma))$ is an isomorphism from the groupoid $Y\rtimes \cG$ onto the groupoid $Y\rtimes (Z\rtimes \cG)$ and $p$ is $Z\rtimes \cG$-equivariant.
\item[(ii)]  Conversely, assume that $(Y,p)$ is a left  $(Z\rtimes \cG)$-space.  Then $(Y, q\circ p)$  is a left $\cG$-space where the action is defined by $(\gamma,y)\mapsto (\gamma p(y),\gamma)y$ when $s(\gamma) = q\circ p(y)$. Moreover, $p$ is $\cG$-equivariant and $(y,\gamma) \mapsto (y, (p(y),\gamma))$ is an isomorphism between the groupoids $Y\rtimes \cG$ and $Y\rtimes (Z\rtimes \cG)$.
\end{itemize}
\end{lem}

\begin{proof}  In case (i) observe that the moment map  of $Y$ as a $\cG$-space is $q\circ p$.  The verifications are the assertion (i) are straightforward, as they are also for (ii).
 \end{proof}

\begin{lem}\label{lem:equiv} Let $\cG\actson Z$ and $\cG \actson Y$ be two left actions, with moment maps $p_Z$ and $p_Y$ respectively, of a locally compact groupoid $\cG$, and let $q:Z\to Y$ be an equivariant map.  To any  left action $Y\rtimes \cG$ on a locally compact space $V$, with moment map $p_V$,  is canonically associated a left action from $Z\rtimes \cG$ on $V \,_{p_V}* _q Z$ and a canonical homomorphism $f$ from the groupoid $(V \,_{p_V}* _q Z)\rtimes (Z\rtimes \cG)$ into the groupoid $V\rtimes(Y\rtimes \cG)$. Moreover the homomorphism $f$ is proper whenever $q$ is proper.
\end{lem}

\begin{proof} For $(v,z)\in V \,_{p_V}* _q Z$ we set $p'(v,z) = z$. We let $Z\rtimes \cG$ act on $V \,_{p_V}* _q Z$, with moment map  $p'$, as follows:
$$(z,\gamma)(v, \gamma^{-1}z) = \big((q(z),\gamma)v,z\big)$$
if $p_Z(z) = r(\gamma)$ and $\gamma^{-1}(q(z)) = p_V(v)$.

We have 
\begin{align*}
(V \,_{p_V}* _q Z)\rtimes (Z\rtimes \cG) &= \set{\big((v,z),(z,\gamma)\big): p_Z(z) = r(\gamma), p_V(v) = q(z)}\\
& = \set{(v,z,\gamma): p_Z(z) = r(\gamma), p_V(v) = q(z)}.
\end{align*}
The homomorphism is defined by $f(v,z,\gamma) = (v,(q(z),\gamma))$. 
\end{proof}

\subsection{Etale groupoids}\label{subset:etaleg} Apart from semi-direct product groupoids another important class  of examples is given by \'etale groupoids.

An {\it \'etale groupoid} \index{etale groupoid@étale groupoid} is a locally compact groupoid $\cG$ such  that the range (and therefore the source) map is a local homeomorphism from $\cG$ into $\cG^{(0)}$. In this situation,  the fibres $\cG^x = r^{-1}(x)$ with their induced topology are discrete and  $\cG^{(0)}$ is open in $\cG$.  The family of counting measures $\lambda^x$ on $\cG^x$ forms a Haar system (see \cite[Proposition 2.8]{Ren_book}), which will be implicit in the sequel.  

Examples of \'etale groupoids are plentiful. Let us mention groupoids associated with discrete group actions, local homeomorphisms, pseudo-groups of partial homeomorphisms, topological Markov shifts, graphs, inverse semigroups, metric spaces with bounded geometry,... (see \cite{Ren_book}, \cite{Dea}, \cite{KPRR}, \cite{Ren_1997},  \cite{Ren_2000}, \cite{Pat}, \cite{KS02} for a non exhaustive list). For a brief account on the notion of \'etale groupoid see also \cite[Section 5.6]{BO}. 

A {\it bisection} \index{bisection} is a subset $S$ of $\cG$ such that the restriction of $r$ and $s$ to $S$ is injective. Given an open bisection $S$, we will denote by $r_{S}^{-1}$ \index{$r_{S}^{-1}$} the inverse map, defined on the open subset $r(S)$ of $\cG^{(0)}$, of the restriction of $r$ to $S$. Note that $r_{S}^{-1}$ is continuous.

 An \'etale  groupoid $\cG$ has a cover by open bisections. These open bisections form an inverse semigroup (see the definition \ref{def:ISA} below) with composition law
$$ST = \set{\gamma_1\gamma_2: (\gamma_1,\gamma_2)\in (S\times T) \cap \cG^{(2)}},$$
the inverse $S^{-1}$ of $S$ being the image of $S$ under the inverse map (see \cite[Proposition 2.2.4]{Pat}). 
A compact subset $K$ of $\cG$ is covered by a finite number of open bisections. Thus, using partitions of unity, we see that every element of $\cC_c(\cG)$ is a finite sum of continuous functions whose compact support  is contained in some open bisection.

Semi-direct product groupoids relative to partial actions of \'etale groupoids are themselves \'etale.

\begin{prop}\label{prop:etale} Let $\cG$ be an \'etale groupoid  and let $\cG\actson (Y,p)$ be a partial action. Then the groupoid $Y\rtimes \cG$ is \'etale.
\end{prop}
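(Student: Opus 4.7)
The plan is to verify directly that the range map $\mathbf{r}:Y\rtimes\cG\to Y$ (after identifying $(Y\rtimes\cG)^{(0)}$ with $Y$) is a local homeomorphism, which is the defining property of being étale.

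First I would fix an arbitrary point $(y_0,\gamma_0)\in Y\rtimes\cG$ and produce an open neighborhood on which $\mathbf{r}$ is a homeomorphism onto an open subset of $Y$. Since $\cG$ is étale, the discussion of bisections preceding the proposition gives an open bisection $S\subset\cG$ with $\gamma_0\in S$ such that $r|_S$ is a homeomorphism from $S$ onto the open set $r(S)\subset\cG^{(0)}$, with continuous inverse $r_S^{-1}:r(S)\to S$. Then
\[
\Omega \;=\; Y\,_p\!*_r S \;=\; \{(y,\gamma)\in Y\times S : p(y)=r(\gamma)\}
\]
is an open neighborhood of $(y_0,\gamma_0)$ in $Y\rtimes\cG = Y\,_p\!*_r\cG$, since $\Omega$ is the trace on $Y\rtimes\cG$ of the open set $Y\times S$.

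Next I would show $\mathbf{r}|_\Omega$ is a homeomorphism onto $p^{-1}(r(S))$. The image is clearly $p^{-1}(r(S))$, which is open in $Y$ by continuity of $p$. Injectivity follows because if $(y,\gamma),(y,\gamma')\in\Omega$, then $r(\gamma)=p(y)=r(\gamma')$ and $\gamma,\gamma'\in S$, so $\gamma=\gamma'$ since $r|_S$ is injective. Continuity of $\mathbf{r}|_\Omega$ is immediate. The inverse is explicit:
\[
y \;\longmapsto\; \bigl(y,\, r_S^{-1}(p(y))\bigr),
\]
and this is continuous on $p^{-1}(r(S))$ because $p$ and $r_S^{-1}$ are continuous. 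Hence $\mathbf{r}$ is a local homeomorphism at $(y_0,\gamma_0)$, and since $(y_0,\gamma_0)$ was arbitrary, $Y\rtimes\cG$ is étale.

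There is no real obstacle here; the only point worth flagging is that we do not use (and do not need) openness of the momentum map $p$ — étaleness of $\cG$ supplies enough local structure on the $\cG$-factor to carry the argument through, which is consistent with the remark following Definition \ref{def:Gspace} and with Proposition \ref{prop:open}(i).
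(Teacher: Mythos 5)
Your proof is correct and follows essentially the same route as the paper: both choose an open bisection $S$ containing $\gamma_0$, take the open set $\Omega = p^{-1}(r(S))\,_p\!*_r S$ (your $Y\,_p\!*_r S$ is the same set, since the fibred-product condition forces $p(y)\in r(S)$), and exhibit the continuous inverse $y\mapsto (y, r_S^{-1}(p(y)))$. Your closing remark that openness of $p$ is never used is accurate and consistent with the paper's conventions.
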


\begin{proof}
Let us show  that every  $(y_0,\gamma_0) \in Y\rtimes \cG$ has an open neighborhood $\Omega$ such that the restriction of ${\underline r}$  to $\Omega$ is a homeomorphism onto its image, which is open in $Y$.  We first choose an open bisection $S$ of $\cG$ which contains $\gamma_0$ and we set $W= p^{-1}(r(S))$.  It is an open subset of  $Y$.
We set $\Omega = (W \times S)\cap(Y\rtimes \cG)$. It is an open subset of $Y\rtimes \cG$ whose image  ${\underline r}(\Omega)$  is open in $(Y\rtimes \cG)^{(0)}$ by Lemma \ref{lem:open}. Obviously, ${\underline r}$ is a continuous bijection from $\Omega$ onto its image. Its inverse map is $y \mapsto (y,r_S^{-1}(p(y))$ which is continuous on ${\underline r}(\Omega)$.
\end{proof}

 \begin{rem}\label{rem:nota}Let $\cG$ and $(Y,p)$ be as in the previous proposition. An open bisection $S$ of $\cG$ defines a homeomorphism $\alpha_S$ from $\cup_{\gamma\in S} Y_{\gamma^{-1}}$ onto $\cup_{\gamma\in S} Y_{\gamma}$, sending $y$ onto $\gamma y$, where $\gamma$ is the unique element of $S$ such that $s(\gamma) = p(y)$. For simplicity of notation, we usually write $Sy$ instead of $\alpha_S(y)$. When  $(Y,p) = (\cG^{(0)},\Id)$, we rather use the notation $x\mapsto S\cdot x$. \index{$Sy$, $S\cdot x$} Note that $p(Sy) = S\cdot p(y)$ for $y\in p^{-1}(s(S))$ and therefore, for every subset $W$ of $p^{-1}(s(S))$, we have
\begin{equation}\label{eqn:equiv}
p(SW) =  S\cdot p(W).
\end{equation}
\end{rem}

\subsection{Inverse semigroup actions}\label{subsec:ISA}  We first recall a few definitions. Our main references for this topic are \cite{Pat, Law, Exel}. A semigroup $S$ is an {\it inverse semigroup} \index{inverse semigroup} if for each $s\in S$ there exists a unique $s^*\in S$ (called the inverse of $s$) such that $s=s s^* s$ and $s^* = s^* s s^*$. The set $E_S$ of idempotents of $S$ (also denoted by $E$ when there is no ambiguity) plays a crucial role. It is an abelian sub-semigroup of $S$. Inverse semigroups where $E$ is a singleton are nothing else that the discrete groups. 

Let $X$ be a locally compact space. We denote by $\cI(X)$ the inverse semigroup formed by all homeomorphisms between open subsets  of $X$, under the operation given by composition of functions in the largest domain under which the composition may be defined. The inverse of $f\in \cI(X)$ is $f^{-1}$, the idempotents are the identity maps of the open subsets  of $X$, that we identify with the corresponding open set. This semigroup has a zero (the empty set) and a unit (the space $X$).

\begin{defn}\label{def:ISA} An {\it action of an inverse semigroup $S$ on a locally compact space} $X$ \index{inverse semigroup action} is a semigroup homomorphism $\theta: S \to \cI(X)$ such that $X$ is the union of the domains of all the $\theta_s$. If $S$ has a zero we require $\theta_0$ to be the empty set and if $S$ has a unit $1$ we require $\theta_1$ to be $X$.
\end{defn}

For every idempotent $e$ we denote by $D_e$ its domain. Then for $s\in S$ the domain of $\theta_s$ is $D_{s^*s}$ and $\theta_s$ is a homeomorphism from $D_{s^*s}$ onto $D_{ss^*}$.

Let us now describe the {\it groupoid of germs} $\cG(S,\theta)$ \index{$\cG(S,\theta)$} constructed from $\theta$ (for more details see \cite[\S 4]{Exel}). We set $\Xi = \set{(t,x) \in S\times X : x\in D_{t^*t}}$.
On $\Xi$ we define the equivalence relation $(t,x)\sim (t_1,x_1)$ if $x = x_1$ and there exists $e\in E$ such that $x\in D_e$ and $te = t_1e$. Then $\cG(S, \theta)$ is the quotient of $\Xi$ with respect to this equivalence relation, equipped with the quotient topology. The composition law is given by $[t,x][t', x'] = [tt',x']$ if $\theta_{t'}(x')= x$ and $[t,x]^{-1} = [t^*,\theta_t(x]$.  We identify $[e,x]$ with $x$, where $e$ is any idempotent such that $x\in D_e$ and in this way the space of units of $\cG(S,\theta)$ is identified with $X$. The range of the class $[t,x]$ of $(t,x)$ is $\theta_t(x)$ and its source is $x$.  The topology of $\cG(S,\theta)$ has the sets $\set{[t,x]: x\in U}$ as basis of open subsets, where $t\in S$ and $U$ ranges over the open subsets of $D_{t^*t}$. 

For $t\in S$ we set $\cS_t = \set{[t,x]: x\in D_{t^*t}}$. Then $\cS_t$ is an open bisection. Although $X$ is Hausdorff, it is not the case for $\cG(S,\theta)$ in general. When $\cG(S,\theta)$ is Hausdorff, it is therefore an \'etale groupoid.

There is a natural partial order on $S$ defined by $s\leq t$ if there exists an idempotent $e\in E$ such that $s =  t e$ or, equivalenty, if $s = ts^*s$. Given $s\in S$ we denote by $\cF_s$ the union of the $D_e$ where $e$ ranges on the set of idempotents such that $e\leq s$.

\begin{prop}$($\cite[Theorem 3.15]{EP}$)$\label{prop:GHaus} Let $\theta$  be an action of $S$ on $X$. Then $\cG(S,\theta)$ is Hausdorff if and only if for every $s\in S$ the set  $\cF_s$ is closed into  $D_{s^*s}$. 
\end{prop} 

 If $S$ is a discrete group $\Gamma$, an action $\theta$ of $S$ is an usual action of $\Gamma$ and we have $\cG(S,\theta) = \Gamma\ltimes X$.

 Among the inverse semigroups  closest to discrete groups are the $E$-unitary  and $E^*$-unitary inverse semigroups.

\begin{defn}\label{def:idpure1} An inverse semigroup $S$ is said to be {\it $E$-unitary} \index{E-unitary, E*-unitary inverse semigroup} if  every element greater than an idempotent is an idempotent. When $S$ has a zero, this means that $S= E$. In this case, we rather introduce {\it $E^*$-unitary} inverse semigroups\footnote{also called $0$-$E$-unitary inverse semigroups}, defined by the fact that every $s$ greater than a non-zero idempotent is an idempotent.
\end{defn}

Assume that $S$ is $E$-unitary or $E^*$-unitary. Then if $s\in S$ is not an idempotent, we have $\cF_s = \emptyset$, and we have $\cF_s = D_s = D_{s^*s}$ if $s$ is an idempotent.  Therefore $\cG(S,\theta)$ is Hausdorff for any action $\theta$ of $S$. Numerous examples of such inverse semigroups are discussed in \cite{Law}.

 \subsection{Group bundle groupoids}\label{subsubsect:gbg} A {\it group bundle groupoid}\index{group bundle groupoid} is a locally compact groupoid $\cG$ such that $r= s$. Since $r$ is open,   for $x\in \cG^{(0)}$ we can choose a Haar measure $\lambda(x)$ on the group $\cG(x)$ in such a way that $(\lambda(x))_{x\in \cG^{(0)}}$ is a Haar system for $\cG$ (see \cite[Lemma 1.3]{Ren91} or \cite[Theorem 6.9]{Will19}).

\begin{ex}\label{ex:HLS}The following class of group bundle groupoids (that we call HLS-groupoids)\index{HLS-groupoid} was introduced by  Higson, Lafforgue and Skandalis \cite{HLS},
 in order to provide examples of groupoids for which the Baum-Connes conjecture fails. We consider an infinite discrete group $\Gamma$ and a decreasing sequence $(N_k)_{k\in \N}$ of normal subgroups of $\Gamma$ of finite index. We set $\Gamma_\infty = \Gamma$, and $\Gamma_k = \Gamma/N_k$ and we denote by $q_k : \Gamma \to \Gamma_k$ the quotient homomorphism for $k$ in the  Alexandroff compactification $\N^+$ of $\N$. Let  $\cG$ be the quotient of $  \N^+ \times\Gamma$ with respect to the equivalence relation
$$(k,t)\sim (l,u) \,\,\,\hbox{if} \,\,\, k=l \,\,\,\hbox{and}\,\,\, q_k(t) = q_k(u).$$ 
Then $\cG$ is the bundle of groups $k\mapsto \Gamma_k$ over $\N^+$. The range and source maps are given by $r([k,t]) = s([k,t]) = k$, where $[k,t] = (k,q_k(t))$ is the equivalence class of $(k,t)$.  We endow $\cG$ with the quotient topology. Then $r^{-1}(\N)$ is a discrete open subset of $\cG$. A basis of neighborhoods of $(\infty,s)$ is formed by the subsets $\set{(k,q_k(s)) : k\geq k_0}$, where $k_0$ runs over $\N$. It $s\not = t$, we observe that $(\infty,s)$ and $(\infty, t)$ have disjoint neighborhoods if and only if there exists an integer $k_0$ such that $q_{k_0}(s)\not=q_{k_0}(t)$. Therefore, $\cG$ is Hausdorff  if and only if  for every $s\not= 1$ there exists $k_0$ such that $s\notin N_{k_0}$  (hence, $\Gamma$ is residually finite).  Note that $\cG$ is then an \'etale   groupoid.

In this case, following \cite[Definition 2.1]{Wil15} we will say that the pair  $(\Gamma, (N_k))$ is {\it an approximated group}\index{approximated group}. Examples are provided by taking $\Gamma = \hbox{SL}_n(\Z)$ and $\Gamma_k = \hbox{SL}_n(\Z/k\Z)$, for $k\geq 2$.
\end{ex}

\subsection{Extensions of groupoids}\label{subsec:ext-groupoids} Let $\boldsymbol{\beta }: \cG \actson Y$ be a partial action. Then $c: (y,\gamma)\in Y\rtimes \cG \mapsto \gamma\in \cG$ is a continuous homomorphism which is surjective if and only if $Y_\gamma \not=\emptyset$ for all $\gamma\in \cG$. We have 
$$\Ker(c) = \set{(y,\gamma): c(y,\gamma) \in \cG^{(0)}} = \set{(y,p(y)): y\in Y} \equiv Y.$$
When $c$ is surjective, one can think of $\cL =Y\rtimes \cG$ as being an extension of $\cG$ by $Y$ viewed as a groupoid. However,  the notion of surjectivity of c is ambiguous and assuming that $c(\cL) = \cG$ is sometimes too weak. In the litterature it is a stronger notion that is considered, being more tractable and including many interesting examples. In \cite[Definition 5.3.7]{AD-R}, the notion of strongly surjective homomorphism is defined in the Borel setting. In the topological case we adopt the following definition.

\begin{defn}\label{def:strong-surj} Let $c:\cL\to \cG$ be a continuous homomorphism from a locally compact groupoid  $\cL$ to a locally compact groupoid  $\cG$. We say that $c$ is {\it strongly surjective}\index{strongly surjective homomorphism} if 
\begin{itemize}
\item[(i)] $c^{(0)}: \cL^{(0)} \to \cG^{(0)}$ is surjective;
\item[(ii)] for every $y\in \cL^{(0)}$ we have $c(\cL^y) = \cG^{c^{(0)}(y)}$;
\item[(iii)] the map $\phi: \gamma\in \cL \mapsto (r(\gamma), c(\gamma))$ from $\cL$ to $\cL^{(0)} \!_{c^{(0)}}\!\!*_r \cG$ is open.
\end{itemize}
\end{defn}
Note the the condition (ii) is equivalent to the surjectivity of $\phi$. In particular, $\phi$ is a groupoid fibration in the sense of \cite[Definition 2.1]{BM}. When $\cL$ and $\cG$ are locally compact groups, strong surjectivity is the same as surjectivity.

Note that given any $\cG$-space $(Y,p)$ where $p: Y\to \cG^{(0)}$ is surjective, the canonical homomorphism $c: Y\rtimes \cG \to \cG$ is strongly surjective, but for partial actions that are not global, Condition (ii) is not satisfied. 

\begin{prop}\label{prop:exten} We keep the assumption of the previous definition.
\begin{itemize}
\item[(i)] $\cH = \Ker(c)$ is a subgroupoid of $\cL$.
\item[(ii)] $\phi$ induces an homeomorphism $\dot{\phi}$ from $\cL/\cH$ onto $\cL^{(0)} \!_{c^{(0)}}\!\!*_r \cG$, by passing to quotient.
\item[(iii)] $c^{(0)}$ is open if and only if $c$ is open.
\end{itemize}
\end{prop}

\begin{proof} For (i) we have to show that the range map of $\cH$ is open. Note that $\cH^{(0)} = \cL^{(0)}$. We denote by $\psi$ the map $y\mapsto (y, c^{(0)}(y))$ from $\cH^{(0)}$ to $\cL^{(0)} \!_{c^{(0)}}\!\!*_r \cG$. Then $\gamma\mapsto (\gamma, r(\gamma))$ is a homeomorphism from $\cH$ onto $\cL\,_\phi\!*_\psi \cH^{(0)}$ and the range map of $\cH$  corresponds to the second projection  $\cL\,_\phi\!*_\psi \cH^{(0)}\to \cH^{(0)}$. The conclusion follows from Lemma \ref{lem:easy} (i), since $\phi$ is open.

Let us check (ii). Obviously, $\dot{\phi}$ is injective, and it is surjective because for every $y\in \cL^{(0)}$ we have $c(\cL^y) = \cG^{c^{(0)}(y)}$. Moreover $\dot{\phi}$ is a homeomorphism since $\phi$ is continuous and open.

In order to prove (iii) we first observe that $c = \pi_2\circ \phi$, where $\pi_2$ is the projection on the second variable.  Since $\phi$ is surjective and open, we see that $c$ is open if and only if $\pi_2$ is open. If $c^{(0)}$ is open then $\pi_2$ is open,  using  the lemma \ref{lem:easy} (i),  and therefore so is $c$.
Conversely assume that $c$ is open. Then $c^{(0)}$ is open since for every open subset $U$ of $\cL^{(0)}$ we have $c^{(0)}(U) = r\big(\pi_2(U \!_{c^{(0)}}\!\!*_r \cG)\big)$
\end{proof}

\begin{defn}\label{def:extension} When the properties of the definition \ref{def:strong-surj} are satisfied, we will say that $\cL$ is an {\it extension of $\cG$ by $\cH = \Ker(c)$}.\index{extension of groupoids}  
\end{defn}

\begin{prop}\label{prop:Haar} $($\cite[Theorem 5.1]{BM}$)$ Let us assume that $\cH$ and $\cG$ have Haar systems, denoted respectively by $\lambda$ and $\mu$.  Given $f\in \cC_c(\cL)$ and  $\gamma \in \cL$, the integral $\int_{\cH^{s(\gamma)} }f(\gamma h) \rd \lambda^{s(\gamma)}(h)$  only depends on $\gamma\cH$.  After passing to quotient, the map $\gamma \mapsto   \int_{\cH^{s(\gamma)} }f(\gamma h) \rd \lambda^{s(\gamma)}(h)$ defines a continuous function with compact support on $\cL/\cH$ identified with  $\cL^{(0)} \!_{c^{(0)}}\!\!*_r \cG$. We denote it by $(y,g) \mapsto F(y,g)$. 
We set $\int_\cL f \rd \nu^y = \int_\cG  F(y, g) \rd \mu^{c^{(0)}(y)}(g)$. Then $(\nu^y)_{y\in \cL^{(0)}}$ is a Haar system on $\cL$.
\end{prop}

\begin{proof} First we note that $\cL$ is a principal $\cH$-bundle for the right action of $\cH$ on $\cL$. Then the Haar system  $\lambda$ induces a continuous family of measures $(\dot \lambda^{\dot\gamma})$  on the fibres of the quotient map $\gamma \mapsto \dot\gamma = \gamma\cH$ from $\cL$ onto $\cL/\cH$ (see \cite{Ren87}). For $f\in \cC_c(\cL)$  we have 
$$\dot \lambda^{\dot\gamma}(f) = \int_{\cH }f(\gamma h) \rd \lambda^{s(\gamma)}(h).$$
We denote by $T_\cH(f)$ this continuous function with compact support on $\cL/\cH$ and we set $F(y,g) = T_\cH(f)\circ \dot \phi^{-1}(y,g)$ for $(y,g)\in  \cL^{(0)} \!_{c^{(0)}}\!\!*_r \cG$.
Then we set 
\begin{equation}\label{eq:Haar}
\int_\cL f \rd \nu^y = \int_\cG  F(y, g) \rd \mu^{c^{(0)}(y)}(g).
\end{equation}
We define in this way a continuous family of measures $y \mapsto \nu^y$ on $\cL$ where the support of $\nu^y$ is $\cL^y$. Let us check that $\gamma\nu^{s(\gamma)} = \nu^y$ where $y = r(\gamma)$. If $r(\gamma_1) = s(\gamma)$ we have
$$\int_{\cH} f(\gamma\gamma_1 h)\rd\lambda^{s(\gamma_1)}(h) = T_\cH(f)(\gamma\dot\gamma_1) = F(y,c(\gamma)g))$$
where $g=c(\gamma_1)$. It follows that
$$\int_\cL f\rd \gamma\nu^{s(\gamma)} =  \int_\cG F(y,c(\gamma)g)\rd\mu^{s(c(\gamma))}(g) = \int_\cG F(y,g)\rd\mu^{r(c(\gamma))}(g)= \int_\cL f \rd \nu^y,$$
since $r(c(\gamma)) = c^{(0}(y)$.
Thus $\nu$ is a Haar system for $\cL$. For more details see \cite[Theorem 5.1]{BM}.
\end{proof}

 As a particular case of this notion of extension, we have unit fixing extensions. 

\subsubsection{Unit fixing groupoid extensions}\index{unit fixing groupoid extension} They are defined by a strongly surjective map $c: \cL\to \cG$ such that  $c^{(0)}$ is a homeomorphism fron $\cL^{(0)}$ onto $\cG^{(0)}$. We set $\cH= \Ker(c)$, and we identify the unit spaces of these three groupoids\footnote{ We keep the terminology of \cite[page 236]{Will19}.}. Note that $\cL^{(0)} \!_{c^{(0)}}\!\!*_r \cG$ can be canonically identified with $\cG$ and then $c = \phi$  an open surjection. We have $r_H = s_H$, and so $\cH$ is a bundle of locally compact groups.  Moreover $\cH$ is normal in $\cL$, that is, for every $h\in \cH$ and every $\gamma\in \cL$ with $s(\gamma) = r_H(h)$ we have $\gamma h \gamma^{-1} \in \cH$.

 We choose a Haar system $\lambda$ on $\cH$ as explained in the subsection \ref{subsubsect:gbg}. Assume that $\cG$ has a Haar system $\mu$.  Then the corresponding Haar system $\nu$ on $\cL$ is defined  as follows:
 $$\int_\cL f\rd \nu^y = \int_\cG \Big(\int_\cH f(\gamma h)\rd \lambda^{s(\gamma)}(h)\Big)\rd \mu^{y}(\dot \gamma),$$
 where $r(\gamma) = y$ and $\dot\gamma = c(\gamma)$.

When $\cL$ and $\cG$ are locally compact groups, extensions and unit fixing extensions are of course the same, and they are the classical notion of group extension.

Let us describe now the  case of semi-direct product groupoids, where we will use the following notation: given an extension $\cL$ of $\cG$ by $\cH$, for more clarity, we reserve the letters $r,s$ for the range and source maps of $\cL$; those of $\cH$ (resp. $\cG$) will be denoted by $r_H, s_H$ (resp. $r_G, s_G$).

\subsubsection{Case of a groupoid acting on another groupoid}\label{subsec:gg} Let $\cG$ and $\cH$ be two locally compact groupoids. As in \cite[Definition 5.3.24]{AD-R} we consider the case $\alpha: \cG \actson \cH$ of a right action. It is in particular   an action on $\cH$ viewed as a locally compact space. We denote by $p: \cH\to \cG^{(0)}$ its moment map and we assume here that $p$ is {\it surjective}. Moreover, the definition requires that 
$$p = p\circ r_H = p\circ s_H.$$
 .  

The right action $\alpha: (h,g)\mapsto h\cdot g\in \cH$ is defined on $\cH\, _p\!*_r \cG$ and $r_H, s_H$ are $\cG$-equivariant:
$$r_H(h\cdot g) = r_H(h)\cdot g,\quad  s_H(h\cdot g) = s_H(h)\cdot g$$
(note that $p(h\cdot g) = s_G(g)$). Then for $h_1, h_2\in \cH$ such that $s_H(h_1) = r_H(h_2)$, the pair  $h_1\cdot g, h_2\cdot g$ is composable whenever it is defined (that is $p(h_i) = r_G(g)$, $i= 1,2$) and we have
$$(h_1 h_2)\cdot g = (h_1\cdot g)(h_2\cdot g).$$

Let us now describe the groupoid structure of 
$$\cL = \cH\rtimes_\alpha \cG = \set{(h,g)\in \cH\times\cG: p(h) = r_G(g)}.$$
 Its unit space is $\cH^{(0)}\, _p *\cG^{(0)}$ that we identify to $\cH^{(0)}$ via the map $(y,p(y)) \mapsto y$. We have
$$r(h,g) = (r_H(h), r_G(g)\equiv r_H(h), \quad s(h,g) = (s_H(h)\cdot g, s_G(g) \equiv s_H(h)\cdot g.$$
The product is given by
$$(h_1,g_1)\cdot (h_2,g_2) = (h_1(h_2\cdot g_1^{-1}), g_1g_2)$$
when $s_H(h_1)\cdot g_1 = r_H(h_2)$.

We leave it to the reader to check that $r,s$ are open, since the range and source maps of $\cH$ and $\cG$ are open. We denote by $c:\cL\to \cG$ the map $(h,g)\mapsto g$.
We have 
$$\Ker(c) = \set{(h,p(h)): h\in \cH} \equiv \cH,$$
$$ \cL^{(0)} = \set{ (y,x) \in \cH^{(0)}\times \cG^{(0)} : x=p(y)} \equiv \cH^{(0)}.$$
Here $\phi(h,g) = (r_H(h),g)$ from $\cH\rtimes_\alpha \cG \to \cH^{(0)}\,_p*_r \cG$ is an open continuous surjection, and $c^{(0)}(y) = p(y)$  so $c^{(0)} : \cL^{(0)} \to \cG^{(0)}$ is surjective. It follows that $\cL =  \cH\rtimes_\alpha \cG$ is an extension of $\cG$ by $\cH$.  We call it the {\it semi-direct product of $\cG$ acting on $\cH$}. Note that  $\cH^{(0)}\,_p*_r \cG$ is a closed subset of  $\cL$ and that $\dot{\phi}^{-1}(r_H(h),g) = (r_H(h),g)\cH$.

Let us assume now that $\cH$ and $\cG$ have  
Haar systems, denoted respectively by $\lambda$ and $\mu$ and   let $\nu$ be the corresponding Haar system on $\cL$. For 
$$\gamma = (h,g)\in \cL^y = \cH^y \times \cG^{p(y)}$$
 where $r(\gamma) = r_H(h) = y$,  and for $f\in \cC_c(\cL)$, we have (with the notation of the proof of the previous proposition)
\begin{align*}
&\int f\big((h,g)\cdot h_1\big) \rd \lambda^{s_H(h)\cdot g}(h_1) = \int f\Big(\big((h\cdot g)\cdot h_1)\cdot g^{-1}, g\big)\Big)\rd \lambda^{s_H(h)\cdot g}(h_1)\\
& = \int f(h_1\cdot g^{-1}, g) \rd \lambda^{y\cdot g}(h_1) = T_\cH(f)\big((h,g)\cH\big).
\end{align*}

Observe that $(h,g)\cH = (r_H(h),g)\cH$ and therefore $T_\cH(f)\big((h,g)\cH\big) =  F(y,g)$. It follows from the formula \ref{eq:Haar} that 
$$\int_\cL f\rd \nu^y = \int_{\cG^{p(y)} }\Big(\int_{\cH^{y\cdot g}} f(h\cdot g^{-1},g)\rd \lambda^{y\cdot g}(h)\Big)\rd \mu^{p(y)}(g).$$
When $\lambda$ is invariant under the $\cG$-action we find the formula given in \cite[Proposition 5.3.27]{AD-R}. 

As a particular example, we have of course the case where $\cH$ is the locally compact space $Z$ and we find the semi-direct product groupoid $Z\rtimes_\alpha \cG$. Another important example is the case where $\cH$ and $\cG$ are locally compact groups $H$ and $G$  where we get the usual semi-direct product group $H\rtimes_\alpha G$.

 \subsection{Generalized morphisms between groupoids}\label{subsec:gen-mor}
 \subsubsection{Locally proper and faithful homomorphisms} These notions are useful in order to transfer properties from the image of a homomorphism to its source.
\begin{defn}\label{defn:lpropre} Let $c : \cG \to {\mathcal H}$ be a   continuous homomorphism (also called cocycle) between locally compact groupoids. We denote by $\psi$ the map $\gamma \mapsto (r(\gamma), c(\gamma), s(\gamma))$ from $\cG$ into $\cG^{(0)}\times\cH\times \cG^{(0)}$. We say that $c$ is {\it locally proper}\index{locally proper homomorphism} if $\psi$ is proper or, equivalently, if for every compact subset $K$ of $\cG^{(0)}$, the restriction of $c$ to the reduction $\cG(K)$ of $\cG$ by $K$ is proper. Following \cite{KS02,RS}, we say that $c$ is {\it faithful} if $\psi$ is injective. 
  \end{defn}

A homomorphism $c: \cG\to \cH$ induces, for every $x\in \cG^{(0)}$, a homomorphism from the group $\cG(x)$ into the group $\cH(c(x))$. Observe that $c$ is faithful if and only if, for every $x\in \cG^{(0)}$ the restriction of $c$ to $\cG(x)$ is an injective group homomorphism.

Whenever $c$  is faithful, note that $c$ is locally proper if and only if  the map $\psi$ is closed\footnote{In \cite{KS02,RS}, $c$ is said to be closed if $\psi$ is closed}, equivalently if and only if $\psi(G)$ is closed and $\psi$ is a homeomorphism from $G$ onto $\psi(G)$.

\begin{exs}\label{exs:loc_proper} (a) If $\cH$ is a closed subgroupoid of $\cG$, then the inclusion map is faithful and locally proper. 

(b) Let $E$ be a locally compact subset of $\cG^{(0)}$ such that the reduction $\cG(E)$ is  a locally compact subgroupoid of $\cG$ ({\it i.e.,} the range map of $\cG(E)$ is open). Then the inclusion map from $\cG(E)$ into $\cG$ faithful and locally proper. Indeed, it suffices to observe that if $K$ is a compact subset of $E$, then $\cG(E)(K) = \cG(K)$ is closed into $\cG$.

(c) If $Y$ is a $\cG$-space, the obvious homomorphism from $Y\rtimes \cG \to \cG$   is faithful and locally proper. 

(d) Let $\boldsymbol{\beta} : \cG\actson Y$ a partial action, as introduced in Definition \ref{def:part top}. The homomorphism $(y,\gamma) \mapsto \gamma$ from $Y\rtimes \cG$ to $\cG$ is faithful. It is locally proper if and only if the map $\psi:(y,\gamma) \to (y,\gamma,\gamma^{-1}y)$ is  closed.  Since $\psi^{-1}$ is continuous, we see that $\psi$ is locally proper if and only if its range is a closed subset of $Y\times \cG\times Y$ (see \cite[Chapter I, \S 10]{Bou}). In particular, for every $\gamma\in \cG$, the graph $\set{(y,\gamma^{-1}  y) : y\in Y_\gamma}$ must be closed into $Y\times Y$. This last condition is sufficient when $\cG$ is a discrete group.

(e) Let $\cG\actson Z$ and $\cG \actson Y$ be two actions,  and let $q:Z\to Y$ be an equivariant map. The homomorphism $(z,\gamma)\mapsto (q(z),\gamma)$ from $Z\rtimes \cG$ into $Y\rtimes \cG$ is faithful and locally proper.
\end{exs}

\begin{ex}\label{ex:ampliation} Let $\cG$ be a locally compact groupoid, let $T$ be a locally compact space and let $\varphi : T\to \cG^{(0)}$ be a continuous, open surjective map. We set 
$$\cG^\varphi = \set{(x,\gamma,y)\in T\times\cG\times T: \varphi(x) = r(\gamma), \varphi(y) = s(\gamma)}.$$
Endowed with the topology induced by that of $T\times\cG\times T$ it is a locally compact space, and it has a natural structure of groupoid with respect to the following operations: $(x,\gamma,y)(y,\eta,z) = (x,\gamma\eta,z)$, $(x,\gamma,y)^{-1} = (y,\gamma^{-1},x)$. Its unit space is $\set{(x,\varphi(x), x): x\in T}$,  canonically identified with $T$.

This groupoid $\cG^\varphi$\index{$\cG^\varphi$} is called the {\it ampliation}\index{ampliation}, or {\it blow-up} of $\cG$ with respect to $\varphi$.

 Obviously, the homomorphism $(x,\gamma,y)\mapsto \gamma$ from $\cG^\varphi$ to $\cG$ is  surjective, faithful and locally proper.
 \end{ex}

\begin{prop}\label{prop:locprop} Let $c: \cL\to \cG$ be a continuous strongly surjective homomorphism from a locally compact  groupoid $\cL$ to a locally compact groupoid $\cG$. We assume that the groupoid $\Ker(c)$ is proper. Then $c$ is locally proper.
\end{prop}

\begin{proof}  Let  $K$ be  a compact subset of $\cL^{(0)}$ and let $K_1$ be a compact subset of $\cG$. We have to show that $\cL(K)\cap c^{-1}(K_1)$ is compact. Recall that the map $\phi: \gamma\in \cL \mapsto (r(\gamma), c(\gamma))$ from $\cL$ to $\cL^{(0)} \!_{c^{(0)}}\!\!*_r \cG$ is continuous, open and surjective. Let  $C$ be a compact subset of $\cL$ such that $\phi(C) = K \!_{c^{(0)}}\!\!*_r K_1$. Then we have $\cL(K)\cap c^{-1}(K_1) \subset C\cH$. For $\gamma = \gamma' h\in \cL(K)\cap c^{-1}(K_1)$ with $\gamma'\in C$ and $h\in \cH$, we have $s(h) = s(\gamma)\in K$ and $r(h) = s(\gamma')\in s(C)$. Since $\cH$ is proper, $\cH\big(K \cup s(C)\big)$ is compact and so is $\cL(K)\cap c^{-1}(K_1))$ which is contained into $C\cH\big(K \cup s(C)\big)$.
\end{proof}

\begin{rem}\label{rem:locprop} The previous proposition is not true is $c$ is only assumed to be surjective.  Indeed,  let $\boldsymbol{\beta} : \cG\actson Y$ a partial action, as introduced in Definition \ref{def:part top}. The homomorphism $c: (y,\gamma) \mapsto \gamma$ from $Y\rtimes \cG$ to $\cG$ is faithful, and $\Ker(c) = Y$ is a proper groupoid. Recall that $c$ is  locally proper if and only if the map $(y,\gamma) \to (y,\gamma,\gamma^{-1}y)$ is  closed.  In particular, for every $\gamma\in \cG$, the graph $\set{(y,\gamma^{-1}  y) : y\in Y_\gamma}$ must be closed into $Y\times Y$. This is not always the case (see the example  $\Z_2\actson ]-1,+\infty[$ given in \ref{ex:reducpartial}).\end{rem}

\subsubsection{Generalized morphisms} The assertion (i) of the  proposition below is the property of homomorphisms that is used for instance in \cite{Tu04} in order to define a generalized morphism.

\begin{prop}\label{prop:homo} Let $\cG$ and $\cH$ be two locally compact groupoids and let $c : \cG\to \cH$ be a continuous homomorphism. We set $$Z_c = \set{(x,\eta)\in \cG^{(0)}\times \cH: c(x) = r(\eta)} = \cG^{(0)} \!_{c^{(0)}}\!\!*_r \cH.$$
 We endow $Z_c$  with the left $\cG$-action $\gamma(s(\gamma),\eta) = (r(\gamma), c(\gamma)\eta)$ $($with moment map $\rho: (x,\eta)\mapsto x$$)$, and with the right $\cH$-action $(x,\eta)\eta' = (x\,\eta\eta')$ $($with moment map  $\sigma: (x,\eta) \mapsto s(\eta)$$)$. The moment map $\rho$ has a global continuous section, namely $x \mapsto (x,c(x))$.
\begin{itemize}
\item[(i)] The right action of $\cH$ is free and proper, $\rho$ is open and $z\mapsto \rho(z)$ induces a homeomorphism from $Z_c/\cH$ onto $\cG^{(0)}$.
\item[(ii)] The left action of $\cG$ is proper if and only if $c$ is locally proper. It is free if and only if $c$ is faithful.
\end{itemize}
\end{prop}

\begin{proof} (i) To see that $\rho : (x,\eta) \mapsto x$ is open we apply Lemma \ref{lem:easy}.  Passing to quotient we get the homeomorphism from $Z_c/\cH$ onto $\cG^{(0)}$. The other assertions are immediate.
  
  (ii) Again, the verifications are straightforward.
\end{proof}

Note the the moment map $\sigma: (x,\eta) \mapsto s(\eta)$ is neither surjective nor open in general.   
  \begin{defn}(\cite[Definition 7.1]{Tu04})  \label{def:mg} Let $\cG$ and $\cH$ be two locally compact groupoids. We say that a locally compact space $Z$ (or more precisely $(Z,\rho,\sigma)$) is a {\it generalized  morphism} \index{generalized morphism} from $\cG$ into $\cH$ if the following conditions hold:
\begin{itemize}
\item[(i)] $Z$ is a left $\cG$-space with moment map $\rho$ and is a free and proper right $\cH$-space with moment map $\sigma$, with commuting actions;
\item[(ii)]  the moment map $\rho: Z\to \cG^{(0)}$  is open and induces a homeomorphism from $Z/\cH$ onto $\cG^{(0)}$.
\end{itemize}
\end{defn}

\begin{rem}\label{rem:ampl}  Ordinary homomorphisms from $\cG$ into $\cH$ are generalized morphisms such that $\rho$ has a global continuous section. Indeed, let $Z$ be a generalized morphism with a global section $p$ for $\rho$. Then for $\gamma\in \cG$ let $\eta$ be the unique element of $\cH$ such that $\gamma p(s(\gamma)) = p(r(\gamma))\eta$. We set $c(\gamma) = \eta$. Then  $c$ is a  continuous homomorphism from $\cG$ into $\cH$.  For every $r\in Z$, there exists a unique element in $\cH$, that we denote by $\psi(z)$ such that $p\circ \rho(z) = z\psi(z)$. We leave it as an exercise to show that the map $z \mapsto (\rho(z), \psi(z)^{-1})$ is an isomorphism from the ($\cG$-$\cH$)-space $Z$ onto the ($\cG$-$\cH$)-space $Z_c$. The section $p$ corresponds to the canonical section $\cG^{(0)} \to Z_c$.
\end{rem}

This remark and Proposition \ref{prop:homo} motivate the following definition.

 \begin{defn}(\cite[Definition 7.3]{Tu04}) \label{defn:genmorlp} We say that a generalized morphism $(Z,\rho,\sigma)$ from $\cG$ into $\cH$ is {\it locally proper} \index{locally proper generalized morphism} if the  action of $\cG$ on $Z$ is proper. We say that it is {\it faithful} if this action is free. 
 \end{defn}
 
If one only requires the existence of local continuous sections for the moment map $\rho$, we  have an interesting notion of generalized morphism, intermediate between  homomorphisms and generalized morphisms.
  
  \begin{defn}\label{defn:genmorloc} A {\it generalized morphism with local sections} from $\cG$ into $\cH$ is a generalized morphism $Z$ such that there exists an open covering $(U_i)$ of $\cG^{(0)}$ and local continuous sections $p_i: U_i\to Z$ of $\rho$.
 \end{defn}
 
 In this case there is an ampliation $\cG'$ of $\cG$, often more handy than $\cG^\rho$ (called, the {\it localization of $\cG$ relative to the covering $(U_i)$}) \index{localization of a groupoid} and a canonical continuous homomorphism $c: \cG'\to \cH$.  In order to define $\cG'$ we first set $T = \bigsqcup_i U_i$ and we denote by $f$ the canonical open continuous surjective map from $T$ onto $\cG^{(0)}$. Then $\cG'$ is the corresponding ampliation $\cG^f$. We have
\begin{align*}
\cG' &= \set{(t,\gamma,t')\in T\times\cG\times T: f(t) = r(\gamma), f(t') = s(\gamma)} \\
&= \bigsqcup_{i,j} \set{(x,\gamma,x')\in U_i\times\cG\times U_j: x=r(\gamma), x'=s(\gamma)}.
\end{align*}

  An element $(x,\gamma,x')$ with $x=r(\gamma)\in U_i$ and $x'=s(\gamma)\in U_j$  will be simply written $(i,\gamma,j)$. The set $\set{(i,\gamma,j): \gamma \in \cG_{U_j}^{U_i}}$ is open and closed in $\cG'$ (where  $\cG_{U_j}^{U_i} = \cG_{U_j}\cap \cG^{U_i}$) and canonically homeomorphic to $\cG_{U_j}^{U_i}$. 
  
  We have 
  $$r(i,\gamma,j) = (i,r(\gamma),i), \,s(i,\gamma,j) = (j,s(\gamma),j),$$
  $$(i,\gamma,j)(j,\gamma',k) = (i,\gamma\gamma',k), \,(i,\gamma,j)^{-1} = (j,\gamma^{-1}, i).$$
  Let $\gamma\in \cG_{U_j}^{U_i}$. Since $\rho(\gamma p_j(s(\gamma)) ) = \rho(p_i(r(\gamma)))$, there is a unique element in $\cH$, that we denote by $g_{i,j}(\gamma)$ such that $\gamma p_j(s(\gamma)) = p_i(r(\gamma)) g_{i,j}(\gamma)$. Then $(g_{i,j})$ is a cocycle from $\cG$ to $\cH$ as defined in \cite{HS}. This means that for $\gamma \in \cG_{U_j}^{U_i}$ we have $g_{j,i}(\gamma^{-1}) = g_{i,j}(\gamma)^{-1}$ and that for every $\gamma\in \cG_{U_j}^{U_i}$, $\gamma'\in \cG_{U_k}^{U_j}$ with $s(\gamma) = r(\gamma')$ we have $g_{i,k}(\gamma\gamma') = g_{i,j}(\gamma)g_{j,k}(\gamma')$. Then the homomorphism $c: \cG'\to \cH$ is defined by $c(i,\gamma,j) = g_{i,j}(\gamma)$.
 
 We observe that when $\rho$ has a global section $p$, then $\cG'= \cG$  and $c$ is the homomorphism defined in Remark \ref{rem:ampl}.

\section{\textbf{\textsc{About equivalence  of groupoids}}}\label{sec:EquGr}

There are various notions of equivalence of groupoids in the literature. In this section we discuss some of them.

\subsection{Similarity}\label{subsec:sim}
\begin{defn}\label{def:sim} Let $\cG$ and $\cH$ be two locally compact groupoids.
\begin{itemize}
\item[(i)] We say that two continuous homomorphisms $f,g : \cG\to\cH$ are {\it similar}\index{similar homomorphisms} (and we write $f\sim g$)\index{$f\sim g$} if there exists a continuous map $\theta: \cG^{(0)} \to \cH$ such that 
$$\theta\circ r(\gamma)f(\gamma) =g(\gamma)\theta\circ s(\gamma)$$
 for all $\gamma\in \cG$.
\item[(ii)] We say that $\cG$ and $\cH$ are {\it similar} \index{similar groupoids} (and we write $\cG\sim\cH$)\index{$\cG\sim\cH$} if there exist continuous homomorphisms $f:\cG\to\cH$ and $g:\cH\to\cG$ such that $f\circ g \sim\Id_\cH$ and $g\circ f \sim \Id_\cG$.
\end{itemize}
\end{defn}

From the pure algebraic point of view, forgetting the topology, a groupoid is a category in which all arrows are invertible. The set of arrows is the groupoid, the objects are the units. The homomorphisms are the functors, and  similar homomorphisms are naturally equivalent functors in the sense of category theory \cite[\S I.4]{MacL}. Similarity of locally compact groupoids corresponds to equivalence as categories.

\begin{rem}\label{rem:simeq} Let $\cG,\cH, \cK$ be locally compact groupoids, and let $f, g: \cG\to \cH$, $f_1:\cH\to \cK$ be homomorphisms. It is straightforward to check that if $f\sim g$ then $f_1\circ f\sim f_1\circ g$. From this observation, we easily deduce that similarity of groupoids is an equivalence relation.
\end{rem}

\subsection{Equivalence of groupoids}\label{subsec:eqgroupoid} Let $\cG$ and $\cH$ be two locally compact groupoids and let $(Z,\rho, \sigma)$ be a generalized morphism from $\cG$ into $\cH$. We can define a left $\cH$-action and a right $\cG$-action on $Z$ by setting $\eta z = z \eta^{-1}$ and $z\gamma = \gamma^{-1} z$ for $z\in Z$, $\eta\in \cH$, $\gamma\in \cG$  with $s(\eta) = \sigma(z)$ and $r(\gamma) = \rho(z)$. If $(Z,\sigma, \rho)$ is, similarly to   $(Z,\rho, \sigma)$, a generalized morphism from $\cH$ into $\cG$ this time, we say that   $(Z,\rho,\sigma)$ is a ($\cG$-$\cH$)-equivalence.  This property is summarized in the definition below.

\begin{defn}\label{def:equiv-groupoids}$($\cite{Ren82}, \cite{MRW}$)$ Let $\cG$ and $\cH$ be two locally compact groupoids. We say that   $(Z,\rho,\sigma)$ (or $Z$ for short) is a ($\cG$-$\cH$)-{\it equivalence} \index{equivalence of groupoids} if the following conditions hold:
\begin{itemize}
\item[(i)] $Z$ is a free and proper left $\cG$-space with moment map $\rho$ and is a free and proper right $\cH$-space with moment map $\sigma$, with commuting actions;
\item[(ii)]  the moment map $\rho: Z\to \cG^{(0)}$ is {\it open} and induces a homeomorphism from $Z/\cH$ onto $\cG^{(0)}$;
\item[(iii)]  the moment map $\sigma: Z\to \cH^{(0)}$ is {\it open} and induces a homeomorphism from $\cG\setminus Z$ onto $\cH^{(0)}$.
\end{itemize}
If such a $Z$ exists, we say that {\it $\cG$ and $\cH$ are equivalent}.
\end{defn}

This notion of equivalence is indeed an equivalence relation on the set of groupoids. It does not imply similarity in general (see \cite[Example 3.13]{FKPS}). It encompasses many interesting exemples. Let us give three of them.

\subsubsection{Equivalence induced by a locally proper faithful generalized morphism}\label{subsec:lpgm}  It is well-known \cite{Rie82} that if $G$ is a closed subgroup of a locally compact group $H$, then $H$ defines an equivalence between the group $G$ and the semi-direct product groupoid  $(G\setminus H) \rtimes H$, where the moment map for the right action of  $(G\setminus H) \rtimes H$ on $G$ is the quotient map $H\to G\setminus H = \set{Gh: h\in H}$. Denoting by $c:G\to H$  the inclusion homomorphism (obviously locally proper and faithful), we observe that $H$ is the space $Z_c$ described in Proposition \ref{prop:homo}. More generally, any  locally proper and faithful generalized morphism $(Z,\rho,\sigma)$ from a locally compact groupoid $\cG$ to a locally compact groupoid $\cH$ defines an equivalence, as shown in the following proposition. This useful result  dates back to \cite[\S 1.2]{Abad03} for partial actions and was continued in \cite{KS02, RS}.

 \begin{prop}\label{prop:homo1} Let $(Z,\rho,\sigma)$ be a locally proper and faithful generalized morphism from a locally compact groupoid $\cG$ to a locally compact groupoid $\cH$, where $\rho$ is the moment map of $\cG\actson Z$ and $\sigma$ is the moment map of $\cH\actson Z$.  Then $\cH$ acts to the right in a canonical way on  the quotient space $Y=\cG\setminus Z$ which, endowed with the quotient topology, is locally compact. Moreover there is a canonical right action of the groupoid $Y\rtimes \cH$ on $Z$, in such a way that  $Z$ is a $($$\cG$-$(Y\rtimes \cH)$$)$-equivalence of groupoids
 \end{prop}
  
  \begin{proof}   The proof is an immediate generalization of that of \cite[Theorem 6.2 (1)]{RS}.  Since the left action of $\cG$ is proper, $Y$  is locally compact and the quotient map $\sigma_Z$ from $Z$ onto $Y$ is open \cite[Lemma 2.1.11, Proposition 2.1.12]{AD-R}.   We denote by $[z]$ the class of $z\in Z$ and by $\sigma_Y$ the map $[z]\mapsto \sigma(z)$ from $Y$ into $\cH^{(0)}$. We have $\sigma = \sigma_Y\circ\sigma_Z$. Then $\cH$ acts to the right on $Y$ by $\alpha_Y: ([z],\eta) \mapsto [z\eta]$ from $Y\,_{\sigma _Y} \!*_r\cH$ into $Y$. Using the facts that $\sigma_Z$ is continuous, open and surjective, and the continuity of the right action $\cH\actson Z$, we see that the action $\alpha_Y$ is continuous.
  
  Next we define a right action of $Y\rtimes \cH$ on $Z$ with moment map $\sigma_Z$ in the following way, for $z,y,\eta \in Z\times Y \times \cH$, with $[z] = y$ and $\sigma_Y(y) = r(\eta)$:
  $$z(y,\eta) = z\eta.$$
  One checks that this action is continuous, free, proper, and commutes with the left $\cG$-action on $Z$. Moreover we have $\cG\setminus Z =Y = (Y\rtimes \cH)^{(0)}$ and $Z/(Y\rtimes \cH) = Z/\cH = \cG^{(0)}$.
  
  Note that Z is a ($\cG$-$\cH$)-equivalence precisely when $Y = \cH^{(0)}$.
 \end{proof}
   
  \begin{ex}\label{ex:Abd}(\cite{Abad03, KS02, RS}) Let $\boldsymbol{\beta } = (\set{X_t}_{t\in G}, \set{\beta_t}_{t\in G})$ be a partial action of a locally compact group $G$ on a locally compact space $X$. The previous proposition  provides a necessary and sufficient condition for $\boldsymbol{\beta}$ to be a reduction of a global action of $G$, namely that the homomorphism $c: (x,t) \mapsto t$ from $\cG =X\rtimes_\beta  G$ into $G$ is locally proper (or, equivalently that $\set{(x, t, \beta_{t^{-1}}(x): x\in X_t, t\in G}$ is a closed subset of $X\times G\times X$.  Let us give some details. We keep the notation of the section \ref{subsec:PartialAction}, adapted to our context. 
    
  First, let us assume that $\boldsymbol{\beta}$ is the reduction of an action $\alpha: G\actson Y$ to an open subset $X$ of $Y$. Then the morphism $c: (x,t) \mapsto t$ from $\cG$ into $G$ is locally proper. Indeed, let $K$ be a compact subset of $X$. We have 
  $$\cG(K) = \set{(x,t)\in (K\cap X_t)\times G: \beta_{t^{-1}}(x)\in K}= \set{(x,t): x\in K\cap \alpha_{t}(K)}$$
 where  $ K\cap \alpha_{t}(K)$ is a compact subset of $X_t$. It follows that the map $\psi:(x,t)\mapsto (x,t,\beta_{t^{-1}}(x))$ is proper from $\cG(K)$ into $X\times G\times X$.

Conversely, let us assume that the morphism $c: (x,t) \mapsto  t$ is locally proper. As in Proposition \ref{prop:homo} we introduce the space $Z_c$, which is $X\times G$ in case the target of $c$ is a group. The left $\cG$-action is defined by $(x,t)(\beta_{t^{-1}}(x),s) = (x,ts)$. The right $G$-action is defined by $(x,t)s = (x,ts)$. The moment map $\rho$ is $(x,t)\mapsto x$. We are in the situation of the previous proposition since $c$ is faithful and locally proper (see Proposition \ref{prop:homo}). The locally space $Y$ is the quotient of $X\times G$ with respect to the following equivalence relation: $(x,s)\sim (x_1,s_1)$ if $x\in X_{ss_1^{-1}}$ and $\beta_{s_1s^{-1}}(x) = x_1$. The right action  $\alpha$ of $G$ on $Y$ is given by $[x,t]s = [x,ts]$. We embed continuously $X$ into $Y$ by $\iota(x) = [x,e]$. Let us show that $\iota$ is open. If $U$ is an open subset of $X$ we have to check that $q^{-1}(\iota(U))$ is an open subset of $X\times G$ where we denote here by $q$ the quotient map from $X\times G$ onto $Y$. We have
$$q^{-1}(\iota(U)) = \set{(x,t)\in X\times G: x\in X_t, \beta_{t^{-1}}(x) \in U}.$$
It is open, since by definition of partial actions, the set $C=\set{(x,t), x\in X_t}$ is open in $X\times G$ and $(x,t) \mapsto \beta_{t^{-1}}(x)$ is continuous from $C$ into $X$.
Finally, we have $\iota(\beta_t(x)) = [\beta_t(x), e] = [x,t^{-1}] = \iota(x)t^{-1}$. It follows that $\boldsymbol{\beta}$ is identified to the restriction to $X=\iota(X)$ of the global left action $\tilde\alpha : (t,y) \mapsto yt^{-1}$ from $G$ onto $Y$.

Note that $X\rtimes_\beta G$ is equivalent to $Y\rtimes_{\tilde\alpha} G$, which is the same as the semi-direct product $Y\rtimes_\alpha G$ relative to the right $G$-action $ \alpha$.
 
    \end{ex}

\subsubsection{Transversals}

Let $\cG$ be a locally compact groupoid and let $A$ be a locally closed  subset of $\cG^{(0)}$.   What is generally missing for $\cG(A)$ to be a locally compact groupoid is that its range and source maps are not open, although $r:\cG^A\to A$ and $s:\cG_A\to A$ are open. Let mention however the following result.

\begin{lem}\label{lem:reducproperty} We keep the above notation and we assume that $s : \cG^A \to \cG^{(0)}$ is open.
\begin{itemize}
\item[(i)] The range and source maps of $\cG(A)$ are open.
\item[(ii)] If $s: \cG^A\to \cG^{(0)}$  is a local homeomorphism then the groupoid  $\cG(A)$ is \'etale.
\end{itemize}
\end{lem}

\begin{proof} In order to prove (i) we first observe that the second projection $\pi_2$ from $\cG^A  \,_s\!*_r \cG_A$ into $ \cG_A$ is open. Indeed, given two open subsets $U,V$ of $\cG$, we have
$$\pi_2\big((U\times V)\cap  ( \cG^A  \,_s\!*_r \cG_A)\big) = \pi_2(U^A \,_s\! *_r V_A) = V_A^{s(U^A)}$$
which is open since the restriction of $s$ to $\cG^A$ is open. The following diagram, where $p$ denotes the product and $s_A$ denotes the restriction of $s$ to $\cG(A)$, is commutative:
$$\xymatrix{
\cG^A  \,_s\! *_r \cG_A\ar[d]^{p} \ar[r]^{\pi_2} &  \cG_A  \ar[d]^{s}\\
\cG(A) \ar[r]^{s_A} & A}$$

If $U$ is an open subset of $\cG(A)$, then $s_A(U) = s\big(\pi_2(p^{-1}(U))\big)$ is open.

Next, let us prove (ii). Since $s_A$ is open, it suffices to show that $A$ is an open subset of $\cG(A)$ (see \cite[Lemma 1.26]{Will19}). Let $x\in A$ and let $V$ be an open neighborhood of $ x$ in $\cG^A$ on which $s$ is injective.  Since $U = V \cap A$ is an open subset of $A$ we see that $W=V\cap s_A^{-1}(U)$ is an open neighborhood of $x$ in $\cG(A)$. It is contained into $A$ because if $\gamma\in W$ we have $s(\gamma) = s(s_A(\gamma))$ with $s_A(\gamma) \in U\subset V$ and therefore $\gamma = s_A(\gamma)$. It follows that $A$ is open in $\cG(A)$.
\end{proof}

\begin{prop}\label{lem:reducproperty1} Let $\cG$ be a locally compact groupoid and let $A$ be a locally closed  subset of $\cG^{(0)}$. We assume that $s:\cG^A\to\cG^{(0)}$ is open and surjective. Then $\cG^A$, equipped with the canonical left action of $\cG(A)$ and right action of $\cG$ on $\cG^A$ is a  $($$\cG(A)$-$\cG$$)$-equivalence of groupoids.
\end{prop}

\begin{proof}  Immediate.
\end{proof}

A locally compact subset $A$ of $\cG^{(0)}$ such that $s:\cG^A\to \cG^{(0)}$ is open and surjective is called a {\it transversal}\index{transversal} (see \cite[Example 2.7]{MRW}). In particular, $A$ intersects every $\cG$-orbit $\cG\cdot x = r(\cG_x)$. \index{$\cG$-orbit}

\subsubsection{Transitive groupoids}\label{sub:tg}  We consider the particular case where $A=\set{x}$ and we assume that the locally compact groupoid $\cG$ is transitive\index{transitive groupoid}, that is, for all  $(y,z)\in  \cG^{(0)}$ there exists $\gamma\in \cG$ such that $r(\gamma) = y$ and $s(\gamma) = z$. Then all the isotropy groups of $\cG$ are isomorphic. The restriction of $s$ to $\cG^x$ is not open in general but however this is true  if $\cG$ is second countable. Then in this case $\cG^x$ is a ($\cG(x)$-$\cG$)-equivalence (see \cite[Theorem 2.2.B]{MRW}, \cite[Example 2.32]{Will19}). 

As a particular case one finds the example of the subsection \ref{subsec:lpgm} corresponding to the action of a locally compact group on the quotient space relative to a closed subgroup.

\subsubsection{A more general example}\label{subsec:FER} It generalizes the case $G\actson G/H$ and has been studied in connection  with the notions of approximate lattices and aperiodic order (see \cite{Favre, ER} and the references therein). Let $G$ be a locally compact group and let $C$ be a closed subset of $G$.   We denote by $\Omega(C)$ the closure of the set of all left translated of $C$ in the space $Closed(G)$ of all closed subsets of $G$ endowed with the Chabauty-Fell topology.  The space  $Closed(G)$ is compact and is second countable when $G$ is second countable, which will be our assumption. Its topology is generated by the subsets $\set{P\in  Closed(G): K\cap P = \emptyset}$ and $\set{P\in  Closed(G): V\cap P \not = \emptyset}$, where $K$  (resp. $V$) ranges  over all compact  (resp. open) subsets of $G$. For more on the topology of $Closed(G)$ we refer to \cite{Favre, ER}. In particular the convergence of sequences is described  as follows.
\begin{lem}\label{lem:converge}  Let $(P_n)$ be a sequence in  $Closed(G)$. Then $(P_n)$ converges to $P$ if and only if both of the
following statements hold:
\begin{itemize}
\item[(i)] whenever $x \in P$ then there exist $x_n\in P_n$ such that $\lim x_n = x$;
\item[(ii)] whenever $(n_k)_{k\in\N}$ is a subsequence of $\N$ and $x_{n_k}\in P_{n_k}$ with $\lim x_{n_k}  = x\in G$, then $x\in P$.
\end{itemize}
 \end{lem}
We note that the empty set has an open neighborhood basis consisting of the sets $\cV_K = \set{P\in Closed(G): P\cap K = \emptyset}$, where $K$ runs through all compact subsets of $G$. It follows that $\emptyset\in \Omega(C)$ if and only if for all  compact subset $K$ of $G$ there exists $g\in G$ such that  $gK\cap C=\emptyset$.

We set $\Omega(C)^\times = \Omega(C)\setminus\set{\emptyset}$ and $\Omega_0(C) = \set{P\in \Omega(C): e\in P}$, where $e$ is the unit of $G$. We observe that $\Omega(C)^\times$ is locally compact if $\emptyset \in \Omega(C)$, and compact otherwise\footnote{In this case, one says that $C$ is relatively dense in $G$}, and that $\Omega_0(C)$ is compact, since it is closed in $\Omega(C)$.

The group $G$ acts to the left on $\Omega(C)^\times$. We denote by $\cG$ the semi-direct product $G\ltimes \Omega(C)^\times$.

\begin{prop}\label{prop:transverse} The set $A= \Omega_0(C)$ is a transversal for $\cG = G\ltimes \Omega(C)^\times$: the map $s: \cG^A\to \Omega(C)^\times$ is open and surjective. As a consequence, the groupoid $\cG$ is equivalent to its reduction $\cG(\Omega_0(C)) = \set{(g,P)\in G\times \Omega_0(C), g^{-1}\in P}$. 
\end{prop}

 \begin{proof}We have $\cG^A = \set{(g,P)\in \Omega(C)^\times:  g^{-1}\in P}$ and $s: (g,P) \mapsto P$ from $\cG^A$ to $\Omega(C)^\times$. 
Obviously, $s$ is surjective. Let us check that $s$ is open. Let $V$ be an open subset of $G$ and let $W$ be an open subset of $\Omega(C)^\times$. Then 
\begin{align*} s\big((V\times W)\cap \cG^A\big) &= \set{P\in W: g^{-1} \in P\quad \hbox{for some}\quad  g\in V}\\
& = W \cap \set{P\in \Omega(C)^\times: P\cap V^{-1} \not =\emptyset},
\end{align*}
is open.
\end{proof}

\begin{rem}\label{rem:UD} Let $C$ be a closed subset of $G$. As observed in \cite{ER},  $\Omega_0(C)$ is the closure of $\set{c^{-1}C: c\in C}$. Indeed, given $P = \lim_n g_n C \in \Omega_0(C)$, for every $n$ there exists $c_n \in C$ such that $\lim_n g_n c_n = e\in P$.  Then 
$$\lim_n c_n^{-1} C = \lim_n (c_n^{-1} g_n^{-1}) g_n C = P.$$

In particular, if $H$ be a closed subgroup of $G$, then $\Omega_0(H)$ is reduced to the element $H\in Closed(G)$ and then $\Omega(H)^\times = \set{gH: g\in G}$  because if $g\in P$ with $P\in \Omega(H)^\times$, then $g^{-1}P \in \Omega_0(H)$. In this case, $\cG$ is isomorphic to $G\ltimes G/H$ and its reduction to $A=\set{H}$ is the group $H$.
\end{rem}

\begin{defn}\label{def:UD} One says that a subset $\Lambda$ of $G$ is   {\it uniformly discrete}\index{uniformly discrete subset} if there exists a neighborhood $U$ of $e$ such that $\abs{\Lambda\cap gU}\leq 1$ for every $g\in G$ (or equivalently, if $\abs{P\cap U}\leq 1$ for every $P\in \Omega(\Lambda)$). 
\end{defn}

\begin{rem}\label{rem:caracUD} It is easily checked that a uniformly discrete set $\Lambda$ is discrete and closed in $G$ and that $\Lambda$ is uniformly discrete if and only if $e$ is not an accumulation point of $\Lambda^{-1}\Lambda$ (see \cite[\S 2]{BH}). In particular, $\Lambda$ is uniformly discrete whenever $\Lambda^{-1}\Lambda$ is discrete. Every discrete subgroup of $G$ is uniformly discrete.
\end{rem}

\begin{prop}\label{prop:UD} Let $\Lambda$ be a uniformly discrete subset of a locally compact group $G$. We set $A = \Omega_0(\Lambda)$. Then the map $s: \cG^A\to \Omega(\Lambda)^\times$ is a local homeomorphism and therefore the groupoid $\cG(\Omega_0(\Lambda))$ is \'etale. \end{prop}

\begin{proof} Let us show that $s: \cG^A \to \Omega(\Lambda)^\times$ is a local homeomorphism. Since this map is open, it is enough to check that every $(g_0, P_0)\in \cG^A$ has an open neighborhood on which $s$ is injective. Let $U$ be an open symmetric neighborhood of $e$ such that $\abs{P\cap U}\leq 1$ for every $P\in \Omega(\Lambda)$ and let $W$ be an open neighborhood of $P_0$ in $\Omega(\Lambda)^\times$. If $(g,P)$ and $(h,Q)$ in $(Ug_0 \times W)\cap \cG^A$ are such that $P =s(g,P) = s(h,Q) = Q$ then $g^{-1}, h^{-1}$ are both in $P\cap g^{-1}_0 U$. It follows that $g=h$.

 We conclude that  $\cG(\Omega_0(\Lambda))$ is \'etale by using Lemma \ref{lem:reducproperty}.
\end{proof}

\begin{rem}\label{rem:strong eq} Let $\Lambda$ be a uniformly discrete subset of a locally compact group $G$.  We observe that the $(\cG(A)$-$\cG)$-equivalence defined by $\cG^A$ is such that the associated moment maps $\cG^A\to A$ and $ \cG^A \to \Omega(\Lambda)^\times$ have local sections, which proves useful.
\end{rem}

 \subsubsection{Ampliations}\label{subsec:amplibis} We keep the notation of Example \ref{ex:ampliation}.
 \begin{prop}We set $Z= \set{(x,\gamma): \varphi(x) = r(\gamma)}$. Then $Z$ defines a $($$\cG^\varphi$-$\cG$$)$-equivalence as follows. The left action of $\cG^\varphi$ is given by $(x,\gamma,y)(y,\eta) = (x,\gamma\eta)$ and the right $\cG$-action is given by $(x,\gamma)\eta = (x,\gamma\eta)$.  In particular, $\cG$ is equivalent to any of its localizations.
\end{prop}
\begin{proof} See \cite[Example 2.37]{Will19}). \end{proof}

 \begin{thm}\label{thm:ampl} Let $\cG$ and $\cH$ be two locally compact groupoids. Then they  are equivalent if and only if they have isomorphic ampliations.
 \end{thm}
 
 \begin{proof} Assume first that some ampliations $\cG^\varphi$ and $\cH^\psi$ are isomorphic. Since $\cG$ and $\cG^\varphi$ are equivalent as well as $\cH$ and $\cH^\psi$, we conclude that $\cG$ and $\cH$ are equivalent by transitivity of this property.
 
 Conversely, let $Z$ be a $($$\cG$-$\cH$$)$-equivalence with moment maps $\rho: Z\to \cG^{(0)}$ and $\sigma:Z\to \cH^{(0)}$. It is proved in \cite[Proposition 3.10]{FKPS} that the ampliations $\cG^\rho$ and $\cH^\sigma$ are isomorphic under the assumption that the groupoids $\cG$ and $\cH$ are \'etale but this assumption is not needed (see also \cite[Proposition 2.29]{Tu04}).
  \end{proof}

\begin{lem}\label{lem:ampl} Let $(Z,\rho,\sigma)$ be a generalized morphism from a locally compact grou\-poid $\cG$ to a locally compact groupoid $\cH$. Then $\cG^\rho$ is a groupoid equivalent to $\cG$ and  there is a continuous homomorphism $c: \cG^\rho \to \cH$ defined as follows. Given $(z,\gamma, z') \in \cG^\rho$,  since $\rho(z) = r(\gamma) = \rho(\gamma z')$ there exists a unique element $\eta\in \cH$ such that $z\eta = \gamma z'$ and we set $c(z,\gamma, z') = \eta$.  This homomorphism $c: \cG^{\rho} \to \cH$ is continuous, and is locally proper whenever the left action of $\cG$ on $Z$ is proper.
 \end{lem}

\begin{proof} The fact that $c$ is a homomorphism is obvious. Let us show that $c$ is continuous. Let $(z_i, \gamma_i, z_{i}')_{i\in I}$ be a net in $\cG^{\rho}$ such that $\lim_i 
(z_i, \gamma_i, z_{i}') = (z_0, \gamma_0, z_{0}')$. We have $z_i c(z_i, \gamma_i, z_{i}') = \gamma_i z_{i}'$ for all $i\in I$, $\lim_i z_i = z_0$ and $\lim_i \gamma_i z_{i}' = \gamma_0 z_{0}'$. Since the right action of $\cH$ on $Z$ is proper, there is a compact subset $K$ of $\cH$  and $i_0$ such that $c(z_i, \gamma_i, z_{i}') \in K$ for $i\geq i_0$. Therefore we can assume that $(c(z_i, \gamma_i, z_{i}'))_{i\in I}$ converges to some $\eta\in \cH$. We have $z_0\eta = \gamma_0 z_{0}'$ and therefore $\eta = c(z_0, \gamma_0, z_{0}')$.

The straightforward proof of the second assertion is left to the reader.
\end{proof}
  
  \subsubsection{The case of ample groupoids} For other facts about equivalence of groupoids see \cite[Proposition 3.10, Theorem 3.12]{FKPS} and \cite[Theorem 2.1]{CRS}. Let us mention the following particular case, for ample groupoids. Recall that an {\it ample groupoid} \index{ample groupoid} is an \'etale groupoid whose space of units is totally disconnected. Equivalently, an \'etale groupoid is ample if and only if its topology has a basis consisting of compact open  bisections.
  
  \begin{thm}\label{thmCRS} Let $\cG$ and $\cH$ be ample groupoids. Assume that $\cG^{(0)}$ and $\cH^{(0)}$ are $\sigma$-compact. We denote by $\mathcal R$ the trivial equivalence relation $\N\times \N$. The following properties are equivalent:
  \begin{itemize}
  \item[(i)] the groupoids $\cG$ and $\cH$ are similar;
  \item[(ii)] the groupoids $\cG$ and $\cH$ are equivalent ;
  \item[(iii)] the groupoid products $\cG\times\mathcal R$ and $\cH\times\mathcal R$ are isomorphic.
  \end{itemize}
   \end{thm}

\section{\textbf{\textsc{Fibrewise equivariant compactifications of $\cG$-spaces}}}\label{sec:2}

We keep the notation of the appendix where fibrewise compactifications are studied. Let  $\cG$ be a locally compact groupoid with Haar system and let $(Y,p)$ be a left $\cG$-space over $X= \cG^{(0)}$. We are interested in  $\cG$-equivariant fibrewise compactifications   of  $(Y,p)$  in the following sense.

\begin{defn}\label{def:eq_comp} A {\it $\cG$-equivariant fibrewise compactification} \index{equivariant fibrewise compactification} of the left $\cG$-space $(Y,p)$ is a fibrewise compactification $(Z,\varphi, q)$ of $(Y,p)$ such that $(Z,q)$ is a left $\cG$-space satisfying $\varphi(\gamma y) = \gamma\varphi(y)$ for every $(\gamma, y)\in \cG\,_s\!*_pY$.
\end{defn}

 For $g\in \cC_c(\cG)$ and $f\in \cC_b(Y)$ we define the convolution product  $g*f$ by
 $$(g*f)(y) = \int_{\cG} g(\gamma) f(\gamma^{-1}y) \rd \lambda^{p(y)}(\gamma).$$
This function $g*f$ is bounded but there is no reason why it should be continuous in general.

The situation is different when we are given a $\cG$-equivariant fibrewise compactification $(Z,\varphi, q)$ of $(Y,p)$  and when  $f$ is the restriction to $Y$ of $F\in \cC_0(Z)$, because $g*F \in \cC_0(Z)$ (see Lemma \ref{lem:convol})  and $g*f$ is the restriction of $g*F$ to $Y$.

Recall (Proposition \ref{prop:f_c}) that the fibrewise compactifications $(Z,\varphi, q)$ of $(Y,p)$ are in bijective correspondence with the $C^*$-algebras $A = \cC_0(Z)$ such that 
$$p^*\cC_0(X) + \cC_0(X) \subset A \subset \cC_0(Y,p),$$
 after having identified the elements of $\cC_0(Z)$ with their restriction to $Y$. This idenfication will always be done implicitly.

The above observations show that when $(Z,\varphi,q)$ is a $\cG$-equivariant   fibrewise compactification of $(Y,p)$, then  $A= \cC_0(Z)$, viewed as a subset of $\cC_b(Y)$, is stable under convolution by the elements of $\cC_c(\cG)$. Thus the first item of the following proposition \ref{prop:extaction} is satisfied. The lemma below is needed for the proof of the second item.

\begin{lem} Let $(Z,q)$ be a left $\cG$-space, $K$ a compact subset of $Z$ and $U$  a neighborhood of $K$. Then  $\Omega= \set{\gamma\in \cG: \gamma K \subset U}$  is an open neighborhood of $X$.
 \end{lem}
 
 \begin{proof} Obviously, $\Omega$ contains $X$. For $x\in X$ we set $K^x = K\cap q^{-1}(x)$. Let $\gamma_0\in \Omega$. For every $z\in K^{s(\gamma_0)}$ there exist a neighborhood $V_z$ of $\gamma_0$ and an open neighborhood $W_z$ of $z$ such that $\gamma z' \in U$ when $\gamma \in V_z$ and $z' \in W_z\cap Z^{s(\gamma)}$. Since 
$K^{s(\gamma_0)}$ is compact, there exist $z_1,\cdots, z_n$ such that $K^{s(\gamma_0)}\subset W=  \bigcup_{i= 1}^n W_{z_i}$. We set $V = \bigcap_{i= 1}^n V_{z_i}$ and $V' = V\bigcap s^{-1}\big(X\setminus q(K\setminus W)\big)$. We have $\gamma_0\in V'$ and $V'$ is a neighborhood of $\gamma_0$ since $q(K\setminus W)$ is compact, hence closed. If $\gamma\in V'$ and $z\in K^{s(\gamma)}$, see that $z\in W$ and therefore $z\in W_{z_i}$ for some $i$.  It follows that $\gamma z\in U$ since $\gamma\in V_{z_i}$. We conclude that $V'\subset \Omega$.
  \end{proof}
 
 \begin{prop} \label{prop:extaction} Let $(Z,\varphi, q)$ be a $\cG$-equivariant fibrewise compactification of $(Y,p)$ and set $A = \cC_0(Z)$. Then
 \begin{itemize}
 \item[(i)] for every $g\in \cC_c(\cG)$ and every $f \in A$ we have $g*f\in A$;
 \item[(ii)] for every $\varepsilon >0$ and every $f\in A$ there is a neighborhood $\mathcal V$ of $X$ such that $\abs{f(\gamma y) - f(y)}< \varepsilon$ for $(\gamma, y)\in \mathcal V\,_ s*_p Y$.
 \end{itemize}
 \end{prop}
 
 \begin{proof}  In order to prove (ii) it suffices to consider the case where $f\in \cC_c(Z)$. Let $K$ be the support of $f$ and choose an open set $U$ and a compact set $K_1$ such that $K\subset U \subset K_1\subset Z$.  We consider
 $$O = \set{\gamma \in \cG : \abs{f(\gamma z) - f(z)}<\varepsilon \quad \hbox{for}\quad  z \in  K_{1}^{s(\gamma)}}.$$
 Let us check that $O$ is open. Let $\gamma_0\in O$ and $z\in K_{1}^{s(\gamma_0)}$. We set 
 $$h(\gamma, z) =\abs{f(\gamma z) - f(z)}.$$
  It is a continuous function on $\cG\,_s*_q Z$. There exist open neighborhoods $V_z$ and $W_z$ of $\gamma_0$ and $z$ respectively, such that $h(\gamma, z')< \varepsilon$ for $(\gamma,z') \in (V_z\times  W_z)\cap (\cG\,_s*_q Z)$. Let $z_1,\cdots z_n\in  K_{1}^{s(\gamma_0)}$ such that $ K_{1}^{s(\gamma_0)} \subset W=\bigcup_{i= 1}^n W_{z_i}$ and set $V = \bigcap_{i= 1}^n V_{z_i}$. As in the proof of the previous lemma we see that  $V' = V\cap s^{-1}\big(X\setminus q(K_1\setminus W)\big)$ is a neighborhood of $\gamma_0$ and that $h(\gamma,z) <\varepsilon$  if $\gamma\in V'$ and $z\in K_1^{s(\gamma)}$. Therefore $O$ is an open neighborhood of $X$. 
 
 Take $\Omega$ as in the previous lemma, with respect to $K\subset U$ and set $\mathcal V = O\cap \Omega^{-1}$. Obviously $\mathcal V$ is an open set containing $X$. Let  us check that $h(\gamma,z)< \varepsilon$ for $(\gamma, z) \in \mathcal V\,_s*_q Z$. Either $z\in K_1$ and we are done or $z\notin K_1$. In this case $\gamma z\notin K$, otherwise $z\in \Omega K \subset U \subset K_1$, a contradiction; so we have $h(\gamma, z) = 0$ whenever $z\notin K_1$.
 \end{proof}

 We don't know whether the converse of the proposition \ref{prop:extaction} holds, namely, given a fibrewise compactification $(Z,\varphi, q)$ of the $\cG$-space $(Y,p)$ such that (i) and (ii) are satisfied can we extend the $\cG$-action on $Y$ to a $\cG$-action on $Z$ ? This is at least true in two important particular situations.
 
 \noindent{\bf -- Case where $\cG$ is a locally compact group.}
 
 \begin{prop}
 Let $G\actson Y$ be a left action of a locally compact group on a locallly compact space $Y$ and let $A$ be a sub-$C^*$-algebra of $\cC_b(Y)$ which contains $\C + \cC_0(Y)$.  Then the $G$-action on $Y$ extends to a $G$-action on the spectrum $Z$ of $A$ if and only if the conditions $(i)$ and $(ii)$ of the proposition \ref{prop:extaction} are fulfilled.
 \end{prop}
 
 \begin{proof} Assume that the conditions $(i)$ and $(ii)$ are satisfied. Condition (ii) means that  every $f\in A$ is left uniformly continuous: if we set $\alpha_t(f)(y) = f(t^{-1}y)$ then $\norm{\alpha_t(f) - f}_\infty$ goes to $0$ when $t$ goes to the unit element $e\in G$. Let us show that $\alpha_t(f)\in A$ for all $t\in G$. Let $g\in \cC_c(G)$ be a non-negative function such $\int_G g(s) \rd s = 1$. We have
\begin{align*}\abs{\int_G g(s) \alpha_s(f) \rd s - \alpha_t(f)} &= \abs{\int_G g(s) \big(\alpha_s(f) -\alpha_t(f)\big) \rd s} \\ &\leq \sup_{s\in K}\norm{\alpha_s(f) -\alpha_t(f)}_\infty,
\end{align*}
where $K$ is the support of $g$. Given $\varepsilon >0$ there exists a neighborhood $V$ of $e$ such that  $\sup_{s\in tV}\norm{\alpha_s(f) -\alpha_t(f)}_\infty\leq \varepsilon$. We choose $g$ such that $\supp(g) \subset tV$.
Then we get $\norm{g*f - \alpha_t(f)}_\infty \leq \varepsilon$. It follows that $\alpha_t(f)$ belongs to the closure of $A$ and therefore $\alpha_t(f)\in A$. The action of $G$ by automorphisms  of $A$ induces an action of $G$ on $Z$ which extends the action $G\actson Y$.
 \end{proof}
 
 Let $Y$ be a $G$-space. Note that $ \C + \cC_0(Y)$ satisfies the conditions (i) and (ii) and its spectrum  $Z$ is the Alexandroff compactification $Y^+$ of $Y$. Likewise, the $C^*$-algebra $A$ of the left uniformly continuous bounded functions on $Y$ satisfies both conditions. We denote by  $\beta^u Y$ its spectrum. In particular the $G$-pace $\beta^u G$ (relative to the left action of  $G$ on itself)  is the universal $G$-space for the $G$-actions on compact spaces. Note that $\beta^u Y$ is the the Stone-\v Cech compactification $\beta Y$ of $Y$ when $G$ is discrete.
 
 \noindent{\bf -- Case where $\cG$ is an \'etale groupoid.}

In the rest of this section $\cG$ {\it will always be an \'etale groupoid} and, without further mention, $X$ will denote its space of units. Let $(Y,p)$ be a left $\cG$-space, and let $(Z,\varphi, q)$ be a fibrewise compactification of $(Y,p)$. Obviously the condition (ii) of the proposition  \ref{prop:extaction} is satisfied with $\mathcal V = X$. We will show that the condition (i) suffices to extend the given $\cG$-action to $Z$.

 For $g\in \cC_c(\cG)$ and $f\in \cC_b(Y)$ the convolution product $g*f$  is defined by
\begin{equation}\label{eq:convol}
(g*f)(y) = \sum_{\gamma\in r^{-1}(p(y))} g(\gamma) f(\gamma^{-1} y).
\end{equation}
Thanks to the fact that $g$ is a finite sum of continuous functions supported in an open bisection, to study the properties of the convolution product it suffices to consider the case where $g$ has a compact support $K$ contained in an open bisection $S$. Then
\begin{align*}
(g*f)(y)& = 0 \quad\hbox{if}\quad y\notin p^{-1}(r(K))\\
 &=  g(\gamma) f(\gamma^{-1}y) \quad\hbox{if}\quad y\in p^{-1}(r(S)),
\end{align*}
where $\gamma = r_{S}^{-1}(p(y))$ and $\gamma^{-1}y = {S^{-1}}y$. 

It follows that for every $g\in \cC_c(\cG)$, the function  $y\mapsto (g*f)(y)$ is continuous and belongs to $\cC_c(Y,p)\subset \cC_b(Y)$ with $\abs{(g*f)(y)} \leq \big(\sum_{\gamma\in r^{-1}(p(y))} \abs{g(\gamma)}\big)\norm{f}_\infty$.

Let $(Z,\varphi, q)$ be a fibrewise compactification of $(Y,p)$.  Assume that $A =\cC_0(Z)$, identified to a sub-$C^*$-algebra of $\cC_b(Y)$, is stable under convolution by the elements of $\cC_c(\cG)$. For $F\in \cC_0(Z)$ we emphasize  the fact that $g*F$ will denote both the convolution product  defined on $Y$ and its unique extension as a continuous function on $Z$ (which cannot be defined, {\it a priori}, as a convolution product).  We want to show that there is a unique continuous $\cG$-action on $Z$ extending  the given $\cG$-action on $Y$. For $(\gamma_0,z_0) \in \cG\,_s\!*_q Z$, we first explain how $\gamma_0 z_0$ is defined.

\begin{lem}\label{lem:ext} We assume that $\cC_0(Z)$ is stable under convolution by the elements of $\cC_c(\cG)$. Let $(\gamma_0,z_0) \in \cG\,_s\!\!*_q Z$. Let $S$ be an
open bisection such that $\gamma_{0} \in S$ and  let $a$ be a continuous
function on $X= \cG^{(0)}$ with compact support in $s(S)$, such that $a(x)=1$ for $x$ in a neighborhood of
$s(\gamma_0)$. We set $h = (a\circ s)\,\chi_{S}$ and $g(\gamma) = h(\gamma^{-1})$ (where $\chi_S$ is the characteristic function of $S$).  
\begin{itemize}
\item[(i)] Let $F\in \cC_0(Z)$. If $z_0\in Y$, we have $(g*F)(z_0) = F(\gamma_0z_0)$.
\item[(ii)] For $F\in \cC_0(Z)$, the complex number $(g*F)(z_0)$ does not depend on the choice of
$a$ and $S$ satisfying the above properties, but only on $(\gamma_0,z_0)$.
 \item[(iii)] $F\in \cC_0(Z) \mapsto (g*F)(z_0)$
is a character of $\cC_0(Z)$ that we denote by $\gamma_0 z_0$.
\end{itemize}
\end{lem}

\begin{proof} (i)  Observe that $g\in \cC_c(\cG)$ and so $g*F\in \cC_0(Z)$.  For $y\in Y\subset Z$, we have
\begin{align}\label{al:convol}
(g*F)(y) & = a\circ p(y) F(Sy)\quad\hbox{if}\quad 
p(y) \in \hbox{Supp}(a) \subset s(S)\\
 &= 0\quad\hbox{ otherwise.}\notag
\end{align}
Obviously, if $z_0\in Y$ we have $(g*F)(z_0) = F(\gamma_0z_0)$.

(ii) Consider two open bisections $S_1$, $S_2$ with $\gamma_0
\in S_1 \cap S_2$, and let $a_1$, $a_2$
be two continuous functions with compact support in $s(S_1)$
and $s(S_2)$ respectively, with constant value $1$ in a neighborhood of
$s(\gamma_0)$. Then by \eqref{al:convol}, there is an open neighborhood $V$ of $s(\gamma_0)$ in $X$
such that $g_1 * F$ and $g_2 * F$ coincide on $p^{-1}(V)$.
By Lemma \ref{lem:up}, we see that $g_1 * F= g_2*F$ on $q^{-1}(V)$ and in particular we have $(g_1*F)(z_0) = (g_2*F)(z_0)$.

(iii) To show that $F\mapsto (g*F)(z_0)$ is a character, we first observe that
given $F$ and $F'$ in $\cC_0(Z)$, we have, for $y\in Y$ such that  $p(y)$ belongs to a neighborhood $V$ of $s(\gamma_0)$
on which $a$ takes value $1$,
$$a\circ p(y)(FF')(Sy) = a\circ p(y)F(Sy)\,
a\circ p(y)F'(Sy),$$
that is $$
(g*(FF'))(y) = (g*F)(y)\, (g*F'(y).$$
Once again we conclude thanks Lemma \ref{lem:up} that this equality still holds when $y$ is replaced by $z_0$.

It remains to check that there exists $F\in \cC_0(Z)$ such that $(g*F)(z_0) \not= 0$. We take $S$ to be relatively compact. Let $V$ be a compact neighborhood of $s(\gamma_0)$ in $X$, contained in $s(S)$ and on which $a(x) = 1$. We choose $f\in \cC_c(X)$  such that $f(x) = 1$ if $x\in r(SV)= S\cdot V$ and we set $F = f\circ q$. Since $q$ is proper, we note that $F\in \cC_c(Z)$. For $y\in p^{-1}(V)$, we have, since $p$ is equivariant,
$$(g*F)(y) = F(Sy) = f\circ p(Sy) = f(S\cdot p(y)) =1.$$
Once more, we deduce from Lemma \ref{lem:up} that $(g*F)(z_0) = 1$.
\end{proof}
Note that, by definition,
\begin{equation}\label{eq:extend}
 F(\gamma_0z_0)= (g*F)(z_0).
\end{equation}

\begin{thm}\label{thm:ext} Let $(Y,p)$ be a left $\cG$-space, where $\cG$ is an \'etale groupoid. Let $(Z,\varphi, q)$ be a fibrewise compactification of $(Y,p)$ such that $\cC_0(Z)$ is stable under convolution by the elements of $\cC_c(\cG)$. The map $(\gamma_0, z_0) \mapsto \gamma_0 z_0$ is a continuous 
${\mathcal G}$-action on $Z$, and it is the only
continuous ${\mathcal G}$-action extending the action of ${\mathcal G}$
on Y.
\end{thm}

\begin{proof} Since $\cG \,_s\!*_p Y$ is dense into $\cG \,_s\!*_q Z$, there is at most
one continuous extension of the $\cG$-action. 

For $(\gamma_0,z_0) \in \cG \,_s\!*_q Z$, let us check  that $q(\gamma_0 z_0) = r(\gamma_0)$. Let $V$ be a compact
neighborhood of $r(\gamma_0)$ in $X= \cG^{(0)}$ and let $f$ be a continuous
function supported in $V$ and taking value $1$ in a neighborhood of $ r(\gamma_0)$.
Set $F=f\circ q \in \cC_0(Z)$ and take $g$ as in the previous lemma. Here, our choices of $a$ and $S$ are such that $a=1$ on a relatively compact open neighborhood $W$ of $s(\gamma_0)$ and $f =1$ on $r(SW)$. We have
$$(g*F)(y) = a\circ p(y) F(Sy) =f\circ p(Sy) = 1$$
 for every $y\in Y$ such that $p(y)\in W$. It follows that $(g*F)(z) = 1$ for $z\in q^{-1}(W)$. In particular $F(\gamma_0 z_0) = (g *F)(z_0) =1$, and therefore $q(\gamma_0 z_0) \in V$. Since this holds for every $V$ in a basis of neighborhoods of $r(\gamma_0)$ we conclude that $q(\gamma_0 z_0) = r(\gamma_0)$.

Given $z_0\in Z$, let us check that $F(q(z_0) z_0) = F(z_0)$ for every $F\in \cC_0(Z)$. For the definition  of $F(q(z_0) z_0)$ we take $S = X$. Then in \eqref{al:convol} we get, for $y\in Y$,
$$(g*F)(y) = a\circ p(y) F(y),$$
and therefore $(g*F)(z) = a\circ q(z) F(z)$ for $z\in Z$. In particular, we have $F(q(z_0) z_0) = (g*F)(z_0) = F(z_0)$.
It follows  that $q(z_0)z_0 = z_0$. 

Thus, the first item in the definition \ref{def:Gspace} is fulfilled. 
Let us  show the continuity of $(\gamma,z)\mapsto \gamma z$. Let $(\gamma_0, z_0) \in \cG \,_s\!*_q Z$ and let $W$ be a neighborhood
of $\gamma_0 z_0$. Let us consider $F\in \cC_0(Z)$ such that $F(\gamma_0 z_0) = 1$ and $F(z) = 0$ if $z\not\in W$. We take $S$, $a$ and $g$ as  in the previous lemma. Let  $U$ be an open neighborhood of $\gamma_0$, contained into $S$ such that $a\circ s(\gamma) = 1$ if $\gamma \in U$.   For $(\gamma,z)\in U_s\!*_q Z$  we have
$F(\gamma z) = (g*F)(z)$. Since $g*F$ is continuous and $ (g*F)(z_0) = 1$, there is a neighborhood $V$
of $z_0$ in $Z$ on which $g*F > 0$. It follows that for $(\gamma, z) \in U _s\!*_qV$ we have
$\gamma z \in W$.

Finally, let us show the associativity: $(\gamma_1\gamma_2)z = \gamma_1(\gamma_2 z)$ when this expression makes sense\footnote{This would be obvious if the fibres of $Y$ were dense into those of $Z$.}.
We have to check that $F((\gamma_1\gamma_2)z) = F(\gamma_1(\gamma_2 z))$ for every $F\in \cC_0(Z)$.
For $i=1,2$, we choose an open bisection $S_i$ containing $\gamma_i$, a continuous function $a_i$ with compact support in $s(S_i)$ and taking value $1$ in a neighborhood of $s(\gamma_i)$.
 We introduce  $g_i$ (with respect now to $S_i, \gamma_i$ instead of $S,\gamma_0$) as in  the previous lemma.  A straightforward computation shows that
$$F((\gamma_1(\gamma_2 z)) = g_2*(g_1*F)(z).$$
In the groupoid algebra $\cC_c(\cG)$  the convolution product of $g_2$ by $g_1$ is defined as
\begin{align*}
(g_2*g_1)(\gamma) &= \sum_{\set{\gamma':r(\gamma') = r(\gamma)}} g_2(\gamma')g_1(\gamma'^{-1}\gamma)\\
&= a_2\circ r(\gamma)\sum_{\set{ \gamma'\in S_2: s(\gamma') = r(\gamma}} a_1\circ s(\gamma^{-1}\gamma'^{-1})\chi_{S_1}(\gamma^{-1}\gamma'^{-1}) .
\end{align*}
It follows that $(g_2*g_1)(\gamma) = 0$ except possibly whenever $\gamma'\in S_2$ together with $\gamma^{-1}\gamma'^{-1} \in S_1$ and then we get
$$(g_2*g_1)(\gamma) = a_2\circ r(\gamma)a_1\circ r(\gamma')$$
where $\gamma'$ depends on $\gamma$ as explained now.
Let $\theta_2$ denote the homeomorphism $x\mapsto S_2 \cdot x$ from $s(S_2)$ onto $r(S_2)$.
Then we have, when $r(\gamma') \in s(S_1)$, 
$$a_1\circ r(\gamma') = a_1\circ\theta_2\circ s(\gamma') = a_1\circ\theta_2\circ r(\gamma),$$ 
and therefore
$$(g_2*g_1)(\gamma) = a_2\circ r(\gamma)( a_1\circ\theta_2)\circ r(\gamma)\chi_{S_1S_2}(\gamma^{-1}).$$

We observe that $a_1\circ \theta_2$ is equal to $1$ in a neighborhood of $s(\gamma_2)$.
We can choose $S_2$ so that $r(S_2)\subset s(S_1)$ and $a_1$ with compact support in $r(S_2)$. Then $S_1S_2$ is a bisection containing $\gamma_1\gamma_2$ with $s(S_1S_2) = s(S_2)$.
 Moreover, $a_2(a_1\circ\theta_2)$ is continuous with compact support in $s(S_2)$ and is equal to $1$
in a neighborhood of $s(\gamma_2) = s(\gamma_1\gamma_2)$.

In conclusion, we get
$$F(\gamma_1(\gamma_2 z)) = g_2*(g_1*F)(z) = \big((g_2*g_1)* F\big)(z) = F((\gamma_1\gamma_2)z),$$
where the last equality follows from \eqref{eq:extend}.
Since this holds for every $F\in \cC_0(Z)$, we obtain  $\gamma_1(\gamma_2 z) = (\gamma_1\gamma_2)z$.
\end{proof}

\begin{cor}\label{cor:fin} Let $(Y,p)$ be a $\cG$-space, where $\cG$ is an \'etale groupoid. The $\cG$-equivariant fibrewise compactifications of $(Y,p)$ are in bijective correspondence with the $C^*$-subalgebras of $\cC_0(Y,p)$ which contain $p^*\cC_0(X) + \cC_0(Y)$ and are stable under convolution  by the elements of $\cC_c(\cG)$. This correspondence is the restriction to the set formed by these $C^*$-subalgebras of the bijection described in Proposition \ref{prop:f_c}.
\end{cor}

\begin{proof} The only fact that remains to show is that if $(Z,\varphi, q)$ is a  $\cG$-equivariant fibrewise compactification, then $\varphi^*\cC_0(Z)$ is stable under convolution  by the elements of $\cC_c(\cG)$.
Let $F\in \cC_0(Z)$ and $g\in \cC_c(\cG)$. We define $g\star F$ by the expression
$$(g\star F)(z) = \sum_{\gamma\in r^{-1}(q(z))}g(\gamma) F(\gamma^{-1}z).$$
This function is continuous with  support in $q^{-1}(r(K))$ where $K$ is the compact support of  $g$. Since $q$ is proper, the support of $g\star F$ is compact and so $g\star F\in \cC_0(Z)$.

To conclude, we observe that $(g\star F)\circ \varphi = g*(F\circ\varphi)$.
\end{proof}

\begin{prop}\label{prop:max_min}  Let $(Y,p)$ be a left $\cG$-space, where $\cG$ is an \'etale groupoid. The structure of $\cG$-space of $(Y,p)$ extends in a unique way to the Alexandroff fibrewise compactification $(Y^+, p^+)$ and to the Stone-\v Cech fibrewise compactification $(\beta_p Y, p_\beta)$. This makes  them $\cG$-equivariant fibrewise compactifications.
\end{prop}

 \begin{proof} We have to show that the $C^*$-algebras   $\cC_0(Y,p)$ and
$p^*{\mathcal C}_0(X) + {\mathcal C}_0(Y)$ are stable under convolution by the elements of $\cC_c(\cG)$. The verifications are immediate.
\end{proof}

We point out that $(\beta_p Y, p_\beta)$ is the solution of an universal problem.

\begin{prop}\label{prop:universal1} Let $\cG$ be an \'etale groupoid and $(Y_1,p_1)$, $(Y,p)$ be two left $\cG$-spaces. We assume that $(Y_1,p_1)$ is fibrewise compact.  Let $\varphi_1: (Y, p) \to (Y_1,p_1)$ be a $\cG$-equivariant morphism. The  unique continuous map  $\Phi_1 : \beta_p Y \to Y_1$ that extends $\varphi_1$ is $\cG$-equivariant and proper\footnote{See Proposition \ref{prop:universal} for the existence of $\Phi_1$}. 
\end{prop}

\begin{proof} The two continuous maps $(\gamma, z) \mapsto \Phi_1(\gamma z)$ and $(\gamma, z) \mapsto \gamma\Phi_1(z)$, which are defined on $\cG\,_s\!*_{p_\beta} \beta_p Y$ coincide on $\cG\,_s\!*_{p} Y$ which is dense into $\cG\,_s\!*_{p_\beta} \beta_p Y$. 
\end{proof}

\begin{rem}\label{rem:compG} Let $\cG$ be an \'etale groupoid. The above results apply to the fibre space $(\cG,r)$.  In particular  $(\cG,r)$ has two important  $\cG$-equivariant  fibrewise compactifications, its {\it Alexandroff fibrewise compactification} $(\cG_{r}^+,r^+)$ and its {\it Stone-\v Cech fibrewise compactification} $(\beta_r \cG, r_\beta)$. \index{$(\beta_r \cG, r_\beta)$} Note that since $r$ is surjective, the moment maps $r^+$ and $r_\beta$ are surjective.

 Let us consider the example described in \ref{rem:notopen}. Observe that $Y$ is in a natural way an \'etale groupoid, namely a bundle of groups over $X=  [0,1]\times\set{0}\equiv [0,1]$, the fibres over $[0,1/2]$ being the trivial group, and those over $]1/2,1]$ being the group with two elements. This example shows that the maps $r^+$ and $r_\beta$ are not always open.

\end{rem}

 With techniques similar to those used above,we  can construct useful smaller fibrewise compact $\cG$-spaces from a given one $(Z,q)$, when $\cG$ is an \'etale groupoid. This fact of independent interest will be needed in Proposition \ref{prop:nonsep_nuc}. 
 
\begin{prop}\label{prop:smaller} Let $(Z,q)$ be a left $\cG$-space where $q:Z\to X=\cG^{(0)}$ is proper. Let $A$ be a $C^*$-subalgebra of $\cC_0(Z)$ which contains $q^*\cC_0(X)$ and is stable under  convolution  by the elements of $\cC_c(\cG)$. Denote by $Y$ the Gelfand spectrum of $A$ and by $p: Y\to X$
(resp. $q_Y : Z\to Y$) the continuous surjective map corresponding to the embedding $q^*\cC_0(X)\subset \cC_0(Y)$ (resp. $\cC_0(Y)\subset \cC_0(Z))$. Then  $(Y,p)$ has a unique structure of left $\cG$-space which make $q_Y$ equivariant. Moreover, $p\circ q_Y = q$, $q_Y$ is surjective  and $p$ and $q_Y$ are proper.
\end{prop}

\begin{proof} Since $q^*\cC_0(X)$ contains an approximate unit for $\cC_0(Z)$, the maps $q_Y$ and $p$ are  well defined by  the canonical  homomorphisms from $\cC_0(Y)$ into $\cC_0(Z)$ and $\cC_0(X)$ into $\cC_0(Y)$ respectively. Since the composition of these two maps is $q^*$ we see that  that $p\circ q_Y = q$.

Let us show that $q_Y$ is proper: let $K$ be a compact subset of $Y$, then 
$$q_Y^{-1}(K) \subset q_Y^{-1}\big(p^{-1}(p(K)\big) = q^{-1}(p(K))$$ is compact. It follows that $q_Y(Z)$ is closed and so  $q_Y$ is surjective since $q_Y(Z)$ is dense into $Y$.  It is now easy to check that $p$ is proper: for every compact subset $K$ of $X$, we have $p^{-1}(K) = q_Y\big(q_Y^{-1}(p^{-1}(K))\big)$.

The uniqueness assertion for the structure of $\cG$-space of $Y$ is obvious. Given $(\gamma_0,q_Y(z_0)) \in {\mathcal G} _s\!*_p Y$, let us explain how $\gamma_0 q_Y(z_0)$ is defined. As in the proof of Lemma \ref{lem:ext}, we consider an open bisection $S$ such that $\gamma_{0} \in S$ and     a continuous
function $a$ on $X$ with compact support in $s(S)$, such that $a(x)=1$ for $x$ in a neighborhood of
$s(\gamma_0)$. We set $h = (a\circ s)\,\chi_{S}$ and $g(\gamma) = h(\gamma^{-1})$.  Let $f\in \cC_0(Y)$. Then we have $(g*(f\circ q_Y))(z_0) = f\circ q_Y (\gamma_0 z_0)$. It follows that $f\mapsto (g*(f\circ q_Y))(z_0)$ is a character of $A$
and since $g*(f\circ q_Y)$ belongs to $q_{Y}^* \cC_0(Y)$, this character only depends on $q_Y(z_0)$ and (as before) $\gamma_0$.
We denote it by $\gamma_0 q_Y(z_0)$. So, we have $f\circ q_Y (\gamma_0 z_0) = f(\gamma_0q_Y(z_0))$ for every $f\in \cC_0(Y)$ and therefore $q_Y(\gamma_0 z_0) = \gamma_0 q_Y(z_0)$. The fact that we define in this way a continuous action is proved with  arguments similar to those used in the the proof of Theorem \ref{thm:ext}.
\end{proof}

\section{\textbf{\textsc{Amenable groupoids and amenable actions}}}\label{sec:Amen}
The  reference for this section is \cite{AD-R}.  We recall the main definitions and some permanence properties of amenability, by emphasizing the subtle case of extensions. Here $\cG$ will be a locally compact groupoid   and we set $X=\cG^{(0)}$.

\subsection{Some equivalent definitions of amenability} \label{subsec:amenable} The notion of amenable locally compact groupoid has many equivalent definitions. We will recall three of them. Before,  let us recall a notation: given a locally compact groupoid $\cG$, $\gamma\in \cG$ and $\mu$ a measure on $\cG^{s(\gamma)}$, then $\gamma\mu$ is the measure on $\cG^{r(\gamma)}$ defined by 
$\int \! f \rd \gamma\mu = \int\! f(\gamma\gamma_1) \rd \mu(\gamma_1)$.

\begin{defn}\label{def:amen1}(\cite[Definitions 2.2.2, 2.2.8]{AD-R}) 
We say that $\cG$ is {\it amenable}\index{amenable locally compact groupoid} if there exists a net $(m_i)$, where $m_i = (m_i^{x})_{x\in X}$ is a family of probability measures $m_i^{x}$ on $\cG^x$, such that
\begin{itemize}
\item[(i)] each $m_i$ is continuous in the sense that for all $f\in \cC_c(\cG)$, the function $x\mapsto \int f\rd m_i^{x}$ is continuous;
\item[(ii)] $\lim_{i} \norm{\gamma m_i^{s(\gamma)} - m_i^{r(\gamma)}}_1 = 0$ uniformly on the compact subsets of $\cG$.
\end{itemize}
\end{defn}

We say that $(m_i)_i$ is an {\it approximate invariant continuous mean} \index{approximate invariant continuous mean}on $\cG$. Each $m_i$ is called a system of probability measures for $r$.\index{system of probability measures for $r$}

We say that a function $h$ on a groupoid $\cG$ is {\it positive definite}\index{positive definite function} if for every $x\in \cG^{(0)}$, $n\in \N$, and $\gamma_1,\dots,\gamma_n \in \cG^x$, the $n\times n$ matrix $[h(\gamma_i^{-1}\gamma_j)]$ is nonnegative, that is,
$$\sum_{i,j=1}^n \overline{\alpha_i}\alpha_j h(\gamma_i^{-1}\gamma_j) \geq 0$$
for $\alpha_1,\dots,\alpha_n \in \C$.

Note that for $x\in X$ we have $h(x)\geq 0$, and for all $\gamma\in \cG$
\begin{equation}\label{eq:posdef0}
h(\gamma^{-1}) = \overline{h(\gamma)},\quad\quad \abs{h(\gamma)}^2 \leq h(r(\gamma))h(s(\gamma)).
\end{equation}

A second useful characterization of amenability is  as follows.

\begin{prop}\label{prop:amen2}{\rm(\cite[Proposition 2.2.13 (iv)]{AD-R})} Let $\cG$ be a locally compact groupoid with Haar system. Then $\cG$ is amenable if and only if there exists a net $(h_i)$ of continuous positive definite functions with compact support in $\cG$ such that 
\begin{itemize}
\item[(i)] for every $i$, the restriction of $h_i$ to $X$ is uniformly bounded by $1$;
\item[(ii)] $\lim_n h_i = 1$ uniformly on the compact subsets of $\cG$.
\end{itemize}
\end{prop}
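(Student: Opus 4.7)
The strategy is based on the standard correspondence between positive definite functions and matrix coefficients of unitary representations, transferred to the groupoid setting via the convolution product $\xi*\xi^*$ on $\cG$.

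For the forward direction $(\Rightarrow)$, start with an approximate invariant continuous mean $(m_i)$. A preliminary approximation step (convolving each $m_i^x$ against an approximate identity supported in $\cG$, and invoking continuity of the Haar system) reduces to the case where each $m_i^x = f_i\lambda^x$ on $\cG^x$ with $f_i\in \cC_c(\cG)$, $f_i\geq 0$, and $\int f_i\,\rd\lambda^x = 1$ for every $x\in X$, while retaining the AICM property. Set $\xi_i = \sqrt{f_i} \in \cC_c(\cG)$ and
$$h_i(\gamma) \defequal \int_{\cG^{r(\gamma)}} \xi_i(\gamma_1)\,\overline{\xi_i(\gamma^{-1}\gamma_1)}\,\rd\lambda^{r(\gamma)}(\gamma_1).$$
Then $h_i$ is continuous (by continuity of the Haar system acting on $\cC_c(\cG)$), positive definite as a diagonal matrix coefficient of the left regular representation applied to $\xi_i$, and supported in $\supp(\xi_i)\cdot\supp(\xi_i)^{-1}$. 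Clearly $h_i(x) = \int f_i\,\rd\lambda^x = 1$, and using the pointwise inequality $(\sqrt{a}-\sqrt{b})^2 \leq |a-b|$ for $a,b\geq 0$ together with the invariance of the Haar system,
$$2\bigl(1 - h_i(\gamma)\bigr) = \int_{\cG^{r(\gamma)}} \bigl(\sqrt{f_i(\gamma_1)} - \sqrt{f_i(\gamma^{-1}\gamma_1)}\bigr)^2 \rd\lambda^{r(\gamma)}(\gamma_1) \leq \bigl\|\gamma m_i^{s(\gamma)} - m_i^{r(\gamma)}\bigr\|_1,$$
whose right hand side tends to $0$ uniformly on compact subsets of $\cG$ by hypothesis, so $h_i \to 1$ uniformly on compacta.

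For the backward direction $(\Leftarrow)$, given $(h_i)$ as in the statement, the GNS construction applied fiberwise produces, for each $i$, a continuous Hilbert bundle $(\cH_i^x)_{x\in X}$, a unitary representation $\pi_i$ of $\cG$, and a continuous section $\eta_i$ with
$$h_i(\gamma) = \bigl\langle \eta_i^{r(\gamma)},\,\pi_i(\gamma)\,\eta_i^{s(\gamma)}\bigr\rangle.$$
The uniform convergence $h_i\to 1$ on compacta then gives $\|\eta_i^x\|^2 = h_i(x) \to 1$ and $\|\pi_i(\gamma)\eta_i^{s(\gamma)} - \eta_i^{r(\gamma)}\|^2 = h_i(r(\gamma)) + h_i(s(\gamma)) - 2\,\mathrm{Re}\,h_i(\gamma) \to 0$, both uniformly on compact subsets of $\cG$. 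Thus $(\eta_i)$ is a net of almost invariant unit sections of $\pi_i$. To reach amenability, I would manufacture from these a net of non-negative $\zeta_i \in \cC_c(\cG)$ satisfying $\int \zeta_i^2 \rd\lambda^x = 1$ and $\int|\zeta_i(\gamma^{-1}\gamma_1) - \zeta_i(\gamma_1)|^2\,\rd\lambda^{r(\gamma)}(\gamma_1) \to 0$ uniformly on compacta (an $L^2$-Reiter condition for $\cG$); the measures $\zeta_i^2\lambda^x$ then form an AICM, with the Powers--Størmer inequality used in the reverse direction to pass from $L^2$-almost invariance to $L^1$-almost invariance. The transfer from the abstract vectors $\eta_i^x$ to functions on the concrete fibres $\cG^x$ relies crucially on the compact support of $h_i$: by a Fell absorption type argument one shows that $\pi_i$ is weakly contained in a regular-type representation, so $\eta_i$ can be approximated by vectors in the $L^2$-bundle $(L^2(\cG^x,\lambda^x))_{x\in X}$, and suitable truncation and normalization produce the required $\zeta_i$.

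\textbf{Main obstacle.} The forward implication amounts to routine convolution estimates. The real work lies in the backward direction: one must realize the abstract almost invariant sections $\eta_i$ of the GNS representation $\pi_i$ as concrete almost-invariant functions on the fibres $\cG^x$ in a manner continuous in $x$, uniform on compact subsets of $\cG$, and preserving positivity and compact support. The absence of a canonical ``square root'' of a positive definite function on a groupoid means that this realization step, and not the construction of $h_i$ from $\xi_i$, is where the technical weight of the proof sits.
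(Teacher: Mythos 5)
First, a point of reference: the paper offers no proof of this statement at all --- it is quoted verbatim from \cite[Proposition 2.2.13]{AD-R} and used as a black box --- so there is no in-paper argument to compare against. Judged on its own terms, your forward direction is the standard argument and is essentially correct: set $\xi_i=\sqrt{g_i}$ and take the diagonal coefficient $h_i(\gamma)=\int \xi_i(\gamma_1)\overline{\xi_i(\gamma^{-1}\gamma_1)}\,\rd\lambda^{r(\gamma)}(\gamma_1)$. One quibble: you cannot normalize so that $\int f_i\,\rd\lambda^x=1$ for \emph{every} $x\in X$, since $x\mapsto\lambda^x(f_i)$ is supported in the compact set $r(\supp f_i)$; when $X$ is non-compact this is impossible. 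You should instead start from the Reiter-type net of Proposition \ref{prop:amen3} (conditions $\lambda^x(g_i)\leq 1$ and $\to 1$ uniformly on compacta of $\cG^{(0)}$), which costs only harmless error terms in the identity $2-2h_i(\gamma)=\norm{\xi_i-\gamma\xi_i}^2+(1-\norm{\xi_i}^2_{r(\gamma)})+(1-\norm{\gamma\xi_i}^2_{r(\gamma)})$.

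The backward direction, however, has a genuine gap, and it sits exactly where you say the ``technical weight'' is: you do not actually carry that step out. The GNS construction and the computation $\norm{\pi_i(\gamma)\eta_i^{s(\gamma)}-\eta_i^{r(\gamma)}}^2=h_i(r(\gamma))+h_i(s(\gamma))-2\,\mathrm{Re}\,h_i(\gamma)$ are fine, but the assertion that ``by a Fell absorption type argument one shows that $\pi_i$ is weakly contained in a regular-type representation'' is not a proof and is not what Fell absorption gives (absorption concerns $\pi\otimes\lambda$, not weak containment of $\pi$ in $\lambda$). The statement you actually need --- that a continuous, compactly supported positive definite function on $\cG$ is a limit, uniformly on compacta, of coefficients $\xi*\xi^*$ with $\xi\in\cC_c(\cG)$ --- is the groupoid analogue of Godement's square-root theorem and is essentially the entire content of the non-trivial implication; it cannot be waved through. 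Moreover, even granting weak containment, you would only get weak-$*$ approximation of coefficients, and passing from that to a net of \emph{non-negative} functions $\zeta_i\in\cC_c(\cG)$ satisfying the $L^2$-Reiter condition \emph{uniformly on compact subsets of $\cG$}, continuously in $x$, requires a convexity (Day-type) argument together with a careful handling of the uniformity quantifier; none of this appears. So the proposal proves one implication and correctly diagnoses, but does not close, the other.
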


Finally, we  will also need the following  characterization of amenability.

\begin{prop}\label{prop:amen3}{\rm (\cite[Proposition 2.2.13 (i)]{AD-R})} Let $(\cG,\lambda)$ be a locally compact groupoid with Haar system $\lambda$. Then $\cG$ is amenable if and only if there exists  a net $(g_i)$ of nonnegative functions in $\cC_c(\cG)$ such that 
\begin{itemize}
\item[(a)] $\int g_i \rd\lambda^{x} \leq 1$ for every $x\in \cG^{(0)}$;
\item[(b)] $\lim_i \int g_i \rd\lambda^{x} = 1$ uniformly on the compact subsets of $\cG^{(0)}$;
\item[(c)] $\lim_i \int \abs{g_i(\gamma^{-1}\gamma_1) -  g_i(\gamma_1)}\rd\lambda^{r(\gamma)}(\gamma_1) = 0$ uniformly on the compact subsets of $ \cG$.
\end{itemize}
\end{prop}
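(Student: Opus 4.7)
My plan is to establish both implications, noting that the converse is essentially a direct computation while the direct implication requires a regularization argument.

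\textbf{Direction $(\Leftarrow)$.} Given $(g_i)$ satisfying (a)--(c), I would normalize to obtain a candidate approximate invariant continuous mean. For each $i$ and each $x$ with $\int g_i \rd\lambda^x > 0$, set $m_i^x := \bigl(\int g_i \rd\lambda^x\bigr)^{-1} g_i \lambda^x$. By condition (b), for each compact $L \subset X$ these are well-defined probability measures on $\cG^x$ for every $x \in L$, provided $i$ is large enough. The continuity of $x \mapsto \int f \rd m_i^x$ for $f \in \cC_c(\cG)$ follows from the continuity of the Haar system applied to $f g_i$. For approximate invariance, the left invariance of $\lambda$ gives the identity $\gamma m_i^{s(\gamma)} = \bigl(\int g_i \rd\lambda^{s(\gamma)}\bigr)^{-1} g_i(\gamma^{-1}\cdot)\,\lambda^{r(\gamma)}$, whence
$$\norm{\gamma m_i^{s(\gamma)} - m_i^{r(\gamma)}}_1 \leq \int \abs{\frac{g_i(\gamma^{-1}\gamma_1)}{\int g_i \rd\lambda^{s(\gamma)}} - \frac{g_i(\gamma_1)}{\int g_i \rd\lambda^{r(\gamma)}}}\rd\lambda^{r(\gamma)}(\gamma_1).$$
Inserting and subtracting $g_i(\gamma_1)/\int g_i \rd\lambda^{s(\gamma)}$ and applying (b) to the denominators and (c) to the numerators shows this quantity tends to $0$ uniformly on compact subsets of $\cG$, so $(m_i^x)$ is an approximate invariant continuous mean by Definition~\ref{def:amen1}.

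\textbf{Direction $(\Rightarrow)$.} Start from an approximate invariant continuous mean $(m_i)$. The idea is to smear each fibre measure $m_i^x$ by a localized approximate unit to produce continuous densities. Choose a net $(\phi_k)$ of non-negative functions in $\cC_c(\cG)$ whose supports form a neighborhood basis of $\cG^{(0)}$ in $\cG$ and satisfy $\int \phi_k \rd\lambda^x \leq 1$ with $\int \phi_k \rd\lambda^x \to 1$ uniformly on compact subsets of $X$. For each pair $(i,k)$ set
$$g_{i,k}(\gamma) := \int_{\cG^{r(\gamma)}} \phi_k(\gamma_1^{-1}\gamma)\rd m_i^{r(\gamma)}(\gamma_1),$$
a fibrewise convolution that is continuous in $\gamma$ by the joint continuity of $\phi_k$ and of $x \mapsto m_i^x$. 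A Fubini computation using the left invariance of $\lambda$ yields $\int g_{i,k}\rd\lambda^x = \int \bigl(\int \phi_k\rd\lambda^{s(\gamma_1)}\bigr)\rd m_i^x(\gamma_1)$, from which (a) follows immediately and (b) follows for fixed $i$ as $k$ grows. For (c), using left invariance again and standard change-of-variable,
$$\int \abs{g_{i,k}(\gamma^{-1}\gamma_1) - g_{i,k}(\gamma_1)}\rd\lambda^{r(\gamma)}(\gamma_1) \leq \Bigl(\sup_x \int \phi_k \rd\lambda^x\Bigr)\norm{\gamma m_i^{s(\gamma)} - m_i^{r(\gamma)}}_1,$$
which tends to $0$ uniformly on compacts of $\cG$ as $i$ grows, by the invariance hypothesis on $(m_i)$. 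A diagonal subnet in $(i,k)$ then furnishes the required sequence.

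\textbf{The main obstacle} is arranging that each $g_{i,k}$ actually lies in $\cC_c(\cG)$: since $m_i^x$ is not assumed to have compact support as $x$ varies over compacts of $X$, the function $g_{i,k}$ need not be compactly supported. This is overcome by introducing a third approximation parameter $j$ and multiplying by a cutoff $\psi_j \in \cC_c(\cG)$ with $\psi_j \nearrow 1$ uniformly on compacts, producing a triply-indexed family $\psi_j g_{i,k}$. The three approximation scales --- the shrinking of $\supp(\phi_k)$, the invariance defect of $m_i$, and the enlargement of $\supp(\psi_j)$ --- must be compatibly ordered through a directed set of triples so that (a), (b), (c) are preserved in the limit. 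This is the only delicate point; the computations themselves are straightforward once the convolution formalism is set up.
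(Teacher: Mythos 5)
First, a remark on the comparison you asked for: the paper does not prove this proposition at all --- it is quoted from \cite[Proposition 2.2.13]{AD-R}, so there is no in-paper argument to measure yours against, and I can only judge the proposal on its own terms. Its architecture (normalize the densities for the ``if'' direction; convolve the means against a localized kernel and truncate for the ``only if'' direction) is the standard one. In the ``if'' direction there is a small but real defect: Definition \ref{def:amen1} requires the probability measures $m_i^x$ to be defined for \emph{every} $x\in\cG^{(0)}$, whereas your normalization is undefined wherever $\int g_i\rd\lambda^x=0$, and condition (b) only excludes this for $x$ in a compact set and $i$ large. The standard repair is to set $m_i^x=g_i\lambda^x+\bigl(1-\int g_i\rd\lambda^x\bigr)\delta_x$: this is a continuous system of probability measures, and the Dirac corrections contribute at most $\abs{1-\int g_i\rd\lambda^{s(\gamma)}}+\abs{1-\int g_i\rd\lambda^{r(\gamma)}}$ to the invariance defect, which vanishes uniformly on compacta by (b). With that change your estimate goes through.

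The genuine gap is in the ``only if'' direction, precisely at the point you label ``the only delicate point'' and then dismiss as bookkeeping. Two of your assertions do not follow as stated. First, your Fubini identity gives $\int g_{i,k}\rd\lambda^x=\int\bigl(\int\phi_k\rd\lambda^{s(\gamma_1)}\bigr)\rd m_i^x(\gamma_1)$; since $\phi_k\in\cC_c(\cG)$, the quantity $\int\phi_k\rd\lambda^{y}$ can only be made close to $1$ for $y$ in a \emph{compact} subset of $\cG^{(0)}$, while $s(\gamma_1)$ ranges over $s(\cG^x)$, which need not be relatively compact; so ``(b) follows for fixed $i$ as $k$ grows'' is unjustified. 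Second, after multiplying by $\psi_j$, condition (c) acquires the extra term $\int\abs{\psi_j(\gamma^{-1}\gamma_1)-\psi_j(\gamma_1)}\,g_{i,k}(\gamma^{-1}\gamma_1)\rd\lambda^{r(\gamma)}(\gamma_1)$, which is controlled only by the mass that $g_{i,k}\lambda^x$ carries outside large compact sets --- again not controlled a priori. Both points are rescued by one missing lemma that you neither state nor prove: \emph{a continuous system of probability measures is uniformly tight over compact subsets of $\cG^{(0)}$}, i.e.\ for every compact $L\subset\cG^{(0)}$ and $\varepsilon>0$ there is a compact $C\subset\cG$ with $m_i^x(\cG\setminus C)<\varepsilon$ for all $x\in L$. (If not, choose for every compact $C$ a point $x_C\in L$ with $m_i^{x_C}(C)\leq1-\varepsilon$, pass to a subnet $x_C\to x_*\in L$, and deduce $\int f\rd m_i^{x_*}\leq1-\varepsilon$ for every $f\in\cC_c(\cG)$ with $0\leq f\leq1$, contradicting $m_i^{x_*}(\cG)=1$.) With this lemma the parameters can be ordered: given a target compact $K\subset\cG$ and $L=r(K)\cup s(K)$, first choose $i$, then $C$ tight for $\set{m_i^x:x\in L\cup s(L\cdot K)}$, then $\phi_k$ with $\int\phi_k\rd\lambda^y$ close to $1$ for $y\in s(C)$, then $\psi_j\equiv1$ on a compact set containing $C\cdot\supp(\phi_k)$ and $K^{-1}C\cdot\supp(\phi_k)$. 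Finally, the continuity of $g_{i,k}$ deserves more than an appeal to ``joint continuity'': one should approximate $(\gamma_1,\gamma)\mapsto\phi_k(\gamma_1^{-1}\gamma)$ locally by finite sums $\sum_l u_l(\gamma_1)v_l(\gamma)$ with $u_l,v_l\in\cC_c(\cG)$ so that Definition \ref{def:amen1}(i) can be applied; this is routine but not free. As written, the forward direction does not close.
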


\begin{rems}\label{rem:nonsep}  (1) We will have to work occasionally  with $\sigma$-compact groupoids, not necessarily second countable, with second countable space of units.  One checks that the results of \cite[\S 2.2.a, \S 2.2.b]{AD-R} remain true in this setting. For them, the above characterizations of amenability are still valid. Moreover, as with second countable locally compact groupoids, in the above statements, we can replace nets by sequences.

(2) As observed in \cite[Proposition 2.2.13]{AD-R}, \cite[Remark 9.4]{Will19}, the condition (a) in the previous proposition is not necessary. It follows that in the proposition \ref{prop:amen2} we do not need to ask for condition (i).
\end{rems}

We say that an action $\cG\actson Y$ of a locally compact groupoid $\cG$ on a locally compact space $Y$ is amenable \index{amenable action}, or that the {\it $\cG$-space $Y$ is amenable}, if the semi-direct product groupoid $Y\rtimes \cG$ is amenable. 

If $\cG$ is an amenable groupoid, then every left $\cG$-space $(Y, p)$ is amenable. Indeed, if $(m_i) $ is an approximate invariant continuous mean for $\cG$, then $(n_i)$, where $n_i^{y}(f) = \int f(y,\gamma)\rd m_i^{p(y)}(\gamma)$ for $f\in \cC_c(Y\rtimes \cG)$, is an approximate invariant continuous mean for $Y\rtimes \cG$.

\begin{rem}\label{rem:amenable} Every locally compact group $G$ (or groupoid) has an amenable action on a locally compact space, namely its left action on itself, but the existence of an amenable action on a compact space is not always realized. Groups for which such an action exists are called amenable at infinity (see Section \ref{sec:ameninf}). 

We note that if there exists a $G$-invariant probability measure $\mu$ on a locally compact space $Y$ on which $G$ acts amenably, then $G$ is an amenable group. Indeed, if $(h_i)$ is a net of continuous positive definite functions in $\cC_c(Y\times G)$ as in Proposition \ref{prop:amen2}, it suffices to consider the net $(\widetilde{h_i})$ of positive definite functions on $G$, where $\widetilde{h_i}(t) = \int_Y h_i(x,t)\rd\mu(x)$. It converges to $1$ uniformly on compact subsets of $G$.

\end{rem}

\subsection{Some stability properties of amenability}\label{subsec:stable}  We refer to \cite[\S 2.2 b, \S 5.1]{AD-R} for this subject. In particular amenability is preserved under equivalence of groupoids \cite[Theorem 2.2.17]{AD-R}. 

\begin{prop}\label{cor:locpropre} Let $Y$ and $Z$ be two locally compact left $\cG$-spaces, where $\cG$ is a locally compact groupoid. We assume that there exists a continuous $\cG$-equivariant map $p:Y\to Z$ and that $Z\rtimes \cG$ is amenable. Then so is $Y\rtimes \cG$. 
\end{prop}

\begin{proof} 

That $Y\rtimes \cG$ is amenable follows  from the fact that this groupoid is isomorphic to $Y\rtimes (Z\rtimes \cG)$  (see Lemma \ref{lem:idengr} (i)), where $Z\rtimes \cG$ is amenable\end{proof}

\begin{prop}\label{prop:locpropre} Let $\cG$ and $\cH$ be two locally compact groupoids with Haar systems and let $c:\cG\to \cH$ be a locally proper continuous homomorphism. If $\cH$ is amenable, then so is $\cG$.
\end{prop}

\begin{proof} Given a compact subset $K$ of $\cG$  and $\varepsilon >0$, we want to construct a continuous  positive definite function $h\in \cC_c(\cG)$ such that $\abs{h(\gamma) -1}\leq \varepsilon$ for $\gamma\in K$. We choose a continuous  positive definite function $k\in \cC_c(\cH)$ such that $\abs{k(\eta) -1}\leq \varepsilon$ for $\eta\in c(K)$ and a continuous function $\varphi:\cG^{(0)} \to [0,1]$, with compact support such that $\varphi(x) = 1$ for $x\in r(K)\cup s(K)$. The function 
$$h: \gamma \mapsto \varphi\circ r(\gamma)\varphi\circ s(\gamma) k\circ c(\gamma)$$
 satisfies the required conditions.
\end{proof}

For instance every proper locally compact groupoid $\cG$ with Haar system is amenable. Indeed, the map $r\times s: \cG \to \cG^{(0)}\times \cG^{(0)}$ is a proper homomorphism into the pair groupoid $\cG^{(0)}\times \cG^{(0)}$ which is amenable.

\begin{prop}\label{prop:stablegenmor}  Let $\cG$ and $\cH$ be two locally compact groupoids. We assume that $\cH$ is amenable.  Let $Z$ be a faithful locally proper  generalized morphism from $\cG$ to $\cH$. Then $\cG$ is amenable.
\end{prop}

\begin{proof}  We keep the notation of Proposition \ref{prop:homo1} which shows that the groupoids $\cG$ and $Y\rtimes \cH$ are equivalent. The groupoid $\cG$ is amenable since $Y\rtimes \cH$ is amenable.
\end{proof}

\subsection{Amenability of extensions}\label{subsec:amenext} It is a natural question to ask whether a locally compact groupoid $\cL$ is amenable if there is a surjective continuous homomorphism $c: \cL \to \cG$ where $\cG$ is a locally compact groupoid  which is amenable as well as $\Ker(c)$. The answer is no in general as shown  by the following simple example given in \cite[Remark 5.3.15]{AD-R}: $\cL$ is the group bundle groupoid $\cL = G\bigsqcup H$ where $H$ is non-amenable locally compact group equiped with a continuous injective homomorphism $\psi$ from $H$ into an amenable locally compact group $G= \cG$.  We consider the homomorphism  $c: \cL\to \cG$ that is the identity on $G$ and is equal to $\psi$ on $H$. Then $\cL$ is not amenable, although $\cG$  and $\Ker(c)$ are amenable. If we remove one of the following two obstacles, namely that $G$ is not discrete, or that $c$ is not strongly surjective, we will see that the answer is positive.

\subsubsection{The strongly surjective case} It is solved in \cite[Theorem 5.3.14]{AD-R} in the setting of measurewise amenability and in \ \cite[Proposition 5.1.2]{AD-R}, \cite[Theorem 9.80]{Will19}, for unit fixing extensions. For the general case below, we simply adapt the proof of this last result.

\begin{thm}\label{thm:amen-ext} Let $c:\cL \to \cG$ be a strongly surjective continuous homomorphism from a locally compact  groupoid $\cL$ to a locally compact groupoid $\cG$. We assume that $\cG$ and $\cH=\Ker(c)$ are amenable. Then, so is $\cL$.
\end{thm}

\begin{proof}  Given a compact subset $K_1$ of $\cL$ and $\varepsilon >0$, we will construct a continuous system of probability measures $\nu$ tor $r:\cL\to \cL^{(0)}$ such that 
\begin{equation}\norm{\gamma_1\nu^{s(\gamma_1)} - \nu^{r(\gamma_1)}}_1 \leq \varepsilon \quad \hbox{for} \quad\gamma_1\in K_1.
\end{equation}
We keep the notation of the definition \ref{def:strong-surj}. Since $\phi : \cL \to \cL^{(0)} \!_{c^{(0)}}\!\!*_r \cG$ is surjective and open there is a system $m=(m^{(y,g)})_{(y,g)\in  \cL^{(0)} \!_{c^{(0)}}\!\!*_r \cG}$ of probability measures for $\phi$ with $m^{(y,g)} \not= 0$ for all $(y,g)$ (see for instance \cite[Lemma 3.20]{Will19}). Moreover, using \cite[Lemma 1.1.2]{AD-R}, we can choose $m$ so that for every compact subset $C$ of $ \cL^{(0)} \!_{c^{(0)}}\!\!*_r \cG$ there exists a compact subset $K$ of $\cL$ such that $m^{(y,g)}(K) = 1$ when $(y,g)\in C$. We set $\varepsilon' = \varepsilon/4$. Since $\cG$ is amenable there is a continuous system $\mu = (\mu^x)_{x\in \cG^{(0)}}$ for $r: \cG\to \cG^{(0)}$ such that 
\begin{equation}\label{eq:G}
\norm{g\mu^{s(g)} - \mu^{r(g)}}_1\leq \varepsilon' \quad \hbox{for} \quad g\in c(K_1).
\end{equation}
There exists a compact subset $L$ of $\cG$ such that $\mu^x(\cG\setminus L)\leq \varepsilon'$ for $x\in c^{(0)}\circ s(K_1)$. Then we choose a compact subset $K$ of $\cL$ such that 
$$m^{(y,g)}(K) = 1 \,\hbox{ for }\, (y,g)\in \big(\cL^{(0)} \!_{c^{(0)}}\!\!*_r \cG\big) \cap \big(s(K_1)\cup r(K_1))\times (L \cup c(K_1)L)\big).$$
Now, let $M$ be a compact subset of $\cL$ containing $K^{-1}K$ and $K^{-1}K_1K$. Since $\cH = \Ker(c)$ is amenable we can find a continuous system $\lambda = (\lambda^y)_{y\in \cH^{(0)}}$ such that
\begin{equation}
\norm{h\lambda^{s(h)} - \lambda^{r(h)}}_1\leq \varepsilon' /2\quad \hbox{if} \quad h\in M\cap\cH.
\end{equation}
Then, for $f\in \cC_c(\cL)$ and $y\in \cL^{(0)}$ we set
$$\nu^y(f) = \int_\cG\big[\int_\cL\big(\int_\cH f(\gamma h)\rd \lambda^{s(\gamma)}(h)\big)\rd m^{(y,g)}(\gamma)\big]\rd \mu^{c^{(0)}(y)}(g).$$
Then, $\nu^y$ is a probability measure supported on $\cL^y$ and $y\mapsto \nu^y(f)$ is continuous.

Let $f\in \cC_c(\cL)$ with $\norm{f}_\infty\leq 1$. To end the proof of the theorem it suffices  to show that
\begin{equation}\label{eq:toshow}
\abs{\int_\cL f(\gamma_1\gamma)\rd\nu^{s(\gamma_1)}(\gamma) -\int_\cL f(\gamma)\rd\nu^{r(\gamma_1)}(\gamma)}\leq \varepsilon \quad\hbox{if} \quad \gamma_1\in K_1.
\end{equation}
We set $\underline{\lambda}(f)(\gamma) =\int_\cH f(\gamma h)\rd^{s(\gamma)}(h)$. 
 If $\gamma,\gamma'$ are such that $\phi(\gamma) = \phi(\gamma')$ with $\gamma^{-1}\gamma'\in M\cap \cH$, we have
\begin{equation}\label{eq:underline}
\abs{\underline{\lambda}(f)(\gamma) - \underline{\lambda}(f)(\gamma')}\leq \varepsilon'/2,
\end{equation}
since 
$$\abs{\underline{\lambda}(f)(\gamma) - \underline{\lambda}(f)(\gamma')} = \abs{\int f(\gamma hh_1\rd\lambda^{s(h)}(h_1) = \int f(\gamma h_1\rd\lambda^{r(h)}(h_1)}$$
with $h = \gamma^{-1}\gamma'$.

Let $\gamma_1\in K_1$ and $g\in L$ with $r(g) = c^{(0)}(s(\gamma_1))$ and let $\gamma'\in K$ such that $r(\gamma') = s(\gamma_1)$ and $c(\gamma') = g$.  We have
\begin{multline}\label{eq:1}
\abs{\int_\cL\underline{\lambda}(f)(\gamma_1\gamma) \rd m^{(s(\gamma_1),g)}(\gamma) - \underline{\lambda}(f)(\gamma_1\gamma')}\\
= \abs{\int_K \big(\underline{\lambda}(f)(\gamma_1\gamma) -\underline{\lambda}(f)(\gamma_1\gamma')\big)\rd m^{(s(\gamma_1),g)}(\gamma)}\leq \varepsilon'/2
\end{multline}
due to \eqref{eq:underline}. Similarly we have
\begin{equation}\label{eq:2}
\abs{\int_\cL\underline{\lambda}(f)(\gamma) \rd m^{(r(\gamma_1), c(\gamma_1)g)}(\gamma) - \underline{\lambda}(f)(\gamma_1\gamma')}\leq \varepsilon'/2.
\end{equation}
From these \eqref{eq:1} and \eqref{eq:2} we deduce the inequality
\begin{equation}\label{eq:3}
\abs{\int_\cL\underline{\lambda}(f)(\gamma_1\gamma) \rd m^{(s(\gamma_1),g)}(\gamma) - \int_\cL\underline{\lambda}(f)(\gamma) \rd m^{(r(\gamma_1), c(\gamma_1)g)}(\gamma)}\leq \varepsilon'.
\end{equation}
Now, given $\gamma_1\in K_1$, we turn to the proof of the inequality \eqref{eq:toshow}. One one hand we have
$${\int_\cL f(\gamma_1\gamma)\rd\nu^{s(\gamma_1)}(\gamma) = \int_\cG\big(\int_\cL \underline{\lambda}(f)(\gamma_1\gamma)\rd m^{(s(\gamma_1),g)}(\gamma)\big)\rd\mu^{c^{(0)}(s(\gamma_1))}}(g),$$
while
\begin{align*}\int_\cL f(\gamma)\rd\nu^{r(\gamma_1)}(\gamma) &= \int_\cG\big(\int_\cL \underline{\lambda}(f)(\gamma)\rd m^{(r(\gamma_1),g)}(\gamma)\big)\rd\mu^{c^{(0)}(r(\gamma_1))}(g)\\
& =  \int_\cG \psi(g) \rd\mu^{c^{(0)}(r(\gamma_1))}(g),
\end{align*}
where $\psi(g) = \int_\cL \underline{\lambda}(f)(\gamma)\rd m^{(r(\gamma_1),g)}(\gamma)$.
The quantity 
$$\int_\cL f(\gamma_1\gamma)\rd\nu^{s(\gamma_1)}(\gamma) -\int_\cL f(\gamma)\rd\nu^{r(\gamma_1)}(\gamma)$$
is the sum $A+B$ where 
\begin{multline*} 
A = \int_\cG\Big(\int_\cL \underline{\lambda}(f)(\gamma_1\gamma)\rd m^{(s(\gamma_1),g)}(\gamma)\Big)\rd\mu^{c^{(0)}(s(\gamma_1))}(g)\\
-\int_\cG\Big(\int_\cL \underline{\lambda}(f)(\gamma)\rd m^{(r(\gamma_1),c(\gamma_1)g)}(\gamma)\Big)\rd\mu^{c^{(0)}(s(\gamma_1))}(g),
\end{multline*}
and
$$B = \int_\cG \psi(c(\gamma_1)g) \rd \mu^{c^{(0)}(s(\gamma_1))}(g) - \int_\cG \psi(g) \rd \mu^{c^{(0)}(r(\gamma_1))}(g).$$
Since $\norm{\psi}_\infty\leq 1$, using \eqref{eq:G} we get $\abs{B}\leq \varepsilon'$. 
On the other hand, we have $A = A_1 + A_2$ where
\begin{multline*} 
A _1= \int_L\Big(\int_\cL \underline{\lambda}(f)(\gamma_1\gamma)\rd m^{(s(\gamma_1),g)}(\gamma) \\
- \int_\cL \underline{\lambda}(f)(\gamma)\rd m^{(r(\gamma_1),c(\gamma_1)g)}(\gamma)\Big)\rd\mu^{c^{(0)}(s(\gamma_1))}(g),
\end{multline*}
and
\begin{multline*} 
A_2 = \int_{\cG\setminus L}\Big(\int_\cL \underline{\lambda}(f)(\gamma_1\gamma)\rd m^{(s(\gamma_1),g)}(\gamma)\\
-\int_\cL \underline{\lambda}(f)(\gamma)\rd m^{(r(\gamma_1),c(\gamma_1)g)}(\gamma)\Big)\rd\mu^{c^{(0)}(s(\gamma_1))}(g).
\end{multline*}
Then \eqref{eq:3} applies to give $\abs{A_1}\leq\varepsilon'$. Moreover, we recall that we have chosen $L$ such that $\mu^x(\cG\setminus L)\leq \varepsilon'$ if $x\in c^{(0)}(s(K_1))$. It follows that $\abs{A_2}\leq 2\varepsilon'$. Combining all these facts we establish the inequality\eqref{eq:toshow}.
\end{proof}

\subsubsection{Case of homomorphisms $c:\cL\to \cG$ with some discreteness assumption on the groupoid $\cG$}  Let us first recall the following result proved in  \cite{RW17}.

\begin{thm}$($\cite[Corollary 4.5]{RW17}$)$\label{thm:RW17} Let $\cL$ be a locally compact  groupoid with a Haar system and let $c :\cL \to \Gamma$ be a continuous homomorphism into a discrete group $\Gamma$. We assume   that $\Gamma$ is amenable and that $c^{-1}(e)$
 is amenable. Then the groupoid $\cL$ is amenable.
\end{thm}

In order to study amenability of partial actions, we need the more general version of this result, where $\Gamma$ is replaced by a semi-direct product $X\ltimes \Gamma$.

\begin{thm}\label{thm:RW17Bis} Let $\cL$ be a locally compact  groupoid with a Haar system and let $\cG = \Gamma\ltimes X$, where $\Gamma\actson  X$ is an amenable left action of a discrete group $\Gamma$ on a locally compact space $X$. Let  $c :\cL \to \cG$ be a continuous homomorphism. We assume  that $Ker(c)$
 is amenable. Then the groupoid $\cL$ is amenable.
\end{thm}

The proof is obtained by combining the propositions \ref{prop:RW17Thm4.2}  and \ref{prop:RW17-4.1} below\footnote{We do not know whether in this theorem  we can replace $\Gamma\ltimes X$ by any \'etale groupoid.}.

We will follow the ideas of the proof of the theorem 4.2 in \cite{RW17} which uses in a crucial way the notion of {\it skew-product}\index{skew-product}, that we first recall.

 Let $\cG$ and $\cL$ be two locally compact groupoids and let $c: \cL\to \cG$ be a continuous  homomorphism. As a set, the skew-product groupoid $\cL(c)$ associated with $c$  is defined as follows:
$$\cL(c) = \set{(\eta,\gamma, c(\gamma^{-1})\eta) : c^{(0)}(r(\gamma)) = r(\eta)}\subset \cG\times \cL\times\cG.$$
It is a closed subset of $\cG\times \cL\times\cG$, and we endow it with the topology induced by the product topology. The range of $(\eta,\gamma, c(\gamma^{-1})\eta)$ is $(\eta, r(\gamma), \eta)$ and its source is $(c(\gamma^{-1})\eta,s(\gamma),c(\gamma^{-1})\eta)$, which are identified to $(r(\gamma),\eta)$ and $(s(\gamma),c(\gamma^{-1})\eta)$ res\-pectively. The product in $\cL(c)$ is defined as follows if $(s(\gamma),c(\gamma^{-1})\eta) = (r(\gamma_1),\eta_1)$:
$$(\eta,\gamma, c(\gamma^{-1})\eta)(\eta_1,\gamma_1, c(\gamma_{1}^{-1})\eta_1) = (\eta,\gamma\gamma_1, c(\gamma_{1}^{-1})\eta_1).$$
The inverse is given by
 $$(\eta,\gamma, c(\gamma^{-1})\eta)^{-1} = (c(\gamma^{-1})\eta, \gamma^{-1}, \eta).$$
   We have $\cL(c)^{(0)} = \cL^{(0)} \!_{c^{(0)}} \!*_r \cG$, that we denote by $Z$.

Let us observe that $(\eta,\gamma, c(\gamma^{-1})\eta) \mapsto (\gamma, (s(\gamma), c(\gamma^{-1})\eta))$ is an isomorphim from $\cL(c)$ onto $\cL\ltimes Z$,
where $\gamma(s(\gamma), \eta) = (r(\gamma), c(\gamma)\eta)$.

For $(x,\eta) \in Z=\cL(c)^{(0)}$ we have $\cL(c)^{(x,\eta)} = \set{(\eta,\gamma, c(\gamma^{-1})\eta)\in \cL(c) : r(\gamma) = x}$. If $\lambda = (\lambda^x)_{x\in \cL^{(0)}}$ is a Haar system for $\cL$, then $(\lambda^{(x,\eta)})_{(x,\eta)\in Z}$ is a Haar system for $\cL(c)$, where
$$\int_{\cL(c)} f\rd\lambda^{(x,\eta)} = \int_\cL f(\eta,\gamma, c(\gamma^{-1})\eta)\rd\lambda^x(\gamma).$$

For $\gamma\in \cL$, we set $\phi(\gamma) = (r(\gamma), c(\gamma))\in \cL^{(0)} \!_{c^{(0)}} \!*_r \cG = \cL(c)^{(0)}$, and we set $W= \phi(\cL)$. This set is $\cL(c)$-invariant. Indeed, assume that $s(\eta,\gamma, c(\gamma^{-1})\eta) = \phi(\gamma_1)$, that is $s(\gamma) = r(\gamma_1)$ and $c(\gamma^{-1})\eta = c(\gamma_1)$,  then $\phi(\gamma\gamma_1) = (r(\gamma),\eta) = r(\eta,\gamma, c(\gamma^{-1})\eta)$. Moreover, since $\cL$ is second countable\footnote{We remind the reader that we always assume the locally compact groupoids to be second countable.}, hence $\sigma$-compact, and since $\phi$ is continuous, $W$ is $\sigma$-compact and therefore is a Borel subset of $\cL(c)^{(0)}$.

Note that $\cL$ is a right $\Ker(c)$-space with respect to the groupoid multiplication. It is a left $\cL(c)(W)$-space where $\phi$ is the moment map $\cL\to W$ and the left action is defined by $(\eta,\gamma, c(\gamma^{-1})\eta)\gamma_1 = \gamma\gamma_1$. Exactly as in \cite[Proposition 4.3]{RW17} one shows the following proposition. For the notion of Borel equivalence see for instance \cite[Definition 3.1]{RW17}.

\begin{prop}\label{prop:Boreleq} With the above notation, $\cL$ is a Borel equivalence between the Borel groupoids $\cL(c)(W)$ and $\Ker(c)$.
\end{prop}

\begin{prop}\label{prop:RW17Thm4.2} Let $\cL$ be a locally compact groupoid with Haar system, let $\cG = \Gamma\ltimes X$ where $\Gamma \actson X$ be a left action of a  discrete group $\Gamma$ on a locally compact space $X$ and let $c: \cL \to \cG$ be a continuous homomorphism. Assume that the groupoid $\Ker(c)$ is amenable. Then the skew-product $\cL(c)$ is amenable.
\end{prop}

\begin{proof} For the notion of Borel amenability of locally compact groupoids, we refer to \cite{Ren_13, RW17}. Recall that (topological) amenability implies Borel amenability, and that
locally compact groupoids with Haar system that are Borel amenable are amenable by \cite[Corollary 2.15]{Ren_13}.  By \cite[Theorem 3.2]{RW17}, Borel amenability is preserved under Borel equivalence. It follows from the previous proposition that the groupoid $\cL(c)(W)$ is Borel amenable.

We set $\Gamma =\set{g_n: n\in \N}$ and $S_n = \set{g_n}\times X$ for $n\in \N$. We let $S_n$ act on the right of $\cL(c)$ in the following way:
$$(\eta,\gamma, c(\gamma^{-1})\eta)\cdot S_n = (\eta\eta_1,\gamma, c(\gamma^{-1})\eta\eta_1),$$
where $\eta_1$ is the unique element of $S_n$ such that $r(\eta_1) =s(\eta)$. Let us check that this right action of $S_n$ is a groupoid automorphism.
Assume that 
$$(\eta',\gamma', c(\gamma'^{-1})\eta')\cdot S_n = (\eta,\gamma, c(\gamma^{-1})\eta)\cdot S_n.$$
Then $\gamma = \gamma'$ and $\eta'\eta'_{1} = \eta\eta_1$ where $\eta'_{1} = (g_n,x'_{1})$ and $\eta_1= (g_n, x_1)$ are the unique elements of $S_n$ such that $g_n x'_{1} = s(\eta')$ and $g_n x_1 = s(\eta)$. If $\eta' = (g',x')$ and $\eta =(g,x)$, we have $g'x' = c^{(0)}(r(\gamma)) = gx$ and $(g'g_n, x'_1) = (gg_n, x_1)$. It follows that $g'= g$ and $x' = x$. Therefore the right action of $S_n$ is injective. Its surjectivity and the fact that it is a groupoid automorphism are also tedious but straightforward verifications.

It follows that $W\cdot S_n$ is $\cL(c)$-invariant and that $\cL(c)(W)\cdot S_n$ is Borel amenable since it is isomorphic to $\cL(c)(W)$. Moreover, we have $Z = \cup_n W\cdot S_n$. Indeed, let $(\eta,x,\eta)\in Z$ and let  $\gamma\in \cL^x$. We have $r(\eta) = c^{(0)}(x)= r(c(\gamma))$. If $k$ is such that $\eta_1 = c(\gamma)^{-1} \eta \in S_k$, then $(\eta,x,\eta)= (c(\gamma), x, c(\gamma))\cdot S_k$.

Now, using Lemma 3.3 in \cite{RW17} we see that $\cL(c)$ is Borel amenable and therefore amenable.
\end{proof}

Finally, in order to prove Theorem \ref{thm:RW17Bis}, we need the following generalization of \cite[Proposition 4.1]{RW17}.

\begin{prop}\label{prop:RW17-4.1} Let $\cG$ and $\cL$ be locally compact groupoids with Haar systems and let $c:\cL\to\cG$ be a continuous homomorphism. If $\cL(c)$ and $\cG$ are both amenable, then so is $\cL$.
\end{prop}

\begin{proof} We denote by the same letter $\lambda$ the Haar systems of $\cG$ and $\cL$. We fix two compact sets $K\subset\cL$ and $L\subset \cL^{(0)}$, and $\varepsilon >0$. We need to construct a nonnegative function $f\in \cC_c(\cL)$ such that 
\begin{itemize}
\item [(1)]$\int_{\cL} f(\gamma) \rd \lambda^x \leq 1$ for every $x\in \cL^{(0)}$;
\item [(2)]$\int_{\cL} f(\gamma) \rd \lambda^x \geq 1-\varepsilon$ for every $x\in L$;
\item [(3)] $\int_\cL \abs{f(\gamma^{-1}\gamma') - f(\gamma')} \rd \lambda^{r(\gamma)}(\gamma') \leq \varepsilon$ for every $\gamma \in K$.
\end{itemize}
Since $\cG$ is amenable, we find a nonnegative function $k\in\cC_c(\cG)$ satisfying the following conditions:
\begin{itemize}
\item $\int_\cG k(\eta)\rd \lambda^y(\eta) \leq 1$ for every $y\in \cG^{(0)}$;
\item$\int_\cG k(\eta)\rd \lambda^y(\eta) \geq 1- \varepsilon/2$ for every $y\in c^{(0)}(L)$;
\item $\int_\cG \abs{k(\eta^{-1}\eta') - k(\eta')}\rd\lambda^{r(\eta)}(\eta') \leq \varepsilon/2$ for every $\eta\in c(K)$.
\end{itemize}
Similarly, since $\cL(c)$ is amenable, there exists a nonnegative function $g\in \cC_c(\cL(c))$ such that:
\begin{itemize}
\item $\int_{\cL} g(\eta,\gamma, c(\gamma^{-1})\eta) \rd\lambda^{x}(\gamma) \leq 1$ for every $(x,\eta)\in \cL(c)^{(0)}$;
\item $\int_{\cL}g(\eta,\gamma, c(\gamma^{-1})\eta) \rd\lambda^{x}(\gamma)\geq 1-\varepsilon/2$ for every \\$(x,\eta)\in (L\times\supp(k))\cap \cL(c)^{(0)}$;
\item $\int_\cL \abs{g(c(\gamma^{-1})\eta, \gamma^{-1}\gamma', c(\gamma'^{-1})\eta) - g(\eta,\gamma', c(\gamma'^{-1})\eta)}\rd^{r(\gamma)}(\gamma') \leq \varepsilon/2$ for eve\-ry $(\eta,\gamma)\in (c(K)\supp(k) \times K) \cap\cG\, _r\!\!*_{c^{(0)}\circ r} \cL$.
\end{itemize}
We define a nonnegative function $f\in \cC_c(\cL)$ by the formula
$$f(\gamma) = \int_\cG k(\eta)g(\eta,\gamma,c(\gamma^{-1})\eta)\rd\lambda^{c^{(0)}(r(\gamma))}(\eta).$$
We have
$$\int_\cL f(\gamma)\rd\lambda^x(\gamma) = \int_\cL \int_\cG k(\eta)g(\eta,\gamma,c(\gamma^{-1})\eta)\rd\lambda^{c^{(0)}(x)}(\eta)\rd\lambda^x(\gamma).$$
Condition (1) above is obviously satisfied and if $x\in L$ we have 
$$\int_\cL f(\gamma)\rd\lambda^x(\gamma) \geq (1-\varepsilon/2)^2 \geq 1-\varepsilon.$$

For the proof of Condition (3) we note that $\int_\cL  \abs{f(\gamma^{-1}\gamma') - f(\gamma')} \rd \lambda^{r(\gamma)}(\gamma')$
is the integral with respect to $\lambda^{r(\gamma)}$ of  the absolute value of the function of $\gamma'$ below:
\begin{align*}
\int_\cG k(\eta)g(\eta,\gamma^{-1}\gamma',&c(\gamma'^{-1})c(\gamma)\eta)\rd\lambda^{c^{(0)}(s(\gamma))}(\eta)\\ 
&- \int_\cG  k(\eta)g(\eta,\gamma',c(\gamma'^{-1})\eta)\rd\lambda^{c^{(0)}(r(\gamma'))}(\eta).
\end{align*}By change of variable, the first integral of this expression is equal to
$$\int_\cG k(c(\gamma^{-1})\eta)g(c(\gamma^{-1})\eta, \gamma^{-1}\gamma',c(\gamma'^{-1})\eta)\rd\lambda^{c^{(0)}(r(\gamma))}(\eta) .$$
Then we have, for $\gamma\in K$,
\begin{align*}
&\int_\cL  \abs{f(\gamma^{-1}\gamma') - f(\gamma')} \rd \lambda^{r(\gamma)}(\gamma')\leq
\Big(\int_\cL \int_\cG k(c(\gamma^{-1})\eta)\times\\
&\abs{g(c(\gamma^{-1})\eta, \gamma^{-1}\gamma',c(\gamma'^{-1})\eta) - g(\eta,\gamma',c(\gamma'^{-1})\eta)}\rd\lambda^{c^{(0)}(r(\gamma)}(\eta)\rd\lambda^{r(\gamma)}(\gamma')\Big)\\
&+ \int_\cL\int_\cG\abs{ k(c(\gamma^{-1})\eta) - k(\eta)}g(\eta,\gamma',c(\gamma'^{-1})\eta)\rd^{c^{(0)}(r(\gamma)}(\eta)\rd\lambda^{r(\gamma)}(\gamma')\\
&\leq\varepsilon/2\int_\cG k(c(\gamma^{-1})\eta)\rd^{c^{(0)}(r(\gamma)}(\eta) + \varepsilon/2\int_\cL g(\eta,\gamma',c(\gamma'^{-1})\eta)\rd\lambda^{r(\gamma)}(\gamma') \leq \varepsilon.
\end{align*}
\end{proof}

\subsubsection{Application to amenable partial actions}\label{subsec:amenpa}  Let $\boldsymbol{\beta} = ((\beta_\eta)_{\eta\in \cG}, (Y_\eta)_{\eta\in \cG})$ be a partial action of a locally compact groupoid $\cG$ on a locally compact space $(Y,p)$ fibered over $X = \cG^{(0)}$.

\begin{defn}\label{def:amenac} We say that the partial action $\boldsymbol{\beta}$  is {\it amenable} \index{amenable groupoid partial action} if the  groupoid $\cL =Y\rtimes \cG$ is amenable.
\end{defn}

Recall that $Y$ is the set of units of the semi-direct product $\cL=Y\rtimes \cG$ and that for $y\in Y$ we have $\cL^y = \set{y}\times \set{\eta\in \cG:y\in Y_\eta}$ which can be
 canonically identified to an open subset of $\cG^{p(y)}$. If $\cG$ has a Haar system $\lambda$, then $Y\rtimes \cG$ has a  Haar system defined in Proposition \ref{prop:Abad2.2}.  
A positive definite function  on $Y\rtimes \cG$ is a function $h$ such that, for every $y\in Y$, $n\in \N$, and $\eta_1,\dots,\eta_n \in \set{\eta\in \cG:y\in Y_\eta}$, the $n\times n$ matrix $[h(\eta_i^{-1} y,\eta_i^{-1}\eta_j)]$ is nonnegative. Then, the  characterizations given in Propositions \ref{prop:amen2} and \ref{prop:amen3} are easily spelled out for the groupoid $Y\rtimes \cG$. Note that when $\boldsymbol{\beta}$ is a global action, we have $\set{\eta\in \cG:y\in Y_\eta} = \cG^{p(y)}$.

 We have already observed that a global action of an amenable locally compact groupoid  is amenable. 
 
 For a partial action $\cG\actson Y$ the situation is more complicated because the cano\-nical map $c:Y\rtimes \cG \to \cG$ is not always locally proper and also because
 the groupoid $Y\rtimes \cG$ is not an extension of $\cG$ by the space $Y$ (viewed as a groupoid), in the sense  of  Definition \ref{def:extension}. Indeed  $c$ is not always strongly surjective  even if we assume that the moment map is surjective. 
 
  The following result was obtained in \cite[Proposition 7.1]{AD20} when $\cG$ is, more generally, an \'etale groupoid, but with a proof  involving more sophisticated tools that we do not consider necessary for our objective here.
 
  \begin{prop}\label{cor:amen-pa} Let $\cG =\Gamma \ltimes X$ where $\Gamma\actson X$ is an amenable left action of a discrete group $\Gamma$ on a locally compact space $X$. Let $\boldsymbol{\beta}$ be a partial action of $\cG$ on a locally compact space $Y$. Then the semi-direct product groupoid $\cL =Y\rtimes \cG$ is amenable.
\end{prop}

\begin{proof} We have $\cL = \set{(y,\eta)\in Y\times \cG:  y\in Y_{\eta}}$. We consider the homomorphism $c : (y,\eta) \mapsto \eta$ from $\cL$ to $\cG$. Then $\Ker(c) = Y$ is amenable and we use the theorem \ref{thm:RW17Bis}. 
\end{proof}

\section{\textbf{\textsc{Inner amenable groupoids}}}\label{section:IA}

We will use in Section \ref{sect:eiai} another kind of amenability\footnote{This notion was considered in \cite{AD02}  for transformation groupoids, under the name of Property (W).}, that we describe now.

 \subsection{Definition, examples and some stability properties} \label{subsection:IA}Let us first consider the case of a locally compact group $G$. Let $\lambda$ be its left regular representation and $\rho$ its right regular representation. These representations act on the Hilbert space $L^2(G)$ of square integrable functions. For $\xi\in L^2(G)$ and $s,y\in G$, we have
$$(\lambda_s \xi)(y) = \xi(s^{-1}y),\quad (\rho_s \xi)(y) = \Delta(s)^{1/2}\xi(ys)$$
 where $\Delta$ is the modular function of $G$.

\begin{defn}\label{def:IA}Following \cite[page 84]{Pat88}, we say that $G$ is {\it inner amenable} if there exists an inner invariant mean  on $L^\infty(G)$, that is, a state $m$ such that $m(sfs^{-1}) = m(f)$ for every $f\in L^\infty(G)$ and $s\in G$, where $(sfs^{-1})(y) = f(s^{-1}ys)$. This is equivalent to the existence of a net $(\xi_i)$ in $\cC_c(G)$ such that $\| \xi_i\|_2 = 1$ for all $i$ and
$\langle \xi_i, \lambda_s\rho_s \xi_i \rangle$ goes to one uniformly on compact subsets of $G$ (see \cite{LR87}). 
\end{defn}

 \begin{exs} Amenable groups are inner amenable. Every discrete group $G$ is inner amenable in this sense, since the Dirac measure $\delta_e$ is an inner invariant mean\footnote{Effros \cite{Effros} excludes this trivial inner invariant mean in his definition of inner amenability.}. On the other hand,  
a locally compact group which is either almost connected or type $I$  is  inner amenable if and only if it is amenable (see  \cite{LR87}, \cite[Remark 5.10]{AD02}). For instance, the type I, totally disconnected group $G(\Q_p)$, where $G$ is a linear algebraic group over $\Q_p$, $p$ a prime number, is not inner amenable
 
 There exist non-discrete, totally disconnected, non-amenable  groups that are inner amenable. Consider for example any product of an infinite compact totally disconnected group by a discrete non-amenable group. 
 
 Finally, let us mention that inner amenability of locally compact groups is preserved under passing to quotient \cite[Proposition 6.2]{LP}, closed subgroups \cite[Corollary 3.3]{CT}, but not under extension \cite[Proposition 4.14]{Man}.
 \end{exs}
 
 Let  $(\xi_i)$ be a net of elements of $\cC_c(G)$ satisfying the  condition stated in the definition \ref{def:IA}. Let us define $f_i$ on $G\times G$ by
$$f_i(s,t) = \langle \xi_i, \lambda_s \rho_t \xi_i \rangle.$$
 Then $f_i$ is a positive definite function on the product group $G\times G$. For each compact subset $K$ of $G$, the intersections with $K\times G$ and with $G\times K$ of the support of $f_i$ are compact. Moreover, $\lim_i f_i = 1$ uniformly on compact subsets of the diagonal.

 \begin{thm}\cite{CT} Let $G$ be a locally compact group. The following conditions are equivalent:
 \begin{itemize}
 \item[(i)] $G$ is inner amenable;
 \item[(ii)]  there exists a net $(f_i)$ of positive definite functions on $G\times G$ satisfying the above mentioned properties.
 \end{itemize}
 \end{thm}

\subsubsection{Definition of inner amenability for groupoids} This theorem motivates our definition.  We first  need to introduce the following notions.

\begin{defn} Let $\cG$ be a locally compact groupoid. Following \cite[Definition 2.1]{Roe}, we say that a closed subset $A$ of $\cG \times \cG$ is {\it proper}  if for every compact subset $C$ of $\cG$, the sets $(C\times \cG) \cap A$ and $(\cG \times C) \cap A$ are compact. We say that a function $f: \cG\times \cG \to \C$ is {\it properly supported} \index{properly supported function} if its support is  proper.
\end{defn}

Given a groupoid $\cG$, let us recall that a function  $f:\cG\times \cG\to \C$ on  the product groupoid $\cG\times \cG$ is positive definite if and only if, for every integer $n$, $(x,y)\in \cG^{(0)}\times \cG^{(0)}$ and $\gamma_1,\dots, \gamma_n\in \cG^x$, $\eta_1,\dots,\eta_n\in \cG^y$, the matrix $[f(\gamma_i^{-1}\gamma_j, \eta_i^{-1}\eta_j)]_{i,j}$ is nonnegative.

\begin{defn}\label{def:wia} We say that a locally compact groupoid $\cG$ is {\it  inner amenable}\index{inner amenable l. c. groupoid}  if for every compact subset $K$ of $\cG$ and for every $\varepsilon >0$
there exists a continuous   positive definite function $f$ on the product groupoid $\cG\times \cG$, properly supported,  such that  $f(x,y) \leq 1$ for $x,y\in \cG^{(0)}$ and $|f(\gamma,\gamma) - 1| < \varepsilon$ or all $\gamma \in K$.
\end{defn}

Note that $f$ is bounded by $1$ (see Equation \ref{eq:posdef0}). Its restriction to the diagonal  is a continuous positive definite function on $\cG$. Therefore, if  $f$ was required to be compactly supported, then $\cG$ would be amenable.

  Every amenable locally compact groupoid $\cG$ with Haar system is  inner amenable since the groupoid $\cG\times \cG$ is amenable and therefore Proposition \ref{prop:amen2} applies to this groupoid.

\subsubsection{Some permanence properties} We will use several times  the following observation: let $f$ be a positive definite function on a product groupoid $\cG\times \cG$, and let $\varphi$ be a function on $\cG^{(0)}$; then the function 
\begin{equation}\label{eq:posdef}
f_\varphi : (\gamma_1,\gamma_2) \mapsto \varphi\circ r(\gamma_1)\varphi\circ s(\gamma_1)\varphi\circ r(\gamma_2)\varphi\circ s(\gamma_2)
f(\gamma_1, \gamma_2)
\end{equation}
is positive definite.

\begin{prop}\label{prop:wai} Let $c : \cL \to {\mathcal G}$ be a locally proper continuous homomorphism
between locally compact groupoids. Assume that ${\mathcal G}$ is  inner amenable. Then the groupoid $\cL$ is also  inner amenable.
\end{prop}

\begin{proof} Let $K$ be a compact subset of $\cG$ and let $\varepsilon >0$ be given.  We choose a continuous function $\varphi : \cL^{(0)} \to [0,1]$ with compact support $K'$, such that $\varphi(x) = 1$ if $x\in r(K)\cup s(K)$. Let $f:\cG\times \cG \to \C$ be a continuous  positive definite function 
on the product groupoid $\cG\times\cG$, properly supported, such that $f(x,y) \leq 1$  for $x,y\in \cG^{(0)}$ and  $|f(\gamma,\gamma) - 1| < \varepsilon$
for all $\gamma \in c(K)$. Then $h= f\circ(c\times c)$ is positive definite on $\cL\times\cL$ as well as $F= h_\varphi$.
Obviously we  have   $\abs{F(\gamma,\gamma) -1} < \varepsilon$ if $\gamma\in K$.
 Let us check that $F$ is properly supported. We denote by $S_F$ and $S_f$ the supports of $F$ and $f$ respectively. We fix a compact subset $K_1$ of $\cL$. Let $C$ be a compact subset of $\cG$ such that 
$$( c(K_1)\times \cG)\cap S_f \subset C\times C.$$ 
Then $\big(K_1\times \cL\big)\cap S_F$ is contained into $\big(c^{-1}(C)\cap \cL(K')\big)\times \big(c^{-1}(C)\cap \cL(K')\big)$ and therefore is compact. The case of $\big(\cL  \times K_1 )\cap S_F $ is similar.
  \end{proof}

Applying Proposition \ref{prop:wai}  to the examples given in \ref{exs:loc_proper} we obtain the following corollaries.

\begin{cor}\label{cor:induc} Let $\cG$ be an  inner amenable locally compact groupoid.
\begin{itemize}
\item[(i)] Every closed subgroupoid of $\cG$ is  inner amenable.
\item[(ii)] Let $E$ be a locally compact subset of $\cG^{(0)}$ whose reduction $\cG(E)$ is a subgroupoid. Then $\cG(E)$ is   inner amenable.
\end{itemize}
\end{cor}

\begin{rem} 
In particular, we see that if $\cG$ is  inner amenable, all its isotropy subgroups must be amenable whenever they are connected. 
\end{rem}

\begin{cor}\label{cor:transf} Let $\cG$ be an  inner amenable locally compact groupoid and $(Y,p)$ a $\cG$-space.
Then the semi-direct product groupoid $Y\rtimes \cG$ is  inner amenable.
\end{cor}

The following more general result  is a consequence of the propositions \ref{prop:wai} and \ref{prop:locprop}.

\begin{cor}\label{cor:transf1} Let $c:\cL\to \cG$ be a continuous strongly surjective homomorphism from a locally compact groupoid $\cL$ to an inner amenable locally compact groupoid $\cG$. If  $\Ker(c)$ is a proper groupoid, then $\cL$ is inner amenable.
\end{cor}

We note that this result is not true if we only assume that $\Ker(c)$ is amenable: there is in \cite[Proposition 4.14]{Man} an example of an extension of the discrete group $\F_6$ by $\R^2$ that is not inner amenable.

\begin{cor}\label{cor:amplinamen} Let $\cG$ be a locally compact groupoid,  and let $\varphi : T\to \cG^{(0)}$ be a continuous, open, surjective map. Assume that $\cG$ is inner amenable. Then the ampliation $\cG^\varphi$  is inner amenable.
\end{cor}

\begin{proof} Recall that $$\cG^\varphi = \set{(x,\gamma,y)\in T\times\cG\times T: \varphi(x) = r(\gamma), \varphi(y) = s(\gamma)}.$$
The inner amenability of $\cG^\varphi$ follows from the fact that the map $(x,\gamma,y) \mapsto \gamma$ from $\cG^\varphi $ into $\cG$ is locally proper.
\end{proof}

\begin{rem}\label{rem:??}  We do not know whether the inner amenability of $\cG^\varphi$ implies the same property for $\cG$. We point out that these two groupoids are  equivalent, but  whether inner amenability is preserved in general under equivalence of groupoids is still an open problem.
\end{rem}

\begin{prop}\label{prop:IAproduct} Let $\cG$ and $\cH$ be two locally compact groupoids. Then the groupoid product $\cG\times \cH$ is inner amenable if and  only if so are $\cG$ and $\cH$.
\end{prop}

\begin{proof} In one direction we use the fact that $\cG$ and $\cH$ can be identified to closed subgroupoids of $\cG\times\cH$. For the converse, assume that $\cG$ and $\cH$ are inner amenable. For $f: \cG\times\cG \to \C$ and $h:\cH\times\cH \to \C$ we set $F: \big((\gamma,\eta),(\gamma',\eta')\big)\mapsto f(\gamma,\gamma')h(\eta,\eta')$ from $(\cG\times\cH)\times(\cG\times\cH)$ to $\C$. It is easy to construct $F$ satisfying the conditions stated in Definition \ref{def:wia} for $\cG\times\cH$ from the analogous conditions for $f$ and $h$.
\end{proof}

\begin{prop}\label{prop:inductivelim}  Let $(\cG_i)_{i\in I}$ be a directed set of open and closed inner amenable subgroupoids of a locally compact groupoid $\cG$ such that $\bigcup_{i\in I} \cG_i = \cG$. Then $\cG$ is inner amenable. \end{prop}

\begin{proof} Let $K$    be a compact subset of $\cG$ and $\varepsilon >0$. Then we must find a conti\-nuous function $f: \cG\times \cG \to \C$, positive  definite and properly supported such that $\sup_{\gamma\in K}\abs{f(\gamma,\gamma) -1}\leq \varepsilon$ and $f(x,y) \leq 1$ for $x,y\in \cG^{(0)}$.

Let $i_0\in I$ such that $K\subset \cG_{i_0}$.  There exists a continuous positive definite function $f_0$ on $\cG_{i_0}\times \cG_{i_0}$, bounded by $1$ on the units,  properly supported and  such that $\sup_{\gamma\in K}\abs{f_0(\gamma,\gamma) -1}\leq \varepsilon$. We extend $f_0$ to a function $f$ on $\cG\times\cG$ by giving it the value $0$ outside $\cG_{i_0}\times \cG_{i_0}$. Then $f$ is continuous and positive definite (see the next lemma). It remains to show that $f$ is properly supported. Let $C$ be a compact subset of $\cG$. Then we have
\begin{align*}
\big(C\times \cG\big) \cap \supp(f) & = \big(C\times \cG\big) \cap \big(\cG_{i_0}\times \cG_{i_0}\big) \cap \supp(f_0)\\
&=\Big( \big(C\cap\cG_{i_0}\big)\times \cG_{i_0}\Big)\cap \supp(f_0).
\end{align*}
It follows that $\big(C\times \cG\big) \cap \supp(f)$ is compact. The case of  $\big(\cG\times C\big) \cap \supp(f) $ is similar.
\end{proof}

\subsubsection{Other examples} Unfortunately, we lack tools in order to construct  positive definite functions on groupoids. The following well-known lemma is sometimes useful.

\begin{lem}\label{lem:subgroupoid} Let $\cG$ be a groupoid and let $f$ be a positive definite function on a subgroupoid $\cH$ of $\cG$. Let $h:\cG\to \C$  be such that $h(\gamma) = f(\gamma)$ if $\gamma \in \cH$ and $h(\gamma) = 0$ otherwise. Then $h$ is  positive definite. In particular, the characteristic function of $\cH$  is 
 positive definite.
\end{lem}
 
\begin{proof} Let $x\in \cG^{(0)}$, $\gamma_1,\cdots,\gamma_n \in \cG^x$ and $\lambda_1,\cdots,\lambda_n\in \C$. By rearranging the elements of $I= [1,\cdots,n]$ we can find a partition $I_1,\cdots, I_k$ of $I$, where $I_1 = [1,\cdots,j_1]$ is the set of indices $j$ such that $\gamma_1^{-1}\gamma_j\in \cH$, $I_2= [j_1+1,\cdots j_2]$ is the set of indices $j$ such that $\gamma_{j_1+1}^{-1}\gamma_j \in \cH$ and so on. Then we have
$$\sum_{i,j\in I} \overline{\lambda_i}\lambda_j h(\gamma_i^{-1}\gamma_j) = \sum_{i,j\in I_1}\overline{\lambda_i}\lambda_j f(\gamma_i^{-1}\gamma_j)+\cdots +   \sum_{i,j\in I_k}\overline{\lambda_i}\lambda_j f(\gamma_i^{-1}\gamma_j)\geq 0.$$
\end{proof}

\begin{lem}\label{lem:opclos} Let $\cG$ be a locally compact groupoid. We assume the existence of an open and closed subgroupoid $\cH$ of $\cG\times\cG$ that contains the diagonal $\Delta_\cG$ of $\cG\times\cG$ and which has in addition the following property: for every compact subset $K$ of $\cG^{(0)}$ and every compact subset $C$ of $\cG(K)$, the sets $(C\times\cG(K))\cap \cH$ and $(\cG(K)\times C)\cap \cH$ are compact. Then $\cG$ is inner amenable.
\end{lem}

\begin{proof} Let $\varphi: \cG^{(0)} \to [0,1]$ be a continuous function with compact support $K$ and let $f_\varphi$ be the continuous positive definite function on $\cG\times\cG$ defined in the formula \eqref{eq:posdef}, where $f$ is the characteristic function of $\cH$.  It is a positive definite continuous function since the characteristic function of $\cH$ is so. Moreover  $f_\varphi$ is properly supported since for every compact subset $C$ of $\cG$ we have
$$(C\times \cG)\cap \supp(f_\varphi) \subset \Big(\big(C\cap\cG(K)\big)\times\cG(K)\Big)\cap \cH.$$
The fact that $(\cG\times C)\cap \supp(f_\varphi)$ is compact is proved similarly.

Morever, given a compact subset $C$ of $\cG$, we can choose $\varphi$ such that $f_\varphi(\gamma,\gamma)=1$ for $\gamma\in C$.
\end{proof}

The simplest example is for $\cG$ a discrete group $\Gamma$, where we take for $\cH$ the diagonal of $\Gamma\times\Gamma$. This is a particular case of the following situation.

\begin{prop}\label{prop:newin} Let $c : \cG \to {\mathcal \cG_1}$ be a locally proper continuous homomorphism
between locally compact groupoids.  We assume that 
$$\cH = \set{(\gamma,\eta)\in \cG\times \cG: c(\gamma) = c(\eta)}$$
 is an open subset of $\cG\times\cG$. Then the groupoid $\cG$ is inner amenable.
\end{prop}

\begin{proof} Since $\cH$ is an open and closed subgroupoid of $\cG\times\cG$ which contains the diagonal $\Delta_\cG$, it suffices to show that for every compact subset $K$ of $\cG^{(0)}$ and every compact subset $C$ of $\cG(K)$  the sets $(C\times\cG(K))\cap \cH$ and $(\cG(K)\times C)\cap \cH$ are compact. This is immediate, since for instance
$$(C\times\cG(K))\cap \cH \subset C\times \big(\cG(K)\cap c^{-1}(c(C))\big)$$
with $c(C)$ compact.
\end{proof}

Compared with Proposition \ref{prop:wai}, the important point here is that the target groupoid $\cG_1$ is not assumed to be inner amenable.

\begin{cor}\label{prop:moreInAmen} Let $\cG = G\ltimes Y$ be a semi-direct product groupoid where $G$ is a locally compact group  acting to the left on the  locally compact space $Y$. Let $A$ be a  locally compact  subset of $Y$ such that $\cG(A)$ is a subgroupoid of $\cG$. We denote by $c$ the restriction to $\cG(A)$ of the first projection $G\times Y\to G$. We assume  that $c(\cG(A))$ is a discrete subset of $G$. Then the groupoid $\cG(A)$ is inner amenable.
\end{cor}

\begin{proof}  We first observe that $c$ is locally proper. We set $E = c(\cG(A))$. The map $\big((t,x), (s,y)\big) \mapsto (t,s)$ from $\cG(A)\times\cG(A)$ into $G\times G$ is continuous and its range is contained into the discrete set $E\times E$. It follows that the subgroupoid $\cH = \set{\big((t,x),(t,y)\big): (t,x), (t,y)\in  \cG(A)}$ of $\cG(A)\times \cG(A)$ is open and we apply the proposition \ref{prop:newin}.
\end{proof}

The fact that the range and source of $\cG(A)$ are open is not important in this proof. This is ensured when $A$ is open, or invariant, or when the restriction of the source of $\cG$ to $\cG^A$ is open (see Lemma \ref{lem:reducproperty}).

 \begin{rem}\label{rem:partial??}   Let us recall  that if $A$ is an open subset of $Y$, the reduction $\cG(A)$ of the groupoid $\cG= G \ltimes_\alpha Y $ is the semi-direct product groupoid $G\ltimes_\beta A$ associated to the partial action $\boldsymbol{\beta } = (\set{A_g}_{g\in G}, \set{\beta_g}_{g\in G})$ where, for $g\in G$, we set $A_g= A\cap \alpha_g(A)$ and where we denote by $\beta_g$ the restriction of $\alpha_g$ to $A_{g^{-1}}$. Applying the  corollaries \ref{cor:induc} and \ref{cor:transf}, we see that the groupoid $\cG(A) = G\ltimes_\beta A$   is inner amenable if $G$ is inner amenable. 

We emphasize that the interesting point in the corollary \ref{prop:moreInAmen} is that it holds without assuming that $G$ is inner amenable. Its origin comes  from \cite{Favre} where the following example is considered.
\end{rem}

\begin{ex}\label{ex:Favre}  Let $G$ be a locally compact space. Let
 $\Lambda$ a be subset of $G$ such that   $\Lambda^{-1} \Lambda$ is uniformly discrete\footnote{This extends the case of a discrete subgroup $\Lambda$.}. This implies that $\Lambda$ is also uniformly discrete. Such a set $\Lambda$ is said to have finite local complexity. 
 
 We keep the notation of the subsection \ref{subsec:FER}, with $C$ replaced by $\Lambda$. We still set $A= \Omega_0(\Lambda)$. The proposition \ref{prop:UD} shows that the groupoid $\cG(A)$ is \'etale.  As observed in the remark \ref{rem:UD}, $A$ is the closure of the set $\set{\lambda^{-1}\Lambda : \lambda\in \Lambda}$ in the Chabauty-Fell topology. Since the set of closed subsets of $G$ contained in $\Lambda^{-1}\Lambda$ is closed, we see that  $\bigcup_{P\in A}P= \Lambda^{-1}\Lambda$. 
 
 Next, since $c(\cG(A))= \set{g\in G: \exists P\in A, g^{-1} \in P}$, we see that 
 $$c(\cG(A))\subset \bigcup_{P\in A} P^{-1} = \Lambda^{-1}\Lambda.$$
 Therefore $c(\cG(A))$ is discrete and the \'etale groupoid $\cG(A)$ is inner amenable\footnote{Recall that when $\Lambda$ is a discrete subgroup, we have $\cG(A) = \Lambda$.}.

Interesting examples of such subsets $\Lambda$, called model sets, in non inner amenable groups $G$ are given in \cite{BHP}. They provide a natural generalization of model sets in locally compact abelian groups as defined by Y. Meyer and used as mathema\-tical models of quasicrystals.  For instance it is explained how  such sets exist in any  semisimple Lie group $G$ without compact factors. The  starting point of the construction is the data  of a  lattice $\Gamma$ in a product $G\times H$, where $H$ is another locally compact group. We refer to \cite{BHP} for the details of such constructions of $\Lambda$. It is shown in \cite[Corollary 2.11]{BHP} that $\emptyset \in \Omega(\Lambda)$ if and only if the lattice $\Gamma$ is non-uniform and that in any case there is a unique $G$-invariant probability measure on $\Omega(\Lambda)^\times$ (see \cite[Theorem 3.4]{BHP}). Nice examples are given from the embedding of $\Gamma = SL_n(\Z[\sqrt{2}])$ into $SL_n(\R)\times SL_n(\R)$ (starting from the embedding $a+b\sqrt{2} \mapsto (a+b\sqrt{2}, a-b\sqrt{2})$ from $\Z[\sqrt{2}]$ into $\R\times \R$) or the embedding from $SL_n(\Z[1/p])$ into  $SL_n(\Q_p)\times SL_n(\R)$.

We note that  the groupoid $\cG = G\ltimes \Omega(\Lambda)^\times$ is inner amenable  in such examples (by Remark \ref{rem:UD2}) although $G$ is not inner amenable.

Let us observe that if we take $H$ to be trivial in this construction, then $\Lambda = \Gamma$ and $\Omega(\Lambda)^\times$ is the $G$-space $G/\Gamma$.  Note that we have $\Omega(\Gamma)^\times = \Omega_0(\Gamma)$ if and only if the lattice $\Gamma$ is uniform.
\end{ex}

 Let us come back to the general case studied in the corollary \ref{prop:moreInAmen}. For $t\in G$ we set $S_t = (\set{t}\times Y)\cap \cG(A)$. Note that $S_t \not =\emptyset$ if and only if $t\in E$. The family $\cS = \set{S_t:t\in G}$ has the properties described in the following proposition, which suffice to imply inner amenability.
 
 \begin{prop}\label{prop:moremoreInAmen} Let $\cG$ be a locally compact groupoid that is covered by a locally finite familiy $\cS$ of closed and open subsets such that 
 \begin{itemize}
 \item[(i)] $S\in \cS\Rightarrow S^{-1}\in \cS$ ;
 \item[(ii)] given $S, T\in \cS$ there exists $R\in \cS$ such that $ST\subset R$;
 \item[(iii)] for every compact subset $K$ of $\cG^{(0)}$ and every $S\in \cS$, the intersection $\cG(K)\cap S$ is compact.
 \end{itemize}
 \end{prop}
 
 \begin{proof} We set $\cH = \bigcup_{S\in \cS} S\times S$. It is an open subgroupoid of $\cG\times\cG$ which contains $\Delta_\cG$. Let us show that $\cH$ is closed. Let $(\gamma,\eta)\not\in \cH$. Let $S_1,\cdots, S_k$ be the elements of $\cS$ which contain $\gamma$ and let $T_1,\cdots, T_l$ be the elements of $\cS$ which contain $\eta$. The two sets $\set{S_1,\cdots, S_k}$ and $\set{T_1,\cdots, T_l}$ are disjoint. Since the elements of $\cS$ are closed, and since $\cS$ is locally finite there exists a neighborhood $V$ of $\gamma$ that  intersects no other element of $\cS$ than $S_i$, $i= 1,\cdots, k$. We choose similarly a neighborhood $W$ of $\eta$ that only intersects $T_1,\cdots, T_l$. Then we have $(V\times W)\cap \cH = \emptyset$.
 
 Now, let $C$ be compact subset of $\cG$ and let $K$ be a compact subset of $\cG^{(0)}$. Then
 $$(C\times \cG(K)) \cap \cH = \bigcup_{S\in \cS} (C\cap S) \times (\cG(K)\cap S).$$
 Since $C$ only intersects a finite number of elements of $\cS$, it follows from (iii) that $(C\times \cG(K)) \cap \cH$ is compact, and the same conclusion holds for $(\cG(K)\times C) \cap \cH$.
 \end{proof}
 
 \begin{cor}\label{cor:moremoreInAmen}  Every locally compact groupoid that is covered by a locally finite inverse semigroup of closed and open bisections is inner amenable.
 \end{cor}

\subsection{About invariance under equivalence}\label{subsec:aboutIA} Let us consider first the case of similarities.

\begin{prop}\label{prop:midsim} Let $\cG$ and $\cH$ be two locally compact groupoids, and let $f: \cG\to \cH$, $g: \cH\to \cG$ be two continuous homomorphisms. We assume that there exists a continuous map  $\theta: \cG^{(0)} \to \cG$ such that $\theta\circ r(\gamma)g\circ f(\gamma) = \gamma\theta\circ s(\gamma)$ for all $\gamma\in \cG$. Then, $\cG$ is  inner amenable whenever $\cH$ is inner amenable.
\end{prop}

\begin{proof} It suffices to check that $f$ is locally proper. Let $K$ be a compact subset of $\cG$  and let $C$ be a compact subset of $\cH$. Then we have
$$\cG(K)\cap f^{-1}(C) \subset \theta(K)g(C)\theta(K)^{-1}$$
where $\theta(K)g(C)\theta(K)^{-1}$ is compact. 
\end{proof}

\begin{cor}\label{cor:similar} Inner amenability is preserved under similarity.
\end{cor}

Using Theorem \ref{thmCRS} and the proposition \ref{prop:IAproduct}, we obtain the result below.

\begin{prop}\label{cor:ample} Let $\cG$ and $\cH$ be two ample equivalent groupoids with $\sigma$-compact sets of units. Then $\cG$ is inner amenable if and only if so is $\cH$.
\end{prop}

In particular, every ample transitive groupoid is inner amenable, since it is equi\-valent to a discrete group.

We end this section with the study of a particular case where the inner amena\-bility of an ampliation of $\cG$ is im\-plied by the inner amenability of $\cG$.  In the following lemma, we keep the notation of the end of Subsection  \ref{subsec:gen-mor}.

\begin{lem}\label{lem:inamenampl} Let $\cG$ be a locally compact groupoid, $(U_i)_{i\in I}$ a locally finite covering of $\cG^{(0)}$ by open subsets, and let $\cG'$ be the ampliation of $\cG$ defined by the natural map $f$ from $T=\sqcup_i U_i$ onto $\cG^{(0)}$. Assume that $\cG'$ is inner amenable. Then $\cG$ is also inner amenable.
\end{lem}

\begin{proof} Recall that $\cG' = \bigsqcup_{i,j} \set{(i,\gamma,j): \gamma \in \cG_{U_j}^{U_i} }$. 
 A positive definite kernel $h$ on $\cG'\times\cG'$ is such that if we fix $x,x'\in T$, and elements $\widetilde{\gamma_k} = (x,\gamma_k,y_k)$, $\widetilde{\gamma_{k}' }= (x',\gamma_{k}', y_{k}')$ of $\cG'$ , with $k= 1,\cdots, n$, then for every $\lambda_1,\cdots,\lambda_n\in \C$ we have

$$\sum_{k,l =1}^{n} \overline{\lambda_k}\lambda_l h\big((y_k,\gamma_{k}^{-1}\gamma_l),  y_{l}), (y_{k}', \gamma_{k}'^{-1}\gamma_{l}', y_{l}')\big)
=\sum_{k,l =1}^{n} \overline{\lambda_k}\lambda_l h\big(\widetilde{\gamma_{k}}^{-1}\widetilde{\gamma_l}, \widetilde{\gamma_{k}'}^{-1}\widetilde{\gamma_{l}'}\big) \geq 0.$$

 Let $(\varphi_i)_i$ be a partition of unit subordinated to the covering $(U_i)$ and let $h$ be a continuous properly supported positive definite kernel on $\cG'\times\cG'$. For $(\gamma_1,\gamma_2) \in \cG\times \cG$ we set
 $$h'(\gamma_1,\gamma_2) = \sum_{i,j} \varphi_i(r(\gamma_1))^{1/2}\varphi_j(s(\gamma_1))^{1/2}\varphi_i(r(\gamma_2))^{1/2}\varphi_j(s(\gamma_2))^{1/2} h\big((i,\gamma_1,j), (i,\gamma_2,j)\big).$$
 Let us check that this continuous function is positive definite on $\cG\times\cG$. We fix $x,y\in \cG^{(0)}$, an integer $n$ and $\gamma_1,\cdots,\gamma_n\in \cG^x$, $\gamma_{1}',\cdots,\gamma_{n}' \in\cG^y$. Given $\lambda_1,\cdots,\lambda_n\in \C$, we have to show that
 $$\sum_{k,l}\overline{\lambda_k}\lambda_l h'(\gamma_{k}^{-1}\gamma_l, \gamma_{k}'^{-1}\gamma_{l}') \geq 0,$$
 where $ h'(\gamma_{k}^{-1}\gamma_l, \gamma_{k}'^{-1}\gamma_{l}')$ is equal to
 $$\sum_{i,j} \varphi_i(s(\gamma_k))^{1/2}\varphi_j(s(\gamma_l))^{1/2}\varphi_i(s(\gamma_{k}'))^{1/2}\varphi_j(s(\gamma_{l}'))^{1/2} h\big((i,\gamma_{k}^{-1}\gamma_l,j), (i,\gamma_{k}'^{-1}\gamma_{l}',j)\big).$$
We note that that the sources of $\gamma_1,\cdots,\gamma_n, \gamma_{1}',\cdots,\gamma_{n}'$ belong to a finite set of the covering $(U_i)_i$ and therefore  the above sum is limited to $i,j$ in a finite subset $F$ of $I$. For $i\in F$ and $k = 1\cdots n$ we set $\lambda_{i,k} =  \varphi_i(s(\gamma_k))^{1/2}\varphi_i(s(\gamma_{k}'))^{1/2}$. We choose $i_0$ such that $x\in U_{i_0}$ and $i_{0}'$ such that $y\in U_{i_{0}'}$ and we set $\widetilde\gamma_{i,k} = (i_{0},\gamma_{k},i)$, $\widetilde\gamma_{i,k}' = (i_{0}',\gamma_{k}',i)$. Then we have
$$h'(\gamma_{k}^{-1}\gamma_l, \gamma_{k}'^{-1}\gamma_{l}')= \sum_{i,j\in F} \lambda_{i,k}\lambda_{j,l} h(\widetilde\gamma_{i,k}^{-1}\widetilde\gamma_{j,l}, \widetilde\gamma_{i,k}'^{-1}\widetilde\gamma_{j,l}') .$$
Therefore
$$\sum_{k,l}\overline{\lambda_k}\lambda_l h'(\gamma_{k}^{-1}\gamma_l, \gamma_{k}'^{-1}\gamma_{l}') = \sum_{i,j\in F, 1\leq k,l\leq n} \overline{\lambda_k} \lambda_{i,k} \lambda_l\lambda_{j,l} h(\widetilde\gamma_{i,k}^{-1}\widetilde\gamma_{j,l}, \widetilde\gamma_{i,k}'^{-1}\widetilde\gamma_{j,l}')\geq 0.$$

 Let us show now that $h'$ is properly supported.  For $(\gamma_1,\gamma_2)\in \cG\times \cG$, we set 
 $$\widetilde{h_{i,j}}(\gamma_1,\gamma_2) =  \varphi_i(r(\gamma_1))^{1/2}\varphi_j(s(\gamma_1))^{1/2}\varphi_i(r(\gamma_2))^{1/2}\varphi_j(s(\gamma_2))^{1/2} h\big((i,\gamma_1,j), (i,\gamma_2,j)\big) .$$
 Let $K$ be a compact subset of $\cG$. We have to show that 
 $$(K\times \cG) \bigcap \hbox{Supp}\big(\sum_{i,j} \widetilde{h_{i,j}}\big)$$
 is compact.      
  Since $r(K)$ and $s(K)$ are compact the sum in this expression is finite, say limited to $i,j\in F$, where $F$ is a finite subset of $I$. Therefore it suffices to show that for $i,j\in F$ the space $(K\times \cG) \bigcap \hbox{Supp}\big(\widetilde{h_{i,j}}\big)$ is compact. Since $h$ is properly supported, the set of $(i,\gamma_2,j)\in \cG'$ such that there exists $\gamma_1\in K$ with $h\big((i,\gamma_1,j), (i,\gamma_2,j)\big)\not=0$ is relatively compact in $\cG'$, and thus the set $E$ of such $\gamma_2$ is relatively compact in $\cG_{U_j}^{U_i}$. The conclusion follows because $(K\times \cG) \bigcap \hbox{Supp}\big(\widetilde{h_{i,j}}\big)\subset K\times E$.
 
 Finally, let $K$ be a compact subset of $\cG$ and $\varepsilon >0$. We show that we can choose $h$ such that $\sup_{\gamma\in K}\abs{h'(\gamma,\gamma) - 1}\leq \varepsilon$. Since the covering $(U_i)$ is locally finite, there exists a finite subset $F$ of $I$ such that $r(K)\cap \supp(\varphi_i) = \emptyset=s(K)\cap \supp(\varphi_i)$ if $i\notin F$. Therefore $r(K)= \bigcup_{i\in F} r(K)\cap \supp(\varphi_i)$ and $s(K)= \bigcup_{i\in F} s(K)\cap \supp(\varphi_i)$. It follows that, if we set $K_j^{i} = K\cap r^{-1}(\supp(\varphi_i)) \cap s^{-1}(\supp(\varphi_j))$, we have $K = \bigcup_{i,j\in F} K_j^{i}$ where $K_j^{i}$ is a compact subset  of $\cG_{U_j}^{U_i}$.
 We choose  $h$ such that $\abs{h\big((i,\gamma,j), (i,\gamma,j)\big) -1}\leq \varepsilon$ for $i,j\in F$ and $\gamma\in K_j^{i}$.  We have, for $\gamma\in K$,
 $$\abs{h'(\gamma,\gamma)-1} =\abs{ \sum_{i,j\in F} \varphi_i(r(\gamma))\varphi_j(s(\gamma)) \Big(h\big((i,\gamma,j), (i,\gamma,j)\big)-1\Big)}\leq \varepsilon.$$
\end{proof}

In the following theorem, in the case where $\rho$ has a global continuous section, we recover the proposition \ref{prop:wai}.

\begin{thm}\label{thm:inamen} Let $(Z,\rho,\sigma)$ be a  generalized locally proper morphism from $\cG$ into $\cH$. We assume that $\rho$ has local sections. If $\cH$ is inner amenable, then $\cG$ is inner amenable.
\end{thm}

\begin{proof}  We consider a locally finite covering $(U_i)_{i\in I}$ of $\cG^{(0)}$ by open subsets such for each $i$ there is a local section $p_i: U_i \to \cG$ of $\rho$. We keep the notation following the definition \ref{defn:genmorloc}. It suffices to show that the  homomorphism $c:\cG'\to \cH$ is locally proper and then to use Proposition \ref{prop:wai} and Lemma \ref{lem:inamenampl}. Let $K$ be a compact subset of $T= \sqcup_i U_i$ and let $C$ be a compact subset of $\cH$. We write $K$ as $K = \sqcup_{i\in F} K_i$ where  $K_i$ is a compact subset of $U_i$  and $F$ is a finite subset of $I$. We have to show that  the set $S$ of $(i,\gamma,j)$, with $i,j\in F$ and $r(\gamma)\in K_i$, $s(\gamma)\in K_j$, such that $g_{i,j}(\gamma)\in C$, is compact. Recall that $\gamma p_i(s(\gamma)) = p_j(r(\gamma))g_{i,j}(\gamma)$. For $(i,\gamma,j)\in S$, we observe that
$\gamma$ is contained in the set $\set{\eta: \eta p_i(s(K))\cap p_j(r(K))C\not=\emptyset}$.
The generalized morphism being locally proper, the left action of $\cG$ on $Z$ is proper and therefore this set is compact. It follows that $c$ is locally proper.
\end{proof}

\begin{cor}\label{cor:inamen} Let $(Z,\rho,\sigma)$ be a $(\cG$-$\cH)$-equivalence. We assume that $\rho$ and $\sigma$ have local sections. Then $\cG$ is inner amenable if and only if $\cH$ is inner amenable.
\end{cor}

\begin{cor}\label{cor:subgroup} We keep the notation of the proposition \ref{lem:reducproperty1} and we assume that $s: \cG^A\to \cG^{(0)}$ is a surjective local homeomorphism.
Then $\cG(A)$ is inner amenable if and only if so is $\cG$.
\end{cor}

\begin{proof} Recall that $(\cG^A, r, s)$ is an equivalence between the groupoids $\cG(A)$ and the groupoid $\cG$, where $r: \cG^A \to A$ and $s : \cG^A\to \cG^{(0)}$ are the restrictions of the range and source maps of $\cG$ to $\cG^A$. We apply the previous corollary.
 \end{proof}
 
 Now we keep the notation of Proposition \ref{prop:UD}.
 
 \begin{cor}\label{cor:UD2} Let $\Lambda$ be a uniformly discrete subset of a locally compact group $G$. Then the groupoid $\cG(\Omega_0(\Lambda))$ is  inner amenable if and only if so is the groupoid $G\ltimes \Omega(\Lambda)^\times$.
 \end{cor}
\begin{proof} This follows from the previous corollary, since we have shown in Proposition \ref{prop:UD} that $s: \cG^A \to  \Omega(\Lambda)^\times$ is a local homeomorphism, where $A = \Omega_0(\Lambda)$.
\end{proof}

\begin{rem}\label{rem:UD2} Let us assume that $\Lambda^{-1}\Lambda$ is uniformly discrete (which implies that $\Lambda$ is also uniformly discrete). Then the groupoid $\cG(\Omega_0(\Lambda))$ is inner amenable (see Example  \ref{ex:Favre}) and therefore $G\ltimes \Omega(\Lambda)^\times$ is inner amenable. In particular, when $\Lambda$ is a discrete subgroup of $G$ the transformation groupoid $G\ltimes G/\Lambda$ is inner amenable.
\end{rem}

\section{\textbf{\textsc{Amenability at infinity}}}\label{sec:ameninf}
\subsection{Definitions and first properties}\label{def:ameninf}
Let $\cG$ be a  locally compact groupoid and set $X=\cG^{(0)}$.  Recall that a  {\it fibrewise compact $\cG$-space}\index{fibrewise compact $\cG$-space} is a $\cG$-space $(Y,p)$ which is fibrewise compact, that is $p:Y\to X$ is a proper map.

\begin{defn}\label{def:sai}    We say that a locally compact groupoid $\cG$ is {\it amenable at infinity}\index{amenability at infinity} if there is an amenable fibrewise compact $\cG$-space $(Y,p)$, where $p$ is {\it surjective}.  Whenever $(Y,p)$ can be chosen such that, in addition, there exists a continuous section $\sigma : X\to Y$ of $p$, then we say that $\cG$ is {\it strongly amenable at infinity}.
\index{strong amenability at infinity}
\end{defn}

\begin{rems} The interest of this last notion will become clear later (see Proposition \ref{prop:SC} and Theorem \ref{prop:amen_inf})  when we will deal with the action $\cG\actson \beta_r \cG$ for an \'etale groupoid $\cG$. It has a continuous section, namely the inclusion of $\cG^{(0)}$ into $\cG$. On the other hand, as said in the remark \ref{rem:compG}, the moment map $p_\beta : \beta_r \cG \to \cG^{(0)}$ is not always open.

 In \cite[D\'efinition 3.2]{Las}  an \'etale locally compact groupoid $\cG$ is said to be amenable at infinity if there is an amenable fibrewise compact $\cG$-space $(Y,p)$ where in addition $p$ is assumed to be open. In this situation, it is shown in \cite{Las} that there exists an amenable fibrewise compact $\cG$-space $(M,\tilde p)$ such that $\tilde p$ admits a continuous section but   the proof  has some inaccuracies.
  
 We don't have any example of a locally compact groupoid where the two notions  of amenability at infinity differ.

 Note that a locally compact group $G$ is amenable at infinity if and only if it has an amenable action on a compact space. In this case there is no difference between amenability at infinity and strong amenability at infinity. 

Of course, every amenable locally compact groupoid is strongly amenable at infinity since the canonical left action of $\cG$ on $\cG^{(0)}$ is amenable.
\end{rems}

We end this section with a first study of some permanence properties of amenabi\-lity at infinity.  Analogous  results for KW-exactness have been obtained in \cite{Lal14, Lal17}. The first invariance that we consider is by passing to subgroupoids.

\begin{prop}\label{prop:subgroupoid} Let $\cG$ be an amenable (resp. strongly amenable) at infinity locally compact groupoid. Let $\cH$ be a locally compact subgroupoid  of $\cG$. Then $\cH$ is amenable (resp. strongly amenable) at infinity.
\end{prop}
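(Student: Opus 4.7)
The plan is to obtain a witnessing $\cH$-space by restricting the witnessing $\cG$-space to the units of $\cH$. Let $(Y,p)$ be an amenable $\cG$-space with $p$ proper; set $X=\cG^{(0)}$, $X_\cH=\cH^{(0)}$, and define $Z := p^{-1}(X_\cH)$ with $q := p|_Z$. Because $\cH$ is a locally compact subgroupoid, $X_\cH$ is locally closed in $X$, hence $Z$ is locally closed (so locally compact) in $Y$, and $q$ is proper: $q^{-1}(K)=p^{-1}(K)$ is compact for every compact $K\subseteq X_\cH$. The $\cG$-action on $Y$ restricts to an $\cH$-action on $Z$: for $\gamma\in\cH$ and $z\in Z$ with $s(\gamma)=q(z)$, the identity $p(\gamma z)=r(\gamma)\in X_\cH$ gives $\gamma z\in Z$; continuity is inherited from the $\cG$-action. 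Thus $(Z,q)$ is a fibrewise compact $\cH$-space.

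The core step is to show that $Z\rtimes\cH$ is amenable. Observe that $Z\rtimes\cH$ embeds as a locally compact subgroupoid of $Y\rtimes\cG$, which is amenable by hypothesis. In the \'etale case I would argue directly via Proposition \ref{prop:amen2}: given a net $(h_i)$ of continuous, compactly supported positive definite functions on $Y\rtimes\cG$ tending to $1$ uniformly on compact subsets, the restrictions $\tilde h_i := h_i|_{Z\rtimes\cH}$ are continuous and remain positive definite on $Z\rtimes\cH$, since $\cH^{p(z)}\subseteq\cG^{p(z)}$ for every $z\in Z$, and uniform convergence to $1$ on compact subsets of $Z\rtimes\cH$ is preserved because any such compact is also compact in $Y\rtimes\cG$. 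The main technical obstacle is ensuring that $\tilde h_i$ has compact support in $Z\rtimes\cH$ (which is merely locally closed, not closed, in $Y\rtimes\cG$); this is handled by an auxiliary cut-off, approximating $\tilde h_i$ by compactly supported positive definite functions built via convolution from a non-negative $\cC_c$-approximation (as in Proposition \ref{prop:amen3}), exploiting the properness of $q$. In the general locally compact case, one simply invokes the inheritance of amenability by locally compact subgroupoids established in \cite{AD-R}.

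Finally, for the strong amenability statement, suppose $p$ admits a continuous section $\sigma:X\to Y$. Then $\sigma(X_\cH)\subseteq Z$, because $p(\sigma(x))=x\in X_\cH$ forces $\sigma(x)\in Z$, and $\sigma|_{X_\cH}:X_\cH\to Z$ is a continuous section of $q$; hence the restricted data witness strong amenability at infinity for $\cH$.
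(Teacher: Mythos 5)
Your proof is correct and follows essentially the same route as the paper: restrict the amenable fibrewise compact $\cG$-space to $p^{-1}(\cH^{(0)})$, observe that the restricted momentum map is still proper (and still has a section in the strong case), note that the resulting semi-direct product is a locally compact subgroupoid of the amenable groupoid $Y\rtimes\cG$, and invoke the stability of amenability under passage to locally compact subgroupoids from \cite{AD-R}. The detour through positive definite functions in the \'etale case is unnecessary, since the citation to \cite{AD-R} that you give at the end already covers all cases — and is exactly what the paper does.
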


\begin{proof} Let $(Y,p)$ be a fibrewise compact amenable $\cG$-space with $p$ surjective and set $Y_E = p^{-1}(E)$ where $E = \cH^{(0)}$. Note that the restriction of $p$ to $Y_E$ is proper and surjective, and that it has a continuous section whenever $p$ has a continuous section. Moreover, it is easily seen that $Y_E\rtimes \cH$ is a subgroupoid of $Y\rtimes \cG$. Therefore, by \cite[Proposition 5.1.1]{AD-R}, the groupoid $Y_E\rtimes \cH$ is amenable. \end{proof}

It follows in particular that if $\cG$ is amenable at infinity, its isotropy groups are amenable at infinity.
\begin{prop}\label{prop:inv_sd} Let $\cG$ be a locally compact groupoid.
\begin{itemize}
\item[(i)] Assume that $\cG$ is amenable (resp. strongly amenable) at infinity. Then for every $\cG$-space $(Z,q)$, the semi-direct product groupoid $Z\rtimes \cG$ is amenable (resp. strongly amenable) at infinity.
\item[(ii)] Let $Z$ be a $\cG$-space whose moment map $q$ is proper and surjective. Assume that $Z\rtimes \cG$  is amenable at infinity. Then $\cG$ is amenable at infinity.
\item[(iii)] Let $Z$ be a $\cG$-space whose moment map $q$ is proper, surjective,  and has a continuous section. Assume that $Z\rtimes \cG$ is strongly amenable at infinity. Then $\cG$ is strongly amenable at infinity.
\end{itemize}
\end{prop}

\begin{proof} (i)  Let $(Y,p)$ be a fibrewise compact amenable $\cG$-space with $p$ surjective. We denote by $l: Z\, _q\!*_pY \to Z$ the first projection. We observe that $l$ is proper and surjective, and that whenever $p$ has a continuous section $\sigma$, then $z\mapsto (z,\sigma(q(z)))$ is a continuous section for $l$. We let $Z\rtimes \cG$ act to the left on $Z\,_q\!*_pY \to Z$ by
$$(z,\gamma)(\gamma^{-1}z,y) = (z,\gamma y).$$
To prove (i), it suffices to show that this action is amenable. The map
$$\big((z,y),(z,\gamma)\big) \mapsto ((z,y), \gamma)$$
is an isomorphism of groupoids from $(Z\,_q\!*_pY)\rtimes (Z\rtimes \cG)$ onto $(Z\,_q\!*_pY)\rtimes \cG$, where $\cG$ acts diagonally on $Z\,_q\!*_pY$. Similarly, the groupoids $(Z\,_q\!*_pY)\rtimes \cG$ and $(Z\,_q\!*_pY)\rtimes (Y\rtimes \cG)$ are isomorphic (see Lemma \ref{lem:idengr}) and they are amenable since $Y\rtimes \cG$ is amenable. 

(ii) Let $(Y,p)$ be a fibrewise compact amenable $Z\rtimes\cG$-space. We define a continuous $\cG$-action on $Y$, whose moment map is $q\circ p$ by
$$\gamma y = (\gamma p(y), \gamma) y.$$
The map $q\circ p$ is proper and surjective. Moreover, the map $(y,\gamma) \mapsto (y,(p(y),\gamma))$ is an isomorphism of groupoids from $Y\rtimes \cG$ onto $Y\rtimes \big(Z\rtimes\cG\big)$ (see again Lemma \ref{lem:idengr}) and therefore  $Y\rtimes \cG$ is amenable.

(iii) is proved in the same way.
\end{proof}

 Of course, (ii) and (iii) do not extend to the case where $q$ is not proper: for every group $G$ acting by left translations onto itself, the transformation groupoid $G\rtimes G$ is amenable (even proper), although there exist discrete groups that are not amenable at infinity, for instance the Gromov monster groups \cite{Gro} or the groups introduced in \cite{Osa}.

\begin{prop}\label{prop:stab-homo1} Let $(Z,\rho,\sigma)$ be a locally proper and faithful generalized morphism from a locally compact groupoid $\cG$ into a locally compact groupoid $\cH$. If $\cH$ is amenable at infinity, then so is $\cG$.
\end{prop}
\begin{proof} By Proposition \ref{prop:homo1}  there is a right $\cH$-space $Y$ such that $Z$ is a  ($\cG$-$Y\rtimes \cH$)-equivalence. Since the groupoid $Y\rtimes \cH$ is amenable at infinity (Proposition \ref{prop:inv_sd}), the conclusion follows from the proposition \ref{prop: stab-amen-infi}  below.
\end{proof}

We will see in Proposition \ref{prop:locpropinv} that for genuine homomorphisms, the faithfulness  assumption is not necessary.

\begin{prop}\label{prop: stab-amen-infi}  Let $\cG$ and $\cH$ be two equivalent locally compact groupoids. Then $\cG$ is amenable at infinity if and only if so is $\cH$.
\end{prop}

\begin{proof} Let $(Z,r_Z,s_Z)$ be a ($\cG$-$\cH$)-equivalence and let  $(Y,p)$ be a fibrewise compact amenable left $\cH$-space with $p$ surjective. We set $T = Z\,_{s_Z}\!\!*_p Y$. We define a right action of $\cH$ on $T$ by setting $(z,y)\eta = (z\eta, \eta^{-1}y)$ when $r(\eta) = s_Z(z) = p(y)$. This action is free and proper since so is $\cH\actson Z$.  It follows from \cite[Lemma 2.1.11]{AD-R} that  $r_T:T\to T/\cH$ is open and therefore from \cite[Proposition 2.1.12]{AD-R} that the topological quotient space $S= T/\cH$ is locally compact. Given $(z,y) \in T$ we denote by $[z,y]$ its equivalence class. Observe that $\cG$ acts on $S$ by $\gamma [z,y] = [\gamma z,y]$ if $s(\gamma) = r_Z(z)$.

Furthermore, we denote by $s_T$ the map $(z,y) \mapsto y$ from $T$ onto $Y$. This map is continuous and open. Indeed, let $U,V$ be open subsets of $Z$ and $Y$ respectively. Then we have $s_T(U\,_{s_Z}\!\!*_p V) = V \cap p^{-1}(s_Z(U))$ which is an open subset of $Y$ since $s_Z$ is open.  The groupoid $Y\rtimes \cH$ acts to the right on $T$, with moment map $s_T$, by
$$(z,y)(y,\eta) = (z\eta, \eta^{-1} y), \quad \forall (z,y,\eta)\in Z\times Y\times \cH, \,\hbox{with}\,\, s_Z(z) = p(y) = r(\eta),$$
and the groupoid $\cG\ltimes S$ acts on $T$, with moment map $r_T$, by
$$(\gamma, w)(z,y) = (\gamma z,y)$$
where $(\gamma, w, z,y)\in \cG\times S\times Z\times Y$ with $s(\gamma)=  r_Z(z) = p(y)$ and $r_T(z,y) = w$.

Let us show that $T$ is a ($\cG\ltimes S$-$Y\rtimes \cH$)-equivalence of groupoids. One immediately sees that the left and right actions commute and are free and proper. Obviously $r_T$ induces a homeomorphism from  $T/(Y\rtimes \cH)$ onto $S$, and since $\cG$ acts transitively on the fibres $s_Z^{-1}(x)$ with $x\in \cH^{(0)}$, we see that $s_T$ induces a homeomorphism from  $(\cG\ltimes S)\setminus T$ onto $Y$. 

We know that amenability is preserved under equivalence of groupoids (see \cite[Theorem 2.2.17]{AD-R}), and therefore the action of $\cG$ on $S$ is amenable. To conclude that $\cG$ is amenable at infinity it remains to show that the map $q: S\to \cG^{(0)}$ such that $q([z,y)] = r_Z(z)$ is proper and surjective. Surjectivity follows from the fact that for every $x\in \cG^{(0)}$ there exists $(z,y)\in Z\times Y$ with $r_Z(z) = x, s_Z(z) = p(y)$.  Let us show that $q$ is proper. Let $K$ be a compact subset of $\cG^{(0)}$. Since the map $r_Z$ is open and surjective, there exists a compact subset $K_1$ of $Z$ such $r_Z(K_1) = K$.  Using the fact that $p$ is proper, we see that $C_1= \set{(z,y)\in K_1\times Y: r_Z(z) = p(y)}$ is compact and therefore that $C= r_T(C_1) = \set{[z,y]: (z,y)\in C_1}$ is a compact subset of $S$. Let us show that $q^{-1}(K) = C$. Obviously we have $q(C) = K$. Moreover let $[z,y]\in S$ such that $q[z,y] = r_Z(z)\in K$. There exists $z'\in K_1$ with $r_Z(z') = r_Z(z)$, and so  $z= z'\eta$ with $\eta\in \cH$. It follows that $[z,y] = [z'\eta, y] = [z',\eta y]\in C$.
\end{proof}

\begin{prop}\label{prop: stab-amen-infi1}  Let $(Z,r_Z,s_Z)$ be an equivalence between two locally compact groupoids $\cG$ and $\cH$. We assume that $\cH$ is strongly amenable at infinity and that $r_Z$ has a global continuous section $\sigma$. Then $\cG$ is strongly amenable at infinity.
\end{prop}

\begin{proof} We keep the notation of the proof of the previous proposition. We have to show that the map $q: [z,y] \in S \mapsto r_Z(z)$ have a global continuous section.
Let $\sigma'$ be a continuous section of $p:Y\to \cH^{(0)}$. Then $x\mapsto [\sigma(x), \sigma'\big(s_Z(\sigma(x))\big)]$ is such a section.
\end{proof}

\subsection{About invariance under extensions}\label{subsec:iue}  We have already considered in Proposition \ref{prop:inv_sd} the case of transformation groupoids.  We now study other situations.
In the following proposition and its corollaries we assume that the groupoids have a Haar system.

\begin{prop}\label{prop:locpropinv} Let $c$ be a locally proper continuous homomorphism from a locally compact groupoid $\cL$ into a locally compact groupoid $\cG$ which is amenable (resp. strongly amenable) at infinity. Then so is $\cL$.
\end{prop}

\begin{proof}  Let $(Y,p)$ be a fibrewise compact  amenable $\cG$-space with $p$ surjective. We set $Z= \cL^{(0)}$ and $X= Z\, _{c^{(0)}}\!*_pY$, and we denote by $l$ the first projection $Z\, _{c^{(0)}}\!*_pY \to Z$. We observe that $l$ is proper and surjective, and that whenever $p$ has a continuous section $\sigma$, then $z\mapsto (z,\sigma(c^{(0)}(z)))$ is a continuous section for $l$. We let $\cL$ act to the left on $Z\, _{c^{(0)}}\!*_pY$ by
$$\eta(s(\eta),y) = (r(\eta),c(\eta)y).$$
To conclude, it suffices to show that the groupoid $X\rtimes \cL$ is amenable. Note that the range of $\big((z,y),\eta\big)\in X\rtimes \cL$ is $(r(\eta),y)$ and that its source is $(s(\eta),c(\eta)^{-1} y)$. Let $\Phi: X\rtimes \cL\to Y\rtimes \cG$ be defined by
$$\Phi\big(((z,y), \eta\big) = (y,c(\eta)),$$
where $z = r(\eta)$ and $r(c(\eta))= c^{(0)}(r(\eta)) = p(y)$. It is easily checked that $\Phi$ is a continuous homomorphism. Let us show that $\Phi$ is locally proper.  Let $C$ and $K$ be compact subspaces of $X$ and $Y\rtimes \cG$ respectively . We  have to check that $\set{(x,\eta)\in (X\rtimes\cL)(C): \Phi(x,\eta)\in K}$ is compact.  We can assume that $C= C_1\,_{c^{(0)}}\!*_p C_2$ and $K = K_1\,_p\!*_r K_2$, where $C_1\subset Z$, $C_2\subset Y$, $K_1\subset Y$, $K_2\subset \cG$ are compact spaces.

We have $(X\rtimes\cL)(C) \subset C\,_l\!*_r\cL(C_1)$ and 
$$(X\rtimes\cL)(C) \cap\Phi^{-1}(K) \subset C\,_l\!*_r\Big(\cL(C_1)\cap c^{-1}(K_2)\Big).$$
The right-hand side is compact since $c$ is locally proper, and therefore $\Phi$ is locally proper. Since $Y\rtimes \cG$ is amenable, we see that $X\rtimes \cL$ is amenable (Proposition \ref{prop:locpropre}).
\end{proof}

We observe that in this result we do not assume that $c$ is surjective.

\begin{cor}\label{prop:equivproper} Let $\cG\actson Z$ and $\cG \actson Y$ be two left $\cG$-actions and let $q:Z\to Y$ be an equivariant proper map. Assume that $Y\rtimes \cG$ is amenable (resp. strongly amenable) at infinity. Then $Z\rtimes \cG$ is amenable (resp. strongly amenable) at infinity.
\end{cor}

\begin{proof} This follows from the fact that $\Phi: Z\rtimes \cG \to Y\rtimes \cG$ defined by $\Phi(z,\gamma) = ((q(z),\gamma)$ is locally proper.
\end{proof}

\begin{cor}\label{cor:kerproper} Let  $c$ be a  continuous strongly surjective homomorphism from a locally compact groupoid $\cL$ into a locally compact groupoid $\cG$. We assume that $\cG$ is amenable (resp. strongly amenable) at infinity and that $\Ker(c)$ is a proper groupoid. Then $\cL$ is amenable  (resp. strongly amenable) at infinity.
\end{cor}
\begin{proof} This follows from Proposition  \ref{prop:locpropinv}, since $c$ is locally proper by Proposition \ref{prop:locprop}.
\end{proof}

It is likely that the above result remains true when $Ker(c)$ is assumed to be (strongly) amenable at infinity, instead of being proper.  At least in the case of locally compact groups the answer is positive. Indeed,  KW-exactness is preserved under extension (\cite[Theorem 5.1]{KW99bis}), and KW-exactness is equivalent to amenabi\-lity at infinity \cite{BCL, OS20}. A direct proof in case of discrete groups is given in \cite[Proposition 5.2.6]{AD-R}. To adapt it to the case of locally compact groupoids seems quite technical. 
At least, we can easily replace the fact that $\Ker(c)$ is proper by amenability.

\begin{prop}\label{cor:keramen} Let  $c$ be a  continuous strongly surjective homomorphism from a locally compact groupoid $\cL$ into a locally compact groupoid $\cG$. We assume that $\cG$ is amenable (resp. strongly amenable) at infinity and that $\Ker(c)$ is an amenable groupoid. Then $\cL$ is amenable  (resp. strongly amenable) at infinity.
\end{prop}

\begin{proof} We keep the notation of the proof of the proposition \ref{prop:locpropinv}. First we observe that $\Phi: X\rtimes \cL\to Y\rtimes \cG$ is strongly surjective: indeed, given a unit $(z,y)$ of $X\rtimes \cL$, we have $\big(X\rtimes\cL)^{(z,y)} = \set{(z,y)}\times \cL^z$ and
$$\Phi\Big(\set{(z,y)}\times \cL^z\Big) = \set{y}\times \cG^{p(y)} = \Big(Y\rtimes\cG\Big)^{\Phi^{(0)}(z,y)}.$$
We have 
$$\Ker(\Phi) = \set{((r(\eta),y),\eta): c^{(0}(r(\eta)) = p(y), \eta\in \Ker(c)}= X\rtimes \Ker(c),$$
and therefore $\Ker(\Phi)$ is an amenable groupoid, since $\Ker(c)$ is amenable. It follows that $X\times \cL$ is an extension of the amenable groupoid $Y\rtimes \cG$ by the amenable groupoid $\Ker(\Phi)$ and therefore is amenable by the theorem \ref{thm:amen-ext}.
\end{proof}

  Let $\boldsymbol{\beta} = (\set{X_g}_{g\in \Gamma}, \set{\beta_g}_{g\in \Gamma})$ be a partial action of a discrete group $\Gamma$. The cocycle $c: (x,g,y) \mapsto g$ has a proper kernel. It is locally proper if and only if for every $g\in \Gamma$ the graph $\set{(\beta_g(y),y): y\in X_{g^{-1}}}$ is closed in $X\times X$ (see \ref{exs:loc_proper}  (d)) and so is not locally proper in general. However, we can take advantage of the fact that $\Gamma$ is discrete to obtain the following positive result (where exact is the same at amenable at infinity in this case).

\begin{prop}\label{prop:partial} Let $\boldsymbol{\beta} = (\set{X_g}_{g\in \Gamma}, \set{\beta_g}_{g\in \Gamma})$ be a partial action of an exact discrete group $\Gamma$ on a locally compact space $X$. Then the partial transformation groupoid $\Gamma\ltimes_\beta X$  is strongly amenable at infinity. 
\end{prop}

\begin{proof}  The proof is similar to that of Proposition \ref{prop:inv_sd} (i) and this time uses  the proposition \ref{thm:RW17Bis}. We write $gx$ instead of $\beta_g(x)$. Let $Y$ be a compact space on which $\Gamma$ acts amenably.  We are going to define a left action of $\Gamma\ltimes_\beta X  $ on $X\times Y$.  Let $l : X\times Y \to X$ be the first projection.
Then $$(\Gamma\ltimes_\beta X )\,_s*_l (X\times Y) = \set{\big((g,x),(x,y)\big) : g\in \Gamma, x\in X_{g^{-1}}, y\in Y}$$
and we set $(g,x)(x,y) = (gx,gy)$. 

Since the momentum  map $l: X\times Y\to X$ is proper and surjective and has obviously continuous sections, its remains to show that the groupoid we have just defined is amenable.

We let $\cG = \Gamma\ltimes Y$ act partially on $X\times Y$ as follows. The domain of the action of $(g,y)$ is $X_{g^{-1}} \times \set{y}$. This time the moment map is the second projection $q$ and
$$\cG\,_s*_q (X\times Y) = \set{\big((g,y),(x,y)\big) : g\in \Gamma, x\in X_{g^{-1}}, y\in Y}.$$ The partial action of $\cG$ on $X\times Y$ is defined by $(g,y)(x,y) = (gx ,gy)$, where $x\in X_{g^{-1}}$. 

 One immediately check that the map $\big((g,y),(x,y)\big) \mapsto \big((g,x),(x,y)\big)$ is an isomorphism of groupoids from $\cG\ltimes (X\times Y)$ onto the groupoid $(\Gamma\ltimes_\beta X)\ltimes (X\times Y)$. 
 
 To end the proof,  one applies the proposition \ref{thm:RW17Bis} to the amenable groupoid $\cG$ acting partially on $X\times Y$. The corresponding semi-direct groupoid is amenable and so is  $(\Gamma\ltimes_\beta X)\ltimes (X\times Y)$. 
\end{proof}

\subsection{Amenability and amenability at infinity for \'etale groupoids}\label{subsec:aietale}

A locally compact group is amenable if and only if its action on its Alexandroff compactification is amenable. We have a similar characterization of amenability for an \'etale groupoid, for which we need the observation below. 

\begin{prop}\label{prop:sat} Let $\cG$ be an \'etale groupoid and $(Y,p)$ a left $\cG$-space. Let $U$ be the greatest open subset of $X=\cG^{(0)}$ such that the restriction of $p$ to $p^{-1}(U)$ is proper. Then $U$ is invariant under the $\cG$-action, that is, $r(\gamma)\in U$ if and only if $s(\gamma)\in U$.
\end{prop}

\begin{proof} We refer to Subsection \ref{subsec:pfc}  for the properties of $U$ used in this proof. Let $\gamma$ be such that $s(\gamma)\in U$ and let $S$ be a compact bisection which is a neighborhood of $\gamma$. There exists a compact neighborhood $K$ of $s(\gamma)$ contained into $s(S)$ such that $p^{-1}(K)$ is compact. Then $S\cdot K$ is a compact neighborhood of $r(\gamma)$ and $p^{-1}(S\cdot K) = Sp^{-1}(K)$ is compact. Therefore we have $r(\gamma)\in U$.
\end{proof}

\begin{prop}\label{prop:carac_amen} An \'etale groupoid $\cG$ is amenable if and only if the $\cG$-space $\cG_r^{+}$ is amenable.
\end{prop}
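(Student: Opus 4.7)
The forward direction follows immediately: if $\cG$ is amenable then Corollary 2.2.10 of \cite{AD-R} (recalled at the end of Section~2) shows that every $\cG$-space has amenable action, so the action on $(\cG_r^+,r^+)$ is amenable.

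For the converse, suppose $\cG_r^+\rtimes\cG$ is amenable, and use Proposition~\ref{prop:amen2} to obtain a net $(h_i)$ of continuous positive definite functions on $\cG_r^+\rtimes\cG$ of compact support, with $h_i\to1$ uniformly on compact subsets. The plan is to produce a positive definite function on $\cG$ by averaging $h_i$ over the compact fibres of $r^+:\cG_r^+\to X$ against a $\cG$-equivariant family $(\mu^x)_{x\in X}$ of probability measures on $(\cG_r^+)^x$. Set
\[ \tilde h_i(\gamma)\;:=\;\int_{(\cG_r^+)^{r(\gamma)}}h_i(y,\gamma)\,\rd\mu^{r(\gamma)}(y). \]
The equivariance $\gamma\mu^{s(\gamma)}=\mu^{r(\gamma)}$ permits, for $\gamma_1,\ldots,\gamma_n\in\cG^x$, the change of variable
\[ \tilde h_i(\gamma_k^{-1}\gamma_l)\;=\;\int_{(\cG_r^+)^x}h_i(\gamma_k^{-1}z,\gamma_k^{-1}\gamma_l)\,\rd\mu^x(z), \]
and for each $z$ the integrand is, by the positive definiteness of $h_i$ at $y=z$ applied to the arrows $\eta_k=(z,\gamma_k)$, a positive definite matrix in $k,l$; hence $[\tilde h_i(\gamma_k^{-1}\gamma_l)]\ge 0$. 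Continuity of $\tilde h_i$, compactness of its support, and uniform convergence $\tilde h_i\to 1$ on compacts all transfer from the corresponding properties of $h_i$, so Proposition~\ref{prop:amen2} yields that $\cG$ is amenable.

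The main technical step is to construct such a continuous $\cG$-equivariant family $(\mu^x)$. The natural candidate uses the distinguished ``points at infinity'' of the Alexandroff compactification: on the closed $\cG$-invariant subset of $X$ where $r$ fails to be proper, the fibre $(\cG_r^+)^x$ contains a point $\infty_x$ satisfying $\gamma\cdot\infty_{s(\gamma)}=\infty_{r(\gamma)}$, and one sets $\mu^x:=\delta_{\infty_x}$; on the complementary open $\cG$-invariant subset of $X$ where $r$ is proper, each $\cG^x$ is finite and the normalized counting measure is $\cG$-equivariant by bijectivity of left multiplication $\alpha_\gamma:\cG^{s(\gamma)}\to\cG^{r(\gamma)}$. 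Continuity of the combined family at the boundary is established using the \'etale structure, which implies that any compact $K\subset\cG$ meets each discrete fibre $\cG^x$ in a uniformly bounded number of points, so the uniform measure on a growing finite fibre weak-$*$-converges to $\delta_{\infty_{x_0}}$ as one approaches the non-proper locus.
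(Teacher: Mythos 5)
Your forward direction is correct, and the skeleton of your averaging scheme is sound: the positivity computation via equivariance of $(\mu^x)$ is fine, and on the non-proper locus your choice $\mu^x=\delta_{\infty_x}$ amounts to restricting $h_i$ to the section at infinity, which is exactly what the paper does. The gap is the last step: the glued family $(\mu^x)$ is \emph{not} weak-$*$ continuous, so $\tilde h_i$ need not be continuous and Proposition~\ref{prop:amen2} cannot be applied to it. Your claim that the normalized counting measure on $\cG^x$ converges to $\delta_{\infty_{x_0}}$ as $x$ approaches the non-proper locus presupposes both that $\abs{\cG^x}\to\infty$ and that the mass eventually leaves every compact subset of $\cG$; neither need hold. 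The paper's own Example~\ref{ex:not open} is a counterexample: there $F'=\set{1/2}$, $\abs{\cG^x}\le 2$ for all $x$, and as $x\to (1/2)^+$ the normalized counting measures converge to $\frac12\big(\delta_{(1/2,0)}+\delta_{(1/2,1)}\big)$, while from the left they converge to $\delta_{(1/2,0)}$ --- in neither case to $\delta_{\infty_{1/2}}=\delta_{(1/2,1)}$. Worse, in that example \emph{no} continuous equivariant family of probability measures on the fibres of $\cG_r^+$ exists (equivariance forces the uniform measure on $\cG^x$ for $x>1/2$ and $\delta_{(x,0)}$ for $x<1/2$, and these have incompatible limits at $1/2$), even though the groupoid is amenable; so the strategy of producing one global net $(\tilde h_i)$ by fibrewise averaging cannot be repaired.

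The fix is not to glue. The set $U$ on which $r$ restricted to $\cG(U)$ is proper and its complement $F'$ are complementary invariant open and closed subsets of $X$. Restricting $h_i$ to $F\,_p\!*_r\cG\cong\cG(F')$ (your $\delta_{\infty_x}$ step, confined to $F'$) shows that $\cG(F')$ is amenable; $\cG(U)$ is amenable simply because it is proper, with no input from the $h_i$; and amenability of $\cG$ then follows from the standard permanence of amenability under decomposition into the reduction to a closed invariant subset and the reduction to its open complement. This is the paper's argument.
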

\begin{proof}  Let $U $ be the greatest open subset of $X$ such that the restriction of $r$ to $r^{-1}(U)$ is proper.
The structure of $\cG_{r}^+$ is described in Proposition \ref{prop:alex}. One of its main features is that it contains a $\cG$-invariant closed subset $F$ such that the restriction of $p=r^+$ to $F$ is a $\cG$-equivariant homeomorphism onto   $F'=X\setminus U$ (see Propositions \ref{prop:alex} and \ref{prop:sat})\footnote{When $\cG$ is an infinite discrete group $G$, then $F = G^+\setminus G$ is reduced to the point at infinity, $F' = \set{e}$, $U$ is empty and $\cG(F') = G$.}. Assume that the $\cG$-space $\cG_{r}^+$ is amenable. Let $(h_i)$ be a net of continuous positive definite functions on $\cG_{r}^+\rtimes \cG$, with compact support, that satisfies the conditions of Proposition \ref{prop:amen2}. Let us observe that 
$$F\,_p\!*_r\cG = \set{(y,\gamma) \in F\times \cG: p(y) = r(\gamma)}$$
is canonically identified to the groupoid $\cG(F')$ thanks to the map $(y,\gamma) \mapsto \gamma$.
We denote by $k_i$ the restriction of $h_i$ to $F\,_p\!*_r\cG$, and we view it as a function on $\cG(F')$. It is a net of positive definite functions on $\cG(F')$ that satisfies the conditions of Proposition \ref{prop:amen2}. So the groupoid $\cG(F')$ is amenable. On the other hand $\cG(U)= r^{-1}(U)$ is amenable since it is proper. It follows that $\cG$ is amenable (see \cite[Proposition 9.83]{Will19}).
\end{proof}

Let us now turn to amenability at infinity. Our standing assumption is that $\cG$ is second countable, and \'etale here, but its Stone-\v Cech fibrewise compactification $\beta_r \cG$ is only $\sigma$-compact in general. The groupoid $\beta_r \cG \rtimes \cG$ is $\sigma$-compact with countable $r$-fibres.  The following  technical result (a generalization of \cite[Lemma 3.5]{H00}) shows how to build a {\it second countable} amenable fibrewise compact $\cG$-space out of any amenable fibrewise compact $\cG$-space.

\begin{lem}\label{lem:scf} Let $\cG$ be a second countable \'etale groupoid that acts amenably on a  fibrewise compact fibre space $(Z,q)$ with $q$ surjective. Then   there is a second countable fibrewise compact $\cG$-space  $(Y,p)$ and a $\cG$-equivariant proper surjective continuous morphism $q_Y:Z\to Y$ such that $\cG$  also acts amenably on $(Y,p)$ and $q= p\circ q_Y$. Moreover, if there is a continuous section $\sigma$ for $q$, then there is a continuous section for $p$, namely $q_Y\circ \sigma$. 
\end{lem}

\begin{proof} Here, $Z$ is not assumed to be second countable, but it is $\sigma$-compact since $q$ is proper and $\cG^{(0)}$ is second countable. The semi-direct product groupoid $Z\rtimes \cG$ is $\sigma$-compact. 
We will use the characterization of an amenable action on a $\sigma$-compact fibre space recalled in Proposition \ref{prop:amen3} and Remark \ref{rem:nonsep}: there exists a sequence $(g_n)$ of nonnegative functions in $\cC_c(Z\rtimes\cG)$ such that 
\begin{itemize}
\item[(a)] $\int g_n(z,\gamma) \rd\lambda^{q(z)}(\gamma) \leq 1$ for every $z\in Z$;
\item[(b)] $\lim_n \int g_n(z,\gamma) \rd\lambda^{q(z)}(\gamma) = 1$ uniformly on the compact subsets of $Z$;
\item[(c)] $\lim_n \int\abs{ g_n(\gamma^{-1}z,\gamma^{-1}\gamma_1) - g_n(z,\gamma_1)} \rd\lambda^{q(z)}(\gamma_1) = 0$ uniformy on the compact subsets of $Z\rtimes \cG$.
\end{itemize}

{\it First observation.} Let $g: Z\rtimes \cG\to \C$. Then $g$ is continuous if and only if for every open bisection $S$ of $\cG$, the function $z \mapsto g(z,r_S^{-1}(q(z)))$ is continuous on $q^{-1}(r(S))$. This is immediate since $z\mapsto (z,r_S^{-1}(q(z)))$ is a homeomorphism from the open subset $q^{-1}(r(S))$ of $Z$ onto the open subset $q^{-1}(r(S))\,_q\!*_r S$ of $Z\rtimes \cG$.

{\it Construction of $Y$.} Let $\cF$ be a countable family of open bisections of $\cG$ which covers $\cG$, is stable under product and inverse and  contains a basis $\cB$ of open sets relative to the topology of $X= \cG^{(0)}$. For $S,S'\in \cF$ we denote by $g_{n,S,S'}$ the function $z \mapsto g_n(Sz, r^{-1}_{S'}(q(Sz)))$. The domain of definition $\Omega_{S,S'}$ of this function is the set of $z\in Z$ such that $q(z)\in s(S)$ and $S\cdot q(z)\in r(S')$, that is, $q^{-1}\big(S^{-1}\cdot \big(r(S)\cap r(S')\big)\big)$.  We observe that if $S$ is an open subset of $X$ then $\Omega_{S,S} = q^{-1}(S)$, and for $z\in q^{-1}(S)$ we have $g_{n,S,S}(z) = g_n(z,q(z))$.

We endow $Z$ with the weakest topology $\cT$ that contains the $\Omega_{S,S'}$ as open subsets and makes continuous the functions $g_{n,S,S'}$ on $\Omega_{S,S'}$. It is second countable but not necessarily Hausdorff. Now, on $Z$ we define the following equivalence relation:
$$z_1\sim z_2 \quad\hbox{if}\quad q(z_1) = q(z_2)\quad\hbox{and}\quad g_{n,S,S'}(z_1) = g_{n,S,S'}(z_2)$$
for all $n$ and all $S,S'$ with $z_1,z_2\in \Omega_{S,S'}$. We denote by $Y$ the quotient space endowed with the quotient topology and by $q_Y:Z\to Y$ the quotient map. Let us observe that $\cT$ is generated by subsets that are saturated with respect to this equivalence relation and therefore $q_Y$ is an open map from $(Z,\cT)$ onto $Y$.
 We denote by $\widetilde{g}_{n,S,S'}$ the continuous function on $q_Y(\Omega_{S,S'})$, deduced from $g_{n,S,S'}$ by passing to the quotient. We also introduce the  map $p$  from $Y$ onto $X= \cG^{(0)}$ such that $p(q_Y(z)) = q(z)$. We remark that $q$ is still continuous when $Z$ is equipped with the topology $\cT$, and therefore $p$ is continuous.

{\it The space $Y$ is Hausdorff, second countable and locally compact.} Let $z_1,z_2\in Z$ be such that $q_Y(z_1) \not= q_Y(z_2)$ in $Y$.
If $q(z_1)\not= q(z_2)$, then $q_Y(z_1)$ and $q_Y(z_2)$ belong to disjoint open subsets of the form $q_Y(\Omega_{S,S})$ since $\cF$ contains a basis of the topology of $X$. Now, if $q(z_1)=q(z_2)$, there exists $S,S'\in \cF$ and $n$ such that  $z_1,z_2\in \Omega_{S,S'}$ and $g_{n,S,S'}(z_1) \not= g_{n,S,S'}(z_2)$. It follows that $\widetilde{g}_{n,S,S'}(q_Y(z_1))\not=\widetilde{g}_{n,S,S'}(q_Y(z_2))$ and therefore $q_Y(z_1)$ and $q_Y(z_2)$ have disjoint neighborhoods. Therefore, $Y$ is Hausdorff. Since $(Z,\cT)$ is second countable and $q_Y$ is open, we see that $Y$ is second countable. 

{\it The map $p$ is proper and $Y$ is locally compact.} Let $K$ be a compact subset of $X$. Then $p^{-1}(K) =q_Y(q^{-1}(K))$. Since $q$ is proper, $q^{-1}(K)$ is compact for the initial topology and therefore $q_Y(q^{-1}(K))$ is compact. It follows that $Y$ is locally compact, because $p$ is a continuous proper map onto  the locally compact space $X$.

We also see that $q_Y$ is proper when $Z$ is endowed with its initial topology, since   we have
$$q_{Y}^{-1}(K)\subset q_{Y}^{-1}(p^{-1}(p(K))) = q^{-1}(p(K)),$$
for every compact subset $K$ of $Y$.
 
{\it The groupoid $\cG$ acts continuously on $(Y,p)$.}  First, if $z_1\sim z_2$, we check that for $\gamma$ such that $s(\gamma) =  q(z_1)$ then $\gamma z_1 \sim \gamma z_2$. Of course, we have $q(\gamma z_1) = r(\gamma) = q(\gamma z_2)$. If $S,S'\in \cF$ are such that $r(\gamma)\in S^{-1}\cdot (r(S)\cap r(S'))$, we have to verify that $g_{n,S,S')}(\gamma z_1) = g_{n,S,S'}(\gamma z_2)$, that is,
$$g_n(S\gamma z_1, r^{-1}_{S'}(q(S\gamma z_1))) = g_n(S\gamma z_2, r^{-1}_{S'}(q(S\gamma z_2))).$$
Let $T\in \cF$ such that $\gamma\in T$. Then $g_{n,S,S')}(\gamma z_1) = g_{n,ST,S'}(z_1)$ and similarly for $z_2$.
Since $ST\in \cF$ we get the desired equality.

Now, we set $\gamma q_Y(z) = q_Y(\gamma z)$ if $s(\gamma) = q(z) = p(q_Y(z))$. We  have to show that the map $(z,\gamma)\mapsto q_Y(\gamma z)$ is continuous from $Z_q\!*_s\cG$ into $Y$, when $Z$ is equipped with the topology $\cT$. Let $z'\in \Omega_{S,S'} $ with $S,S'\in \cF$.  For $\varepsilon >0$ we set
$$V(n,S,S', \varepsilon; z') = \set{z\in \Omega_{S,S'} : \abs{g_{n,S,S'}(z) - g_{n,S,S'} (z')}\leq \varepsilon}.$$
We note that the finite intersections of such $V(n,S,S', \varepsilon;z')$  form a basis of neighborhoods of $z'$.

We check the continuity the map $(z,\gamma)\mapsto q_Y(\gamma z)$ in $(z_0,\gamma_0)$. We consider a neighborhood of $\gamma_0 z_0$ of the form $V(n,S,S', \varepsilon; \gamma_0 z_0)$. 
Let $T\in \cF$ be an open bisection which contains $\gamma_0$. Then if $(z,\gamma)\in V(n,ST,S',\varepsilon; z_0) \,_q\!*_s T$
we have 
$$\abs{g_{n,ST,S'}(z) - g_{n,ST,S'} ( z_0)} =\abs{g_{n,S,S'}(\gamma z) - g_{n,S,S'} (\gamma_0 z_0)} \leq \varepsilon,$$
and therefore $\gamma z \in V(n,S,S', \varepsilon; \gamma_0 z_0)$.

{\it The action of $\cG$ on $Y$ is amenable.} For $(q_Y(z),\gamma) \in Y\rtimes \cG$ we set $\widetilde{g}_n(q_Y(z), \gamma) = g_n(z,\gamma)$. This is well defined. Indeed, let $z_1\sim z_2$. Let $S\in \cB$ such that $q(z_1)\in S$ and let $S'$ be an open bisection containing $\gamma$. Then
$$g_n(z_1,\gamma)= g_{n,S,S'}(z_1) = g_{n,S,S'}(z_2)= g_n(z_2,\gamma).$$
It is now straightforward to see that  $(\widetilde{g}_n)$ is a sequence in $\cC_c(Y\rtimes \cG)$ which satisfies the appropriate version of the above conditions (a), (b) and (c).
\end{proof}

\begin{prop}\label{prop:SC} An \'etale groupoid $\cG$ is strongly amenable at infinity if and only if the Stone-\v Cech fibrewise compactification $(\beta_r \cG, r_\beta)$ is an amenable $\cG$-space.
\end{prop}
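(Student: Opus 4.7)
The implication $(\Leftarrow)$ is essentially tautological. If $(\beta_r\cG, r_\beta)$ is an amenable $\cG$-space, then $r_\beta$ is proper by construction of the Stone-\v Cech fibrewise compactification, and the inclusion $X = \cG^{(0)} \subset \cG \subset \beta_r\cG$ provides a continuous section of $r_\beta$ (since $r$ restricts to the identity on $X$). Hence $(\beta_r\cG, r_\beta)$ itself witnesses strong amenability at infinity. So the real content lies in the forward direction.

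For $(\Rightarrow)$, suppose $(Y, p)$ is a fibrewise compact amenable $\cG$-space admitting a continuous section $\sigma : X \to Y$ of $p$. The first step is to use $\sigma$ to manufacture a continuous $\cG$-equivariant map $f : \cG \to Y$ by setting $f(\gamma) = \gamma\,\sigma(s(\gamma))$. Continuity follows from continuity of $s$, $\sigma$, and the action, since $(\gamma, \sigma(s(\gamma))) \in \cG\,_s\!*_p Y$ by construction. One checks $p\circ f = r$ and $f(\gamma_1\gamma_2) = \gamma_1 f(\gamma_2)$ directly, using $s(\gamma_1\gamma_2) = s(\gamma_2)$.

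The second step invokes the universal property of the Stone-\v Cech fibrewise compactification from \cite{An14}: since $(Y, p)$ is fibrewise compact and $p\circ f = r$, the map $f$ extends uniquely to a continuous map $\bar f : \beta_r\cG \to Y$ with $p\circ \bar f = r_\beta$. By density of $\cG$ in $\beta_r\cG$, joint continuity of the two $\cG$-actions, and equivariance of $f$, the extension $\bar f$ is itself $\cG$-equivariant. Moreover $\bar f$ is automatically proper, because for $L\subset Y$ compact one has $\bar f^{-1}(L) \subset r_\beta^{-1}(p(L))$, which is compact.

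The final step transfers amenability from $Y\rtimes \cG$ to $\beta_r\cG\rtimes \cG$. Using Proposition \ref{prop:amen2}, pick a net $(h_i)$ of continuous positive definite functions with compact support on $Y\rtimes \cG$ converging to $1$ uniformly on compacta, and define $\tilde h_i(\xi,\gamma) = h_i(\bar f(\xi), \gamma)$ on $\beta_r\cG\rtimes \cG$. Positive definiteness of $\tilde h_i$ at a point $\xi \in \beta_r\cG$ reduces, via equivariance of $\bar f$, to positive definiteness of $h_i$ at $\bar f(\xi) \in Y$. Compact support of $\tilde h_i$ follows from properness of $\bar f$ together with compactness of $\supp(h_i)$. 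Uniform convergence $\tilde h_i \to 1$ on compacta holds because $\bar f\times\mathrm{Id}$ sends compacts to compacts. Proposition \ref{prop:amen2} then yields amenability of $\beta_r\cG\rtimes \cG$, which is the desired conclusion. The principal technical ingredient is the universal property from \cite{An14}; everything else is formal verification.
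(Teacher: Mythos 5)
Your proof is correct and follows essentially the same route as the paper: the section gives the equivariant map $\gamma\mapsto\gamma\,\sigma(s(\gamma))$, which extends to $\beta_r\cG$ by the universal property of the fibrewise Stone-\v Cech compactification, and amenability is pulled back along the resulting equivariant map. The only cosmetic difference is that the paper simply cites \cite[Proposition 2.2.9 (i)]{AD-R} for the last step, whereas you reprove that fact by transporting the positive definite functions of Proposition \ref{prop:amen2}; both are fine.
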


\begin{proof} In one direction, let us assume that the action of $\cG$ on $\beta_r \cG$ is amenable. We note that the inclusions $\cG^{(0)}\subset \cG\subset \beta_r \cG$ provide a continuous section for $r_\beta$. The previous lemma shows that   $\cG$ is strongly amenable at infinity (in order to comply  to our standing assumption, we wish  to limit ourselves to actions on second countable locally compact spaces).   Conversely, assume that $(Y,p,\sigma)$ satisfies the conditions of the definition \ref{def:sai}. We define a continuous $\cG$-equivariant morphism $\varphi : (\cG,r)\to (Y,p)$ by
$$\varphi(\gamma) = \gamma \sigma\circ s(\gamma).$$
Then, by the proposition \ref{prop:universal1}, $\varphi$ extends in a unique way to a continuous $\cG$-equivariant morphism $\Phi$ from $(\beta_r\cG, r_\beta)$ into $(Y,p)$.
Since $\Phi$ is proper and equivariant, the map $(z,\gamma)\mapsto (\Phi(z),\gamma)$ is a proper homomorphism from $\beta_r\cG\rtimes \cG$ into $Y\rtimes\cG$. It follows from Proposition \ref{prop:locpropre} that $\beta_r\cG\rtimes \cG$ is amenable, since  $Y\rtimes\cG$ is amenable.
\end{proof}

{\subsection{A characterization of strong amenability at infinity for  \'etale  grou\-poids}\label{subsec:charai} Let $\cG$ be an \'etale groupoid. It is strongly amenable at infinity if and only if the semi-direct product groupoid $\beta_r\cG\rtimes \cG$ is amenable. This can be defined in terms of positive definite functions as recalled in Proposition \ref{prop:amen3}. We are going to provide a characterization where the obscure space $\beta_r \cG$ does not appear. To that end we introduce the dense open subspace\index{$\cG*_r\cG$}
$$\cG*_r\cG = \set{(\gamma,\gamma_1)\in \cG\times \cG : r(\gamma) = r(\gamma_1)}$$\index{$\cG*_r\cG$}
of $\beta_r\cG\rtimes \cG$. For simplicity, we will use the notation $q$ instead of $r_\beta : \beta_r\cG \to X=\cG^{(0)}$. We recall that the restriction of $q$ to $\cG$ is the range map $r$.

Given  $f\in \cC_c(\beta_r\cG\rtimes \cG)$, let us observe that the support of $f$ is contained in $q^{-1}(r(K))*_r K$ \footnote{If $A\subset \beta_r\cG$ and $B\subset \cG$, we write $A *_r B$ instead of $A \,_q\!*_r B$.} for some compact subset $K$ of $\cG$. 

\begin{lem}\label{lem:prepar} Let $\cG$ be an \'etale groupoid. Let $h$
be a bounded continuous function on $\cG *_r \cG$ with support in  $r^{-1}(r(K))*_r K$ for some compact subset $K$ of $\cG$. Then $h$ extends continuously to an  element $\tilde h$ of $\cC_c(\beta_r \cG\rtimes\, \cG)$.
\end{lem}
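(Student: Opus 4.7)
\emph{Plan of proof.} The plan is to cover $K$ by finitely many open bisections and use a partition of unity to reduce the problem to extending a one‑variable function from $\cG$ to $\beta_r\cG$, where one may appeal directly to the Gelfand‑spectrum definition of $\beta_r\cG$.

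First I would choose open bisections $S_1,\dots,S_n$ of $\cG$ with $K\subset\bigcup_j S_j$, together with $\varphi_j\in \cC_c(S_j)$ satisfying $\sum_j\varphi_j=1$ on a neighbourhood of $K$. Setting $h_j(\gamma,\gamma_1):=h(\gamma,\gamma_1)\varphi_j(\gamma_1)$ one has $h=\sum_j h_j$, so it suffices to extend each $h_j$ separately to an element of $\cC_c(\beta_r\cG\rtimes\cG)$.

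Next, since $S_j$ is an open bisection, the projection $(\gamma,\gamma_1)\mapsto\gamma$ is a homeomorphism from $\cG *_r S_j$ onto the open subset $r^{-1}(r(S_j))$ of $\cG$, with inverse $\gamma\mapsto (\gamma,r_{S_j}^{-1}(r(\gamma)))$. Transporting $h_j$ through this homeomorphism yields the bounded continuous function
$$\tilde h_j(\gamma):=h\bigl(\gamma,r_{S_j}^{-1}(r(\gamma))\bigr)\,\varphi_j\bigl(r_{S_j}^{-1}(r(\gamma))\bigr)$$
on $r^{-1}(r(S_j))$; because $\supp\varphi_j$ is a compact subset of $S_j$, the function $\tilde h_j$ vanishes on the open set where $r(\gamma)\notin r(\supp\varphi_j)$, and these two open sets cover $\cG$, so $\tilde h_j$ extends by zero to a bounded continuous function on $\cG$ with $\supp\tilde h_j\subset r^{-1}(r(\supp\varphi_j))\subset r^{-1}(\text{compact})$. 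Hence $\tilde h_j$ lies in the $C^*$‑subalgebra of $\cC_b(\cG)$ whose Gelfand spectrum is $\beta_r\cG$, and therefore extends uniquely to $H_j^0\in \cC(\beta_r\cG)$. Density of $\cG$ in $\beta_r\cG$ together with continuity force $\supp H_j^0\subset q^{-1}(r(\supp\varphi_j))$, which is compact since $q:\beta_r\cG\to X$ is proper.

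Finally, the same bisection argument at infinity (as in the proof of Proposition \ref{prop:etale}) shows that the range map $\mathbf r:\beta_r\cG\rtimes\cG\to\beta_r\cG$ restricts to a homeomorphism of the open set $\beta_r\cG *_r S_j$ onto $q^{-1}(r(S_j))$, with inverse $z\mapsto (z,r_{S_j}^{-1}(q(z)))$. Pulling $H_j^0$ back through this homeomorphism and extending by zero outside $\beta_r\cG *_r S_j$ produces $H_j\in \cC_c(\beta_r\cG\rtimes\cG)$; the extension is continuous because the pulled‑back support is homeomorphic to $\supp H_j^0$, hence compact and therefore closed in the Hausdorff space $\beta_r\cG\rtimes\cG$. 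By construction $H_j$ restricts to $h_j$ on $\cG *_r\cG$, so $H:=\sum_j H_j$ is the required extension. The only subtle step is ensuring that the extension from $\cG$ to $\beta_r\cG$ preserves compactness of the support, which is exactly where one uses the fibrewise properness of $q:\beta_r\cG\to X$ combined with the fact that $r(\supp\varphi_j)$ is a compact subset of $X$.
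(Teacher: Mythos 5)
Your proof is correct and follows essentially the same route as the paper's: cover $K$ by finitely many open bisections, take a partition of unity in the second variable, and reduce to one-variable functions on $\cG$ supported in $r^{-1}(\text{compact})$, which therefore lie in the $C^*$-algebra whose Gelfand spectrum is $\beta_r\cG$. The only (minor) difference is in the reassembly step: the paper writes $H$ as a finite sum of elementary tensors $k_i(z)\sqrt{\psi_i(\gamma)}$, which are manifestly continuous on all of $\beta_r\cG\rtimes\cG$ with no extension-by-zero needed, whereas you extend each piece by zero across the boundary of the open set $\beta_r\cG *_r S_j$ and justify continuity via compactness of the support inside that open set --- both arguments are valid.
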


\begin{proof} Let $\displaystyle K \subset \cup_{i=1}^{n} S_i$ be a covering of $K$
by a finite number of open bisections, and let $\psi_i, i = 1,\dots, n,$ be  continuous nonnegative
functions on $\cG$, the support of each $\psi_i$ being compact and contained in $S_i$, such that $\displaystyle \sum_{i=1,\dots,n} \psi_i(\gamma)
= 1$ if $\gamma\in K$.  We will denote by $S_{i,r}(\gamma)$  the unique element $\gamma'$ of $S_i$ such that $r(\gamma') = r(\gamma)$ whenever $r(\gamma)\in r(S_i)$. Then $S_{i,r}$ is a continuous map from $r^{-1}\big(r(S_i)\big)$ onto $S_i$.

For $ i = 1,\dots, n$, we define a continuous function $k_i$ on $\cG$ as follows:
\begin{align*}
k_i(\gamma) &= h(\gamma, S_{i,r}(\gamma)) \sqrt{\psi_i(S_{i,r}(\gamma))}\quad\hbox{if} \quad  r(\gamma)\in r(S_i)\\
&=0 \quad\hbox{otherwise}.
\end{align*}
 
 Note that $k_i$ is a continuous bounded function on $\cG$. We have $k_i(\gamma) = 0$ when $\gamma\notin r^{-1}\big(r(\supp(\psi_i)\cap K)\big)$
and therefore $k_i$ belongs to $\cC_c(\cG, r)$ (see the notation in Section \ref{subsec:fc}). Thus $k_i$ extends to an element of $\cC_0(\beta_r \cG)$, still denoted by $k_i$,
whose support is  contained into  $q^{-1}\big(r(\supp(\psi_i)\cap K)\big)$ and therefore is compact. 

For $(z,\gamma) \in \beta_r\cG\rtimes \cG$, let us set 
$$\tilde h(z,\gamma) = \sum_{i=1}^{n} k_i(z) \sqrt{\psi_i(\gamma)}.$$
Obviously, $\tilde h$ is a continuous function on  $\beta_r \cG\rtimes\, \cG$ with compact support. Observe that $\tilde h(z,\gamma)= 0$ if $z\notin q^{-1}(r(K))$.

For $(\gamma,\gamma_1) \in \cG *_r \cG$ with $\gamma_1\in K$ we have
\begin{align*}
\tilde h(\gamma,\gamma_1) &= \sum_{i=1}^{n} k_i(\gamma)\sqrt{\psi_i(\gamma_1)}\\
& = \sum_{i=1}^{n} h(\gamma, S_{i,r}(\gamma))\sqrt{\psi_i(S_{i,r}(\gamma))}\sqrt{\psi_i(\gamma_1)}.
\end{align*}
Since $\gamma_1 = S_{i,r}(\gamma)$ for every $i$ such that $\gamma_1 \in S_i$
it follows that
$$\tilde h(\gamma,\gamma_1) = \sum_{i=1}^{n} h(\gamma,\gamma_1) \psi_i(\gamma_1) = h(\gamma,\gamma_1).$$
If $\gamma_1\notin K$ we have $\tilde h(\gamma,\gamma_1) = 0 = h(\gamma,\gamma_1)$.
\end{proof}

\begin{defn} et ${\mathcal G}$ be a locally compact groupoid. We say that a function $k: \cG *_r \cG \to \C$ is a {\it positive definite kernel}\index{positive definite kernel} if for every $x\in X$, $n \in \N$ and $\gamma_1,\dots,\gamma_n \in \cG^x$, the matrix $[k(\gamma_i,\gamma_j)]$ is nonnegative, that is
$$\sum_{i,j=1}^n \overline{\alpha_i}\alpha_j k(\gamma_i,\gamma_j) \geq 0$$
for $\alpha_1,\dots,\alpha_n \in \C$.
\end{defn}

Note that we have $k(\gamma,\gamma) \geq 0$ for all $\gamma\in \cG$ and, for $(\gamma,\gamma_1)\in  {\mathcal G}*_r {\mathcal G}$,
$$k(\gamma,\gamma_1) = \overline{k(\gamma_1,\gamma)},\quad\quad \abs{k(\gamma,\gamma_1)}^2\leq k(\gamma,\gamma)k(\gamma_1,\gamma_1).$$

\begin{defn}\label{def:tube} Let ${\mathcal G}$ be a locally compact groupoid. A {\it tube}\index{tube}
is a subset of ${\mathcal G}*_r {\mathcal G}$ whose image by the map
$(\gamma,\gamma_1) \mapsto \gamma^{-1}\gamma_1$ is relatively compact in ${\mathcal G}$. If $K$ is a compact subset of $\cG$, we denote by $T_K$ \index{$T_K$} the tube $\set{(\gamma,\gamma_1)\in  {\mathcal G}*_r {\mathcal G}:\gamma^{-1}\gamma_1\in K}$.
We denote by
$\cC_t(\cG*_r\cG)$\index{$\cC_t(\cG*_r\cG)$} the space of continuous bounded functions on $\cG*_r\cG$ with support in a tube.

We define a linear bijection from $\cC_b(\cG*_r\cG)$ onto itself by setting
$$\theta(f)(\gamma,\gamma_1) = f(\gamma^{-1},\gamma^{-1}\gamma_1).$$
Note that $\theta = \theta^{-1}$.

For $f\in \cC_c(\beta_r\cG\rtimes \cG)$, we denote by $\rho(f)$ its restriction to $\cG*_r\cG$.
\end{defn} 
\begin{thm}\label{thm:versus_tube} Let $\cG$ be an \'etale groupoid.
\begin{itemize}
\item[(i)] The map $\Theta : f\mapsto \theta(\rho(f))$ is a linear bijection from $\cC_c(\beta_r\cG\rtimes \cG)$ onto $\cC_t(\cG*_r\cG)$. 
\item[(ii)] The map $\Theta$  induces a bijection between the space of continuous positive definite functions with compact support on the groupoid $\beta_r\cG\rtimes \cG$ and the space of continuous positive definite kernels contained in $\cC_t(\cG*_r\cG)$.
\end{itemize}
\end{thm}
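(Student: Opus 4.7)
The plan is to treat (i) by direct unpacking of the definitions combined with Lemma \ref{lem:prepar}, and to treat (ii) by transport of the positive-definiteness condition across the change of variables
$$H(\alpha,\beta)=k(\alpha^{-1},\alpha^{-1}\beta),\qquad k(\gamma,\gamma_1)=H(\gamma^{-1},\gamma^{-1}\gamma_1),$$
where $H\in\cC_c(\beta_r\cG\rtimes\cG)$ and $k=\Theta(H)$. Note that $\theta\circ\theta=\mathrm{Id}$, so the bijection aspects are formal once the support conditions match.

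For (i), I first check that $\Theta$ is well defined into $\cC_t(\cG*_r\cG)$. Any $f\in\cC_c(\beta_r\cG\rtimes\cG)$ has support contained in $q^{-1}(r(K))*_r K$ for some compact $K\subset\cG$, so $\rho(f)$ has support in $r^{-1}(r(K))*_r K$, and then $\theta(\rho(f))(\gamma,\gamma_1)=\rho(f)(\gamma^{-1},\gamma^{-1}\gamma_1)$ vanishes unless $\gamma^{-1}\gamma_1\in K$, which is exactly the tube condition. Surjectivity is now the content of Lemma \ref{lem:prepar}: given $h\in\cC_t(\cG*_r\cG)$ with $\gamma^{-1}\gamma_1\in K$ on its support, the function $\theta(h)$ is bounded continuous on $\cG*_r\cG$ with support in $r^{-1}(r(K))*_r K$, hence extends to some $H\in\cC_c(\beta_r\cG\rtimes\cG)$, and $\Theta(H)=\theta(\rho(H))=\theta(\theta(h))=h$. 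Injectivity reduces to the density of $\cG*_r\cG$ in $\beta_r\cG\rtimes\cG$, which in turn follows from the fact that $\cG$ embeds as an open dense subset of $\beta_r\cG$ and that each fibre $q^{-1}(x)$ is canonically the Stone-\v Cech compactification of the discrete fibre $\cG^x$ (the generating algebra restricts to all of $\ell^\infty(\cG^x)$ by a partition-of-unity argument using a compact neighborhood of $x$).

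For (ii), the groupoid $\beta_r\cG\rtimes\cG$ has unit space $\beta_r\cG$, and a short computation gives $(y,\gamma_i)^{-1}(y,\gamma_j)=(\gamma_i^{-1}y,\gamma_i^{-1}\gamma_j)$. Hence $H$ is positive definite iff for every $y\in\beta_r\cG$ and every finite family $(\gamma_i)$ in $\cG^{q(y)}$, the matrix $[H(\gamma_i^{-1}y,\gamma_i^{-1}\gamma_j)]$ is non-negative. Specialising this to $y=r(\gamma_i)\in\cG^{(0)}\subset\cG\subset\beta_r\cG$ yields $\gamma_i^{-1}y=\gamma_i^{-1}$ and therefore $H(\gamma_i^{-1}y,\gamma_i^{-1}\gamma_j)=k(\gamma_i,\gamma_j)$, so $k$ is a positive definite kernel. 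For the converse, a direct substitution using $H(\alpha,\beta)=k(\alpha^{-1},\alpha^{-1}\beta)$ gives, for $y\in\cG$ with $r(y)=r(\gamma_i)$,
$$H(\gamma_i^{-1}y,\gamma_i^{-1}\gamma_j)=k(y^{-1}\gamma_i,y^{-1}\gamma_j),$$
and the elements $y^{-1}\gamma_i$ all lie in $\cG^{s(y)}$, so positive definiteness of $k$ produces the required non-negativity. For arbitrary $y\in\beta_r\cG$ I approximate $y$ by a net in $\cG^{q(y)}$ using the density discussed above; the entries $H(\gamma_i^{-1}y,\gamma_i^{-1}\gamma_j)$ are continuous in $y$ and PSD matrices form a closed cone, so the conclusion passes to the limit.

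The main obstacle will be the density statement $\cG^x\subset q^{-1}(x)\subset\beta_r\cG$, which is what allows the reduction of positive definiteness on $\beta_r\cG$ to the purely groupoid-theoretic kernel condition on $\cG*_r\cG$; identifying $q^{-1}(x)$ with $\beta\cG^x$ requires a careful inspection of the defining algebra of $\beta_r\cG$ and is the point at which the étale hypothesis (making $\cG^x$ discrete) is genuinely used. Once this is in hand, the rest is bookkeeping with support conditions and the involution $\theta$.
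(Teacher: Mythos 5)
Your treatment of (i) and of the forward direction of (ii) follows the paper's route (support bookkeeping plus Lemma \ref{lem:prepar} for surjectivity, specialisation to $z=r(\gamma_i)\in\cG^{(0)}$ for positive definiteness of $\Theta(f)$), and those parts are fine. But the converse direction of (ii) rests on a false premise. You approximate an arbitrary $y\in\beta_r\cG$ by a net lying \emph{in the fibre} $\cG^{q(y)}$, justified by the claim that $q^{-1}(x)$ is canonically $\beta(\cG^x)$ (equivalently, that $\cG^x$ is dense in $q^{-1}(x)$). This is not true: the fibre $r_\beta^{-1}(x)$ of the Stone-\v Cech fibrewise compactification also contains limits of nets $(\gamma_\alpha)$ with $r(\gamma_\alpha)\to x$ but $r(\gamma_\alpha)\neq x$. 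The paper's own Example \ref{ex:not open} exhibits this: there $\cG^{1/2}$ is a single point while $r_\beta^{-1}(1/2)=\set{1/2}\times\big(\set{0}\sqcup(\beta(]1/2,1])\setminus\,]1/2,1])\big)$, so most of the fibre is not in the closure of $\cG^{1/2}$. Note also that your net \emph{must} stay in the fibre for the expression $H(\gamma_i^{-1}y_\alpha,\gamma_i^{-1}\gamma_j)$ to be defined at all, since $\gamma_i^{-1}y_\alpha$ only makes sense when $q(y_\alpha)=r(\gamma_i)=q(y)$; so the gap cannot be waved away.

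The repair is exactly the step your writeup skips, and it is the real content of the paper's proof: approximate $y$ by a net $(z_\alpha)$ in $\cG$ with no constraint on $r(z_\alpha)$, and simultaneously perturb the $\gamma_i$ so that the ranges keep matching. Concretely, choose open bisections $S_i\ni\gamma_i$ and set $\gamma_{i,\alpha}=r_{S_i}^{-1}(r(z_\alpha))$ for $\alpha$ large; then
$$\sum_{i,j}\overline{\lambda}_i\lambda_j\, f(\gamma_{i,\alpha}^{-1}z_\alpha,\gamma_{i,\alpha}^{-1}\gamma_{j,\alpha})
=\sum_{i,j}\overline{\lambda}_i\lambda_j\,\Theta(f)(z_\alpha^{-1}\gamma_{i,\alpha},\,z_\alpha^{-1}\gamma_{j,\alpha})\ge 0,$$
because the elements $z_\alpha^{-1}\gamma_{i,\alpha}$ all lie in $\cG^{s(z_\alpha)}$ and $\Theta(f)$ is a positive definite kernel; continuity of $f$ and closedness of the cone of non-negative matrices then give the conclusion in the limit. (The same bisection trick, not the fibre identification, is also what proves the density of $\cG*_r\cG$ in $\beta_r\cG\rtimes\cG$ that you invoke for injectivity in (i); that density is true, but your justification of it via $q^{-1}(x)\cong\beta(\cG^x)$ is not.)
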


\begin{proof} Let us prove (i). We observe that $h\in \cC_t(\cG*_r\cG)$ if and only if there exists a compact subset $K$ of $\cG$ such that the support of $\theta(h)$ is contained in $r^{-1}(r(K))*_r K$. Let $f\in \cC_c(\beta_r\cG\rtimes \cG)$. Then its support is contained in some $q^{-1}(r(K))*_r K$
where $K$ is a compact subset $\cG$ and therefore we have $\rho(f) \in \theta(\cC_t(\cG*_r\cG))$. Moreover, $\rho$ is injective since $\cG*_r\cG$ is dense into $\beta_r\cG\rtimes \cG$. Thus $\Theta$ is injective. Its surjectivity follows from the previous lemma.

(ii) Assume that $f\in \cC_c(\beta_r\cG\rtimes \cG)$ is positive definite, that is, for every $z\in \beta_r\cG$, $n \in \N$ and $\gamma_1,\dots,\gamma_n \in \cG^{q(z)}$, the matrix $[f(\gamma_i^{-1}z,\gamma_i^{-1}\gamma_j)]$ is nonnegative. Observe that $\Theta(f)(\gamma_i,\gamma_j) = f(\gamma_i^{-1},\gamma_i^{-1}\gamma_j)$ and so, by taking $z = r(\gamma_i)\in \cG^{(0)}\subset \cG\subset \beta_r\cG$, we get the nonnegativity of $[\Theta(f)(\gamma_i,\gamma_j)]$.

Conversely, assume that the kernel $\Theta(f)$ is positive definite. Let $z\in \beta_r\cG$ and $\gamma_1,\dots,\gamma_n \in \cG^{q(z)}$ as above. Let $(z_\alpha)$ be a net in $\cG$ such that $\lim z_\alpha = z$. For each $i$ we choose an open bisection $S_i$ in $\cG$ with $\gamma_i\in S_i$ and for $\alpha$ large enough we set $\gamma_{i,\alpha} = r_{S_i}^{-1}(r(z_\alpha))$. Given $\lambda_1, \dots,\lambda_n\in \C$, we have
$$\sum \overline{\lambda}_i\lambda_j f(\gamma_{i,\alpha}^{-1} z_\alpha,  \gamma_{i,\alpha}^{-1}\gamma_{j,\alpha}) = \sum \overline{\lambda}_i\lambda_j \Theta(f)(\wt{\gamma}_{i,\alpha}, \wt{\gamma}_{j,\alpha})\geq 0,$$
 where we have set $\wt{\gamma}_{i,\alpha} = z_{\alpha}^{-1}\gamma_{i,\alpha}$. The fact that $\sum \overline{\lambda}_i\lambda_j f(\gamma_i^{-1}z,\gamma_i^{-1}\gamma_j)\geq 0$ is obtained by passing to the limit.
\end{proof}

\begin{rem} Let $f\in \cC_c(\beta_r \cG\rtimes\cG)$ be positive definite and let $h = \Theta(f)$, that is, $h(\gamma,\gamma_1) = f(\gamma^{-1},\gamma^{-1}\gamma_1)$ for $(\gamma,\gamma_1) \in \cG  *_r\cG$. Then we have $f(z,q(z)\leq 1$ for all $z\in \beta_r\cG$ if and only if $h(\gamma,\gamma)\leq 1$ for all $\gamma\in\cG$.
\end{rem}

\begin{thm}\label{prop:amen_inf} Let $\cG$ be an \'etale groupoid. The following conditions are equivalent:
\begin{itemize}
\item[(i)] $\cG$ is strongly amenable at infinity;
\item[(ii)]  there exists a net $(k_i)_{i\in I}$ of  bounded positive definite  continuous
kernels on $\cG *_r \cG$ supported in tubes such that
\begin{itemize}
\item[(a)] for every $i$, the restriction of $k_i$ to the diagonal of $\cG *_r \cG$ is uniformly bounded by $1$;
\item[(b)]  $\lim_i k_i = 1$ uniformly on tubes.
\end{itemize}
\end{itemize} 
\end{thm}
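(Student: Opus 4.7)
The plan is to combine three results already established: Proposition \ref{prop:SC} (which identifies strong amenability at infinity with amenability of $\beta_r\cG\rtimes\cG$), Proposition \ref{prop:amen2} (characterizing amenability via compactly supported continuous positive definite functions converging uniformly to $1$ on compact subsets), and Theorem \ref{thm:versus_tube} (which gives the bijection $\Theta:f\mapsto\theta(\rho(f))$ between continuous compactly supported positive definite functions on $\beta_r\cG\rtimes\cG$ and continuous positive definite kernels on $\cG\ast_r\cG$ supported in tubes). Chaining these three gives the equivalence, modulo translating uniform convergence on compact subsets of $\beta_r\cG\rtimes\cG$ into uniform convergence on tubes of $\cG\ast_r\cG$.

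The central geometric observation for this translation is the following correspondence, which should be made explicit at the start of the proof: every compact subset $C$ of $\beta_r\cG\rtimes\cG$ sits inside a set of the form $q^{-1}(r(K))\ast_r K$ for some compact $K\subset\cG$, and this set is itself compact because $q=r_\beta$ is proper. The intersection $\cG\ast_r K$ is dense in $q^{-1}(r(K))\ast_r K$, and the involution $\theta$ carries $\cG\ast_r K$ bijectively onto the tube $T_K=\set{(\gamma,\gamma_1)\in\cG\ast_r\cG:\gamma^{-1}\gamma_1\in K}$. Conversely, every tube $T$ is contained in $T_K$ for some compact $K$, and $\theta(T_K)=\cG\ast_r K\subset q^{-1}(r(K))\ast_r K$.

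For the implication (i)$\Rightarrow$(ii), I would start from a net $(h_i)$ provided by Proposition \ref{prop:amen2} and set $k_i:=\Theta(h_i)$; these are continuous positive definite kernels in $\cC_t(\cG\ast_r\cG)$, bounded because each $h_i$ is continuous of compact support. For any tube $T\subset T_K$, the relation $k_i=h_i\circ\theta$ on $\cG\ast_r\cG$ together with $\theta(T)\subset q^{-1}(r(K))\ast_r K$ immediately yields $k_i\to 1$ uniformly on $T$.

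For the converse (ii)$\Rightarrow$(i), I would take a net $(k_i)$ as in (ii) and define $h_i:=\Theta^{-1}(k_i)\in\cC_c(\beta_r\cG\rtimes\cG)$ using Theorem \ref{thm:versus_tube}. Given a compact $C\subset q^{-1}(r(K))\ast_r K$, the function $h_i-1$ is continuous on the compact set $q^{-1}(r(K))\ast_r K$, so by density of $\cG\ast_r K$ its supremum equals
\[
\sup_{\cG\ast_r K}|h_i-1|=\sup_{T_K}|k_i-1|\xrightarrow[i]{}0,
\]
and hence $h_i\to 1$ uniformly on $C$. Applying Proposition \ref{prop:amen2} and Proposition \ref{prop:SC} concludes. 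The main technical point, which is a genuine but very mild obstacle, is precisely the density argument giving the equality of suprema; it works because $h_i-1$ is continuous on the compact fibrewise slice, whereas $1$ alone is not compactly supported on $\beta_r\cG\rtimes\cG$.
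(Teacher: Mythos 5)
Your overall strategy --- chaining Proposition \ref{prop:SC}, Proposition \ref{prop:amen2} and Theorem \ref{thm:versus_tube}, and then translating uniform convergence on compact subsets of $\beta_r\cG\rtimes\cG$ into uniform convergence on tubes --- is exactly the route the paper takes, and your direction (i)$\Rightarrow$(ii) is correct. The problem is the step you yourself single out as the main technical point of (ii)$\Rightarrow$(i): the claim that $\cG*_rK$ is dense in $q^{-1}(r(K))*_rK$ for a compact $K\subset\cG$, and hence that $\sup_{q^{-1}(r(K))*_rK}|h_i-1|=\sup_{\cG*_rK}|h_i-1|$. This density is false in general. In Example \ref{ex:not open} of the paper, take $K=\set{x}$ with $x$ the unit at $1/2$: then $\cG*_rK=\cG^{1/2}\times K$ is a single point, whereas $q^{-1}(1/2)$ contains the whole corona $\beta(]1/2,1])\setminus\, ]1/2,1]$, so $q^{-1}(r(K))*_rK$ is strictly larger than the closure of $\cG*_rK$. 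The underlying reason is that to approximate a point $(z,\gamma')$ with $z\in\beta_r\cG\setminus\cG$ you must perturb the second coordinate along with the first (the fibre $\cG^{q(z)}$ need not be dense in $q^{-1}(q(z))$), and the perturbed second coordinates only land \emph{near} $K$, not in $K$. So the equality of suprema, and with it your bound on $|h_i-1|$ over $q^{-1}(r(K))*_rK$, does not follow as written.

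The repair is exactly what the paper does: enlarge $K$ to an open relatively compact $\Omega\supset K$. For \emph{open} $\Omega$ the set $\cG*_r\Omega$ is indeed dense in $\beta_r\cG\,_q\!*_r\Omega$ (given $\gamma_\alpha\to z$ in $\cG$, pick a bisection $S\subset\Omega$ through $\gamma'$ and set $\gamma'_\alpha=r_S^{-1}(r(\gamma_\alpha))\in\Omega$). Since $T_{\overline\Omega}$ is again a tube, hypothesis (ii) gives $|k_i-1|\leq\varepsilon$ there for large $i$, hence $|h_i-1|\leq\varepsilon$ on $\cG*_r\Omega$, hence by continuity and density on $\beta_r\cG\,_q\!*_r\Omega\supset\beta_r\cG*_rK$; finally every compact subset of $\beta_r\cG\rtimes\cG$ is contained in some $\beta_r\cG*_rK$. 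With this one correction your argument coincides with the paper's proof.
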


\begin{proof} By Proposition \ref{prop:amen2} (and Remark \ref{rem:nonsep} (1)), the groupoid $\beta_r\cG\rtimes \cG$ is amenable if and only if there exists a net $(h_i)_{i\in I}$ of continuous positive definite functions in $\cC_c(\beta_r\cG\rtimes \cG)$, with $h_i(z,q(z)) \leq 1$ for all $z\in \beta_r\cG$, such that 
 $\lim_i h_i = 1$ uniformly on every compact subset of $\beta_r\cG\rtimes \cG$.

Therefore, to prove the theorem we just have to check that $\lim_i h_i = 1$ uniformly on every compact subset of $\beta_r\cG\rtimes \cG$ if and only if $\lim_i k_i =1$ uniformly on tubes where we set $k_i = \Theta(h_i)$. First, given a compact subset $K$ of $\cG$, if $i$ is such that $\abs{h_i(z,\gamma) - 1} \leq \varepsilon$ on the compact set $q^{-1}(r(K))*_r K$, then we have $\abs{k_i(\gamma_1,\gamma_2) - 1}\leq \varepsilon$ whenever $\gamma_1^{-1}\gamma_2 \in K$.
Conversely,  let $K$  be a compact subset of $\cG$ and let $\Omega$ be an open relatively compact subset of $\cG$ containing $K$. Let $i$ be such that $\abs{k_i(\gamma_1,\gamma_2) - 1}\leq \varepsilon$ whenever $\gamma_1^{-1}\gamma_2 \in \Omega$. Then we have $\abs{h_i(\gamma, \gamma') -1} \leq \varepsilon$ for $(\gamma,\gamma')\in \cG*_r\Omega$. Since $\cG*_r\Omega$ is dense in $\beta_r\cG\,_q\!*_r\Omega$, we get  $\abs{h_i(z, \gamma') -1} \leq \varepsilon$  for
$(z,\gamma') \in \beta_r\cG *_r K$. To conclude, we observe that every compact subset of $\beta_r\cG\rtimes \cG$ is contained in such a subset $\beta_r\cG *_r K$.
\end{proof}

\begin{rems}\label{rem:bounded} (a) In fact, thanks to Remark \ref{rem:nonsep} (2), it is not necessary to require Condition (ii) (a). It is enough to require that each $k_i$ is bounded.

(b) For the analogous theorem when $\cG$ is a group we refer to \cite[Theorem 2]{Oza} when $\cG$ is discrete and to \cite[Theorems 3.4, 3.5]{AD02} for any locally compact group. In this case, amenability at infinity is equivalent to the fact that $\cG$ is KW-exact (see \cite[Theorem 5.6]{BCL}, \cite[Proposition 2.5]{OS20}) and even to the $C^*$-exactness of the group when it is discrete\footnote{Since there is no ambiguity in the latter case, we simply say that the discrete group is exact. These definitions of exactness are recalled in \ref{def:exact}, \ref{def:exact1} }.
\end{rems}

\begin{rem} This theorem can be used to show that strongly amenable at infinity \'etale groupoids are uniformly embeddable in continuous fields of Hilbert spaces.
\end{rem}

\newpage
\part{}
\section{\textbf{\textsc{Groupoids and $C^*$-algebras}}}\label{groupoid-alg}
\subsection{Full and reduced  $C^*$-algebras}\label{sec:full}

Let $(\cG, \lambda)$ be a locally compact groupoid with a Haar system\footnote{Recall that in the rest of this text, since we are interested in groupoid $C^*$-algebras, groupoids are  implicitely  equipped with a Haar system.} $\lambda$. We set $X = \cG^{(0)}$. The space $\cC_c(\cG)$ is an involutive algebra with respect to the following operations for $f,g\in \cC_c(\cG)$:
\begin{align*}
(f*g)(\gamma) &= \int f(\gamma_1)g(\gamma_{1}^{-1}\gamma) d\lambda^{r(\gamma)}(\gamma_1)\\
f^*(\gamma) & =\overline{f(\gamma^{-1})}. 
\end{align*}

We define a norm on $\cC_c(\cG)$ by
$$\norm{f}_I = \max \set{\sup_{x\in X} \int \abs{f(\gamma)}\rd\lambda^x(\gamma), \,\, \sup_{x\in X}\int \abs{f(\gamma^{-1})}\rd\lambda^x(\gamma)}.$$
The {\it full $C^*$-algebra $C^*(\cG)$\index{full groupoid $C^*$-algebra} of the groupoid} $(\cG,\lambda)$ is the enveloping $C^*$-algebra of the Banach $*$-algebra obtained by completion of $\cC_c(\cG)$ with respect to the norm $\norm{\cdot}_I$.

For the notion of right Hilbert   $C^*$-module $\cH$ over a $C^*$-algebra $A$ (or Hilbert $A$-module) that we use in the sequel, we refer to \cite{Lance_book}. We will denote by  $\cB_A(\cH)$ \index{$\cB_A(H)$} the $C^*$-algebra of $A$-linear adjointable maps from $\cH$ into itself.

We denote by $L^2_{\cC_0(X)}(\cG,\lambda)$\index{$L^2_{\cC_0(X)}(\cG,\lambda)$} the $C^*$-module\footnote{When $\cG$ is \'etale, we will use the notation $\ell^2_{\cC_0(X)}(\cG)$\index{$\ell^2_{\cC_0(X)}(\cG)$} rather than  $L^2_{\cC_0(X)}(\cG,\lambda)$. }  over  ${\mathcal C}_0(X)$  obtained by completion of $\cC_c(\cG)$ with respect to the $\cC_0(X)$-valued inner product
$$\scal{\xi,\eta}(x) = \int_{\cG^x} \overline{\xi(\gamma)}\eta(\gamma)\rd\lambda^x(\gamma).$$
The $\cC_0(X)$-module structure is given by
$$(\xi f)(\gamma) = \xi(\gamma)f\circ r(\gamma).$$
Let us observe that $L^2_{\cC_0(X)}(\cG,\lambda)$ is the space of continuous sections vanishing at infinity of a continuous field of Hilbert spaces with fibre $L^2(\cG^x,\lambda^x)$ at $x\in X$.

For simplicity of notation we set $\cE = L^2_{\cC_0(X)}(\cG,\lambda)$. We let $\cC_c(\cG)$ act on $\cE$ by the formula
$$(\Lambda(f)\xi)(\gamma) = \int f(\gamma^{-1}\gamma_1) \xi(\gamma_1) \rd\lambda^{r(\gamma)}(\gamma_1).$$
Then, $\Lambda$\index{$\Lambda$: regular representation of a groupoid} extends to a representation of $C^*(\cG)$ in the Hilbert $\cC_0(X)$-module $\cE$, called the {\it regular representation of} $(\cG,\lambda)$. Its range $\Lambda(C^*(\cG))\subset \cB_{\cC_0(X)}(\cE)$ is denoted by $C^*_{r}(\cG)$\index{$C^*_{r}(\cG)$} and called the {\it reduced $C^*$-algebra}\index{reduced groupoid $C^*$-algebra}\footnote{Very often, the Hilbert $\cC_0(X)$-module $L^2_{\cC_0(X)}(\cG,\lambda^{-1})$ is considered in order to define the reduced $C^*$-algebra (see for instance \cite{KS02,KS04}). We pass from this setting to ours (which we think to be more convenient for our purpose) by considering the isomorphism $U: L^2_{\cC_0(X)}(\cG,\lambda^{-1})\to L^2_{\cC_0(X)}(\cG,\lambda)$ such that $(U\xi)(\gamma) = \xi(\gamma^{-1})$.}{\it of the groupoid} $\cG$. We still denote by $\Lambda$ the canonical inclusion  of $C^*_{r}(\cG)$  in $\cB_{\cC_0(X)}\big(L^2_{\cC_0(X)}(\cG,\lambda)\big)$. Note that $\Lambda(C^*_{r}(\cG))$ acts fibrewise on the corresponding continuous field of Hilbert spaces with fibres $L^2(\cG^x,\lambda^x)$ by the formula
\begin{equation}\label{eq:rep}(\Lambda_x(f)\xi)(\gamma) = \int_{\cG^x} f(\gamma^{-1}\gamma_1) \xi(\gamma_1) \rd\lambda^x(\gamma_1)
\end{equation}
for $f\in \cC_c(\cG)$ and $\xi\in L^2(\cG^x,\lambda^x)$. Moreover, for $a\in C^*_{r}(\cG)$ we have
$\norm{\Lambda(a)}= \sup_{x\in X} \norm{\Lambda_x(a}$. The norm of $\Lambda(a)$ is denoted by $\norm{\Lambda(a)}$ (or sometimes $\norm{\Lambda(a)}_r$) \index{$\norm{\Lambda}_r$}.

\begin{ex}\label{ex:red} Let $G$ be a locally compact group with modular function $\Delta$. The usual formula for the left regular representation  $\lambda$ of $G$ is given, for $f\in \cC_c(G)$ and $\xi\in L^2(G)$,  by $(\lambda(f)\xi)(s) = \int f(st)\xi(t^{-1}) \rd(t)$. The unitary $V\in \cB(L^2(G))$ defined by $V(\xi)(t) = \xi(t^{-1})\Delta(t)^{-1/2}$ is such that $V\lambda(f)V^{-1} = \Lambda(f\Delta^{1/2})$. The bijection $\Phi: \cC_c(G) \to  \cC_c(G)$ given by $\Phi(f)(s) = f(s)\Delta(s)^{1/2}$  is such that $\Phi(f*g) = \Phi(f)*\Phi(g)$ but $\Phi(f^\star) = \Phi(f)^*$ where $f^\star(s) = \overline{f(s^{-1})}\Delta(s)^{-1}$ and $f^*(s) = \overline{f(s^{-1})}$. The closure of $\lambda(\cC_c(G))$ and $\Lambda(\cC_c(G))$ in $\cB(L^2(G))$ are the same and give the reduced $C^*$-algebra $C^*_{r}(G)$.
\end{ex}
\begin{rem}\label{rem:propRed}  We will use later on the  following properties of $C^*_{r}(\cG)$:
\begin{itemize}
\item[(a)] for every $a\in C^*_{r}(\cG)$ the function $x\mapsto \norm{\Lambda_x(a)}$ is lower semicontinuous. This follows from the fact that, $$x\mapsto \norm{\Lambda_x(a)}=\sup\abs{\scal{\xi,\Lambda(a)\eta}(x)},$$
 where $\xi,\eta$ run over the unit ball of  $L^2_{\cC_0(X)}(\cG,\lambda)$;
\item[(b)] for every $\gamma\in \cG$, the representations $\Lambda_{s(\gamma)}$ and $\Lambda_{r(\gamma)}$ are equivalent. So, for $a\in C^*_{r}(\cG)$, the function $x\mapsto \norm{\Lambda_x(a)}$ is constant on each orbit $\cG\cdot x = r(s^{-1}(x))$;
\item[(c)] if the support of $f \in \cC_c(\cG)$ is contained into a compact set $K$, then we have $\Lambda_x(f) = 0$ when $x$ is not in the saturation $[r(K)] = r\big(s^{-1}(r(K))\big)$ of $r(K)$. It follows that for every $a\in C^*_{r}(\cG)$ and every $\varepsilon >0$ there exists a compact subset $K$ of $\cG$ such that $\norm{\Lambda_x(a)} \leq \varepsilon$ if $x\notin [r(K)]$.
\end{itemize}
\end{rem}

\subsection{Uniform $C^*$-algebra}\label{subsec:Roe}
We now introduce a third $C^*$-algebra associated with a groupoid $(\cG,\lambda)$, which extends the notion of uniform Roe algebra associated with a countable discrete group \cite{Roe}. Recall that $\cC_t(\cG*_r\cG)$ denotes the space of conti\-nuous bounded functions on $\cG*_r\cG$ with support in a tube. We define on $\cC_t(\cG*_r\cG)$ the following operations:
\begin{align*}
(f*g)(\gamma,\gamma') &= \int f(\gamma,\gamma_1)g(\gamma_1,\gamma') \rd\lambda^{r(\gamma)}(\gamma_1)\\
f^*(\gamma,\gamma') &= \overline{f(\gamma',\gamma)},
\end{align*}
which make $\cC_t(\cG*_r\cG)$ a $*$-algebra.

For $f\in \cC_t(\cG*_r\cG)$ and $\xi\in \cE$, we set
$$(T(f)\xi)(\gamma) = \int f(\gamma,\gamma_1) \xi(\gamma_1) \rd\lambda^{r(\gamma)}(\gamma_1).$$
Then $T$ is a $*$-homomorphism from $\cC_t(\cG*_r\cG)$ into $\cB_{\cC_0(X)}(\cE)$.

\begin{defn} The {\it uniform} $C^*$-algebra\index{uniform $C^*$-algebra} of $\cG$ is the $C^*$-subalgebra $C_{u}^{*}(\cG)$\index{$C_{u}^{*}(\cG)$}
of $\cB_{\cC_0(X)}(\cE)$ generated by the operators $T(f)$ associated with the bounded conti\-nuous
kernels supported in tubes.
\end{defn}

\begin{rem} Our notion of Roe algebra differs from the notion introduced in \cite[Definition 3.9]{TWY} for a another purpose. Let us briefly compare the two notions. Let $\cG$ be a locally compact groupoid with Haar system and set $\cH = \cG*_r\cG$. It is a locally compact groupoid with $\cG$ as set of units. We have $r(\gamma_1,\gamma_2)= \gamma_1$, $s(\gamma_1,\gamma_2) = \gamma_2$, $(\gamma_1,\gamma_2)^{-1} = (\gamma_2,\gamma_1)$, $(\gamma_1,\gamma_2)(\gamma_2,\gamma_3) = (\gamma_1,\gamma_3)$. After having identified $\cH^\gamma$ with $\cG^{r(\gamma)}$, we see that the Haar system of $\cG$ induces a Haar system on $\cH$. We define a coarse structure on $\cH$ in the sense of \cite[definition 2.1]{TWY} by saying that an open subset $E$ of $\cH$ is an entourage if there exists an open relatively compact  subset $\Omega$ of $\cG$ such that $E$ is contained into the tube $\set{(\gamma_1,\gamma_2)\in \cH, (\gamma_1^{-1}\gamma_2) \in \Omega}$. Then $C^*_{u}(\cG)$ is the same as the Roe algebra of $\cH$ equipped with this coarse structure, as defined in \cite{TWY}.
\end{rem}

Our goal  is now to prove that the $C^*$-algebras $C_{u}^{*}(\cG)$ and  $C^{*}_{r}(\beta_r \cG\rtimes \,\cG)$ are canonically isomorphic when $\cG$ is an \'etale groupoid.  We keep the notation of Theorem \ref{thm:versus_tube}. Recall that we have set $q = r_\beta : \beta_r \cG \to X$. We will have to apply the definition of the reduced $C^*$-algebra to the groupoid $\beta_r\cG\rtimes \cG$. We will use the notation $\Lambda'$ and $\cE'$ for this groupoid to avoid any confusion with $\Lambda$ and $\cE$ that we reserve to the groupoid $\cG$. Hence  $\Lambda'$ is a representation of $\cC_c(\beta_r\cG\rtimes \cG)$ in the Hilbert $\cC_0(\beta_r\cG)$-module $\cE'$ which is the completion of $\cC_c(\beta_r\cG\rtimes \cG)$ with respect to the inner product
$$\scal{\xi,\eta}(z) = \int \overline{\xi(z,\gamma)}\eta(z,\gamma) \rd\lambda^{q(z)}(\gamma).$$
For $f \in \cC_c(\beta_r\cG\rtimes \cG)$ and $\xi\in \cC_c(\beta_r\cG\rtimes \cG)$ we have
$$\big(\Lambda'(f)\xi\big)(z,\gamma) = \int f(\gamma^{-1}z,\gamma^{-1}\gamma_1)\xi(z,\gamma_1)\rd\lambda^{r(\gamma)}(\gamma_1).$$

\begin{lem}\label{lem:versus_tube} Let $\cG$ be an \'etale groupoid.
\begin{itemize}
\item[(i)] The $*$-algebra $\cC_c(\cG)$ is canonically embedded into the $*$-algebras $\cC_c(\beta_r\cG\rtimes \cG)$ and  $\cC_t(\cG*_r\cG)$ (via $*$-homomorphisms).
\item[(ii)] These embeddings extend into embeddings of $C^*_{r}(\cG)$ into $C^{*}_{r}(\beta_r \cG\rtimes \,\cG)$ and $C^*_{u}(\cG)$.
\item[(iii)] The map $\Theta : f\mapsto \theta(\rho(f))$ (defined in \ref{def:tube}) is an isomorphism of $*$-algebras from $\cC_c(\beta_r\cG\rtimes \cG)$ onto $\cC_t(\cG*_r\cG)$ which preserves the above mentioned embeddings of $\cC_c(\cG)$.

\end{itemize}
\end{lem}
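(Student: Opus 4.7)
The plan is to introduce explicit maps $\iota_1 : \cC_c(\cG) \to \cC_c(\beta_r\cG\rtimes \cG)$ and $\iota_2 : \cC_c(\cG) \to \cC_t(\cG*_r\cG)$ and to verify, by direct calculation, that they are $*$-homomorphisms, then that they extend isometrically to $C^*_r(\cG)$. For (i), I set $\iota_1(g)(z,\gamma) = g(\gamma)$ and $\iota_2(g)(\gamma_1,\gamma_2) = g(\gamma_1^{-1}\gamma_2)$. The function $\iota_1(g)$ is continuous on $\beta_r\cG\rtimes\cG$ and its support sits inside the compact set $q^{-1}(r(\supp g)) *_r \supp g$ (compactness comes from properness of $r_\beta$); the support of $\iota_2(g)$ is sent onto $\supp g$ by $(\gamma_1,\gamma_2)\mapsto \gamma_1^{-1}\gamma_2$, hence lies in a tube. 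Both maps clearly preserve involution, and preservation of convolution follows from left-invariance of the Haar (counting) system by the substitution $\gamma_1 \mapsto \gamma\gamma_1$.

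For (ii), the case of $\iota_2$ is immediate: unwinding the formulas for $T$ and $\Lambda$ shows $T(\iota_2(g)) = \Lambda(g)$ for every $g\in \cC_c(\cG)$, and since $T$ takes values in $C^*_u(\cG)\subset\cB_{\cC_0(X)}(\cE)$, the density of $\cC_c(\cG)$ gives the injection $C^*_r(\cG) \hookrightarrow C^*_u(\cG)$ by closure. For $\iota_1$, I would exploit the fibred structure of the regular representation $\Lambda'$ of $\beta_r\cG\rtimes\cG$: since the identification $(\beta_r\cG\rtimes\cG)^z \cong \cG^{q(z)}$ makes the semi-direct convolution agree fibrewise with the convolution on $\cG$, one gets $\Lambda'_z(\iota_1(g)) = \Lambda_{q(z)}(g)$. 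Surjectivity of $q$ then yields $\norm{\Lambda'(\iota_1(g))} = \sup_z \norm{\Lambda_{q(z)}(g)} = \norm{\Lambda(g)}$, so $\iota_1$ is isometric on $\cC_c(\cG)$ and extends to an injective $*$-homomorphism $C^*_r(\cG) \hookrightarrow C^*_r(\beta_r\cG\rtimes\cG)$.

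For (iii), Theorem \ref{thm:versus_tube} already provides that $\Theta$ is a linear bijection, so only multiplicativity, compatibility with involution, and compatibility with the two embeddings remain. The identity $\Theta(\iota_1(g))(\gamma_1,\gamma_2) = \iota_1(g)(\gamma_1^{-1}, \gamma_1^{-1}\gamma_2) = g(\gamma_1^{-1}\gamma_2) = \iota_2(g)(\gamma_1,\gamma_2)$ is immediate. For involution, the computation $(\gamma_1^{-1}, \gamma_1^{-1}\gamma_2)^{-1} = (\gamma_2^{-1}, \gamma_2^{-1}\gamma_1)$ in $\beta_r\cG\rtimes\cG$ yields $\Theta(f^*) = \Theta(f)^*$. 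For convolution, I would unfold $\Theta(h_1*h_2)(\gamma_a,\gamma_b) = (h_1*h_2)(\gamma_a^{-1}, \gamma_a^{-1}\gamma_b)$ using the semi-direct convolution formula (sum over $\cG^{s(\gamma_a)}$), and substitute $\gamma_c = \gamma_a\gamma_1$ to match it with $\sum_{\gamma_c \in \cG^{r(\gamma_a)}} \Theta(h_1)(\gamma_a,\gamma_c)\Theta(h_2)(\gamma_c,\gamma_b)$. The delicate point throughout is the bookkeeping between the range fibre $(\beta_r\cG\rtimes\cG)^z$ (over a unit $z\in\beta_r\cG$) and the fibre $\cG^{r(\gamma)}$ appearing on $\cG*_r\cG$; once the identification $z\leftrightarrow q(z)\in X$ is used consistently, both (ii) and the multiplicativity in (iii) reduce to routine change-of-variable verifications.
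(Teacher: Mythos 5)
Your proof is correct and follows essentially the same route as the paper's: the same two embeddings $g\mapsto g\circ\pi$ and $g\mapsto\bigl((\gamma_1,\gamma_2)\mapsto g(\gamma_1^{-1}\gamma_2)\bigr)$, the same fibrewise identification $(\beta_r\cG\rtimes\cG)^z\cong\cG^{q(z)}$ giving $\norm{\Lambda'(\iota_1(g))}=\sup_x\norm{\Lambda_x(g)}=\norm{\Lambda(g)}$, and the same appeal to Theorem \ref{thm:versus_tube} plus change-of-variable computations for (iii). The only difference is that you spell out the "straightforward computations" (the identity $T\circ\iota_2=\Lambda$, the substitution $\gamma_c=\gamma_a\gamma_1$ for multiplicativity of $\Theta$) that the paper leaves implicit, and all of these check out.
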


\begin{proof} 
(i) The embedding from $\cC_c(\cG)$ into $\cC_c(\beta_r\cG\rtimes \cG)$ is given by $f\mapsto f\circ \pi$ where $\pi: \beta_r\cG\rtimes \cG \to \cG$ is the second projection, which is proper.
 We embed $\cC_c(\cG)$ into  $\cC_t(\cG*_r\cG)$ by sending $f\in\cC_c(\cG)$ onto $\tilde f$ such that $\tilde f(\gamma,\gamma_1) = f(\gamma^{-1}\gamma_1)$. 

(ii) For $f\in \cC_c(\cG)$, we have $\norm{\Lambda'(f\circ \pi)} = \sup_{z\in \beta_r\cG} \norm{\Lambda_{z}'(f\circ\pi)}$ 
with
$$\big(\Lambda_{z}'(f\circ\pi)\xi\big)(z,\gamma) = \int f(\gamma^{-1}\gamma_1)\xi(z,\gamma_1)\rd\lambda^{q(z)}(\gamma_1).$$
Observe that $\xi\in \ell^2((\beta_r\cG\rtimes \cG)^z)$ can be identified to the element $\gamma\mapsto \xi(z,\gamma)$ of $\ell^2(\cG^{q(z)})$. It follows that $\norm{\Lambda_{z}'(f\circ\pi)} = \norm{\Lambda_{q(z)}(f)}$ and so $\norm{\Lambda'(f\circ \pi)} = \norm{\Lambda(f)}$. The second assertion of (ii) is immediate.

Taking into account Theorem \ref{thm:versus_tube},  straightforward computations prove (iii).
\end{proof}

\begin{thm}\label{prop:Roe} Let $\cG$ be an \'etale groupoid.
The $*$-isomorphism  $\Lambda'(f) \mapsto T(\Theta(f))$ defined on $\Lambda'(\cC_c(\beta_r \cG\rtimes \cG)$ extends to an isomorphim
from $C^{*}_{r}(\beta_r \cG\rtimes \,\cG)$ onto $C^*_{u}(\cG)$, which is the identity on $C_{r}^*(\cG)$.
\end{thm}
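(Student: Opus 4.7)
The plan is to realise the isomorphism via the $*$-algebra isomorphism $\Theta$ of Lemma~\ref{lem:versus_tube}(iii). Since $\Lambda'$ and $T$ are $*$-homomorphisms and $\Theta:\cC_c(\beta_r\cG\rtimes\cG)\to\cC_t(\cG*_r\cG)$ is a bijective $*$-homomorphism, the map $\Lambda'(f)\mapsto T(\Theta(f))$ is algebraically well-defined on the dense subalgebra $\Lambda'(\cC_c(\beta_r\cG\rtimes\cG))\subset C^*_r(\beta_r\cG\rtimes\cG)$ and its range $T(\cC_t(\cG*_r\cG))$ is dense in $C^*_u(\cG)$ by definition. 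Both the extension to the completions and surjectivity then follow once one establishes the isometry $\norm{\Lambda'(f)}=\norm{T(\Theta(f))}$ for $f\in\cC_c(\beta_r\cG\rtimes\cG)$.

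For the easy direction, I would carry out a fibrewise computation: $\Lambda'_z(f)$ is the integral operator on $\ell^2(\cG^{q(z)})$ with kernel $(\gamma,\gamma_1)\mapsto f(\gamma^{-1}z,\gamma^{-1}\gamma_1)$, while $T(\Theta(f))_x$ acts on $\ell^2(\cG^x)$ with kernel $(\gamma,\gamma_1)\mapsto f(\gamma^{-1},\gamma^{-1}\gamma_1)$. Specialising to $z=x\in X\subset\cG\subset\beta_r\cG$ and using $\gamma^{-1}\cdot x=\gamma^{-1}$, the kernels coincide and $\Lambda'_x(f)=T(\Theta(f))_x$, giving $\norm{T(\Theta(f))}=\sup_{x\in X}\norm{\Lambda'_x(f)}\leq\norm{\Lambda'(f)}$. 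Moreover, for $z\in\cG$ with $r(z)=x$ and $s(z)=y$, the map $U_z:\ell^2(\cG^y)\to\ell^2(\cG^x)$, $(U_z\xi)(\gamma)=\xi(z^{-1}\gamma)$, is a unitary by invariance of the counting Haar system, and a direct change of variable $\gamma_1=z\eta_1$ in the defining integral yields $U_z^*\Lambda'_z(f)U_z=\Lambda'_y(f)=T(\Theta(f))_{s(z)}$. Hence $\norm{\Lambda'_z(f)}\leq\norm{T(\Theta(f))}$ for all $z\in\cG$.

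The main obstacle is the reverse bound $\norm{\Lambda'(f)}\leq\norm{T(\Theta(f))}$, since the fibres $\Lambda'_z(f)$ must be controlled for $z$ in all of $\beta_r\cG$, not just $\cG$. Upper semi-continuity of $z\mapsto\norm{\Lambda'_z(f)}$ is in the ``wrong'' direction to propagate the bound from a dense subset, so I would go through matrix coefficients. For $g\in\cC_c(\cG)$, $\widehat g=g\circ\pi$ (with $\pi:\beta_r\cG\rtimes\cG\to\cG$ the second-coordinate projection) defines a bounded section of the Hilbert module $\cE'$, and the same change of variable $\gamma'=\gamma_0\eta$, $\gamma_1=\gamma_0\eta_1$ as above gives, at $z=\gamma_0\in\cG$,
$$\langle\widehat g,\Lambda'(f)\widehat g\rangle(\gamma_0)=\langle g\circ L_{\gamma_0},T(\Theta(f))(g\circ L_{\gamma_0})\rangle_{\cE}(s(\gamma_0)),$$
where $L_{\gamma_0}\eta=\gamma_0\eta$. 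This is bounded in modulus by $\norm{T(\Theta(f))}\cdot\norm{g}^2_{\ell^2(\cG^{r(\gamma_0)})}\leq\norm{T(\Theta(f))}\cdot\norm{\widehat g}^2_{\cE'}$. Since the left-hand side lies in $\cC_0(\beta_r\cG)$ and $\cG$ is dense in $\beta_r\cG$, the same bound holds at every $z\in\beta_r\cG$. A standard polarisation and density argument in $\cE'$ then upgrades this matrix-coefficient bound to the operator-norm inequality $\norm{\Lambda'(f)}\leq\norm{T(\Theta(f))}$.

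Finally, the compatibility on $C^*_r(\cG)$ is the direct calculation, for $f\in\cC_c(\cG)$,
$$\Theta(f\circ\pi)(\gamma,\gamma_1)=(f\circ\pi)(\gamma^{-1},\gamma^{-1}\gamma_1)=f(\gamma^{-1}\gamma_1)=\tilde f(\gamma,\gamma_1),$$
so $\Theta$ intertwines the canonical embeddings of $\cC_c(\cG)$ described in Lemma~\ref{lem:versus_tube}(ii), which means the extended isomorphism restricts to the identity on $C^*_r(\cG)$. The technical crux, as noted, is the matrix-coefficient step; in particular one must verify that $\widehat g$ defines a legitimate element of $\cE'$ and that the family of such sections is rich enough to reach the operator norm on $\cE'$, for which the density of $\cG$ and the continuity of $\langle\widehat g,\Lambda'(f)\widehat g\rangle$ on $\beta_r\cG$ do the work.
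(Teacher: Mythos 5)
Your proof is correct, but it takes a genuinely different route from the paper. The paper proves the isometry $\norm{\Lambda'(f)}=\norm{T(\Theta(f))}$ globally: it introduces the faithful non-degenerate representation $\Phi$ of $\cC_0(\beta_r\cG)$ on $\cE$ given by $\Phi(h)\xi(\gamma)=h(\gamma^{-1})\xi(\gamma)$, forms the interior tensor product $\cE'\otimes_{\cC_0(\beta_r\cG)}\cE$, and exhibits an explicit unitary $W$ onto $\cE\otimes_{\cC_0(X)}\cE$ intertwining $\Lambda'(f)\otimes\Id$ with $T(\Theta(f))\otimes\Id$; faithfulness of $\Phi$ (which is exactly where the density of $\cG$ in $\beta_r\cG$ enters) then yields the isometry in one stroke. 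You instead argue fibrewise: the identity $\Lambda'_x(f)=T(\Theta(f))_x$ for $x\in X$, the unitary equivalence $\Lambda'_z(f)\simeq T(\Theta(f))_{s(z)}$ for $z\in\cG$, and a continuity/density argument via the matrix coefficients $\scal{\widehat g,\Lambda'(f)\widehat g}\in\cC_0(\beta_r\cG)$ against the sections $\widehat g=g\circ\pi$ to control the fibres over $\beta_r\cG\setminus\cG$. Both arguments are valid and rest on the same essential input (density of $\cG$ in $\beta_r\cG$); yours is more elementary and makes visible exactly which fibres need controlling, while the paper's is shorter once the module machinery is set up. Two small points in your last step deserve to be made explicit: the ``richness'' you need is not density of $\set{\widehat g}$ in $\cE'$ (which fails) but density of the fibre restrictions $\set{g|_{\cG^{q(z)}}:g\in\cC_c(\cG)}$ in each $\ell^2(\cG^{q(z)})$, which holds because $\cG$ is \'etale (finitely supported functions on a fibre extend via bisections); and rather than polarisation, the cleanest upgrade from the quadratic-form bound to the operator-norm bound is to apply your pointwise estimate to $f^**f$, using $\Theta(f^**f)=\Theta(f)^**\Theta(f)$, so that $\norm{\Lambda'_z(f)\xi}^2\leq\norm{T(\Theta(f))}^2\norm{\xi}^2$ on each fibre.
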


\begin{proof}  Let $\Phi$ be the faithful non-degenerate homomorphism from $\cC_0(\beta_r \cG)$ into
$\cB_{\cC_0(X)}(\cE)$ defined by
$$\big(\Phi(f) \xi\big)(\gamma) = f(\gamma^{-1}) \xi(\gamma).$$
The relative (or interior) tensor product
$$\cH=\cE'\otimes_{\cC_0(\beta_r\cG)} \cE,$$  is a Hilbert $\cC_0(X)$-module (see \cite{Lance_book}) whose $\cC_0(X)$-inner product is defined, for $\xi,\xi'\in \cC_c(\beta_r\cG\rtimes \cG)$ and $\eta,\eta'\in \cC_c(\cG)$, by
\begin{align*}
\scal{\xi\otimes \eta,\xi'\otimes\eta'}(x) &= \int_{\cG^x}\overline{\eta(\gamma)}\scal{\xi,\xi'}(\gamma^{-1})\eta'(\gamma)\rd\lambda^x(\gamma)\\
& =  \int_{\cG^x}\overline{\eta(\gamma)}\eta'(\gamma)\Big(\int\overline{\xi(\gamma^{-1},\gamma_1)}\xi'(\gamma^{-1},\gamma_1)\rd\lambda^{s(\gamma)}(\gamma_1)\Big)\rd\lambda^x(\gamma).
\end{align*}

We will first check that $\cH$ is isomorphic to the Hilbert $\cC_0(X)$-module 
$$\widetilde{\cH} = L^2_{\cC_0(X)}(\cG *_r \cG, \lambda \otimes \lambda)$$
 which is defined as the completion of $\cC_c(\cG*_r\cG)$ with respect to the $\cC_0(X)$-valued inner product
$$\scal{\xi,\xi'}(x) = \int \overline{\xi(\gamma,\gamma_1)} \xi'(\gamma,\gamma_1)\rd\lambda^x(\gamma)\rd\lambda^x(\gamma_1).$$
Its $\cC_0(X)$-module structure is defined by
$$(\xi f)(\gamma,\gamma_1) = \xi(\gamma,\gamma_1) f\circ r(\gamma).$$

For $\xi \in \cC_c(\beta_r\cG\rtimes \cG)$ and $\eta\in\cC_c(\cG)$ we set, for $(\gamma, \gamma_1) \in \cG *_r \cG$,
$$\big(W(\xi\otimes \eta)\big)(\gamma,\gamma_1) = \xi(\gamma_1^{-1},\gamma_1^{-1}\gamma)\eta(\gamma_1).$$
We have $W(\xi\otimes \eta)\in \cC_c(\cG*_r\cG)$. A straightforward computation shows that $W$ extends to an isomorphism of Hilbert $\cC_0(X)$-module from $\cH$ onto $\widetilde{\cH}$.

We also observe that the map from $\cC_c(\cG) \times \cC_c(\cG)$ into $\cC_c(\cG *_r \cG)$ sending $(\xi,\eta)$ to $(\gamma,\gamma_1)\in \cG *_r \cG \mapsto \xi(\gamma)\eta(\gamma_1)$ defines an isomorphism of Hilbert $\cC_0(X)$-module from  $\cE \otimes_{\cC_0(X)} \cE$ onto $\widetilde{\cH}$. We identify these two Hilbert $\cC_0(X)$-modules.

For $f\in \cC_c(\beta_r\cG\rtimes \cG)$, we claim that
$$W\circ (\Lambda'(f)\otimes \Id_\cE) = \big(T(\Theta(f))\otimes \Id_{\cE}\big)\circ W.$$
This will imply that $\Lambda'(f) \mapsto T(\Theta(f))$ is isometric and thus extends to an isomorphism of the completions. This isomorphism will be the identity on $C^*_{r}(\cG)$ by Lemma \ref{lem:versus_tube} (iii).
Let us prove our claim. Given $\xi \in \cC_c(\beta_r\cG\rtimes \cG)$ and $\eta\in\cC_c(\cG)$, we have
\begin{align*}
W\circ (\Lambda'(f)\otimes \Id_\cE)&(\xi\otimes \eta)(\gamma_1,\gamma_2) =\big(\Lambda'(f)\xi\big)(\gamma_2^{-1},\gamma_2^{-1}\gamma_1) \eta(\gamma_2)\\
&= \Big(\int f(\gamma_1^{-1},\gamma_1^{-1}\gamma_2\gamma)\xi(\gamma_2^{-1},\gamma)\rd\lambda^{s(\gamma_2)}(\gamma)\Big)\eta(\gamma_2)\\
&= \Big(\int f(\gamma_1^{-1}, \gamma_1^{-1}\gamma)\xi(\gamma_2^{-1},\gamma_2^{-1}\gamma)\rd\lambda^{r(\gamma_2)}(\gamma)\Big)\eta(\gamma_2).
\end{align*}
On the other hand, we have
\begin{align*}
\Big(\big(T(\Theta(f))\otimes \Id_\cE\big)\circ W\Big)(\xi\otimes \eta)(\gamma_1,\gamma_2)&= \int \Theta(f)(\gamma_1,\gamma)\big(W(\xi\otimes\eta)\big)(\gamma,\gamma_2)\rd\lambda^{r(\gamma_2)}(\gamma)\\
&= \int f(\gamma_1^{-1},\gamma_1^{-1}\gamma)\xi(\gamma_2^{-1},\gamma_2^{-1}\gamma)\eta(\gamma_2)\rd\lambda^{r(\gamma_2)}(\gamma),
\end{align*}
and so our claim is proved.
\end{proof}

 \subsection{Groupoid actions on $C^*$-algebras and crossed products}\label{subsect:Gaction}  In this section we extend  the notions of full and reduced groupoid $C^*$-algebras to the case of groupoid actions on any $C^*$-algebra.
 
 \subsubsection{$\cC_0(X)$-algebras and groupoid  actions on $C^*$-algebras} We begin by recalling some facts and definitions that are mostly borrowed from \cite{Blan,LeG, KS04, Rie}.
\begin{defn}\label{def:C-alg} Let $X$ be a locally compact space. A $\cC_0(X)$-{\it algebra} \index{$\cC_0(X)$-algebra} is a $C^*$-algebra $A$ equipped with a homomorphism $\rho$ from $\cC_0(X)$ into the center of the multiplier algebra of $A$, which is non-degenerate  in the sense that there exists an approximate unit $(u_\lambda)$ of $\cC_0(X)$ such that $\lim_\lambda \rho(u_\lambda) a = a$ for every $a\in A$, that is, $\lim_\lambda \rho(u_\lambda) = 1$ in the strict topology.
\end{defn}

Given $f\in \cC_0(X)$ and $a\in A$, for simplicity we will often write $f a$ instead of $\rho(f) a$. Let $U$ be an open subset of $X$ and $F= X\setminus U$. We view $\cC_0(U)$ as an ideal of $\cC_0(X)$ and we denote by $\cC_0(U) A$ the closed linear span of $\set{fa: f\in \cC_0(U), a\in A}$. It is a closed ideal of $A$ and in fact, we have $\cC_0(U) A = \set{fa: f\in \cC_0(U), a\in A}$ (see \cite[Proposition 1.8]{Blan}). We set $A_F = A/\cC_0(U) A$ and whenever $F = \set{x}$ we write $\cC_x(X)$\index{$\cC_x(X)$} instead of $\cC_0(X\setminus\set{x})$ and $A_x$ \index{$A_x$} instead of $A_{\set{x}}$. We denote by $e_x : A \to A_x$ the quotient map and for $a\in A$ we set $a(x) = e_x(a)$. Recall that the map $a \mapsto (a(x))_{x\in X}$ from $A$ into $\prod_{x\in X} A_x$ \index{$\EuScript{A}$} is injective and that $x\mapsto \norm{a(x)}$ is upper semicontinuous and vanishes at infinity (see \cite{Rie} or \cite[Proposition C.10]{Will}). 

\begin{rem}\label{rem:USC} The notion of $\cC_0(X)$-algebra is  intimately linked to the notion of  {\it upper semicontinuous $C^*$-bundle} over $X$, as defined in \cite[Definition C.16]{Will}, to which we refer.  Given such a bundle $\EuScript{A}$ we denote by $\Gamma_b(\EuScript{A})$ \index{$\Gamma_b(\EuScript{A})$} the $C^*$-algebra of bounded continuous sections of this bundle, and by $\Gamma_0(\EuScript{A})$ \index{$\Gamma_0(\EuScript{A})$} its ideal of sections that vanish at infinity.  Then $\Gamma_0(\EuScript{A})$ is in a na\-tural way a $\cC_0(X)$-algebra \cite[Proposition C.23]{Will}. Conversely \cite[Theorem C.25]{Will}, given a  $\cC_0(X)$-algebra $A$, there is a unique topology on  $\EuScript{A} = \coprod_{x\in X} A_x$ that makes $\EuScript{A}$ an upper semicontinuous $C^*$-bundle\index{upper semicontinuous $C^*$-bundle}  over $X$ and such that the map sending $a\in A$ to  $x\mapsto a(x)$ is a $\cC_0(X)$-linear isomorphism from $A$ onto $\Gamma_0(\EuScript{A})$.
 \end{rem}
 
\begin{ex}\label{ex-abel-c(x)} Let $(Y,p)$  be a fibre space over $X$, as in Subsection \ref{subsec-Action}. Then $A= \cC_0(Y)$ is a $\cC_0(X)$-algebra via the map $\rho : f\mapsto f\circ p$. We have $A_x = \cC_0(Y^x)$ where $Y^x = p^{-1}(x)$ for $x\in X$. For $f\in \cC_0(Y)$, then $f(x)\in\cC_0(Y^x)$ is the restriction of $f$ to $Y^x$ (see \cite[Example C.4]{Will}). Like in \cite{Will} we do not assume that $p$ is open. The map $x\mapsto \norm{f(x)}$ is continuous if and only if $p$ is open (see \cite[Proposition C.10]{Will}). 
\end{ex}

Let $(Y,p)$  be a fibre space over $X$ and let $A$ be a $\cC_0(X)$-algebra. Then $A\otimes \cC_0(Y)$ is a $\cC_0(X\times Y)$-algebra. We set $F = \set{(p(y),y): y\in Y}$. It is a closed subset of $X\times Y$.  We set  $p^*(A) = \big(A\otimes \cC_0(Y)\big)_F$.   With its natural structure of $\cC_0(Y)$-algebra, $p^*(A)$\index{$p^*(A)$} is called the {\it pull-back of}\index{pull back of a $\cC_0(X)$-algebra}  $A$ {\it via} $p$. Let us observe that $\big(p^*(A)\big)_y = A_{p(y)}$. 

\begin{ex}\label{fibre-product} Let $(Y_i,p_i)$, $i= 1,2$, be two fibre spaces over $X$. Then $\cC_0(Y_i)$, $i= 1,2$, are $\cC_0(X)$-algebras and $\cC_0(Y_1)\otimes \cC_0(Y_2) = \cC_0(Y_1\times Y_2)$ is a $\cC_0(X\times X)$-algebra. Let us denote by $\Delta$ the diagonal of $X\times X$. Let us set $p= p_1\times p_2: Y_1\times Y_2 \to X\times X$ and $F = p^{-1}(\Delta) = Y_1\, _{p_1}\!*_{p_2} Y_2$. We have $\cC_0((X\times X)\setminus \Delta) \cC_0(Y_1\times Y_2) = \cC_0((Y_1\times Y_2)\setminus F)$ and therefore
$$  \cC_0(Y_1\, _{p_1}\!\!*_{p_2} Y_2) = \cC_0(Y_1\times Y_2)_F.$$
Note also that $\cC_0(Y_1\, _{p_1}\!\!*_{p_2} Y_2) = p_2^{*}(\cC_0(Y_1)) = p_1^{*}(\cC_0(Y_2))$. It is a $\cC_0(Y_2)$-algebra as well as a $\cC_0(Y_1)$-algebra. We have
$$\big(p_2^{*}(\cC_0(Y_1)\big)_{y} = \cC_0(Y_1)_{p_2(y)} = \cC_0(Y_1^{p_2(y)})$$
for $y\in Y_2$ and similarly $\big(p_1^{*}(\cC_0(Y_2)\big)_{y} = \cC_0(Y_2)_{p_1(y)} = \cC_0(Y_2^{p_1(y)})$ for $y\in Y_1$.

In particular, if $(Y_1,p_1) = (X,\Id_X)$ and $(Y_2, p_2) = (Y,p)$ we have $p^*(\cC_0(X)) = \cC_0(Z)$ where $Z = \set{(p(y),y): y\in Y}$.
\end{ex}

Let $A$ and $B$ be two $\cC_0(X)$-algebras. A {\it morphism $\alpha: A \to B$ of $\cC_0(X)$-algebras} is a morphism of $C^*$-algebras which is $\cC_0(X)$-linear, that is, $\alpha(fa) = f\alpha(a)$ for $f\in \cC_0(X)$ and $a\in A$. For $x\in X$, in this case $\alpha$ factors through a morphism $\alpha_x : A_x \to B_x$ such that $\alpha_x(a(x)) = \alpha(a)(x)$.

Given a fibre space $(Y,p)$  over $X$, then 
$$\alpha\otimes \Id : A\otimes \cC_0(Y) \to B\otimes \cC_0(Y)$$
 passes to the quotient and defines a morphism of $\cC_0(Y)$-algebras\index{$p^*\alpha$} 
 $$p^*\alpha : p^*(A)\to p^*(B).$$
  We set $\alpha_{p(y)}= (p^*\alpha)_y$.  It is a homomorphism from 
 $(p^*(A))_y = A_{p(y)}$ into $(p^*(B))_y = B_{p(y)}$. 
\begin{defn}(\cite{LeG}) Let $\cG$ be a locally compact groupoid  and set $X = \cG^{(0)}$. An {\it action} of $\cG$ on a $C^*$-algebra $A$ is given by a structure of $\cC_0(X)$-algebra on $A$ and an isomorphism $\alpha : s^*(A) \to r^*(A)$ of $\cC_0(\cG)$-algebras such that for every $(\gamma_1,\gamma_2)\in \cG^{(2)}$ we have $\alpha_{\gamma_1\gamma_2} = \alpha_{\gamma_1}\alpha_{\gamma_2}$, where $\alpha_\gamma : A_{s(\gamma)} \to A_{r(\gamma)}$ is the isomorphism deduced from $\alpha$ by factorization.

When $A$ is equipped with such an action, we say that $A$ is a {\it $\cG$-$C^*$-algebra}.\index{$\cG$-$C^*$-algebra}
\end{defn}

\begin{ex}\label{ex:abelian} Let us consider the particular case where $A= \cC_0(Y)$ and let $p: Y\to X= \cG^{(0)}$ be a continuous  map. Then $A$ is a $\cC_0(X)$-algebra and we have $A_x = \cC_0(Y^x)$ where $Y^x = p^{-1}(x)$ for $x\in X$. We have 
\begin{align*}
&s^*(A) = \cC_0(Y _p\!*_{s} \cG), \quad s^*(A)_{\gamma} = \cC_0(Y^{s(\gamma)}),\\
&r^*(A) = \cC_0(Y _p\!*_{r} \cG), \quad r^*(A)_{\gamma} = \cC_0(Y^{r(\gamma)}).
\end{align*}
Since $\alpha_\gamma^{-1}$ is an isomorphism from $\cC_0(Y^{r(\gamma)})$ onto $\cC_0(Y^{s(\gamma)})$, it induces a   homeomorphism $y\mapsto \gamma y$ from $Y^{s(\gamma)}$ onto $Y^{r(\gamma)}$. This defines a structure of left $\cG$-space on $Y$ in the sense of Definition  \ref{def:Gspace}. Conversely, a left $\cG$-space $Y$ obviously  induces the action $\alpha$ of $\cG$ on $\cC_0(Y)$ defined by $\alpha(f)(y,\gamma) = f(\gamma^{-1}y,\gamma)$ if $f\in \cC_0(Y _p\!*_{s} \cG)$ and $p(y) = r(\gamma)$.

As an example, let us take $A= \cC_0(\cG)$ and $p = r$. We have $s^*(A )=\cC_0(\cG _r\!*_s\cG)$ and $r^*(A) = \cC_0(\cG _r\!*_r\cG)$. Then
$\alpha :   \cC_0(\cG _r\!*_s\cG)  \to  \cC_0(\cG _r\!*_r\cG)$ defined by $\alpha(f)(\gamma_1,\gamma_2) = f(\gamma_2^{-1}\gamma_1, \gamma_2)$ is an isomorphism of $\cC_0(\cG)$-algebra. We have  $A_{s(\gamma)} = \cC_0(\cG^{s(\gamma)})$ and  $A_{r(\gamma)} = \cC_0(\cG^{r(\gamma)})$. Moreover for $f\in \cC_0(\cG^{s(\gamma)})$ we have
$\alpha_\gamma(f)(\gamma_1) = f(\gamma^{-1}\gamma_1)$. Thus $\alpha$ is the canonical left action of $\cG$ on itself.

Similarly, the canonical left action of $\cG$ on a locally compact invariant subset $Y$ of $X=\cG^{(0)}$ corresponds to the action 
$$\alpha: s^*(A)= \cC_0(Y *_s\cG)= \cC_0(\cG(Y))  \to r^*(A)= \cC_0(Y*_r\cG) = \cC_0(\cG(Y))$$ given by $\alpha(f)(r(\gamma),\gamma) = f(s(\gamma),\gamma)$,
so $\alpha= \Id_{\cC_0(\cG(Y))}$.
\end{ex}

\subsubsection{Crossed products $($see \cite{Ren87, LeG,KS02, KS04, MW}$)$} Let $\cG$ be a locally compact groupoid with Haar system $\lambda$ and let $A$ be a $\cG$-$C^*$-algebra. We set $\cC_c(r^*(A)) = \cC_c(\cG)r^*(A)$. It is the space of the continuous sections $\gamma \mapsto f(\gamma)\in A_{r(\gamma)}$, with compact support, of the upper semicontinuous bundle of $C^*$-algebras over $\cG$ defined by the $\cC_0(\cG)$-algebra $r^*(A)$ (see Remark \ref{rem:USC})\footnote{When $A=\cC_0(\cG^{(0)})$ equipped with its canonical left $\cG$-action then $\cC_c(r^*(A)) = \cC_c(\cG)$.}. Then, $ \cC_c(\cG)r^*(A)$ is a $*$-algebra with respect to the following operations:
$$(f*g)(\gamma) = \int f(\gamma_1) \alpha_{\gamma_1}\big(g(\gamma_1^{-1}\gamma)\big) \rd\lambda^{r(\gamma)}(\gamma_1)$$
and
$$f^*(\gamma) = \alpha_\gamma\big(f(\gamma^{-1})^*\big)$$
(see \cite[Proposition 4.4]{MW} for instance). We define a norm on $\cC_c(r^*(A))$ by
$$\norm{f}_I = \max \set{\sup_{x\in X} \int\norm{f(\gamma)}\rd\lambda^x(\gamma),\quad \sup_{x\in X} \int\norm{f(\gamma^{-1})}\rd\lambda^x(\gamma)}.$$
The {\it full crossed product} $C^*(\cG,A)$\index{$C^*(\cG,A)$} is the enveloping $C^*$-algebra of the Banach $*$-algebra obtained by completion of $\cC_c(r^*(A))$ with respect to $\norm{\cdot}_I$.

For $x\in X$, we consider the Hilbert $A_x$-module $L^2_{A_x}(\cG^x,\lambda^x, A_x)$. It is the completion of the space $\cC_c(\cG^x, A_x)$ of  continuous compactly supported functions on $\cG^x$ with values in $A_x$, with respect to the $A_x$-valued inner product
$$\scal{\xi,\eta}= \int \xi(\gamma)^*\eta(\gamma)\rd\lambda^x(\gamma).$$
We observe that $L^2_{A_x}(\cG^x,\lambda^x, A_x)$ is canonically isomorphic to $L^2(\cG^x,\lambda^x)\otimes A_x$.

For $f\in \cC_c(r^*(A))$ and $\xi\in \cC_c(\cG^x, A_x)$, we set
\begin{equation}\label{eqn:red_cross}
\big(\Lambda_x(f)\xi\big)(\gamma) = \int \alpha_{\gamma}\big(f(\gamma^{-1}\gamma_1)\big)\xi(\gamma_1) \rd\lambda^{r(\gamma)}(\gamma_1).
\end{equation}
Then $\Lambda_x(f)$ extends to an element of $\cB_{A_x}\big(L^2_{A_x}(\cG^x,\lambda^x, A_x)\big)$ still denoted by $\Lambda_x(f)$. Moreover, $\Lambda_x$ is a representation of $C^*(\cG,A)$. The {\it reduced crossed product} $C^*_{r}(\cG,A)$\index{$C^*_{r}(\cG,A)$}  is the quotient of $C^*(\cG,A)$ with respect to the family of representations $(\Lambda_x)_{x\in X}$ (see \cite[Section 3.6]{KS04})\footnote{As for the definition of  $C^*_{r}(\cG)$ we have made a different choice than that in \cite{KS04}  for the construction of $C^*_{r}(\cG,A)$. However, both constructions are easily seen to be isomorphic.}.

\begin{rem}As explained in \cite{KS04} this family of representations comes from a representation $\Lambda$ of $C^*(\cG,A)$ in the Hilbert $A$-module $L^2_{A}(\cG,\lambda;A)$ \index{$L^2_{A}(\cG,\lambda; A)$}  which is defined by completion of the right $A$-module $\cC_c(r^*(A)) $ with respect to the $A$-valued inner product
$$x\mapsto \scal{\xi,\eta}(x) =\int_{\cG^x}\xi(\gamma)^*\eta(\gamma) \rd \lambda^x(\gamma) \in A_x,$$
the structure of right $A$-module being given by $(\xi a)(\gamma) = \xi(\gamma) a(r(\gamma))$ (note that we  preferred the notation $L^2_{\cC_0(X)}(\cG,\lambda)$, instead of  $L^2_{\cC_0(X)}(\cG,\lambda, \cC_0(X))$ when $A= \cC_0(X)$).
We let $\cC_c(r^*(A))$ act on $L^2_{A}(\cG,\lambda; A)$ by 
$$\big(\Lambda(f)\xi\big)(\gamma) = \int_{\cG^{r(\gamma)}} \alpha_{\gamma}\big(f(\gamma^{-1}\gamma_1)\big)\xi(\gamma_1) \rd\lambda^{r(\gamma)}(\gamma_1),$$
when $f,\xi \in \cC_c(r^*(A))$.

For $x\in X$, the map sending $\xi\otimes b \in L^2_{A}(\cG,\lambda; A)\otimes_{e_x} A_x$  onto 
$\gamma \in \cG^x\mapsto\xi(\gamma)b$
 induces an isomorphism of Hilbert $A_x$-modules from $L^2_{A}(\cG,\lambda; A)\otimes_{e_x} A_x$ onto $L^2(\cG^x,\lambda^x, A_x)$, Under this identification, $\Lambda(f)\otimes_{e_x} \Id$ becomes $\Lambda_x(f)$. It follows that $\Lambda$ extends to a homomorphism from $C^*(\cG,A)$ into $\cB_A\big(L^2_{A}(\cG,\lambda; A)\big)$ with range isomorphic to $C^*_{r}(\cG,A)$.
\end{rem}

\begin{rem}\label{rem:simple} For simplicity of notation, given $f\in \cC_c(r^*(A))$, we will often write $f$ instead of $\Lambda(f)$. We will also use the notation $A\rtimes \cG$,  $A\rtimes_r \cG$, \index{$A\rtimes \cG$, $A\rtimes_r\cG$} instead of  $C^*(\cG,A)$, $C^*_{r}(\cG,A)$, especially when $A$ is commutative.\end{rem}
\begin{ex}\label{ex:particular} Let us come back to the example \ref{ex:abelian}, that is the case of an action $\alpha: \cG \actson \cC_0(Y)$. Let us consider  the groupoid $Y\rtimes \cG$   associated with the corresponding left action of $\cG$ on $Y$. Then we have 
$$\cC_c(Y\rtimes \cG)\subset \cC_c(r^*(A))\subset  \cC_0(Y _p\!*_{r} \cG).$$
 Recall that $L^2((Y\rtimes \cG)^y)$ is canonically identified with $L^2(\cG^{p(y)},\lambda^{p(y)})$, so that for $f\in \cC_c(Y\rtimes \cG)$ and $(y,\gamma)\in Y\rtimes \cG$ the formula \eqref{eqn:red_cross} becomes
$$\big(\Lambda_{p(y)}(f)\xi\big)(y,\gamma) = \int_{\cG^{p(y)}} f(\gamma^{-1}y, \gamma^{-1}\gamma_1)\xi(y,\gamma_1) \rd \lambda^{p(y)}(\gamma_1).$$
It is the same as the formula given in \eqref{eq:rep} with $x = p(y)$. After having remarked that $A_x = 0$ if $x\notin p(Y)$, we conclude that the $C^*$-algebras $C^*_{r}(Y\rtimes\cG)$ and $C^*_{r}(\cG,\cC_0(Y))$ are canonically identified.

If we apply this observation to the case where $Y$ is an invariant locally compact subset of $\cG^{(0)}$ equipped with its natural left $\cG$-action, we see that $C^*_{r}(\cG(Y))$ and $C^*_{r}(\cG,\cC_0(Y))$ are canonically identified.
\end{ex}

 \begin{lem}\label{lem:smb} Let $(\cG,\lambda)$ be a locally compact groupoid with Haar system. Let $(Z,q_Z)$ and $(Y,q_Y)$ be two left $\cG$-spaces and let $p:Z\to Y$ be a  continuous $\cG$-equivariant morphism.
 Let $A$ be a $(Y\rtimes\cG)$-$C^*$-algebra.
 \begin{itemize}
 \item[(i)] $p^*(A)$ is in a natural way a left $(Z\rtimes \cG)$-$C^*$-algebra.
 \item[(ii)] If $p$ is proper and surjective, then $C_{r}^*(Y\rtimes \cG, A)$ embeds canonically into $C_{r}^*(Z\rtimes \cG, p^*(A))$.
 \end{itemize}
 \end{lem}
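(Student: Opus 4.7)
For part (i), the plan is to exploit the groupoid homomorphism
$$P=(p,\mathrm{Id})\colon Z\rtimes\cG\longrightarrow Y\rtimes\cG$$
coming from the $\cG$-equivariance of $p$ (note $p(\gamma y) = \gamma\cdot p(y)$, so $P$ respects composition). The $\cC_0(Y)$-algebra structure on $A$ together with the continuous map $p$ already makes $p^*A$ a $\cC_0(Z)$-algebra, with fibre $(p^*A)_z=A_{p(z)}$. Since $p\circ s_{Z\rtimes\cG}=s_{Y\rtimes\cG}\circ P$ and $p\circ r_{Z\rtimes\cG}=r_{Y\rtimes\cG}\circ P$, the $\cC_0(Z\rtimes\cG)$-algebras $s^*(p^*A)$ and $r^*(p^*A)$ are canonically identified with $P^*(s^*A)$ and $P^*(r^*A)$. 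The desired action $\tilde\alpha$ on $p^*A$ is then the pull-back $P^*\alpha$, whose fibre map at $(z,\gamma)\in Z\rtimes\cG$ is just $\alpha_\gamma\colon A_{p(\gamma^{-1}z)}=A_{\gamma^{-1}p(z)}\to A_{p(z)}$. The cocycle identity for $\tilde\alpha$ follows fibrewise from that of $\alpha$.

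For part (ii), I would build the embedding already at the level of compactly supported sections and then check that the reduced norms are preserved. For $f\in\cC_c(r^*A)$ over $Y\rtimes\cG$, set
$$\Psi(f)(z,\gamma)=f(p(z),\gamma)\in A_{p(z)}=(p^*A)_z.$$
Properness of $p$ makes $P$ a proper map, so $\Psi(f)$ is compactly supported in $Z\rtimes\cG$ and lies in $\cC_c(r^*(p^*A))$. A direct computation using $\tilde\alpha_{(z,\gamma)}=\alpha_{(p(z),\gamma)}$ and the Haar system $\delta_z\times\lambda^{p(z)}$ from Proposition \ref{prop:open} (ii) shows that $\Psi$ preserves the convolution product and involution.

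The final step is to identify $\Psi$ with a fibrewise isometry of the regular representations. For $z\in Z$, the source fibre $(Z\rtimes\cG)_z$, and hence the range fibre $(Z\rtimes\cG)^z$, is canonically identified with $\cG^{p(z)}$ via $(z,\gamma)\leftrightarrow\gamma$. This gives an isomorphism of Hilbert $A_{p(z)}$-modules
$$L^2((Z\rtimes\cG)^z)\otimes (p^*A)_z\;\simeq\; L^2(\cG^{p(z)},\lambda^{p(z)})\otimes A_{p(z)},$$
and formula \eqref{eqn:red_cross} shows that under this identification $\tilde\Lambda_z(\Psi(f))=\Lambda_{p(z)}(f)$. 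Consequently
$$\norm{\Psi(f)}_{r}=\sup_{z\in Z}\norm{\tilde\Lambda_z(\Psi(f))}=\sup_{y\in p(Z)}\norm{\Lambda_y(f)}=\norm{f}_r,$$
the last equality holding because $p$ is surjective (which, together with properness, is the standing assumption on an equivariant morphism of fibre spaces used here). Hence $\Psi$ extends to an isometric $*$-homomorphism $C^*_r(Y\rtimes\cG,A)\hookrightarrow C^*_r(Z\rtimes\cG,p^*A)$.

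The main technical obstacle, and the one that must be handled carefully, is the bookkeeping that identifies $s^*(p^*A)$ and $r^*(p^*A)$ with pull-backs through $P$ (this is a genuine claim about $\cC_0(Z\rtimes\cG)$-algebra structures, not just fibres). Once that is in place, the remaining computations are formal manipulations with the convolution formulas and the straightforward fibre-by-fibre identification of regular representations.
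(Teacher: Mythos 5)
Your proposal is correct and follows essentially the same route as the paper: the author likewise defines the action on $p^*A$ as the pull-back $P^*\alpha$ along the groupoid homomorphism $P$ (after first reducing to the case $Y=\cG^{(0)}$, $p=q_Z$, which makes the surjectivity you invoke automatic since momentum maps of fibre spaces are surjective), and proves (ii) by the same map $f\mapsto f\circ P$ together with the fibrewise identification $(Z\rtimes\cG)^z\simeq\cG^{p(z)}$, $(p^*A)_z=A_{p(z)}$ showing $\Lambda_z(\Psi(f))=\Lambda_{p(z)}(f)$. The only differences are cosmetic.
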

 
 \begin{proof}  Let us first observe that $Y$ and $Z$ are $(Y\rtimes \cG)$-spaces in an obvious way, that $p$ is $Y\rtimes\cG$-equivariant,  and that the groupoids $Z\rtimes \cG$ and $Z\rtimes(Y\rtimes \cG)$ are canonically isomorphic (see Lemma \ref{lem:idengr}). By  working with $Y\rtimes \cG$ instead of $\cG$, and denoting now by $\cG$ the former groupoid, we can assume that $Y= \cG^{(0)}$ and so $p=q_Z$. We denote by $\alpha: s^*(A) \to r^*(A)$ the $\cG$-action on $A$.
 
 (i) We have to show that $p^*(A)$ has a natural structure of left $(Z\rtimes \cG)$-space. Let us denote for the moment by $\cH$
the groupoid $Z\rtimes \cG$ and by $\underline s$, $\underline r$ its source and range maps respectively, in order to distinguish them from the source and range maps $s : \cG \to \cG^{(0)}$ and  $r:\cG \to \cG^{(0)}$ respectively. Let $P: \cH\to \cG$
be the groupoid homomorphism defined by $P(z,\gamma) = \gamma$. We observe that $p\circ \underline s = s\circ P$
and that that $p\circ \underline r = r\circ P$. It follows that
$$\underline{s}^*(p^*(A)) = (p\circ \underline s)^* (A )= (s\circ P)^* (A) = P^*(s^*( A))$$
and similarly for $r$ instead of $s$. Now, the structure of $\cH$-algebra on $p^* (A)$ is defined by the isomorphism 
$\beta = P^* \alpha : P^*(s^* (A)) \to P^*(r^* (A))$. Note that $\big(P^*(s^*( A))\big)_{(z,\gamma)} = A_{s(\gamma)}$, that $\big(P^*(r^* (A))\big)_{(z,\gamma)} = A_{r(\gamma)}$, and that $\beta_{(z,\gamma)} = \alpha_\gamma$. For these facts we refer to \cite{LeG}.

(ii) Note that since $p$ is proper, the map $P$ is still proper. We define a $*$-homomorphism $\Phi$ from the $*$-algebra $\cC_c(r^*(A))$ into the $*$-algebra 
$$\cC_c\big(\underline{r}^*(p^*(A))\big) = \cC_c\big(P^*(r^*(A))\big)$$
 by $f\mapsto f\circ P$. We have to show that this map is isometric. We have 
 $$\norm{f}_{C^*_{r}(\cG,A)} = \sup_{x\in \cG^{(0)}}\norm{\Lambda_x(f)}$$
 where $\Lambda_x(f)$ acts on the completion $L^2_{A_x}(\cG^x,\lambda^x, A_x)$ of  $\cC_c(\cG^x, A_x)$. Now, we observe that if $z\in Z$ is such that $p(z) = x$, then $(Z\rtimes \cG)^z$ is canonically identified with $\cG^x$ and that $(p^*(A))_{z} = A_x$. It follows that $L^2(\cG^x,\lambda^x)\otimes A_x$ is canonically identified with $L^2((Z\rtimes\cG)^z,\lambda^{p(z)})\otimes (p^*(A))_{z}$ and that  $\Lambda_{z}(\Phi(f)) = \Lambda_x(f)$. This concludes the proof since $\norm{\Phi(f)}_{C^*_{r}(Z\rtimes\cG,p^*(A))} = \sup_{z\in Z}\norm{\Lambda_{z}(\Phi(f))}$.
  \end{proof}

We will need later, in Proposition \ref{prop:equiv}, the following result\footnote{If $A$ is a $\cC_0(X)$-algebra, recall that $\EuScript{A}$ denotes the corresponding  upper semicontinuous $C^*$-bundle over $X$.}.  
\begin{lem}\label{lem:multiple} Let $A$ be a $\cG$-$C^*$-algebra with $\cG$-action $\alpha$. 
\begin{itemize}
\item[(i)] Given $a\in \Gamma_b(\EuScript{A})$ and $\xi \in \cC_c(r^*(A))$, we set $(\kappa(a)\xi)(\gamma) = \alpha_\gamma\big(a(s(\gamma))\big)\xi(\gamma)$. Then $\kappa(a)$\index{$\kappa(a)$}  extends to an operator, still denoted by $\kappa(a)$, in $\cB_A(L^2_{A}(\cG,\lambda; A))$.
\item[(ii)] $\kappa(a)$ is a two-sided multiplier of $C^*_{r}(\cG,A)$. More precisely, for $f\in \cC_c(r^*(A))$ and $a\in A$ we have
$$\kappa(a) \Lambda(f) = \Lambda(\rho(a)f), \quad\hbox{where}\quad (\rho(a)f)(\gamma) = a(r(\gamma)) f(\gamma)$$
and
$$\Lambda(f) \kappa(a) = \Lambda(f\rho'(a)), \quad\hbox{where}\quad (f\rho'(a)(\gamma) = f(\gamma) \alpha_\gamma(a(s(\gamma)).$$ 
\item[(iii)] Let $(u_k)$ be an approximate unit of $A$. Then, for $f\in C^*_{r}(\cG,A)$, we have
$$\lim_k\norm{\kappa(u_k) \Lambda(f) -\Lambda(f)}_{C^*_{r}(\cG,A)} = 0.$$
\end{itemize}
\end{lem}

\begin{proof} (i) We have for $\xi\in \cC_c(r^*(A))$,
\begin{align*}
&\norm{a}^2 \int_{\cG^x}\xi(\gamma)^*\xi(\gamma) \rd \lambda^x(\gamma) - \int_{\cG^x} \Big(\big(\kappa(a)\xi\big)(\gamma)\Big)^* \big(\kappa(a)\xi\big)(\gamma) \rd \lambda^x(\gamma)\\
& = \int_{\cG^x} \xi(\gamma)^*\Big(\norm{a}^2 - \alpha_\gamma\big(a(s(\gamma))^*a(s(\gamma))\big)\Big) \xi(\gamma) \rd \lambda^x(\gamma) \geq 0.
\end{align*}
It follows that   $\norm{\kappa(a)\xi} \leq \norm{a}\norm{\xi}$. We immediately check that the continuous extension $\kappa(a)$ is in $\cB_A(L^2_{A}(\cG,\lambda; A))$.

(ii) Given $f, \xi \in \cC_c(r^*(A))$, $a\in \Gamma_b(\EuScript{A})$, we have
\begin{align*}
\Big(\kappa(a)\Lambda(f)\xi\big)(\gamma) & = \alpha_\gamma\big(a(s(\gamma))\big)\int \alpha_\gamma(f(\gamma^{-1}\gamma_1))\xi(\gamma_1)\rd\lambda^{r(\gamma)}(\gamma_1)\\
&= \int \alpha_\gamma(\rho(a)f)(\gamma^{-1}\gamma_1))\xi(\gamma_1)\rd\lambda^{r(\gamma)}(\gamma_1)\\
&= \Lambda(\rho(a)f)\xi(\gamma).
\end{align*}
Similarly, we have
\begin{align*}
(\Lambda(f)\kappa(a) \xi)(\gamma) 
&= \int\alpha_\gamma(f(\gamma^{-1}\gamma_1))\alpha_{\gamma_1}\big(a(s(\gamma_1)\big)\xi(\gamma_1) \rd\lambda^{r(\gamma)}(\gamma_1)\\
&= \int\alpha_\gamma \Big(f(\gamma^{-1}\gamma_1)\alpha_{\gamma^{-1}\gamma_1}\big(a(s(\gamma^{-1}\gamma_1))\big)\Big)\xi(\gamma_1)\rd\lambda^{r(\gamma)}(\gamma_1)\\
& = \Lambda(f\rho'(a))\xi(\gamma).
\end{align*}
 It follows that $\kappa(a)$ is a two-sided multiplier of $C^*_{r}(\cG,A)$.

(iii) It suffices to consider the case where $f\in \cC_c(r^*(A))$. We use the notation of Remark \ref{rem:simple}. We have to show that $\lim_k\norm{u_k f - f}_{C^*_{r}(\cG,A)} = 0$ (where we write $u_k f$ instead of $\rho(u_k) f$. By \cite[Section 3.6]{KS04}, we have
$$\norm{u_k f - f}_{C^*_{r}(\cG,A)} \leq \norm{u_k f - f}_I.$$
We are going to show that $\lim_k\norm{u_k f - f}_I = 0$. Let $K$ be a compact subset of $\cG$ which contains the support of $f$ and let $c >0$ be such that 
$$\max(\sup_{x\in X} \lambda^x(K), \sup_{x\in X} \lambda_x(K)) \leq c.$$
We set $c' = \sup_{\gamma\in \cG} \norm{f(\gamma)}$. 

Recall that 
$$(u_k f -f)(\gamma) = u_k(r(\gamma)) f(\gamma) - f(\gamma) \in A_{r(\gamma)}.$$
We fix $\gamma\in K$ and choose $a\in A$ with $a(r(\gamma)) = f(\gamma)$. Given $\varepsilon' >0$, there exists $k_\gamma$ such that $\norm{u_{k_\gamma} a-a}<\varepsilon'$. In particular, we have $\norm{u_{k_\gamma}(r(\gamma)) f(\gamma) - f(\gamma)} <\varepsilon'$. Using the upper semicontinuity of the norm, we see that there is a neighborhood $V_\gamma$ of $\gamma$ such that
$$\norm{u_{k_\gamma}(r(\gamma')) f(\gamma') - f(\gamma')} \leq \varepsilon'$$
for $\gamma'\in V_\gamma$.

The compact space  $K$ is covered by  finitely many  such $V_\gamma$'s, which are denoted $V_{\gamma_i}$, $i=1,\dots n$. Let $k_0$ be such that $\norm{u_ku_{k_{\gamma_i}} - u_{k_{\gamma_i}}}\leq \varepsilon'$ for $i=1,\dots n$ and $k\geq k_0$. Take $\gamma \in K$ and choose $V_{\gamma_i}$ such that $\gamma\in V_{\gamma_i}$. We have
\begin{align*}
&\norm{u_k(r(\gamma))f(\gamma) - f(\gamma)}
\leq\norm{u_k(r(\gamma))\Big(f(\gamma)  - u_{k_{\gamma_i}}(r(\gamma))f(\gamma)\Big)} +\\  
&\quad\quad\quad\norm{\Big(u_k(r(\gamma))u_{k_{\gamma_i}}(r(\gamma)) - u_{k_{\gamma_i}}(r(\gamma))\Big)f(\gamma)} + \norm{u_{k_{\gamma_i}}(r(\gamma))f(\gamma) -f(\gamma)}\\
&\leq 2 \varepsilon' + \varepsilon' c'.
\end{align*}

It follows that $\sup_{\gamma\in \cG}\norm{(u_k f -f)(\gamma)}\leq 2 \varepsilon' + \varepsilon' c'$
 and therefore $\norm{(u_k f -f)}_I\leq (2 \varepsilon' + \varepsilon' c')c$.
\end{proof}

 \begin{rem}\label{rem:propRed1} Let us apply this result to the canonical action of $\cG$ on $\cC_0(X)$, where $X= \cG^{(0)}$. We have $C^*_{r}(\cG, \cC_0(X))= C^*_{r}(X\rtimes \cG) = C^*_{r}(\cG)$, where the identifications are canonical. Then we get an embedding $\kappa$ from $\cC_b(X)$ into  the multiplier algebra of $C^*_{r}(\cG))$  such that $\kappa(a) \Lambda(f) = \Lambda(\rho(a)f)$ and $\Lambda(f) \kappa(a) = \Lambda(f\rho'(a))$ where  $(\rho(a)f)(\gamma) = a(r(\gamma)) f(\gamma)$ and $(f\rho'(a)(\gamma) = f(\gamma) a(s(\gamma))$ for $a\in \cC_b(X)$ and $f\in \cC_c(\cG)$.
 
 The operator $\kappa(a)$ acts fibrewise on $L^2_{\cC_0(X)}(\cG,\lambda)$, and for $x\in X$ it gives an ope\-rator $\kappa_x(a)\in \cB(L^2(\cG^x,\lambda^x))$ which is a multiplier of $\Lambda_x(C^*(\cG))$ with $\kappa_x(a) \Lambda_x(f) = \Lambda_x(\rho(a)f)$ and $\Lambda_x(f) \kappa_x(a) = \Lambda_x(f\rho'(a))$. We have $\big(\kappa_x(a)\xi\big)(\gamma) = a(s(\gamma))\xi(\gamma)$ for $a\in \cC_b(X)$ and  $\xi\in L^2(\cG^x,\lambda^x))$.
 
Note that when $\cG$ is a group bundle groupoid, we have $\kappa_x(a) \Lambda_x(f) = \Lambda_x(f) \kappa_x(a)$ for all $x\in X, a\in \cC_0(X), f\in \cC_c(\cG)$. Then $\kappa$ is a non-degenerate homomorphism from $\cC_0(X)$ into the center of the multiplier algebra of $C^*_{r}(\cG)$, which makes $C^*_{r}(\cG)$ a $\cC_0(X)$-algebra.
 \end{rem}

 \section{\textbf{\textsc{Groupoid exactness}}}\label{sect:ext}
 
  \subsection{$C^*$-exactness and KW-exactness}\label{subsec:exact} In this section, we start by extending to the case of groupoids the notions of exactness for locally compact groups introduced and studied by Kirchberg and Wassermann in \cite{KW99, KW99bis}.
 \begin{defn}\label{def:exact} We say that a $C^*$-algebra $A$ is {\it exact} if for every short exact sequence 
$$0\rightarrow J \rightarrow B \rightarrow B/J \rightarrow 0$$
of $C^*$-algebras, the following sequence 
$$0\rightarrow A\otimes J \rightarrow A\otimes B \rightarrow A\otimes (B/J) \rightarrow 0$$
is exact, where $\otimes$ denotes the minimal (or spatial) tensor product.
\end{defn}

 We recall that nuclear $C^*$-algebras are exact, and that exactness is preserved by passing to quotient and   to sub-$C^*$-algebras.
 
\begin{defn} Let $\cG$ be a locally compact groupoid and $A$, $B$ two $\cG$-$C^*$-algebras. A $\cG$-{\it equivariant morphism} $\rho: A\to B$ is a morphism of $\cC_0(X)$-algebras such that $\beta\circ (s^*\rho) = (r^*\rho)\circ \alpha$,
where $\alpha$ and $\beta$ denote the $\cG$-actions on $A$ and $B$ respectively. For every $\gamma\in \cG$ we have $\beta_\gamma\circ \rho_{s(\gamma)} = \rho_{r(\gamma)}\circ \alpha_\gamma$.
\end{defn}

\begin{defn}\label{def:exact1} We say that a {\it locally compact groupoid} $\cG$ with Haar system is {\it $C^*$-exact}\index{C*-exact groupoid}  if $C^*_{r}(\cG)$ is exact. 
We say that  it is {\it exact in the sense of Kirchberg and Wassermann} (or KW{\it-exact})\index{KW-exact groupoid} if for  every $\cG$-equivariant exact sequence
$$0\to I \to A \to B \to 0$$
of $\cG$-$C^*$-algebras, the corresponding sequence
$$0 \to C^*_{r}(\cG,I) \to C^*_{r}(\cG,A) \to C^*_{r}(\cG,B)\to 0$$
of reduced crossed products is exact\footnote{We warn the reader that these two definitions differ  from  those introduced in \cite{Kir78, KW99bis}.}.
\end{defn}

Note that second countable amenable groupoids are KW-exact, because for them reduced crossed products coincide with full crossed products \cite[Theorem 3.6]{Ren91} and  because the analogous sequence for full crossed products is exact \cite[Lemma 6.3.2]{AD-R}.

\begin{rem} KW-exactness of groupoids has been studied by Lalonde in \cite{Lal15, Lal14, Lal17} under  our standing separability assumptions. In \cite[Theorem 4.9]{Lal14}, it is proved that equivalence of groupoids preserves KW-exactness. In \cite[Theorem 6.14]{Lal15}, it is proved that the reduced crossed product $C^*_{r}(\cG,A)$ is exact whenever $\cG$ is a KW-exact locally compact groupoid acting on an exact $C^*$-algebra $A$.  In \cite{Lal17}, among other permanence properties, it is proved that if $\cG$ is a KW-exact groupoid acting on a locally compact space  $Y$, then the semi-direct product groupoid $Y\rtimes \cG$ is KW-exact. Conversely, if $Y\rtimes \cG$ is KW-exact and if the moment map $p: Y\mapsto \cG^{(0)}$ is proper, then $\cG$ is KW-exact.

We have seen that amenability at infinity has similar properties (Propositions \ref{prop:inv_sd}, \ref{prop: stab-amen-infi}), and it is known that $C^*$-exactness is preserved under equivalence of groupoids \cite[Theorem 4.1]{SW12}.
\end{rem}

We will compare in Section \ref{sec:comparison} these notions of exactness, but for now we introduce another one.

\subsection{Inner exactness}\label{subsec:inex} While it is difficult to produce examples of groups that are not exact, to the author's knowledge the first example of a locally compact groupoid that is not KW-exact has been given in 1991 \cite[Remark 4.10]{Ren91}. Another class of nice examples is given by some HLS-groupoids (see \cite{HLS, Wil15} and Proposition \ref{prop:HLS} below).  

These examples are all of the same type, which we will describe now. We first recall a few facts.
Let $\cG$ be a locally  compact groupoid with a Haar system. Let $F$ be a closed invariant subset of $X= \cG^{(0)}$ and set $U = X\setminus F$. It is well-known that the inclusion
$\iota :  \cC_c(\cG(U) )\to \cC_c(\cG)$ extends to an injective homomorphism from $C^*(\cG(U))$ into $C^*(\cG)$ and from
$C^*_{r}(\cG(U))$ into $C^*_{r}(\cG)$. Similarly, the restriction map $\pi: \cC_c(\cG) \to \cC_c(\cG(F))$ extends to a surjective homomorphism from $C^*(\cG)$ onto $C^*(\cG(F))$ and from
$C^*_{r}(\cG)$ onto $C^*_{r}(\cG(F))$. Moreover the sequence 
$$0 \rightarrow C^*(\cG(U)) \rightarrow C^*(\cG) \rightarrow C^*(\cG(F)) \rightarrow 0$$
is exact. For these facts, we refer to \cite[page 102]{Ren_book}, \cite[Section 2.4]{HS}, or to \cite[Proposition 2.4.2]{Ram} for a detailed proof. On the other hand, the examples given in \cite{HLS, Wil15} are based on the existence for $\cG$ of an  invariant closed set $F$  such that
\begin{equation}\label{eq:ie}
0 \rightarrow C^*_{r}(\cG(U)) \rightarrow C^*_{r}(\cG) \rightarrow C^*_{r}(\cG(F)) \rightarrow 0
\end{equation}
 is not  exact.

But the non-exact sequence \ref{eq:ie} is nothing else than
\begin{equation*}
0 \rightarrow \cC_0(U)\rtimes_r \cG \rightarrow \cC_0(X)\rtimes_r \cG\rightarrow \cC_0(F)\rtimes_r \cG \rightarrow 0
\end{equation*}
which corresponds to  the $\cG$-equivariant exact sequence
$$0\rightarrow \cC_0(U) \rightarrow \cC_0(X) \rightarrow \cC_0(F) \rightarrow 0.$$

\begin{defn}\label{def:inamen} A locally compact groupoid with Haar system such that the sequence  \eqref{eq:ie} is exact for every closed invariant subset $F$ of $X$ called KW-{\it inner exact} or simply {\it inner exact}.\index{inner exact groupoid}
\end{defn}

 The class of inner exact groupoids is also interesting in itself and now plays a role in different contexts (see for instance \cite{BL,BEW, BCS}).  This class is quite large. It includes all locally compact groups and more generally the groupoids that act with dense orbits on their space of units. This class is stable under equivalence of groupoids \cite[Theorem 6.1]{Lal17}. Note that KW-exact groupoids are inner exact.

 \begin{rem} If $\Gamma$ is a discrete exact group acting on a locally compact space $X$, the semi-direct product groupoid $X\rtimes \Gamma$ is amenable at infinity,  hence KW-exact (see Propositions \ref{prop:inv_sd}, \ref{prop:equiv}), but it is not true that $X\rtimes \Gamma$ is always inner exact when $\Gamma$ is not exact (although $\Gamma$ is inner exact, as any group). Indeed,  let $\Gamma$ be a Gromov monster. In \cite[Theorem 7.4]{BGW} the authors construct a  second countable locally compact subset $X$ of $\beta\Gamma$, containing $\Gamma$, which is $\Gamma$-invariant with respect to the left action of $\Gamma$ on $\beta\Gamma$, such that the sequence
 \begin{equation*}
0 \rightarrow \cC_0(\Gamma)\rtimes_r \Gamma \rightarrow \cC_0(X)\rtimes_r \Gamma\rightarrow \cC_0(X\setminus \Gamma)\rtimes_r \Gamma \rightarrow 0
\end{equation*}
is not exact in the middle (even not in $K$-theory).
 \end{rem}

\subsubsection{Inner exactness for groupoid bundles}
\begin{defn}\label{def:bundle_groupoid_cont} A {\it groupoid bundle}\index{groupoid bundle} is a triple $(\cG,T,p)$ where $\cG$ is a locally compact groupoid, $T$ is a locally compact space  and $p:\cG^{(0)}\to T$ is a surjective continuous open map such that $p \circ r = p\circ s$.
\end{defn}
 
 This terminology is that of \cite[Definition 1.16]{Will19}. In \cite{LR},  this notion is called a {\it continuous field of groupoids}. Both terminologies are justified\footnote{but the second one can be misleading since, as we will see, $C^*_{r}(\cG)$ is not in general a continuous field of $C^*$-algebras with fibres $C^*_{r}(\cG_t)$, $t\in T$.} by the following observation:
$t\mapsto (p \circ r)^{-1}(t) = \cG_t$ is a bundle (or field) of groupoids. Indeed  $\cG_t$ is the reduction $\cG(p^{-1}(t))$ of $\cG$ by the invariant set $p^{-1}(t)$. In the case where $T= \cG^{(0)}$ and $p$ is the identity map, then $\cG_t$ is the isotropy group  of $\cG$ at $t$, and we say that $\cG$ is a group bundle groupoid.

Let $(\cG,T,p)$ be a  groupoid bundle.  We assume  that $\cG$ is equipped with a Haar system\footnote{This is automatic in the case of a group bundle groupoid since $s= r$ is open \cite[Lemma 1.3]{Ren91}.}. Observe that  $\cC_c(\cG)$ has a structure of $\cC_0(T)$-module by setting  $(fg)(\gamma) = f\circ p(r(\gamma)) g(\gamma)$ for $f\in \cC_0(T)$ and $g\in \cC_c(\cG)$. The map $g \mapsto fg$ extends continuously in order to turn $C^*(\cG)$ and $C^*_{r}(\cG)$ into $\cC_0(T)$-algebras. This is immediate for the reduced $C^*$-algebra\footnote{When $T = X$ and $p= \Id_X$, this is the result stated in \ref{rem:propRed1}.}. In the case of the full $C^*$-algebra, that we will not need, one uses the same arguments as in the proof of \cite[Lemma 1.13, page 59]{Ren_book} (see also \cite[Lemme 2.4.4]{Ram} for details).

We will see that $C^*_{r}(\cG)$ can be viewed as a bundle (or field) of $C^*$-algebras over $T$ in two different ways, but first, let us clarify what we mean by  this notion.

\begin{defn}\label{def:bundle_C*} A {\it  field} 
{\it of $C^*$-algebras over a locally compact space} $X$ is a triple $\cA = (A, \set{\pi_x: A \to A(x)}_{x\in X},X)$ where $A$, $A(x)$ are $C^*$-algebras, and where $\pi_x$ is a surjective homomorphism such that 
\begin{itemize}
\item[(i)] $\set{\pi_x: x\in X}$ is faithful, that is, $\norm{a} = \sup_{x\in X}\norm{\pi_x(a)}$ for every $a\in A$;
\item[(ii)] for $f\in \cC_0(X)$ and $a\in A$, there is an element $fa\in A$ such that $\pi_x(fa) = f(x)\pi_x(a)$ for $x\in X$;
\item[(iii)] the natural homomorphism from $\cC_0(X)$ into the center of the multiplier algebra of $A$ is non-degenerate.
\end{itemize}
\end{defn}

This definition is related to \cite[Definition 1.1]{KW95}. It differs from the  one defined for instance in \cite[\S C.2]{Will} (see Remark \ref{rem:USC}). 
 Note that $A$ is a $\cC_0(X)$-algebra, and that for every $a\in A$ and $\varepsilon >0$ there exists a compact subset $K$ of $X$ such that $\norm{\pi_x(a)} <\varepsilon$ if $x\notin K$. There is a canonical surjection from  $A/\cC_x(X)A$  onto  $A(x)$, which is not injective in general.
 
 \begin{defn}\label{def:bundlebis} Let $A$ be a field of $C^*$-algebras as in the previous definition.
We say that this field is {\it upper semicontinuous} (resp. {\it lower semicontinuous}, resp. {\it continuous}) if the function $x\mapsto \norm{\pi_x(a)}$ is upper semicontinuous (resp. lower semicontinuous, resp. continuous) for every $a\in A$.
\end{defn}

\begin{lem}\label{lem:usc} Let $\cA$ be a field of $C^*$-algebras on a locally compact space $X$.   Then the function $x\mapsto \norm{\pi_x(a)}$ is upper semicontinuous at $x_0$ for every $a\in A$ if and only if $\ker \pi_{x_0}= \cC_{x_0}(X)A$
\end{lem}

\begin{proof} Suppose first that $\ker \pi_{x_0}= \cC_{x_0}(X)A$.  Given $a\in A$ and $\varepsilon >0$ there exist $f\in \cC_{x_0}(X)$ and $b\in A$ such that $\norm{a-fb}\leq \norm{\pi_{x_0}(a} + \varepsilon/2$. Let $V$ be a neighborhood of $x_0$ such that $\abs{f(x)}\norm{b} \leq \varepsilon/2$ for $x\in V$. Then we have, for $x\in V$,
\begin{align*}
\norm{\pi_x(a)}&\leq \norm{\pi_x(a) - f(x)\pi_x(b)} + \abs{f(x)}\norm{\pi_x(b)}\\
&\leq \norm{a-fb} + \varepsilon/2 \leq \norm{\pi_{x_0}(a)} + \varepsilon.
\end{align*}

Conversely, assume that $x\mapsto \norm{\pi_x(a)}$ is upper semicontinuous at $x_0$ for eve\-ry $a\in A$. Let $a\in A$ be such that $\pi_{x_0}(a) = 0$. Given $\varepsilon >0$, let $K = \set{x\in X: \norm{\pi_x(a)}\geq \varepsilon}$. It is a compact subset of $X$ (since it is closed in a compact set) and $x_0\notin K$. Let $f: X\to [0,1]$ with compact support, such that $f(x_0) = 0$ and $f(x) = 1$ if $x\in K$. We have
$$\norm{a-fa} = \sup_{x\in X}\norm{\pi_x(a) - f(x)\pi_x(a)}=\sup_{x\in X\setminus K}\norm{\pi_x(a) - f(x)\pi_x(a)}\leq 2\varepsilon.$$
It follows that  $\ker \pi_{x_0}\subset \cC_{x_0}(X)A$. Obviously, we also have $ \cC_{x_0}(X)A\subset \ker \pi_{x_0}$.
\end{proof}

 \begin{rem}\label{rem:usc} Let us come back to the data of Definition \ref{def:bundle_C*}. It follows from the previous lemma that $A(x) = A/\cC_x(X)A$ for all $x\in X$ if and only if the given field of $C^*$-algebras is upper semicontinuous.
 \end{rem}
  
 Let $(\cG,T,p)$ be a groupoid bundle as above. We set $X = \cG^{(0)}$.  For $t\in T$, we set $X_t = p^{-1}(t)$ and $U_t =X\setminus X_t$. Note that $X_t$ is an invariant closed subset of $X$. Let us explain now how $C^*_{r}(\cG)$ can be viewed as a field of $C^*$-algebras over $T$ in two different ways. First, since it is a $\cC_0(T)$-algebra, we have the field 
 $$(C^*_{r}(\cG), \set{e_t: C^*_{r}(\cG) \to C^*_{r}(\cG)_t}_{t\in  T},T)$$
  where $e_t$ is the quotient map from $C^*_{r}(\cG)$ onto $C^*_{r}(\cG)_t= C^*_{r}(\cG)/\cC_t(T)\cC_{r}^*(\cG)$. Se\-cond, it is the field 
  $$(C^*_{r}(\cG), \set{\pi_t: C^*_{r}(\cG) \to C^*_{r}(\cG(X_t))}_{t\in T},T).$$ 
  The first field is upper semicontinuous by \cite[Proposition 1.2]{Rie}
  and the second is lower semicontinuous by   \cite[Th\'eor\`eme 2.4.6]{Ram} (see also \cite[Theorem 5.5]{LR}).

  \begin{prop}\label{prop:cont_field} Let $(\cG,T,p)$ be a groupoid bundle.  Let $t_0\in T$. The function $t\mapsto \norm{\pi_t(a)}$ is continuous at $t_0$ for every $a\in C^*_{r}(\cG)$ if and only if  the following sequence
 \begin{equation}\label{eq:exact1}
 0\rightarrow C^*_{r}(\cG(U_{t_0})) \rightarrow C^*_{r}(\cG) \stackrel{\pi_{t_0}}{\rightarrow} C^*_{r}(\cG(X_{t_0})) \rightarrow 0
 \end{equation}
  is exact.
  \end{prop}\
  
  \begin{proof} We have $\cC_c(\cG(U_{t_0})) = \cC_c(T\setminus \set{t_0})\cC_c(\cG)$ and by continuity we get $C^*_{r}(\cG(U_{t_0})) = \cC_{t_0}(T)C^*_{r}(\cG)$. It follows from Lemma \ref{lem:usc}  that the function $t\mapsto \norm{\pi_t(a)}$ is upper semicontinuous at $t_0$ for all $a\in C^*_{r}(\cG)$ if and only if the kernel of $\pi_{t_0}$ is  $C^*_{r}(\cG(U_{t_0}))$. Since $t\mapsto \norm{\pi_t(a)}$ is always lower semicontinuous, this proves the proposition.
 \end{proof}

\begin{cor}\label{cor:cont_field}  Let $(\cG,T,p)$ be an inner exact (in particular a KW-exact) groupoid bundle.     Then
$$(C_{r}^*(\cG), \set{\pi_t: C_{r}^*(\cG) \rightarrow C_{r}^*(\cG(X_t))}_{t\in T}, T)$$
 is a continuous field of $C^*$-algebras over $T$.
\end{cor}

\begin{rem}\label{rem:useful}  We immediately get from Proposition \ref{prop:cont_field} that the function $t\mapsto \norm{\pi_t(a)}$ is continuous at $t_0$ for every $a\in C^*_{r}(\cG)$ whenever the groupoid $\cG(X_{t_0})$ is  amenable, because the sequence \eqref{eq:exact1} is exact\footnote{This continuity result was obtained in   \cite[Corollary 2.4.7]{Ram}. This fact is no longer true if $\cG(X_{t_0})$ is only assumed to be exact.}. This follows from the general  observation below that we will use several times.
Here it is applied with $B = C^*(\cG(X_{t_0}))$ and $B_1 =  C^*_{r}(\cG(X_{t_0}))$.
\end{rem}

\begin{lem}\label{lem:WCPinexact} Let us consider the following commuting diagram
\begin{gather}
\begin{aligned}
\xymatrix{
0\ar[r] &I \ar[d]^{p_I} \ar[r]^{\iota} & A\ar[d]^{p_A} \ar[r]^{q} &B
\ar[r]\ar[d]^{p_B} &0\\
0\ar[r] & I_1 \ar[r]^{\iota_1} & A_1\ar[r]^{q_1}&  B_1
\ar[r] &0}
\end{aligned}
\label{xy-com}
\end{gather}
where 
\begin{itemize}
\item $A$ and $A_1$ are $C^*$-algebras with closed ideals $I$ and $I_1$ respectively,  $B= A/I$ and $B_1 = A_1/I_1$,
\item the top sequence is exact, the vertical arrows are surjective homomorphisms. 
\end{itemize}
\begin{itemize}
\item[(i)] If $p_I$ and $p_B$ are injective then $p_A$ is injective.
\item[(ii)]If $p_B$ is injective, then the bottom sequence is exact in $A_1$.
\item[(iii)] If the bottom sequence is exact and $p_A$ is injective then $p_B$ is injective.
\end{itemize}
\end{lem}

\begin{proof} Immediate by diagram chasing. \end{proof}

Let us consider now the case where $X= T$ and $p=\Id_T$, that is, $\cG$ is a groupoid group bundle, with fibres  $\cG(x)$, $x\in X$. Then $C^*_{r}(\cG)$ is a lower semi-continuous bundle of $C^*$-algebras with fibres $C_{r}^*(\cG(x))$. For $x\in X$, the homomorphism 
$\pi_x : $ is the representation $\Lambda_x$ defined in Section \ref{sec:full}.

\begin{prop}\label{prop:innexact} Let $\cG$ be a  group bundle groupoid on $X$. Then, the following conditions are equivalent:
\begin{itemize}
\item[(i)] $\cG$ is inner exact;
\item[(ii)] for every $x\in X$ the sequence
$$
0 \rightarrow C^*_{r}(\cG(X\setminus \set{x})) \rightarrow C^*_{r}(\cG) \rightarrow C^*_{r}(\cG(x)) \rightarrow 0
$$
is exact.
\item[(iii)] $C^*_{r}(\cG)$ is a continuous field of $C^*$-algebras over $X$ with fibres $C^*_{r}(\cG(x)) $.
\end{itemize}
\end{prop}

\begin{proof} (i) $\Rightarrow$ (ii) is obvious and   (ii) $\Rightarrow$ (iii) is a particular case of Proposition \ref{prop:cont_field}. Assume that (iii) holds true and, given a closed (invariant) subset $F$ of $X$,  let us show that the sequence 
$$
0 \rightarrow C^*_{r}(\cG(X\setminus F)) \rightarrow C^*_{r}(\cG) \rightarrow C^*_{r}(\cG(F)) \rightarrow 0
$$
is exact. 

Let $a\in C^*_{r}(\cG)$ such that $\Lambda_x(a) = 0$ for every $x\in F$. Let $\varepsilon >0$ be given. Then $K = \set{x\in X: \norm{\Lambda_x(a)} \geq \varepsilon}$ is a compact subset of $X$ with $K\cap F = \emptyset$. Take a  function $\varphi\in \cC_c(X)$, with values in $[0,1]$,  such that $\varphi(x) = 1$ for $x\in K$ and $\varphi(x) = 0$ for $x\in F$. Recall that $C^*_{r}(\cG)$ is a $\cC_0(X)$-algebra (Remark \ref{rem:propRed1}). So $\varphi a \in C^*_{r}(\cG)$  with $\Lambda_x(\varphi a) = \varphi(x)\Lambda_x(a)$ for all $x$. We have 
$$\norm{\Lambda(a)-\Lambda(\varphi a)}_r = \sup_x\norm{\Lambda_x(a)-\Lambda_x(\varphi a)} \leq \varepsilon$$
  and $\varphi a\in C^*_{r}(\cG(X\setminus F))$. Therefore $a\in C^*_{r}(\cG(X\setminus F))$.
\end{proof}

 Let $\cG$ be locally compact groupoid and $E\subset \cG^{(0)}$.  We set $[E] = r(s^{-1}(E))$ and call it the {\it saturation} of $E$. It is the smallest invariant subset of $\cG^{(0)}$ that contains $E$. When $\cG$ is no longer assumed to be a group bundle groupoid we have the following result.
 
\begin{prop}\label{prop:continn} Let $\cG$ be a locally compact groupoid. The two conditions below are equivalent:
\begin{itemize}
\item[(i)] for every $a\in C^*_{r}(\cG)$, the function $x\mapsto \norm{\Lambda_x(a)}$ is continuous;
\item[(ii)]   $\cG$ is inner exact and for every open inva\-riant subset $U$ of $X= \cG^{(0)}$,  for every compact subset $C$ of $U$, we have $\overline{[C]}\subset U$.
\end{itemize}
\end{prop}

\begin{proof}  Assume that (i) is satisfied.  Given an invariant closed subset $F$ of $X$,  let us show that the sequence 
$$0 \rightarrow C^*_{r}(\cG(X\setminus F)) \rightarrow C^*_{r}(\cG) \rightarrow C^*_{r}(\cG(F)) \rightarrow 0$$
is exact. 

Let $a\in C^*_{r}(\cG)$ such that $\Lambda_x(a) = 0$ for every $x\in F$ and let $\varepsilon >0$ be given. We choose $f\in \cC_c(\cG)$ such that $\norm{\Lambda(f)-\Lambda(a)}_r < \varepsilon/2$. We have 
$$\sup_{x\in F} \norm{\Lambda_x(f)} = \sup_{x\in F} \norm{\Lambda_x(f-a)}\leq \norm{\Lambda(f)-\Lambda(a)}_r  < \varepsilon/2.$$
We set $K = \set{x\in X: \norm{\Lambda_x(f)} \geq \varepsilon/2}$. It  is a closed subset of $X$ with $K\cap F = \emptyset$, which is invariant since the representations $\Lambda_{s(\gamma)}$ and $\Lambda_{r(\gamma)}$ are equivalent for every $\gamma\in \cG$. Take a  function $\varphi\in \cC_c(X\setminus F)\subset \cC_c(X)$, with values in $[0,1]$,  such that $\varphi(x) = 1$ for $x\in K$. Using the notation of Remark \ref{rem:propRed1}, we have $\rho(\varphi)f = \varphi\circ  r f \in \cC_c(\cG(X\setminus F))$ and $\kappa(\varphi)\Lambda(f) = \Lambda(\rho(\varphi)f)$. Given $\xi\in L^2(\cG^x,\lambda^x)$ we have, for every $\gamma\in \cG^x$,
\begin{align*}
\big(\Lambda_x(\rho(\varphi)f)\xi\big)(\gamma) &= \int_{\cG^x} (\rho(\varphi)f)(\gamma^{-1}\gamma_1)\xi(\gamma_1) \rd \lambda^x(\gamma_1) \\
&= \varphi\circ s(\gamma)\big(\Lambda_x(f)\xi\big)(\gamma).
\end{align*}
If $r(\gamma) = x \in K$ we have $s(\gamma)\in K$ and therefore $\varphi\circ s(\gamma) = 1$. It follows that $\Lambda_x(\rho(\varphi)f) = \Lambda_x(f)$ if $x\in K$. If $x\notin K$  we have 
$$\norm{\Lambda_x(\rho(\varphi)f)-\Lambda_x(f)} = \norm{\big(\kappa_x(\varphi)-1\big)\Lambda_x(f)} \leq\norm{1-\varphi}_\infty\norm{\Lambda_x(f)} \leq \varepsilon/2.$$
Thus, we have
\begin{align*}
\norm{\Lambda(\rho(\varphi)f) -\Lambda(a)}_r&\leq \norm{\Lambda(\rho(\varphi)f) -\Lambda(f)}_r + \norm{\Lambda(f)-\Lambda(a)}_r\\
& \leq \sup_{x\in X} \norm{\Lambda_x(\rho(\varphi)f-f)}+\varepsilon/2 \leq \varepsilon,
\end{align*}
with $\rho(\varphi)f \in \cC_c(\cG(X\setminus F))$. It follows that $a\in C^*_{r}(\cG(X\setminus F))$.

Now let $U$ be an open invariant subset of $X$ and let $C\subset U$ be a compact set. We choose an open  set $\Omega$ whose closure is compact and contained into $\cG(U)$, and such that $C\subset r(\Omega)$.  Let $f\in \cC_c(\cG(U))$, nonnegative, such that $f(\gamma) = 1$ if $\gamma\in \Omega$. Note that $\Lambda_x(f)= 0$ if $x\notin U$ (see Remark  \ref{rem:propRed}). Take $x\in C$ and  a nonnegative element $\xi$ in $\cC_c(\cG^x)$. We have 
$$\big(\Lambda_x(f)\xi\big)(x) = \int f(\gamma_1)\xi(\gamma_1)\rd\lambda^x(\gamma_1)\geq \int_{\Omega^x} \xi(\gamma_1)\rd\lambda^x(\gamma_1).$$
We can choose $\xi$ such that $\big(\Lambda_x(f)\xi\big)(x) >0$. The function $\gamma\in \cG^x \mapsto \big(\Lambda_x(f)\xi\big)(\gamma)$ being continuous, we see that 
$\Lambda_x(f)\xi \not=0$ and so $\norm{\Lambda_x(f)}>0$.  Since the function $x\mapsto \norm{\Lambda_x(f)}$ is continuous, there exists $\alpha >0$ such that $\norm{\Lambda_x(f)} \geq \alpha$ for all $x\in C$. The conclusion follows from the inclusions 
$$\overline{[C]} \subset \set{x\in X: \norm{\Lambda_x(f)} \geq \alpha}\subset U.$$

Let us now proceed to the proof of (ii)$\Rightarrow$ (i). It is inspired from \cite{KM}.  Suppose that there is an element $a \in C^*_{r}(\cG)$ such that the map $x\mapsto \norm{\Lambda_x(a)}$ is not continuous. We can assume that $a$ is positive because  $\norm{\Lambda_x(a^*a)}= \norm{\Lambda_x(a)}^2$.
Since the map $x\mapsto \norm{\Lambda_x(a)}$  is lower semicontinuous, it fails to be upper semicontinuous at some point $y\in X$.  Therefore, there exist $\alpha >0$ and a net $(x_i)$ converging to $y$ such that 
$$\norm{\Lambda_{x_i}(a)} \geq \norm{\Lambda_{y}(a)} + \alpha$$
 for all $i$. Replacing $a$ by $f(a)$ via the functional calculus, with 
 $$f(t) = \max \set{0, t-\norm{\Lambda_{y}(a)}},$$ we can  assume that $\norm{\Lambda_{y}(a)} = 0$.

We set $F = \set{x\in X: \norm{\Lambda_{x}(a)} = 0}$. It is an invariant subset of $X$, which is closed since $x\mapsto \norm{\Lambda_x(a)}$  is lower semicontinuous. Then 
$a$ is in the kernel of the quotient map $C^*_{r}(\cG) \to C^*_{r}(\cG(F))$. Therefore there  exists $f\in \cC_c(\cG(X\setminus F))$  with compact support $K$ such that $\norm{\Lambda(f)-\Lambda(a)}_r\leq \alpha/2$.  We have  $\norm{\Lambda_{x}(f)}= 0$ if $x\notin [r(K)]$ (see the remark \ref{rem:propRed}). It follows that $\norm{\Lambda_x(a)} \leq \alpha/2$ if $x\notin \overline{[r(K)]}$ and therefore $x_i\notin \overline{[r(K)]}$  for all $i$, in contradiction with $\lim_i x_i = y \in F$, since $X \setminus \overline{[r(K)]}$ is a neighborhood of $y$.
\end{proof}

\begin{cor}\label{cor:continn} Let $\cG$ be a locally compact groupoid such that the orbit space $\cG\setminus \cG^{(0)}$ equipped with the quotient topology is Hausdorff. Then $\cG$ is inner exact if and only if the function $x\mapsto \norm{\Lambda_x(a)}$ is continuous for every $a\in C^*_{r}(\cG)$.
\end{cor}

\begin{proof} It suffices to check that  for every compact subset $C$ of any open invariant subset $U$ of $\cG^{(0)}$ we have $\overline{[C]} \subset U$. Denoting by $q:\cG^{(0)} \to\cG\setminus \cG^{(0)}$ the quotient map, the set $q(C)$ is compact, hence closed  since $\cG\setminus \cG^{(0)}$ is Hausdorff, and therefore $[C] = q^{-1}(q(C))$ is closed, and of course contained into $U$.
\end{proof}

\begin{ex}  The fact that $\cG\setminus \cG^{(0)}$ is Hausdorff  is obviously satisfied  by group bundle groupoids. It also applies to any proper groupoid $\cG$ (see \cite[Proposition 2.1.12]{AD-R}). One can also show  directly that the saturation of every compact subset $C$ of $\cG^{(0)}$ is closed in this case. Indeed let $x = \lim_i x_i$ with $x_i\in [C]$ for all $i$. We may assume that the $x_i$ are in a compact subset $C_1$ of $X$. Let $\gamma_i$ be such that $r(\gamma_i) = x_i$ and $s(\gamma_i) \in C$. Since $\cG$ is proper, the net $(\gamma_i)$ belongs to a compact subset of $\cG$ and we may assume that this net converges to some $\gamma$. We have $r(\gamma) = x$ and $s(\gamma)\in C$ and therefore $x\in [C]$.

On the other hand this condition is not satisfied in the following situation: let $G\actson X$ be an action of a locally compact group on a locally compact space, and  assume the existence of  an invariant open and not closed subset $U$ of $X$  on which the action of $G$ is minimal. In particular, if $f$ is a nonzero element of $\cC_c(\cG(U))$, the function $x\mapsto  \norm{\Lambda_x(f)}$ is  nonzero and constant on $U$ whereas $ \norm{\Lambda_x(f)} = 0$ if $x\in X\setminus U$. However the groupoid $\cG = X\rtimes G$ is inner exact when $G$ is amenable,  since $\cG$ is amenable.

For instance one can take for $G$ the multiplicative group $\R_+^{*}$ of positive real numbers acting on $\R$ by multiplication and $U = \R_+^{*}$.
\end{ex}

\subsubsection{The case of HLS-groupoids}  Let us recall the notation of  \ref{ex:HLS}: if $\cG$ denotes the HLS-groupoid associated with an approximated group $(\Gamma, (N_k))$, we have $\cG(k) = \Gamma/N_k$ if $k\in \N$ and $\cG(\infty) = \Gamma$. A basic result of \cite{HLS} is that the sequence
\begin{equation}\label{eq:exactHLS}
0 \longrightarrow C^*_{r}(\cG(\N))\longrightarrow C^*_{r}(\cG) \longrightarrow C^*_{r}(\cG(\infty)) \longrightarrow 0
\end{equation}
is not exact whenever  $\Gamma$ has Kazdhan's property (T) (it is not even exact in $K$-theory!). As an example we can take the exact group $SL(3,\Z)$.

For these HLS-groupoids, the inner exactness is a very strong condition which holds if and only if $\Gamma$ is amenable.

\begin{prop}\label{prop:HLS} Let us keep the above notation. We assume that $\Gamma$ is finitely generated. Then the following conditions are equivalent:
\begin{itemize}
\item[(1)] $\Gamma$ is amenable;
\item[(2)]  $\cG$ is  inner exact;
\item[(3)] $C^*_{r}(\cG)$ is a continuous field of  $C^*$-algebras with fibres $C^*_{r}(\cG(x))$;
\end{itemize}
\end{prop}

\begin{proof} If $\Gamma$ is amenable, then the groupoid $\cG$ is amenable (see \cite[Examples 5.1.3 (1)]{AD-R}), and therefore inner exact. That (2) implies (3) is proven in 
Corollary \ref{cor:cont_field}. 
Let us prove that (3) $\Ra$ (1). Assume by contradiction that $\Gamma$ is not amenable. We fix a symmetric probability measure $\mu$ on $\Gamma$ with a finite support that generates $\Gamma$ and we choose $n_0$ such that the restriction of $q_n$ to the support of $\mu$ is injective for $n\geq n_0$. We define $a\in\cC_c(\cG)\subset C^*_{r}(\cG)$ such that $a(\gamma) = 0$ except for $\gamma = (n,q_n(s))$ with $n\geq n_0$ and $s\in \supp(\mu)$ where $a(\gamma) = \mu(s)$. Then $\pi_n(a) = 0$ if $n<n_0$ and $\pi_n(a) = \lambda_{\Gamma_n}(\mu) \in C_{r}^*(\Gamma_n)= C^*_{r}(\cG(n))$ if $n\geq n_0$, where $\lambda_{\Gamma_n}$ is the quasi-regular representation of $\Gamma$ in $\ell^2(\Gamma_n)$. By Kesten's result \cite{Kes,Kes1} on the spectral radius relative to symmetric random walks, we have $\norm{\lambda_{\Gamma_n}(\mu)}_{C_{r}^*(\Gamma_n)} =1$ for $n_0\leq n\in \N $ and $\norm{\lambda_{\Gamma_\infty}(\mu)}_{C_{r}^*(\Gamma_\infty)} < 1$ since $\Gamma$ is not amenable. It follows that $C^*_{r}(\cG)$ is not a continuous field of $C^*$-algebras with fibres $C^*_{r}(\cG(n))$ on $\N^+$, a contradiction.
\end{proof}
 In this proposition it is only for (3) $\Rightarrow$ (1)  that we need to assume that $\Gamma$ is finitely generated. In fact the implication (2) $\Rightarrow$ (1) follows from Lemma \ref{lem:closed WCP}.

The following result shows  that for HLS-groupoids $C^*$-exactness is also a very strong condition.

\begin{prop}\label{prop:HLSbis} Let $\cG$ be the HLS-groupoid associated with an approximated group $(\Gamma, (N_k))$. Then $\Gamma$ is amenable if and only if $C^*_{r}(\cG)$ is exact.
\end{prop}
\begin{proof} We refer to \cite[Lemma 3.2]{Wil15} for the proof that that the exactness of $C^*_{r}(\cG)$ implies that $\Gamma$ is amenable. In turn, this last property implies that $\cG$ is amenable and therefore $C^*_{r}(\cG)$ is exact since it is nuclear.
\end{proof}

\begin{rem}\label{rem:HLSbis} Let $\cG$ be the HLS-groupoid associated with any approximated group $(\Gamma, (N_k))$, where $\Gamma$ is  a residually finite exact non-amenable group, for instance the free group $\F_2$. It is a group bundle  groupoid,  of which all fibers are exact discrete groups, although it is not $C^*$-exact. 
\end{rem}

\section{\textbf{\textsc{Amenability at infinity and exactness}}}\label{sec:comparison}

\subsection{Relations between various forms of exactness}\label{subsec:comparison} The proofs of the propositions \ref{prop:exact} and \ref{prop:equiv} are adapted from the corresponding ones in case of groups (see \cite[Theorem 7.2]{AD02} for instance), 
\begin{prop}\label{prop:exact} Every KW-exact groupoid is $C^*$-exact.
\end{prop}
\begin{proof}
 Let $\cG$ be a locally compact KW-exact groupoid. We set $X = \cG^{(0)}$. Let
$$0 \to I \to A \to B \to 0$$
be an exact sequence of $C^*$-algebras.
Then
$$0 \to I\otimes \cC_0(X) \to A\otimes \cC_0(X) \to B\otimes \cC_0(X) \to 0$$
is a $\cG$-equivariant exact sequence of $\cG$-$C^*$-algebras, where the $\cG$-actions are induced by the left action of $\cG$ on $X$. Then
$$0 \to C_{r}^*(\cG, I\otimes \cC_0(X))  \to C_{r}^*(\cG, A\otimes \cC_0(X))
 \to C_{r}^*(\cG, B\otimes \cC_0(X)) \to 0$$
is an exact sequence and  we have just to observe that it coincides with
the sequence
$$0 \to I \otimes C_{r}^*(\cG) \to A \otimes C_{r}^*(\cG) \to B \otimes C_{r}^*(\cG)
\to 0.$$
\end{proof}

\begin{prop}\label{prop:equiv} Let $\cG$ be a locally compact groupoid with Haar system which acts amenably on a second countable fibrewise compact fibre space. Then $\cG$ is $KW$-exact.
\end{prop}

\begin{proof} Let $Y$ be an amenable second countable $\cG$-space with $p : Y \to X= \cG^{(0)}$ proper and surjective, 
and let 
$$0 \to I \to A \to B \to 0$$
be an equivariant exact sequence  of $\cG$-$C^*$-algebras.
Then
$$0 \to p^*(I )\to p^* (A) \to p^*( B) \to 0$$
is an equivariant exact sequence of $(Y\rtimes \cG)$-$C^*$-algebras. 
Taking the reduced crossed products we obtain the commutative diagram

$$\begin{array}{ccccccccc}
0&\rightarrow&C_{r}^*(\cG,I)
&\rightarrow&C_{r}^*(\cG,A)&\rightarrow&
C_{r}^*(\cG,B)&\rightarrow&0\\
&&&&&&&&\\
&&{ i_I}\downarrow{}&&{i_A}\downarrow{}&&{i_B}\downarrow{}&&\\
&&&&&&&&\\
0&\rightarrow&C_{r}^*(Y\rtimes \cG,p^* (I))
&\rightarrow&C_{r}^*(Y\rtimes \cG,p^*( A))&\rightarrow&
C_{r}^*(Y\rtimes \cG,p^* (B))&\rightarrow&0
\end{array}$$

The vertical arrows were introduced in Lemma \ref{lem:smb} and shown to be injective, due to the fact that $p$ is proper.  Since $Y\rtimes \cG$ is second countable and amenable the reduced crossed products of the second line
are also full crossed products \cite[Proposition 6.1.8]{AD-R}, and therefore the second line is exact (see \cite[Lemma 6.3.2]{AD-R}).

Let us show that the first line is exact in the middle. Assume that $a \in C_{r}^*(\cG,A)$ is sent onto $0\in C_{r}^*(\cG,B)$. Then $i_A(a)$  belongs to $C_{r}^*(Y\rtimes \cG,p^* (I))$ due to the exactness of the second line. 

We now prove that this forces $a$ to belong to $C_{r}^*(\cG,I)$.  First we introduce some notation. We denote by $\EuScript I$ the upper semicontinuous $C^*$-bundle over $X=\cG^{(0)}$ associated with the $\cC_0(X)$-algebra $I$ (see Remark \ref{rem:USC}). If $u\in I= \Gamma_0(\EuScript I)$ we denote by $\tilde u\in p^*(I)$ the section $y\mapsto u(p(y)) \in I_{p(y)}$. Then, with the notation of Lemme \ref{lem:multiple}, $\kappa(\tilde u)$ is a two-sided multiplier of $C_{r}^*(Y\rtimes \cG,p^* (I))$.

For $f \in \cC_c(r^*(A))\subset C^*_{r}(\cG,A)$ (that we write  $f: \gamma \mapsto f(\gamma) \in A_{r(\gamma)}$) the map $\gamma\mapsto \big(\rho(u)f\big)(\gamma) = u(r(\gamma))f(\gamma) \in I_{r(\gamma)}$ is in $\cC_c(r^*(I)) \subset C^*_{r}(\cG,I)$.  As in Lemma \ref{lem:multiple}, we see that $f\mapsto \rho(u)f$ extends to a map, denoted by $\kappa_{A,I}(u)$, from $C^*_{r}(\cG,A)$ into $C^*_{r}(\cG,I)$. Similarly  we define a map $\kappa_{p^*(A),p*(I)}(u): C_{r}^*(Y\rtimes \cG,p^* (A))\to C_{r}^*(Y\rtimes \cG,p^* (I))$, which sends every $f\in \cC_c(\underline{r}^*(p^*(A))\subset C_{r}^*(Y\rtimes \cG,p^* (A))$ to $(y,\gamma) \mapsto u(r(\gamma))f(y,\gamma)$. 

We check that the restriction of $\kappa_{p^*(A),p*(I)}(u)$ to $C_{r}^*(Y\rtimes \cG,p^* (I))$ is $\kappa(\tilde u)$ and that for $b\in C_{r}^*(\cG,A)$  we have
$$\kappa_{p^*(A),p^*(I)}(u)i_A(b) = i_A(\kappa_{A,I}(u) b) = i_I(\kappa_{A,I}(u) b).$$
 
Next, let $(u_j)$ be a bounded approximate
unit of $I$. Then $(\widetilde{u_j})$ is  a bounded approximate unit of $p^*(I)$. Using Lemma \ref{lem:multiple} we see that  $i_A(a) = \lim_j \kappa(\tilde{u_j})i_A(a)$ since $i_A(a) \in C_{r}^*(Y\rtimes \cG,p^* I)$. It follows that we have
$$ i_A(a) = \hbox{lim}_j\kappa_{p^*(A),p^*(I)}(u_j)i_A(a))= \hbox{lim}_j i_I(\kappa_{A,I}(u_j) a)\in i_I\big(C_{r}^*(\cG,I)\big),$$
 and we conclude that $a\in C^*_{r}(\cG,I)$ since $i_A$ is injective.
\end{proof}

 We denote by $\mathscr {E}$ (resp. $\mathscr {E}'$) the family of locally compact groupoids for which $C^*$-exactness (resp. KW-exactness) implies amenability at infinity.

\begin{lem}\label{lem:C*exactAmen} The families $\mathscr {E}$ and  $\mathscr {E}'$ are preserved under equivalence of groupoids.
\end{lem}
\begin{proof} 

By \cite[Theorem 4.1]{SW12}, we know that if two locally compact groupoids are equivalent, then their reduced $C^*$-algebras are Morita equivalent.  Recall that two $C^*$-algebras $A$, $B$ are Morita equivalent if there exists an $A$-$B$ imprimitivity bimodule, and that  the Brown-Green-Rieffel theorem states that two separable $C^*$-algebras $A$ and $B$ are Morita-equivalent if and only if $A\otimes \cK$ and $B\otimes \cK$ are isomorphic, where $\cK$ is the $C^*$-algebra of compact operators on a separable Hilbert space \cite{BGR}. In particular, in this case, it is obvious that exactness is preserved under Morita equivalence. The lemma follows from this observation as well as from the facts that KW-exactness and amenability at infinity are preserved under equivalence of groupoids (see Theorem \cite[Theorem 4.9]{Lal14} and Proposition \ref{prop: stab-amen-infi} respectively). 
\end{proof}

\subsection{The case of \'etale groupoids}\label{subsec:compar-etale}
One strategy in order to show that a $C^*$-algebra is exact is to embed it into a nuclear algebra. One of our goals in this section is to show that if $\cG$ is a second countable,\footnote{We insist on our standing assumption since it is crucial here.} amenable at infinity, \'etale groupoid, then $C^{*}_r(\beta_r\cG\rtimes \cG)$ (or $C^*_{u}(\cG)$) is nuclear. This will imply that $C^*_{r}(\cG)$ is exact since it is contained into $C^{*}_r(\beta_r\cG\rtimes \cG)$. It is known  that if $\cH$ is a second countable  amenable groupoid, then $C^*_{r}(\cH)$ is nuclear and $C^*_{r}(\cH)= C^*(\cH)$ (see \cite[Corollary 6.2.14, Proposition 6.1.8]{AD-R}. Unfortunately, even if $\cG$ is second countable, the groupoid $\cH = \beta_r\cG\rtimes \cG$ is not second countable. 

Our  preliminary task   is to show that the results just mentioned under separability assumptions remain true in the following context.

\begin{prop}\label{prop:nonsep_nuc} Let $\cG$ be an \'etale second countable locally compact  groupoid with Haar system and let $(Z,q)$ be a fibrewise compact amenable $\cG$-space (not assumed to be second countable) with $q : Z\to \cG^{(0)}$ surjective. Then $C^*_{r}(Z\rtimes \cG)$  is nuclear and $C^*_{r}(Z\rtimes \cG) = C^*(Z\rtimes \cG)$. \end{prop}

\begin{proof}  As already said, the only difficulty is that we have no separability assumption on $Z$. We denote by $\cF$ the set of finite subsets of $\cC_0(Z)$, ordered by inclusion. We will construct a family $(A_F)_{F\in \cF}$ of nuclear $C^*$-subalgebras of $C_{r}^*(Z\rtimes \cG)$ such that $A_{F_1} \subset A_{F_2}$ when $F_1\subset F_2$ and $C_{r}^*(Z\rtimes \cG) = \overline{\cup_{F\in \cF} A_F}$. Then $C_{r}^*(Z\rtimes \cG)$ will be nuclear by \cite{Lance73}.

{\it Construction of $A_F$}. We choose a second countable fibrewise compact amenable $\cG$-space $(Y,p)$ and $q_Y:Z\to Y$ as in Lemma \ref{lem:scf}. Recall that $p$ and $q_Y$ are surjective and proper. We view $Y$ and $Z$ as $(Y\rtimes\cG$)-spaces in an obvious way (see Lemma \ref{lem:idengr}). We denote by $B_F$ the smallest $C^*$-subalgebra of $\cC_0(Z)$ which contains  $q_{Y}^*\cC_0(Y) = \set{f\circ q_Y: f \in \cC_0(Y)}$ and $F$, and is stable under convolution by the elements of $\cC_c(Y\rtimes\cG)$. It is a sepa\-rable abelian $C^*$-algebra and therefore its spectrum $Y_F$ is second countable. The inclusions $B_F\subset \cC_0(Z)$ and $q^*_{Y} : \cC_0(Y) \to B_F$ are essential since $q_Y$ is proper. Let $q_F: Z \to Y_F$ and $p_F: Y_F\to Y$ be the continuous surjective maps corresponding to these inclusions.  Note that $p_F\circ q_F = q_Y$ is a proper   map and therefore $p_F$ and $q_F$ are proper. Since $B_F$ is stable under convolution by the elements of $\cC_c(Y\rtimes\cG)$, using Proposition \ref{prop:smaller} we see that $(Y_F,  p_F)$ has a unique structure of $(Y\rtimes\cG)$-space (or equivalently of $\cG$-space) which makes $q_F$ equivariant. Since the groupoids $Y_F\rtimes\cG$ and $Y_F\rtimes (Y\rtimes \cG)$ are isomorphic (see again Lemma \ref{lem:idengr}) and since $Y\rtimes\cG$ is amenable, by Proposition \ref{cor:locpropre}, the $\cG$-space $Y_F$ is amenable and so, by \cite[Corollary 5.2.14]{AD-R}, the $C^*$-algebra $C_{r}^*(Y_F\rtimes \cG)$ is nuclear. 

 Since $q_F$ is proper,  $A_F = C_{r}^*(Y_F\rtimes \cG)$ is canonically embedded in $C_{r}^*(Z\rtimes \cG)$ by Lemma \ref{lem:smb}. For $f\in \cC_c(Y_F\rtimes \cG)$  the embedding is given by the surjection $(z,\gamma) \mapsto (q_F(z),\gamma)$. Note that if $F_1\subset F_2$ we have  $A_{F_1} \subset A_{F_2}$.

{\it Proof of $C_{r}^*(Z\rtimes \cG) = \overline{\cup_{F\in \cF} A_F}$}. It suffices to show that every $f\in \cC_c(Z\rtimes \cG)$ belongs to $\cup_{F\in \cF} A_F$. There exists a compact subset $K$ of $\cG$ such that the support of $f$ is contained in $q^{-1}(r(K))*K$. Using a finite covering of $K$ by open bisections and a corresponding partition of units, it suffices to consider the case where $K$ is contained in an open bisection $S$. For $z\in Z$, we set $\wt{f}(z) =f(z, r_{S}^{-1}(q(z))$ if $z\in q^{-1}(r(S))$ and $\wt f(z) = 0$ otherwise. Then $\wt f\in \cC_c(Z)$ and therefore $\wt f = g\circ q_F$ where $g$ belongs to some $\cC_c(Y_F)$. Let $\varphi \in \cC_c(\cG)$, with support contained in $S$ and equals to $1$ on $K$. Then for $(z,\gamma)\in Z\rtimes \cG$, we have $f(z,\gamma) = g\circ q_F(z)\varphi(\gamma)$ where $(y,\gamma) \mapsto  g(y)\varphi(\gamma)$ belongs to $\cC_c(Y_F\rtimes \cG)$. It follows that $f\in A_F$.

Finally, we similarly show that $C^*(Z\rtimes \cG) = \overline{\cup_{F\in \cF}C^*(Y_F\rtimes \cG)}$ and since 
$$C_{r}^*(Y_F\rtimes \cG)  = C^*(Y_F\rtimes \cG)$$
 we see that
$C^*(Z\rtimes \cG)= C^*_{r}(Z\rtimes \cG)$.
\end{proof}

\begin{thm}\label{cor:ai_ex} Let $\cG$ be an \'etale groupoid and consider the following conditions:
\begin{itemize}
\item[(1)] $\cG$ is strongly amenable to infinity. 
\item[(2)] $\cG$ is  amenable to infinity. 
\item[(3)] $C^*_{r}(\beta_r \cG\rtimes \cG)$ is nuclear.
\item[(4)] $C^*_{r}(\beta_r\cG \rtimes \cG)$ is exact.
\item[(5)] $\cG$ is KW-exact;
\item[(6)] $C^*_{r}(\cG)$ is exact.
\end{itemize}
Then 
 \vspace{-5mm}\hspace{-3mm}$$\xymatrix{(1) \ar@{=>}[r] \ar@{=>}[d] &  (2)\ar@{=>}[r] &(5)\ar@{=>}[d]\\
(3) \ar@{=>}[r] & (4) \ar@{=>}[r] & (6)} $$
\end{thm} 

\begin{proof}
 (1) $\Rightarrow$ (2) and (3) $\Rightarrow$ (4) are immediate.  (2) $\Rightarrow$ (5) is Proposition \ref{prop:equiv}, (5) $\Rightarrow$ (6) is Proposition \ref{prop:exact}, and (4) $\Rightarrow$ (6) follows from the fact that $C^*_{r}(\cG)$ is embedded into $C^*_{r}(\beta_r \rtimes \cG)$. Finally (1) $\Rightarrow$ (3) follows from Propositions \ref{prop:SC} and \ref{prop:nonsep_nuc}.
 \end{proof}

What is missing in order to get the equivalence of all these properties is whether (6) implies (1).

\subsection{Examples}\label{subsec:examples}

\subsubsection{Locally compact groups} When $\cG$ is a locally compact group, it is shown in \cite{BCL, OS20}  that (5) $\Rightarrow$ (1). If in addition $G$ is inner amenable we have (6) $\Rightarrow$ (1)(see \cite{Osa} when $\cG$ is discrete and \cite[Theorem 7.3] {AD02} for all locally compact groups).

\subsubsection{Transitive groupoids}  Let $(\cG,\lambda)$ be a transitive  locally compact groupoid.  All its isotropy groups  are isomorphic and, fixing $x\in X$, the  space $\cG^x$ is a $\cG(x)$-$\cG$-equivalence (see Subsection \ref{sub:tg}). Lemma \ref{lem:C*exactAmen} has the following consequences: the groupoid  $\cG$ is  KW-exact if and only if  it  is amenable at infinity since this is the case for locally compact groups (\cite{BCL, OS20}); moreover these two properties are equivalent to the fact that $\cG$ is $C^*$-exact when $\cG(x)$ is discrete, for instance when $\cG$ is \'etale. 
  
  \subsubsection{Groupoids related to transformation  groupoids}\label{sect:tg} Let $\alpha : \Gamma\actson X$ be a partial action of a discrete group $\Gamma$ on a locally compact space $X$. Assume that $\Gamma$ is exact. We have seen in Proposition \ref{prop:partial} that the groupoid $\Gamma\ltimes X$ is strongly amenable at infinity. It follows from Proposition \ref{prop:equiv} that it is KW-exact and $C^*$-exact. The fact that $C^*_{r}(\Gamma\ltimes X)$ is exact  also follows from \cite[Corollary 5.3]{AEK},   after having observed that  the $C^*$-algebra $C^*_{r}(\Gamma\ltimes X)$ is isomorphic to the reduced crossed product $\cC_0(X)\rtimes_r \Gamma$ with respect to the partial action of $\Gamma$ (see \cite[Proposition 2.2]{Li16}).

\subsubsection{Group bundle groupoids}\label{subsub:exact} There exist continuous fields 
$$\cA = (A, \set{\pi_x: A \to A(x)}_{x\in X},X)$$
 of exact $C^*$-algebras such that $A$ is not exact \cite{KW95}. 
  Let us consider the case of an inner exact \'etale group bundle groupoid  $\cG$ whose isotropy groups are exact. Then $C^*_{r}(\cG)$ is a continuous is a continuous field of exact $C^*$-algebras. Is $C^*_{r}(\cG)$ an exact $C^*$-algebra?
 We have seen that in case of an HLS-groupoid, its inner exactness implies its amenability. 
 
\subsubsection {\it Inverse semigroups.}\label{par:invers} We  keep the notation of the subsection \ref{subsec:ISA}. Let $S$ be an inverse semigroup. One defines an equivalence relation $\sigma$ on $S$ by saying that $s \,{\sim}_\sigma\, t$ whenever there exists an idempotent $e\in E$  such that $ se = te$. The quotient $S/\sigma$ is a group, denoted by $\Gamma_S$, and called the {\it maximal group homomorphic image of} $S$ (or the minimum group congruence). By an abuse of notation,  $\sigma$ will also denote the quotient map from $S$ onto $\Gamma_S$. Then $\sigma$ is such that $\sigma(st) = \sigma(s)\sigma(t)$ and $\sigma(s^*) = \sigma(s)^{-1}$ for all $s,t\in S$. If $S$ has a zero (denoted by $0$), the group $\Gamma_S$ is trivial. We denote by $S^\times$ the set $S\setminus\set{0}$. When $S$ does not have a zero, we set $S^\times = S$. The map $\sigma : S\to \Gamma_S$ has the following property (which justifies the terminology): every homomorphism from $S$ onto a group $G$ factors through $\Gamma_S$.

To each inverse semigroup $S$ is associated in an explicit way a groupoid $\cG_S$, not Hausdorff in general (see \cite{Exel, Li17}). As in \cite{Li17} we denote by $\wh E$ the space of non-zero maps $\chi$ from $E$ into $\set{0,1}$ such that $\chi(ef) =\chi(e)\chi(f)$ and $\chi(0) = 0$ whenever $S$ has a zero.  Equipped with the  topology induced from the product space  $\set{0,1}^E$, the space  $\wh E$,  called the {\it spectrum} of $S$, is  locally compact and  totally disconnected. Note that when $S$ is a monoid ({\it i.e.,} has a unit element $1$) then $\chi$ is non-zero if and only if $\chi(1) = 1$, and therefore $\wh E$ is compact.  The semigroup $S$  has a canonical action $\theta$ on $\wh E$ as follows. The domain (open and closed) of $\theta_t$ is $D_{t^*t} = \set{\chi\in \wh E: \chi(t^*t) = 1}$ and we set $\theta_t(\chi)(e) = \chi(t^*et)$. Then $\cG_S$ is the groupoid $\cG(S,\theta)$ defined in the subsection \ref{subsec:ISA}.  

\begin{defn}\label{def:idpure} Let $S$ be an inverse semigroup. A {\it partial homomorphism}  is an application $\psi$ from $S^\times$ into a discrete group $\Gamma$ such that $\psi(st) = \psi(s)\psi(t)$ if $st\not=0$. If in addition  $\psi^{-1}(e_\Gamma) = E^\times$, we say that $\psi$ is  an {\it idempotent pure morphism} (where $e_\Gamma$ is the unit of $\Gamma$).  When such an application $\psi$ from $S^\times$ into a group $\Gamma$ exists, the inverse semigroup $S$ is called {\it strongly $E^*$-unitary}. 
\end{defn}

If $S$ is $E$-unitary, then $\sigma: S \to \Gamma_S$ is idempotent pure. Of course, any strongly $E^*$-unitary semigroup is $E^*$-unitary. Note that when $S$ is without zero, $S$ is strongly $E^*$-unitary if and only if it is $E$-unitary. 

Let us recall the following result, that relates $\cG_S$ to a partial transformation groupoid when $S$ is strongly $E^*$-unitary (see \cite[Lemma 5.22]{Li17}).

\begin{lem}\label{lem: Li} Let $S$ be a strongly $E^*$-unitary semigroup with an idempotent pure morphism $\psi: S^\times \to \Gamma$. Then there is a partial action $\beta : \Gamma \actson \wh{E}$ such that the topological groupoids $\cG_S$ and $\Gamma\ltimes_\beta \wh{E}$ are canonically isomorphic. In particular $\cG_S$ is Hausdorff\footnote{In fact,  $\cG_S$ is Hausdorff for every $E^*$-unitary semigroup (see Subsection \ref{subsec:ISA})}.
\end{lem}
The domain  of $\beta_g$ is $ \wh E_{g^{-1}} = \cup_{t\in \psi^{-1}(g)} D_{t^*t}$,
and if $\chi\in \wh E_{g^{-1}}$ and $e\in \wh E$ we have $\beta_g(\chi)(e) = \chi(t^*et)$ where $t$ is any element of $S^\times$ with $\psi(t) = g$ and $\chi(t^*t) = 1$.
The isomorphism from $\cG_S$ onto  $\Gamma\ltimes_\beta \wh{E}$ sends $[s,\chi]$ onto $(\psi(s), \chi)$ (see \cite{Li17}).

\begin{rem}\label{rem:strong-F} The homomorphism $(g,\chi) \mapsto g$ is locally proper if and only if the graph of $\beta_g$ is closed in $\wh{E}\times \wh{E}$ for all $g\in \Gamma$ (see Remark \ref{exs:loc_proper} (d)). This holds in particular if $\wh E_{g}$ is closed for all $g\in \Gamma$, for instance if $\psi^{-1}(g)$ has  a maximum element $m_g$ for every $g$ in the image of $\psi$. Indeed, in this case we have $\wh E_{g^{-1}} = D_{m_{g}^*m_g}$, which is compact. A strongly $E^*$-unitary with such a property is called a strongly $F^*$-unitary semigroup. Under this assumption the groupoid $\cG_S$ is inner amenable and is  equivalent to some transformation groupoid $Y\rtimes \Gamma$ for some  action of $\Gamma$ on a locally compact space (see Example \ref{ex:Abd}). This includes many examples of inverse semigroups \cite{Law02}. In fact this  equivalence  between the two groupoids holds for even more general semigroups (see \cite{KS02}).
\end{rem}

Let $S$ be an inverse semigroup. Its reduced $C^*$-algebra $C^*_{r}(S)$ and its full $C^*$-algebra $C^*(S)$ are defined in \cite{Pat}. They are canonically isomorphic to the reduced $C^*$-algebra  and to the full $C^*$-algebra, respectively, of the groupoid $\cG_S$  (see \cite[Theorem 4.4.2]{Pat}, \cite[Theorems 3.3 and  3.5]{KS02}, \cite[Theorems 5.17 and 5.18]{Li17}).

\begin{prop}\label{prop:inverse_E*} Let $S$ be a strongly $E^*$-unitary semigroup as in the previous lemma. The reduced $C^*$-algebra $C^*_{r}(S)$ is exact when $\Gamma$ is exact.
\end{prop}

\begin{proof} Immediate, by Lemma \ref{lem: Li}, Proposition \ref{prop:partial} and Propositions \ref{prop:equiv}, \ref{prop:exact}.
\end{proof}

When $S$ is $E$-unitary the converse holds.

\begin{prop}\label{prop:inverse_E} Let $S$ be a $E$-unitary inverse semigroup. Then $C^*_{r}(S)$ is exact if and only if the maximal group homomorphic image $\Gamma_S$ is an exact group.
\end{prop}

\begin{proof} We have to show that $\Gamma_S$ is exact when $C^*_{r}(S)$ is exact. It suffices to consider the case where $S$ does not have a $0$, since  $S= E$  otherwise. Let $\chi_\infty$ be the character such that $\chi_\infty(e) = 1$ for every $e\in E$. Then $\chi_\infty$ is $\cG_S$-invariant in $\wh E = \cG_S^{(0)}$ and  $[t,\chi_\infty] \mapsto \sigma(t)$ is an isomorphism from the isotropy group $\cG_S(\chi_\infty)$ onto $\Gamma_S$. But the $C^*$-algebra $C^*_{r}(\cG_S(\chi_\infty))$ is a quotient of $C^*_{r}(\cG_S)$. This latter $C^*$-algebra is exact since it is canonically isomorphic to $C^*_{r}(S)$. It follows that the group $\Gamma_S$ is exact.
\end{proof}

\begin{rem}\label{rem:inverse_E} Let $S$ be a $E$-unitary inverse semigroup. Then $C^*_{r}(S)$  is nuclear if and only if the \'etale groupoid $\cG_S$ is amenable. This latter condition implies that the isotropy group $\Gamma_S$ at $\chi_\infty$ is amenable. Conversely, assuming that $\Gamma_S$ is amenable, then the groupoid $\cG_S$ is amenable (by Lemma \ref{lem: Li} and Proposition  \ref{cor:amen-pa}) and therefore $C^*_{r}(S)$  is nuclear.
\end{rem}

\begin{ex}\label{ex:Willett} Let $(\Gamma, (N_k))$ be an approximated group. We keep the notation of Example \ref{ex:HLS}. Let $S = \set{q_k(t) : k\in \N^+, t\in \Gamma}$. Formally, $S = \cG$ is  the HLS-groupoid defined in Example \ref{ex:HLS} but we view $S$ as an inverse semigroup in the following way. The product is given by $q_m(t).q_n(u) = q_{m\wedge n}(tu)$ where $m\wedge n$ is the smallest of the two elements $m,n$. We set $q_m(t)^* = q_m(t^{-1})$. The set $E_S$ of idempotents is $\set{q_m(e) : m \in \N^+}$ that we identify with $\N^+$. The product of two idempotents is given by $m.n = m\wedge n$. The spectrum $\wh E$ is the set $\set{\chi_m : m \in \N^+}$ where $\chi_m(n) = 1$ if and only if $m\leq n$. It is homeomorphic to the compact space $\N^+$.  The groupoid $\cG_S$ associated with $S$ is the space of equivalence classes of pairs $\big(q_m(t),\chi_k\big)$ with $k\leq m$,  where $\big(q_m(t),\chi_k\big) \sim \big(q_n(u),\chi_{k'}\big)$ if and only if $k = k'$ and $q_k(t) = q_k(u)$. The map sending the class of $\big(q_m(t),\chi_k\big)$ to $q_k(t)$ is an isomorphism of topological groupoids from $\cG_S$ onto the HLS-groupoid $\cG$. 
 
  Assuming $N_0 \varsubsetneqq \Gamma$,  the semigroup $S$ is without zero. The maximal group homomorphism image $\Gamma_S$ of $S$ is the finite group $\Gamma/N_0$ and $\sigma : S \to \Gamma/N_0$  is $q_m(t) \mapsto q_0(t)$. It is not idempotent pure and $S$ is not $E$-unitary.   If $\Gamma$ is not amenable, we know that $C^*_{r}(\cG_S)$ is not exact (see Proposition \ref{prop:HLSbis}). So Proposition \ref{prop:inverse_E*}  is not true in this case.
\end{ex}

\subsubsection{\it Sub-semigroups of a group.}\label{subpar_sg} In this subsection, we consider a discrete group $\Gamma$ and a sub-semigroup $P$ which contains the unit $e$ of $\Gamma$. For $p\in P$, let $V_p$ be the isometry in $\cB(\ell^2(P))$ defined by
$$V_p\delta_q = \delta_{pq}.$$
The {\it reduced $C^*$-algebra} or {\it Toeplitz algebra} of $P$ is the sub-$C^*$-algebra $C^*_{r}(P)$ of $\cB(\ell^2(P))$ generated by these isometries. 

An inverse semigroup $S(P)$, called the {\it inverse hull of} $P$, is attached to $P$. One of its definitions  is 
$$S(P) = \set{V_{p_1}^*V_{q_1} \cdots V_{p_n}^*V_{q_n} : n\in \N, p_i,q_i\in P}.$$
It is an inverse semigroup of partial isometries in $\cB(\ell^2(P))$ (see \cite[\S 3.2]{Nor14}).  An important property of $S(P)$ is that the map $\psi$ from $S(P)^\times$ into $\Gamma$ such that
$$\psi\big(V_{p_1}^*V_{q_1} \cdots V_{p_n}^*V_{q_n}\big)= p^{-1}_1 q_1 \cdots p^{-1}_n q_n$$
 is well defined and is an idempotent pure morphism (see \cite[Proposition 3.2.11]{Nor14}). Moreover, by \cite[Lemma 3.4.1]{Nor14}, the semigroup $S(P)$ does not have a zero if and only if $PP^{-1}$ is a subgroup of $\Gamma$. 

Let us denote by $\lambda_\Gamma$ the left regular representation of $\Gamma$ and by $E_P$ the orthogonal projection from $\ell^2(\Gamma)$ onto $\ell^2(P)$. We say that $(P,\Gamma)$ satisfies the {\it Toeplitz condition} if for every $g\in \Gamma$ such that $E_P \lambda_g E_P \not= 0$, there exist $p_1,\dots, p_n, q_1,\dots, q_n \in P$ such that $E_P \lambda_g E_P = V_{p_1}^*V_{q_1}\dots V_{p_n}^*V_{q_n}$. For instance the quasi-lattice ordered groups introduced by Nica in \cite{Nica92} satisfy this property (see \cite[\S 8]{Li13}).

\begin{prop}\label{prop:subgroup} Let $(P,\Gamma)$ be as above.
\begin{itemize}
\item[(i)] If $\Gamma$ is exact, then the $C^*$-algebra $C^*_{r}(P)$ is exact.
\item[(ii)] Assume that $G = PP^{-1}$ and that the Toeplitz condition is satisfied. Then $C^*_{r}(P)$ is exact if and only if the group $\Gamma$ is exact.
\end{itemize}
\end{prop}

\begin{proof} (i) follows from Proposition \ref{prop:inverse_E*} and from the fact that $C^*_{r}(P)$ is a quotient of $C^*_{r}(\cG_{S(P)})$ (see \cite[Corollary 3.2.13]{Nor14}).

(ii) Assume now that $\Gamma = PP^{-1}$ and that the Toeplitz condition is satisfied. Then the inverse semigroup $S(P)$ does not have a $0$. Moreover, the map $\tau : \Gamma_S\to \Gamma$ such that $\tau\circ \sigma = \psi$ is an isomorphism. Indeed $\psi$ is surjective, so $\tau$ is also surjective. Assume that $\tau(\sigma(x)) = e$, with $x\in S(P)$. Since $\psi$ is idempotent pure, we see that $x$ is an idempotent and therefore $\sigma(x)$ is the unit of $\Gamma_S$. 

It follows from Proposition \ref{prop:inverse_E} that $C^*_{r}(S(P))$ is exact if and only if $\Gamma$ is exact. Finally, we conclude by using the fact that $C^*_{r}(P) = C^*_{r}(S(P))$ since the Toeplitz condition is satisfied (see \cite[Theorem 3.2.14]{Nor14} and \cite[Lemma 2.28]{Li12}).
\end{proof}

\section{\textbf{\textsc{A sufficient condition for $C^*$-exactness to imply amenability at infinity}}}\label{sect:eiai}
The main result of this section is Theorem \ref{thm:equiv}. Its proof requires some prelimi\-naries.
We first recall a classical Kirchberg's characterization of exact $C^*$-algebras.

\begin{defn}\label{def:nuclear} Let $\Phi:A\to B$ be a completely positive contraction  between two $C^*$-algebras. We say that $\Phi$ is {\it factorable} if there exists an integer $n$ and completely positive contractions $\psi: A \to M_{n}(\C)$, $\varphi :  M_{n}(\C) \to B$ such that $\Phi = \varphi \circ \psi$.

We say that $\Phi$ {\it is  nuclear} if there exists a net of factorable   completely positive contractions $\Phi_i:A\to B$ such that 
$$\forall a\in A, \quad\lim_i\norm{\Phi(a) -\Phi_i(a)} = 0.$$ 
 \end{defn}

Recall that a $C^*$-algebra $A$ is nuclear if  for every $C^*$-algebra $B$, there is only one $C^*$-norm on the algebraic tensor product $A\odot B$. This is equivalent to the fact that $\Id_A : A\to A$ is nuclear  (see \cite[Theorem 3.8.7]{BO}).

\begin{thm} [Kirchberg]  A $C^*$-algebra  $A$ is exact if and only if there exist a Hilbert space $\cH$ and a nuclear embedding of $A$ into $\cB(\cH)$ (or equivalently a nuclear embedding in some $C^*$-algebra).  
\end{thm}
For a proof of this deep result we refer to  \cite[Theorem 3.9.1]{BO}. We will need the following extension of this theorem.

\begin{lem}\label{lem:Kirch} Let $A$, $B$ be two separable $C^*$-algebras, where $B$ is nuclear. Let $\cE$ be a countably generated Hilbert $C^*$-module over $B$.  Let $\iota : A \to \cB_B(\cE)$ be an embedding of $C^*$-algebras. Then $A$ is exact if and only if $\iota$ is nuclear.
\end{lem}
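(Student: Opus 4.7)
For the easy direction (``$\iota$ nuclear implies $A$ exact''), if $\iota = \lim_i \varphi_i\circ\psi_i$ is a point-norm limit of factorable c.p.\ contractions $\varphi_i\circ\psi_i\colon A\to M_{n_i}(\C)\to \cB_B(\cE)$, then the separable $C^*$-subalgebra $C$ of $\cB_B(\cE)$ generated by $\iota(A)$ together with $\bigcup_i \varphi_i(M_{n_i}(\C))$ admits a faithful representation $\pi\colon C\to \cB(\cH)$ on a separable Hilbert space. Composing gives $\pi\circ\iota = \lim_i(\pi\circ\varphi_i)\circ\psi_i$, a faithful nuclear representation of $A$ on $\cH$. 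The classical Kirchberg theorem (the $B=\C$ case of the lemma) then yields exactness of $A$.

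For the hard direction, I would reduce the problem using Kasparov's stabilization theorem: $\cE\oplus H_B\cong H_B$ with $H_B := \ell^2(B)$, so $\cB_B(\cE) = p\cB_B(H_B)p$ for a projection $p$, and $\cB_B(H_B)$ identifies with the multiplier algebra $\cM(D)$ where $D := \cK\otimes B$ is separable and nuclear (since $B$ is). Thus it suffices to show: any $*$-homomorphism $\iota\colon A \to \cM(D)$ is nuclear whenever $A$ is separable exact and $D$ is separable nuclear. Pick a faithful non-degenerate representation $\rho\colon D\to \cB(\cH)$ on a separable Hilbert space and extend to $\tilde\rho\colon \cM(D)\to \cB(\cH)$. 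The composite $\tilde\rho\circ\iota\colon A \to \cB(\cH)$ is a faithful representation of the separable exact algebra $A$, hence nuclear by the classical Kirchberg theorem, producing factorable c.p.\ contractions $\varphi_i\circ\psi_i\colon A\to \cB(\cH)$ converging point-norm to $\tilde\rho\circ\iota$.

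The main obstacle is converting these nuclear approximations, whose values lie in $\cB(\cH)$, into approximations with values in $\cM(D)$ and, by compression with $p$, in $\cB_B(\cE) = p\cM(D)p$. A naive approach using nuclearity of $D$---forming composites $\beta_j\circ\tilde\alpha_j\circ\iota$ with factorable $\beta_j\circ\alpha_j\colon D\to M_{k_j}(\C)\to D$ approximating $\Id_D$ and $\tilde\alpha_j\colon \cM(D)\to M_{k_j}(\C)$ the Stinespring extension to $\cM(D)$---produces approximations valued in $D$, but these can norm-approximate only elements of $pDp = \cK_B(\cE)$, not general elements of $\cB_B(\cE)$. A successful strategy instead relies on an injectivity argument: nuclearity of $B$ implies $D^{**}$ is an injective von Neumann algebra containing $\cM(D)$, so the embedding $\iota\colon A\to D^{**}$ of a separable exact algebra into an injective vN algebra is \emph{weakly nuclear}, and a Hahn--Banach argument (taking convex combinations, which remain factorable c.p.\ contractions) upgrades this to point-norm convergence of factorable approximations valued in $D^{**}$. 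The final delicate step is to realize these approximations inside $\cM(D)$ itself rather than in the larger $D^{**}$, for which one exploits the specific structure of $\cM(\cK\otimes B)$ together with the separability of $A$ and $B$ through a diagonal extraction. This last point---the passage from $D^{**}$-valued to $\cM(D)$-valued approximations---is the hardest part of the argument.
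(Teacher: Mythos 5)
Your easy direction is fine, and your reduction of the hard direction via Kasparov stabilization to maps into $\cM(D)$ with $D=\cK\otimes B$ matches the paper's setup. But the hard direction is not actually proved: the step you yourself single out as ``the hardest part'' --- converting factorable completely positive approximations with values in $D^{**}$ (or in $\cB(\cH)$) into ones with values in $\cM(D)=\cB_B(\cE)$ --- is exactly the content of the lemma, and neither the injectivity of $D^{**}$ nor an unspecified ``diagonal extraction'' supplies it. Concretely, weak nuclearity of the map $A\to D^{**}$ together with a Hahn--Banach convexity argument yields point-norm approximation of $\iota$ by factorable maps $\varphi_i\circ\psi_i$ whose second legs $\varphi_i:M_{n_i}(\C)\to D^{**}$ still take values in $D^{**}$; perturbing $\varphi_i$ so that it lands in $\cM(D)$ while preserving the norm estimates on the composites $\varphi_i\circ\psi_i(a_k)$ is not routine (strong$^*$ or weak$^*$ approximation of the positive matrix in $M_{n_i}(D^{**})$ that determines $\varphi_i$ does not control $\norm{\varphi_i\circ\psi_i(a_k)-\varphi_i'\circ\psi_i(a_k)}$). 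So the argument as written has a genuine gap at its decisive point.

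The paper closes this gap with a different device, the Kasparov--Voiculescu absorption theorem. One unitizes $A$, fixes a faithful unital representation of $\widetilde{A}$ on a separable Hilbert space $H$, and passes to the infinite ampliation on $H_\infty=\ell^2(\N)\otimes H$, so that $\widetilde{A}\cap\cK(H_\infty)=\set{0}$; after stabilization one may take $\cE=H_\infty\otimes B$. Let $\tau:\widetilde{A}\to\cB_B(\cE)=\cM(\cK(H_\infty)\otimes B)$ be the resulting embedding (which factors through $\cB(H_\infty)$ and hence is nuclear by the classical Kirchberg theorem), and let $\pi$ be a unital representation built from the given $\iota$ on $B\oplus(H_\infty\otimes B)$. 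Since $B$ is nuclear, Kasparov's theorem provides unitaries $U\in\cB_B(\cE,\cE\oplus\cE)$ with $U\tau(a_k)U^*$ within $\varepsilon$ of $(\tau\oplus\pi)(a_k)$ on any prescribed finite set; conjugating the factorable approximants of $\tau$ by $U$ and compressing to the $\pi$-summand, then to its $\cE'$-component, produces factorable approximants of $\iota$ with values in $\cB_B(\cE)$. The absorption theorem is precisely the tool that transports the approximations from $\cB(H_\infty)$ into $\cB_B(\cE)$, which is the step your plan leaves open.
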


\begin{proof} If $\iota$ is nuclear, it is well-known that $A$ is exact (see \cite[Proposition 7.2]{Wass}). 

Conversely, assume that $A$ is exact and that $A$ is not unital, the case where $A$ has a unit being simpler. We denote by $\widetilde{A}$ the $C^*$-algebra obtained by adjunction of a unit $1$ to $A$. We assume that $\widetilde{A}$ is  embedded into $\cB(H)$ for some separable Hilbert space $H$ and  that $1$ is the unit of $\cB(H)$. We set $H_\infty = \ell^2(\N)\otimes H$ and denote by $i$ the embedding of $\widetilde{A}$ into $\cB(H_\infty)$ sending $a$ to $\Id_{\ell^2(\N)} \otimes a$. Observe that $i(\widetilde{A})\cap \cK(H_\infty) = \set{0}$,  where $\cK(H_\infty)$ is the $C^*$-algebra of compact operators on $H_\infty$. We denote by $\tau$ the embedding of $\widetilde{A}$ into $\cB_B(H_\infty\otimes B)$ obtained by composition of the canonical embeddings $i$  of $\widetilde{A}$ into  $\cB(H_\infty)$ and of $\cB(H_\infty)$ into $\cB_B(H_\infty\otimes B)$.

By Kasparov's absorption theorem \cite[Theorem 6.2]{Lance_book}, the Hilbert $C^*$-modules $\cE\oplus (H_\infty\otimes B)$ and $H_\infty\otimes B$ are isomorphic. Let $U \in \cB_B(\cE\oplus (H_\infty\otimes B), H_\infty\otimes B)$ be a unitary operator and set  $\pi(a)= U(\iota(a)\oplus \tau(a)) U^*$ for $a\in A$. We define a unital representation $\pi_1$ of $\widetilde{A}$ into $\cB_B(B\oplus(H_\infty\otimes B))$ by 
$$\pi_1(\lambda,a)(b,\xi) = (\lambda b, \lambda\xi + \pi(a)\xi),$$
where $\lambda\in \C$, $a\in A$, $b\in B$ and $\xi\in H_\infty\otimes B$.

Next, let $V:B\oplus (H_\infty\otimes B)\to H_\infty\otimes B$ be an isomorphism between these two Hilbert $C^*$-modules
and set $\pi_2 = V\pi_1V^*$.

Let $\varepsilon >0$ and $a_1,\cdots,a_n \in A$ be given. Since $B$ is nuclear, the Kasparov-Voiculescu theorem  \cite[Theorem 6]{Kasp} implies the existence of a unitary operator $W \in \cB_B(H_\infty\otimes B,(H_\infty\otimes B)\oplus (H_\infty\otimes B))$
such that 
$$\hbox{for}\,\, 1\leq k\leq n, \quad \norm{W \tau(a_k) W^* -(\tau \oplus \pi_2)(a_k)} \leq \varepsilon.$$

Since $\widetilde{A}$ is exact and since $\tau$ factors through $\cB(H_\infty)$, there exists a factorable  completely positive contraction $\Phi : \widetilde{A} \to \cB_B(H_\infty\otimes B)$  such that 
$$\hbox{for}\,\, 1\leq k\leq n, \quad \| \Phi(a_k) - \tau(a_k)\| \leq \varepsilon.$$

It follows that 
$$\hbox{for}\,\, 1\leq k\leq n, \quad \| W \Phi(a_k) W^* - \tau(a_k) \oplus \pi_2(a_k)\| \leq 2\varepsilon.$$ 
Denoting by $J : H_\infty\otimes B \to (H_\infty\otimes B)\oplus (H_\infty\otimes B)$ the inclusion into the second component of this direct sum, we get for $1\leq k\leq n,$
$$\| J^* W \Phi(a_k) W^* J -  \pi_2(a_k)\| \leq 2\varepsilon,$$
and therefore
$$\|V^*J^*W \Phi(a_k)W^*JV - \pi_1(a_k)\| \leq 2\varepsilon.$$

Let $J_1$ be the inclusion from   $H_\infty\otimes B$ onto $B\oplus H_\infty\otimes B$. Then we have 
$$\hbox{for}\,\, 1\leq k\leq n, \quad \| J_{1}^*V^*J^*W \Phi(a_k)W^*JVJ_1 - \pi(a_k)\| \leq 2\varepsilon,$$
and therefore
$$\hbox{for}\,\, 1\leq k\leq n, \quad \| U^*J_{1}^*V^*J^*W \Phi(a_k)W^*JVJ_1U - (\iota\oplus \tau)(a_k)\| \leq 2\varepsilon.$$
Finally, if $J_2$ is the inclusion from $\cE$ into $\cE\oplus(H_\infty  \otimes B)$ we have
$$\hbox{for}\,\, 1\leq k\leq n, \quad \| J_{2}^*U^*J_{1}^*V^*J^*W \Phi(a_k)W^*JVJ_1UJ_2 - \iota(a_k)\| \leq 2\varepsilon.$$

Observe that $a\mapsto J_{2}^*U^*J_{1}^*V^*J^*W \Phi(a)W^*JVJ_1UJ_2$ is a factorable completely positive contraction from $A$ into $\cB_B(\cE)$.
This shows that the embedding $\iota$ of $A$ into $\cB_B(\cB)$ is nuclear. 
\end{proof}

The major part of the rest of this section \ref{sect:eiai} is adapted from an unpublished note of Jean Renault that we thank for allowing us to use his ideas.
 
\begin{defn} Let $(\cG,\lambda)$ be a locally compact groupoid with a Haar
system and let $B$ be a $C^*$-algebra. A map $\Psi :C^*_{r}(\cG) \to B$ is said to have a {\it compact support} if there exists a compact subset $K$ of $\cG$ such that $\Psi(f) = 0$ for every $f \in \cC_c(\cG)$ with $\supp(f) \cap K = \emptyset$.
\end{defn}

\begin{lem}[Renault] Let $(\cG,\lambda)$ be a locally compact groupoid with a Haar
system, and let $\Phi :  C^{*}_{r}(\cG) \to M_n({\C})$ be a completely
positive map. Then for every $\varepsilon > 0$ and every finite subset $F$ of  $C^{*}_{r}(\cG)$
there exists a completely positive map $\Psi :  C^{*}_{r}(\cG) \to M_n({\C})$
with compact support such that $\| \Psi(a) - \Phi(a) \| \leq \varepsilon$ for
$a\in F$.
\end{lem}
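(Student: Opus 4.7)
The strategy is to apply Stinespring's dilation, reduce to (a multiple of) a regular representation, and then approximate the dilating vectors by compactly supported functions.

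By density of $\cC_c(\cG)$ in $C^*_{r}(\cG)$ and boundedness of $\Phi$, I may assume $F \subset \cC_c(\cG)$, absorbing an $\varepsilon/2$ error into the final estimate. By Stinespring, write $\Phi(a) = V^* \pi(a) V$ where $\pi : C^*_{r}(\cG) \to \cB(H_\pi)$ is a $*$-representation and $V : \C^n \to H_\pi$ is bounded. Since $\pi$ is a representation of $C^*_{r}(\cG)$, it is weakly contained in the regular representation; choosing a quasi-invariant probability measure $\mu$ on $X$ and forming the integrated regular representation $\Lambda_\mu$ on $H_\mu = L^2(\cG,\nu)$ with $\nu = \int \lambda^x \rd\mu(x)$, and applying Kasparov's absorption theorem to $\pi \oplus \Lambda_\mu^\infty$, I may assume (after extending $V$ by zero on the absorbed summand) that $\pi = \Lambda_\mu \otimes \Id_{\ell^2(\N)}$ on $H_\pi = H_\mu \otimes \ell^2(\N)$.

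I then expand each column $V e_j = \sum_{k} \xi_{jk} \otimes e_k$ with $\xi_{jk} \in L^2(\cG,\nu)$, truncate the sum to $k \leq N$, and approximate each $\xi_{jk}$ in $L^2$-norm by some $\eta_{jk} \in \cC_c(\cG)$. Setting $W e_j := \sum_{k \leq N} \eta_{jk} \otimes e_k$ and $\Psi(a) := W^* \pi(a) W$ yields a CP map. For $f \in \cC_c(\cG)$, the entry $\Psi(f)_{ij} = \sum_{k \leq N} \scal{\eta_{ik}, \Lambda_\mu(f) \eta_{jk}}_{L^2(\cG,\nu)}$ is a finite sum of inner products; using the formula defining $\Lambda_\mu$, each of them vanishes whenever $\supp(f) \cap K_{ik}^{-1} K_{jk} = \emptyset$, where $K_{jk} := \supp(\eta_{jk})$. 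Hence $\Psi$ is supported in the compact set $L := \bigcup_{i,j,k} K_{ik}^{-1} K_{jk}$. The bound
$$\norm{\Psi(a) - \Phi(a)} \leq \bigl(\norm{V} + \norm{W}\bigr) \norm{\pi(a)} \norm{V - W}$$
then gives the approximation on $F$ once $N$ is large and the $L^2$-approximation of the $\xi_{jk}$ is sufficiently good.

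The main obstacle is making the reduction to $\pi = \Lambda_\mu \otimes \Id$ fully rigorous, particularly because the paper does not impose separability on $\cG$; an alternative is to carry out the Stinespring dilation in the Hilbert $\cC_0(X)$-module $L^2_{\cC_0(X)}(\cG,\lambda)$ and use the fibrewise representations $\Lambda_x$, reducing the compact support verification to the same kernel-type computation.
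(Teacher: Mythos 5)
Your outline follows the paper's strategy up to one step: Stinespring dilation, passage to (a multiple of) the regular representation integrated against a fully supported probability measure $\mu$ on $X$, replacement of the dilating vectors by compactly supported functions, and the kernel computation showing that $\scal{\eta,\tilde\Lambda_\mu(f)\zeta}=0$ unless $\supp (f)$ meets $(\supp\eta)^{-1}(\supp\zeta)$. That last computation, and the resulting compact support of $\Psi$, are exactly as in the paper and are fine.

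The gap is the one you flag yourself: the reduction to $\pi=\Lambda_\mu\otimes\Id_{\ell^2(\N)}$. What you are invoking is not Kasparov's stabilization theorem for Hilbert modules but the Voiculescu--Kasparov absorption theorem for representations, and that theorem requires the $C^*$-algebra (and the representation spaces) to be separable. The lemma is stated for an arbitrary locally compact groupoid with Haar system, so $C^*_{r}(\cG)$, and hence the Stinespring space generated by $\pi(C^*_{r}(\cG))V\C^n$, need not be separable; the reduction as stated does not go through. The paper avoids this entirely, and more cheaply: since $\tilde\Lambda$ is faithful (because $\mu$ has full support), the Stinespring representation $\rho$ is \emph{weakly contained} in $\tilde\Lambda$, and Fell's characterization of weak containment already gives, for any $\varepsilon'>0$ and the finite set $F$, vectors $\xi_1,\dots,\xi_n$ in $L^2(\cG,\mu\circ\lambda)\otimes K$ for some multiple $\tilde\Lambda_K=\tilde\Lambda\otimes\Id_K$ with
$$\abs{\scal{e_i,\rho(a)e_j}-\scal{\xi_i,\tilde\Lambda_K(a)\xi_j}}<\varepsilon' \quad (a\in F),$$
with no unitary conjugation and no separability hypothesis (the $n\times n$ matrix of coefficients is handled by the standard passage to $M_n(C^*_{r}(\cG))$ acting on $H_\rho\otimes\C^n$). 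One then chooses the $\xi_i$ compactly supported by density and continuity of the coefficients in the vectors, and your support computation concludes. So the fix is to replace the absorption step by the weak-containment approximation of matrix coefficients; the rest of your argument then stands.
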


\begin{proof} Using the Stinespring dilation theorem, we get a representation $\rho$ of $C^*_{r}(\cG)$ into some Hilbert space $H_\rho$ and vectors $e_1,\dots,e_n\in H_\rho$ such that, for $a\in  C^*_{r}(\cG)$, we have
$$\Phi(a) =[ \scal{e_i,\rho(a) e_j}] \in M_n(\C).$$
 Let $\cE$ be the Hilbert module $ L^2_{\cC_0(X)}(\cG,\lambda)$ over $\cC_0(X)$ where $X=\cG^{(0)}$. Let $\mu$ be a probability measure on $X$ with support $X$ and define on $\cG$ the measure $\mu\circ\lambda$ by
 $$\int_{\cG} f\rd\mu\circ\lambda = \int_X \big(\int_{\cG^x} f(\gamma) \rd \lambda^x(\gamma)\big) \rd \mu(x).$$
We consider the faithful representation $f\mapsto\tilde{ \Lambda}(f) =\Lambda(f)\otimes \Id$ in the Hilbert space $\cE\otimes_{\cC_0(X)} L^2(X,\mu)$. Observe that this Hilbert space is canonically isomorphic to the Hilbert space $L^2(\cG,\mu\circ\lambda)$ and that, for $f\in \cC_c(\cG)$ and $\xi\in L^2(\cG,\mu\circ\lambda)$ we have
$$(\tilde{ \Lambda}(f)\xi)(\gamma) = \int f(\gamma^{-1}\gamma_1)\xi(\gamma_1) \rd\lambda^{r(\gamma)}(\gamma_1).$$
Since the representation $\rho$ is weakly contained in $\tilde{ \Lambda}$, given $\varepsilon' >0$, there exist a multiple $\tilde{ \Lambda}_K =\tilde{ \Lambda}\otimes \Id_K$ of $\tilde{ \Lambda}$,
where $K$ is some separable Hilbert space, and vectors $\xi_1,\dots,\xi_n$ in $L^2(\cG,\mu\circ\lambda)\otimes K = L^2(\cG,\mu\circ\lambda, K)$ such that
$$\abs{\scal{e_i,\rho(a) e_j} - \scal{\xi_i, \tilde{ \Lambda}_K(a)\xi_j}} < \varepsilon'$$
for $a\in F$ and $i,j\in \set{1,\dots,n}$. Moreover, we can choose the $\xi_i$'s to have a compact support. For $f\in \cC_c(\cG)$, we have
$$\scal{\xi_i, \tilde{ \Lambda}_K(f)\xi_j} = \int \scal{\xi_i(\gamma),\xi_j(\gamma_1)}f(\gamma^{-1}\gamma_1)\rd\lambda^x(\gamma)\rd\lambda^x(\gamma_1)\rd\mu(x).$$

It follows that the completely positive map
$$a\mapsto \Psi(a) = \Big[\scal{\xi_i, \tilde{ \Lambda}_K(a)\xi_j} \Big]$$
has a compact support and satisfies $\| \Psi(a) - \Phi(a) \| \leq \varepsilon$ for
$a\in F$ if $\varepsilon'$ is small enough.
\end{proof}

\begin{cor}\label{cor:approx} Let $B$ be a $C^*$-algebra and let $\Phi :  C^{*}_{r}(\cG) \to B$ be a nuclear completely positive map.
Then for every $\varepsilon > 0$ and every $a_1,\dots,a_k \in  C^{*}_{r}(\cG)$
there exists a factorable   completely positive map $\Psi :  C^{*}_{r}(\cG) \to B$, with  compact support,
such that $\| \Psi(a_i) - \Phi(a_i) \| \leq \varepsilon$ for
$i = 1,\dots,k$.
\end{cor}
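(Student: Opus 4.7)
The plan is to combine nuclearity of $\Phi$ with the preceding Renault lemma by a two-step approximation: first replace $\Phi$ by a factorable approximant passing through some $M_n(\C)$, then compactly-support the middle map. Fix $\varepsilon > 0$ and $a_1,\dots,a_k \in C^*_{r}(\cG)$.

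\emph{Step 1 (use nuclearity).} Since $\Phi$ is nuclear, there exists a factorable completely positive contraction $\Phi_0 = \varphi \circ \psi$, with $\psi : C^*_{r}(\cG) \to M_n(\C)$ and $\varphi : M_n(\C) \to B$ both completely positive contractions, such that
$$\norm{\Phi(a_i) - \varphi(\psi(a_i))} \leq \varepsilon/2, \quad i=1,\dots,k.$$

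\emph{Step 2 (apply Renault's Lemma).} The map $\psi : C^*_{r}(\cG) \to M_n(\C)$ is completely positive, so the preceding lemma provides a completely positive map $\psi' : C^*_{r}(\cG) \to M_n(\C)$ with compact support such that
$$\norm{\psi(a_i) - \psi'(a_i)} \leq \varepsilon/2, \quad i=1,\dots,k.$$
Then setting $\Psi = \varphi \circ \psi'$, one obtains a completely positive map factoring through $M_n(\C)$. Since $\psi'$ vanishes on every $f \in \cC_c(\cG)$ whose support is disjoint from the compact set $K$ attached to $\psi'$, the same holds for $\Psi$, so $\Psi$ has compact support. Using that $\varphi$ is a contraction,
$$\norm{\Psi(a_i) - \Phi(a_i)} \leq \norm{\varphi}\cdot \norm{\psi'(a_i) - \psi(a_i)} + \norm{\varphi(\psi(a_i)) - \Phi(a_i)} \leq \varepsilon.$$

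\emph{Main obstacle.} The only subtle point is that the map $\psi'$ produced by Renault's Lemma is not a priori a contraction, whereas the definition of \emph{factorable} requires two completely positive contractions. I expect this to be handled by a small renormalization: if $c = \max(1,\norm{\psi'})$, then $\psi''=\psi'/c$ is a completely positive contraction and one absorbs the factor $c$ into $\varphi$, replacing $\varphi$ by $c\varphi$ and enlarging $n$ by a dilation trick (or, more directly, by refining Renault's construction so that the Stinespring-type vectors $\xi_1,\dots,\xi_n$ approximating the dilation of $\psi$ are chosen to satisfy $\sum \norm{\xi_i}^2 \leq 1$, which is possible because $\psi$ is itself a contraction). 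Once this normalization is in place, the factorization $\Psi = (c\varphi) \circ \psi''$ exhibits $\Psi$ as factorable, with compact support, approximating $\Phi$ within $\varepsilon$ on $a_1,\dots,a_k$.
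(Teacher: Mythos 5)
Your argument is correct and is exactly the intended one: the paper states this as an immediate corollary of Renault's lemma combined with the definition of nuclearity, with no further proof given. Your normalization worry in the last paragraph is harmless — the paper itself uses ``factorable completely positive map'' loosely here (the given $\Phi$ is not even assumed to be a contraction), and in any case the vectors $\xi_i$ in Renault's construction approximate the Stinespring vectors $e_i$, so $\|[\langle\xi_i,\xi_j\rangle]\|$ can be taken arbitrarily close to $\|[\langle e_i,e_j\rangle]\|\le 1$ and a rescaling by a constant close to $1$ restores the contraction property.
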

 
The notion of positive definite function extends to the case  of $f: \cG\to B$ in an obvious way: it is {\it positive definite} if for every $x\in X$, $n\in \N$ and $\gamma_1,\dots,\gamma_n \in \cG^x$, then $[f(\gamma_i^{-1}\gamma_j)]$ is a positive element of $M_n(B)$. In the next lemma, we will use the following observation: {\it $f$ is positive definite if and only if for every finite set $I$ and every groupoid homomorphism $\theta: I\times I \to \cG$, where $I\times I$ is the trivial groupoid on $I$, the element $[f\circ \theta(i,j)]$ of the $C^*$-algebra $M_I(B)$ of matrices over $I\times I$ with coefficients in $B$ is positive}. This follows from the fact that every groupoid homomorphism $\theta: I\times I \to \cG$ is such that $\theta(i,j) = \gamma_i^{-1}\gamma_j$ where $i\mapsto \gamma_i$ is a map from $I$ into $\cG^x$ for some $x\in X$. Indeed, it suffices to fix $i_0\in I$, and set $x= \theta^{(0)}(i_0)$ and $\gamma_i = \theta(i_0,i)$. 

Given $f:\cG\times \cG \to \C$, we set $f_\gamma(\gamma') = f(\gamma,\gamma')$.

\begin{lem}[Renault] Let $(\cG,\lambda)$ be a locally compact groupoid with a Haar system.
\begin{itemize}
\item[(a)] Let $f\in \cC_c(\cG)$ be a continuous positive definite function. Then, $f$ viewed as an element of $C^*_{r}(\cG)$ is a positive element.
\item[(b)] Let $f:\cG\times \cG \to \C$ be a properly supported positive definite function. Then $\gamma\mapsto f_\gamma$ is a continuous  positive definite function  from $\cG$ into  $C^*_{r}(\cG)$.
\end{itemize}
\end{lem}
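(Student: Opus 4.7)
For part (a), the plan is to prove that $\Lambda_x(f)\ge 0$ on $L^2(\cG^x,\lambda^x)$ for every $x\in X$, which is equivalent to $\Lambda(f)\ge 0$ in $C^*_{r}(\cG)$. For $\xi\in\cC_c(\cG^x)$ I would compute
\[
\scal{\xi,\Lambda_x(f)\xi} = \iint \overline{\xi(\gamma)}\, f(\gamma^{-1}\gamma_1)\,\xi(\gamma_1)\,\rd\lambda^x(\gamma)\rd\lambda^x(\gamma_1),
\]
and observe that the kernel $(\gamma,\gamma_1)\mapsto f(\gamma^{-1}\gamma_1)$ on $\cG^x\times\cG^x$ is continuous and positive definite, straight from the definition of positive definiteness of $f$ on $\cG$. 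The non-negativity of the double integral then follows from a standard Riemann-sum approximation: partitioning $\supp(\xi)$ into small pieces $A_i$ with representatives $\gamma_i$, the sums $\sum_{i,j}\overline{\xi(\gamma_i)}\xi(\gamma_j)\lambda^x(A_i)\lambda^x(A_j)\, f(\gamma_i^{-1}\gamma_j)$ are non-negative by positive definiteness and converge to the integral.

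For part (b), I first check that $f_\gamma\in\cC_c(\cG)$: continuity is inherited from $f$, and proper support of $f$ applied to the compact singleton $\set{\gamma}$ forces $\supp(f_\gamma)$ to be compact. Continuity of $\gamma\mapsto f_\gamma$ as a map into $C^*_{r}(\cG)$ then follows from $\norm{\cdot}_{C^*_{r}(\cG)}\le\norm{\cdot}_1$, combined with uniform continuity of $f$ on compact subsets of $\cG\times\cG$ and the fact that, for a net $\gamma_\alpha\to\gamma_0$, the supports $\supp(f_{\gamma_\alpha})$ eventually lie in a common compact subset of $\cG$ (once again by properness of $\supp(f)$, applied to a compact neighborhood of $\gamma_0$).

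The positive-definiteness assertion is the main content, and I would reduce it to a matrix-valued version of (a). Fix $y\in X$ and $\eta_1,\dots,\eta_n\in\cG^y$, and define $F:\cG\to M_n(\C)$ by $F(\gamma')_{ij}=f(\eta_i^{-1}\eta_j,\gamma')$. Then $F$ is continuous and compactly supported, and I verify that $F$ is positive definite as an $M_n(\C)$-valued function on $\cG$: for every $x\in X$ and $\alpha_1,\dots,\alpha_m\in\cG^x$, positivity of the block matrix $[F(\alpha_k^{-1}\alpha_l)]$ in $M_m(M_n(\C))\cong M_{nm}(\C)$ amounts to positivity of $[f(\eta_i^{-1}\eta_j,\alpha_k^{-1}\alpha_l)]_{(i,k),(j,l)}$, which is exactly the positive definiteness of $f$ on $\cG\times\cG$ applied via the groupoid homomorphism $((i,k),(j,l))\mapsto(\eta_i^{-1}\eta_j,\alpha_k^{-1}\alpha_l)$ from the trivial groupoid on $\set{1,\dots,n}\times\set{1,\dots,m}$ (as in the observation preceding the statement). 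The matrix-valued refinement of (a), proved by the identical fiberwise Riemann argument with $M_n(\C)$-valued kernels, then yields $[\Lambda(f_{\eta_i^{-1}\eta_j})]\ge 0$ in $M_n(C^*_{r}(\cG))$, which is precisely the assertion. The main technical obstacle I anticipate is the Riemann-sum approximation in (a) (one must use uniform continuity of $f$ on compact sets of $\cG^x$ together with local boundedness of the Haar system), along with careful bookkeeping for the amplification from scalar to $M_n(\C)$-valued data when identifying $M_n(C^*_{r}(\cG))$ with a $C^*$-subalgebra of $\cB_{\cC_0(X)}(\cE\otimes \C^n)$.
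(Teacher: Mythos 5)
Your proof is correct and follows essentially the same route as the paper's: the same fiberwise finite-support (Riemann) approximation of $\lambda^x$ for (a), and the same reduction of (b) to (a) via positive definiteness of the matrices $[f(\eta_i^{-1}\eta_j,\cdot\,)]$, with continuity handled identically through proper support and the $\norm{\cdot}_1$ bound. The only cosmetic difference is that the paper avoids re-proving a matrix-valued version of (a) by applying the scalar statement to the product groupoid $(I\times I)\times\cG$ and identifying $C^*_{r}((I\times I)\times\cG)$ with $M_I(C^*_{r}(\cG))$, which is exactly your $M_n(\C)$-valued amplification in disguise.
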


\begin{proof} (a) Let $\cE = L^2_{\cC_0(X)}(\cG,\lambda)$. We have to show that $\Lambda(f)\in \cB_{\cC_0(X)}(\cE)$ is positive, which amounts to prove that for every $x\in \cG^{(0)}$ and $\xi\in L^2(\cG^x,\lambda^x)$, we have $\scal{\xi,\Lambda_x(f)\xi} \geq 0$. It suffices to consider the case where $\xi\in \cC_c(\cG^x)$. Let $K$ be the support of $\xi$. Since
$$\scal{\xi,\Lambda_x(f)\xi} = \int \overline{\xi(\gamma)}f(\gamma^{-1}\gamma_1)\xi(\gamma_1)\rd\lambda^x(\gamma)\rd\lambda^x(\gamma_1),$$
then by approximating the restriction of $\lambda^x$ to $K$  by positive measures with finite support we get the desired positivity result.

(b) Since $f$ is properly supported, the map $\gamma \mapsto f_\gamma$ is continuous from $\cG$ into $\cC_c(\cG)$ endowed with the inductive limit topology, and therefore from $\cG$ into $C^*_{r}(\cG)$ endowed with its norm topology. 
To show that    $\gamma\mapsto F(\gamma) = f_\gamma$ is a continuous   positive definite function, let $I$ be a finite set and let $\theta : I\times I \to \cG$ be a groupoid homomorphism. We have to check that  $[F\circ\theta(i,j)] \in M_I(C^*_{r}(\cG))$ is positive. But 
$M_I(C^*_{r}(\cG)) = C^*_{r}( (I\times I)\times\cG)$. Therefore, by the first part of the lemma applied to the product groupoid $(I\times I)\times \cG$, it suffices to show that 
$$((i,j),\gamma) \mapsto f_{\theta(i,j)}(\gamma) = f(\theta(i,j),\gamma)$$ 
belongs to $\cC_c((I\times I)\times\cG)$ and is positive definite. But this is clear, since this function is obtained by composing $f$ with the homomorphism $\theta\times \Id : (I\times I)\times \cG\to \cG\times \cG$.  
\end{proof}

\begin{thm}\label{thm:equiv} Let $\cG$ be an  inner amenable \'etale groupoid. Then the following condition are equivalent:
\begin{itemize}
\item[(1)] $\cG$ is strongly amenable at infinity.
\item[(2)] $\cG$ is  amenable at infinity.
\item[(3)] $C^*_{r}(\beta_r\cG\rtimes\cG)$ is nuclear.
\item[(4)] $C^*_{r}(\beta_r\cG\rtimes\cG)$ is exact;
\item[(5)] $\cG$ is KW-exact;
\item[(6)] $C^*_{r}(\cG)$ is exact.
\end{itemize}
\end{thm}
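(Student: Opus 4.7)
The strategy is to close a cycle of implications. The easy parts are already available: Corollary~\ref{cor:ai_ex} gives $(1)\Rightarrow (3)$ and $(1)\Rightarrow (2)$; $(3)\Rightarrow (4)$ is trivial; the embedding $C^{*}_{r}(\cG)\hookrightarrow C^{*}_{u}(\cG)$ from Lemma~\ref{lem:versus_tube} together with hereditarity of exactness under $C^{*}$-subalgebras gives $(4)\Rightarrow (6)$; Proposition~\ref{prop:equiv}, preceded by Lemma~\ref{lem:scf} to secure a second countable amenable fibrewise compact $\cG$-space, yields $(2)\Rightarrow (5)$; and Proposition~\ref{prop:equiv} itself delivers $(5)\Rightarrow (4)$ and $(5)\Rightarrow (6)$. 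So the sole nontrivial step is $(6)\Rightarrow (1)$, which is where weak inner amenability must enter.

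My plan for $(6)\Rightarrow (1)$ is to verify condition (ii) of Theorem~\ref{prop:amen_inf}: produce a net of bounded continuous positive definite kernels on $\cG*_r\cG$ supported in tubes and converging to $1$ uniformly on every tube. Two ingredients feed the construction. First, by the Kirchberg-type lemma at the head of this section, applied with the separable nuclear algebra $\cC_0(X)$ and the countably generated Hilbert $\cC_0(X)$-module $\cE = \ell^{2}_{\cC_0(X)}(\cG)$ (countably generated because $\cG$ is second countable), exactness of $C^{*}_{r}(\cG)$ forces the regular representation $\Lambda: C^{*}_{r}(\cG)\to \cB_{\cC_0(X)}(\cE)$ to be nuclear. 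Corollary~\ref{cor:approx} then supplies, for any finite $F\subset C^{*}_{r}(\cG)$ and $\varepsilon>0$, a factorable completely positive $\Psi: C^{*}_{r}(\cG)\to \cB_{\cC_0(X)}(\cE)$ of compact support with $\norm{\Psi(a)-\Lambda(a)}<\varepsilon$ on $F$. Secondly, weak inner amenability yields, for any prescribed compact $L\subset \cG$, a bounded continuous properly supported positive definite $g:\cG\times\cG\to\C$ with $g(\eta,\eta)$ arbitrarily close to $1$ on $L$; the Renault lemma above then exhibits $\gamma\mapsto g_\gamma\in C^{*}_{r}(\cG)$ as a continuous $C^{*}_{r}(\cG)$-valued positive definite function, so that $[g_{\gamma_i^{-1}\gamma_j}]$ is positive in $M_n(C^{*}_{r}(\cG))$ for any $\gamma_1,\dots,\gamma_n\in\cG^x$.

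These combine as follows. For $(\gamma_1,\gamma_2)\in\cG*_r\cG$ with common range $x$, pick ``Dirac-type'' bumps $\chi_{\gamma_i}\in\cC_c(\cG)\subset\cE$ supported in small open bisections through $\gamma_i$ with $\chi_{\gamma_i}(\gamma_i)=1$, and set
\[
k(\gamma_1,\gamma_2)\;=\;\bigl\langle\chi_{\gamma_1},\,\Psi(g_{\gamma_1^{-1}\gamma_2})\chi_{\gamma_2}\bigr\rangle(x).
\]
Positivity of $[g_{\gamma_i^{-1}\gamma_j}]$ together with complete positivity of $\Psi$ makes $k$ positive definite on each fibre of $r\times r$; proper support of $g$ combined with compact support of $\Psi$ forces $\gamma_1^{-1}\gamma_2$ into a fixed compact subset of $\cG$, so $k$ is supported in a tube. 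Replacing $\Psi$ by $\Lambda$, a direct fibrewise computation against the regular representation yields exactly $g(\gamma_1^{-1}\gamma_2,\gamma_1^{-1}\gamma_2)$, which is arbitrarily close to $1$ on the tube determined by $L$. Upgrading the pointwise-on-finite-sets approximation $\Psi\to\Lambda$ to a uniform approximation on the compact subset $\{g_\eta:\eta\in L\}$ of $C^{*}_{r}(\cG)$ by a standard $\varepsilon$-net argument (using contractivity of $\Psi$), and then passing to a diagonal net in $(\Psi,g)$, produces kernels meeting condition (ii) of Theorem~\ref{prop:amen_inf}.

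The main obstacle is the construction of $k$ in the last step: arranging that it is genuinely continuous on $\cG*_r\cG$ rather than only fibrewise, and that the convergence $k\to 1$ is uniform on each tube rather than only pointwise. The first point is to be handled by covering a prescribed tube by finitely many open sets of the form $S_1*_r S_2$ with $S_1,S_2$ open bisections and locally trivialising the bumps $\chi_\gamma$ on each piece via a partition of unity; the second by the $\varepsilon$-net upgrade described above, exploiting that $\eta\mapsto g_\eta$ is continuous into $C^{*}_{r}(\cG)$ so that its image on $L$ is compact. Once these technicalities are in place, Theorem~\ref{prop:amen_inf} supplies (1) and closes the cycle.
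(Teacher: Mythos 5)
Your proposal is correct and follows essentially the same route as the paper: the easy implications are assembled from Corollary \ref{cor:ai_ex} and Proposition \ref{prop:equiv} exactly as you indicate, and the crucial step $(6)\Rightarrow(1)$ is carried out in the paper precisely as you outline — nuclearity of $\Lambda$ via the Kirchberg-type lemma, Corollary \ref{cor:approx} to get a compactly supported completely positive approximant $\Phi$, the properly supported positive definite function from weak inner amenability fed through Renault's lemma, and the kernel $k(\gamma,\gamma_1)=\langle\xi_\varphi,\Phi(f_{\gamma^{-1}\gamma_1})\xi_{\varphi_1}\rangle(r(\gamma))$ built from bump vectors supported in open bisections, concluded by Theorem \ref{prop:amen_inf}. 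The technicalities you flag (continuity of $k$ via bisections, and the $\varepsilon$-net upgrade to control $\|\Phi(f_\gamma)-f_\gamma\|$ on the compact set $\{f_\gamma:\gamma\in K\}$) are exactly the points the paper's proof handles, if somewhat tersely.
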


\begin{proof} By Theorem \ref{cor:ai_ex}  it suffices to show that (6) implies (1).
Therefore, let us assume that $C^*_{r}(\cG)$ is exact. The proof is adapted from ideas of Jean Renault.

We fix a compact subset $K$ of $\cG$ and $\varepsilon >0$. We want to find a continuous bounded positive definite kernel $k\in \cC_t(\cG*_r\cG)$ such that  $\abs{k(\gamma,\gamma_1) - 1} \leq \varepsilon$ whenever $\gamma^{-1}\gamma_1 \in K$ (see Theorem \ref{prop:amen_inf}).

We set $\cE =  \ell^2_{\cC_0(X)}(\cG)$ with $X = \cG^{(0)}$. Recall that $\lambda^x$ is the counting measure on $\cG^x$. We first choose a bounded, continuous positive definite function $f$ on $\cG\times \cG$, properly supported,
such that $|f(\gamma,\gamma) - 1| \leq \varepsilon/2$ for $\gamma \in K$ and $f(x,y)\leq 1$ for $(x,y)\in X\times X$. By Lemma \ref{lem:Kirch} the regular representation $\Lambda$ is nuclear. Then, using Corollary \ref{cor:approx}, we find  a compactly supported completely positive map $\Psi : C^*_{r}(\cG) \to \cB_{\cC_0(X)}(\cE)$   such that\footnote{Here, we write $f_\gamma$ instead of $\Lambda(f_\gamma)$
for simplicity of notation.}
$$\norm{\Psi(f_\gamma) - f_\gamma}_r \leq \varepsilon/2$$
for $\gamma \in K$. 
We also choose a continuous function $\xi : X\to [0,1]$ with compact support such that $\xi(x) = 1$ if $x\in s(K)\cup r(K)$. 

Let $(\gamma,\gamma_1)\in \cG*_r\cG$. We choose an open bisection $S$  such that $\gamma\in S$ and a continuous function $\varphi: X\to [0,1]$, with compact support in $r(S)$ such that $\varphi (x) = 1$ on a neighborhood of $r(\gamma)$. We denote $\xi_\varphi$ the continuous function on $\cG$ with compact support  (and thus $\xi_\varphi \in \cE$) such that
$$\xi_\varphi(\gamma') = 0 \,\,\hbox{if}\,\, \gamma'\notin S,\quad \xi_\varphi(\gamma') = \varphi\circ r(\gamma')\xi\circ s(\gamma') \,\,\hbox{if}\,\,\gamma'\in S.$$
Note that $\norm{\xi_\varphi}_\cE \leq 1$.
We define $\xi_{\varphi_1}$ similarly with respect to $\gamma_1$. 

Then we set
\begin{align*}
k(\gamma,\gamma_1) &=  \langle \xi_\varphi,
\Psi(f_{\gamma^{-1}\gamma_{1}})\xi_{\varphi_1}\rangle (r(\gamma))
= \xi\circ s(\gamma)\big(\Psi(f_{\gamma^{-1}\gamma_{1}})\xi_{\varphi_1}\big)(\gamma)\\
&= \scal{\big(\Psi(f_{\gamma^{-1}\gamma_1}\big)^*\xi_\varphi, \xi_{\varphi_1}}(r(\gamma))= \overline{\big(\Psi(f_{\gamma^{-1}\gamma_1})^*\xi_\varphi\big)(\gamma_1)} \xi\circ s(\gamma_1).
\end{align*}
We observe that $k(\gamma,\gamma_1)$ does not depend on the choices of $S,\varphi, S_1,\varphi_1$.

We see that $k$ is continuous since $\gamma \mapsto \Psi(f_\gamma)$ is continuous and, since $\Psi$ is completely positive,  we see that $k$ is  positive definite. Moreover, there is a compact subset $K_1$ of $\cG$ such that $\Psi(f_\gamma) = 0$ when $\gamma\notin K_1$, because  $\Psi$ is compactly supported, and  $f$  is properly supported.  If follows that $k$ is  supported in a tube.

We fix $(\gamma,\gamma_1)\in \cG*_r\cG$  such that $\gamma^{-1}\gamma_1 \in K$. Then we have
$$\abs{k(\gamma,\gamma_1) -1}\leq \varepsilon/2 + \abs{ \langle \xi_\varphi,
\big(\Lambda(f_{\gamma^{-1}\gamma_{1}})\xi_{\varphi_1}\big)
\rangle(r(\gamma)) -1},$$
and
$$
 \langle \xi_\varphi,
\big(\Lambda(f_{\gamma^{-1}\gamma_{1}})\xi_{\varphi_1}\big)
\rangle(r(\gamma)\\
=  \xi\circ s(\gamma)\xi\circ s(\gamma_1)f(\gamma^{-1}\gamma_{1},\gamma^{-1}\gamma_{1}).
$$

 Observe that $s(\gamma)\in r(K)$ and $s(\gamma_1)\in s(K)$ and therefore $ \xi\circ s(\gamma) = 1=  \xi\circ s(\gamma_1)$.
It follows that 
$$\abs{k(\gamma,\gamma_1) -1}\leq \varepsilon/2 + \abs{f(\gamma^{-1}\gamma_{1},\gamma^{-1}\gamma_{1})-1}\leq \varepsilon.$$
To end the proof it remains to check that $k$ is a bounded kernel. Since this kernel is positive definite, it suffices to show that $\gamma\mapsto k(\gamma,\gamma)$ is bounded on $\cG$. We have
$$k(\gamma,\gamma) = \scal{\xi_\varphi, \Psi(f_{s(\gamma)})\xi_\varphi}(r(\gamma))\leq \norm{\Psi(f_{s(\gamma)})}_r.$$
Our claim follows since  $\Psi(f_{s(\gamma)}) = 0$ when $s(\gamma)\notin K_1\cap X$ and $x\mapsto f_x$ is continuous from the compact set  $K_1\cap X$ into $C_r^{*}(\cG)$. 
\end{proof}

\begin{cor}\label{cor:main1} Let $\cG$ be an \'etale groupoid such that there exists a locally proper continuous homomorphism $c$ from $\cG$ into a  discrete group $\Gamma$. Then   the six  conditions of Theorem \ref{thm:equiv} are equivalent. Moreover, they hold when $\Gamma$ is exact.
\end{cor}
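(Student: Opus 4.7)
The plan is to reduce the statement to a direct application of Theorem \ref{thm:equiv} combined with Corollary \ref{cor:main}. The only substantive thing one must verify is that $\cG$ is weakly inner amenable, since once this is in hand the theorem asserting the equivalence of the six conditions applies verbatim to our second countable \'etale $\cG$, and the ``moreover'' clause is immediate from Corollary \ref{cor:main}.

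First I would check weak inner amenability. A countable discrete group $G$ is trivially inner amenable: the Dirac mass $\delta_e$ is an inner invariant mean on $\ell^\infty(G)$, as noted in the paper (see the remark following Definition \ref{def:wia}). Since inner amenable locally compact groups are weakly inner amenable, $G$ itself is a weakly inner amenable locally compact groupoid (viewed as a groupoid with one unit). The hypothesis furnishes a locally proper continuous homomorphism $\rho:\cG\to G$, so Proposition \ref{prop:wai} applies and yields that $\cG$ is weakly inner amenable.

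Now $\cG$ is a second countable \'etale groupoid which is weakly inner amenable, so Theorem \ref{thm:equiv} applies directly: the six conditions (1)--(6) listed there are equivalent for $\cG$. This proves the first half of the corollary.

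For the second assertion, assume in addition that $G$ is exact. Then $G$ is strongly amenable at infinity (this is exactly the characterization of exactness for discrete groups recalled in the proof of Corollary \ref{cor:main}), and Corollary \ref{cor:main} then guarantees that $\cG$ is strongly amenable at infinity, i.e.\ condition (1) of Theorem \ref{thm:equiv} holds. By the equivalence just established, all six conditions hold for $\cG$. There is no real obstacle: the result is a clean concatenation of Proposition \ref{prop:wai} (to produce weak inner amenability), Theorem \ref{thm:equiv} (to close up the equivalence) and Corollary \ref{cor:main} (to activate it in the exact case).
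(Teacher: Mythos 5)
Your proposal is correct and follows essentially the same route as the paper: observe that the discrete group $G$ is (inner, hence weakly inner) amenable, apply Proposition \ref{prop:wai} to transport weak inner amenability to $\cG$ along the locally proper homomorphism, invoke Theorem \ref{thm:equiv}, and settle the exact case via Corollary \ref{cor:main}. The paper's own proof is just a terser version of the same argument.
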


\begin{proof} We observe that $\Gamma$ is   inner amenable. Then, by Proposition \ref{prop:wai}
the groupoid $\cG$ is inner amenable. 
\end{proof}

\begin{cor} \label{cor:equiv} Let $\cG$ be a  locally compact groupoid with Haar system.
Assume that $\cG$   is  equivalent to an  inner amenable \'etale groupoid
$\cH$. Then the following conditions are equivalent:
\begin{itemize}
\item[(i)] $\cG$ is amenable at infinity;
\item[(ii)] $\cG$  is KW-exact;
\item[(iii)]  $C^*_{r}(\cG)$ is exact.
\end{itemize}
 \end{cor}

\begin{proof} This follows  from the theorem \ref{thm:equiv} together with the lemma \ref{lem:C*exactAmen}.
\end{proof}

\begin{rem}With similar techniques as those used to prove Theorem \ref{thm:equiv} we can prove the following result.\end{rem}

 \begin{prop} Let $\cG$ be a  locally compact groupoid with Haar system. The following conditions are equivalent:
 \begin{itemize}
 \item[(i)] $\cG$ is amenable;
 \item[(ii)] $C^*_{r}(\cG)$ is nuclear and $\cG$ is  inner amenable.
 \end{itemize}
 \end{prop}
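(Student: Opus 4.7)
The strategy mirrors the proof of Theorem~\ref{thm:equiv}~(6)$\Ra$(1), with exactness replaced by nuclearity (equivalently: $\mathrm{id}_{C^*_r(\cG)}$ is itself nuclear, rather than just the embedding into some $\cB_B(\cE)$) and with the construction aiming at a scalar positive definite function on $\cG$ rather than a positive definite kernel on $\cG*_r\cG$.

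The forward direction $(i)\Ra(ii)$ is essentially standard. Amenability of $\cG$ forces $C^*(\cG)=C^*_r(\cG)$, and this common algebra is nuclear (Proposition~\ref{prop:nonsep_nuc} applied with the trivial $\cG$-space $Y=\cG^{(0)}$, extending the argument from \cite[Corollary 6.2.14]{AD-R}). Moreover the product groupoid $\cG\times\cG$ is then also amenable, so Proposition~\ref{prop:amen2} applied to $\cG\times\cG$ produces compactly supported (hence properly supported) continuous positive definite functions on $\cG\times\cG$ approximating $1$ on compacts; this is precisely weak inner amenability (cf.\ item~(1) of the remark after Definition~\ref{def:wia}).

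For $(ii)\Ra(i)$, given a compact $K\subset\cG$ and $\varepsilon>0$ I produce $h\in\cC_c(\cG)$ continuous positive definite with $|h(\gamma)-1|\le\varepsilon$ on $K$, whence Proposition~\ref{prop:amen2} yields amenability. Weak inner amenability supplies a continuous bounded properly supported positive definite $f\colon\cG\times\cG\to\C$ with $|f(\gamma,\gamma)-1|<\varepsilon/3$ for $\gamma\in K$; by Renault's lemma (part (b), preceding Theorem~\ref{thm:equiv}), $\gamma\mapsto f_\gamma$ is a continuous positive definite $C^*_r(\cG)$-valued function on $\cG$, so $\{f_\gamma:\gamma\in K\}$ is compact in $C^*_r(\cG)$. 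Since $\mathrm{id}_{C^*_r(\cG)}$ is nuclear, Corollary~\ref{cor:approx} (applied with target $B=C^*_r(\cG)$ and $\Phi=\mathrm{id}$) yields a factorable completely positive contraction $\Phi\colon C^*_r(\cG)\to C^*_r(\cG)$ with compact support $K_\Phi\subset\cG$ satisfying $\|\Phi(f_\gamma)-f_\gamma\|<\varepsilon/3$ for every $\gamma\in K$ (covering $K$ by finitely many balls to pass from a finite-subset to a compact-subset estimate). The composition $\gamma\mapsto\Phi(f_\gamma)$ remains a continuous positive definite $C^*_r(\cG)$-valued function on $\cG$.

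The scalar function is then taken to be $h(\gamma)=\omega(\Phi(f_\gamma))$ for a suitable state $\omega$ on $C^*_r(\cG)$: positive definiteness of $h$ is automatic, because scalar states on $C^*$-algebras are completely positive, so $\omega\circ\Phi$ preserves positive definiteness. Compact support of $h$ is inherited, just as in Theorem~\ref{thm:equiv}: when $\supp(f_\gamma)\cap K_\Phi=\emptyset$ one has $\Phi(f_\gamma)=0$ and hence $h(\gamma)=0$, and the set of $\gamma$ for which $\supp(f_\gamma)$ meets $K_\Phi$ lies in the first-coordinate projection of the compact set $\supp(f)\cap(\cG\times K_\Phi)$, compact by proper supportedness of $f$. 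The main obstacle is the careful choice of $\omega$ so that $\omega(f_\gamma)\approx f(\gamma,\gamma)$ on $K$; combined with $\|\Phi(f_\gamma)-f_\gamma\|<\varepsilon/3$ this produces $|h(\gamma)-1|\le\varepsilon$ on $K$. This is the analogue, for a scalar function rather than a kernel, of the bisection-supported vector sandwich $\langle\xi_\varphi,\cdot\,\xi_{\varphi_1}\rangle(r(\gamma))$ used in Theorem~\ref{thm:equiv}, which recovered the diagonal value $f(\gamma^{-1}\gamma_1,\gamma^{-1}\gamma_1)$; here $\omega$ is realized as a vector state in the regular representation, built from data extracted from $f$ itself so that the diagonal embedding $\cG\hookrightarrow\cG\times\cG$ recovers $f(\gamma,\gamma)$ as the corresponding matrix coefficient.
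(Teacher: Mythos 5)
The paper does not actually write out a proof of this proposition; it only records that ``similar techniques'' to those of Theorem~\ref{thm:equiv} apply, so I am measuring your argument against that template. Your direction (i)$\Ra$(ii) is correct and is exactly what the paper intends. For (ii)$\Ra$(i) you have also assembled the right ingredients: Renault's two lemmas, Corollary~\ref{cor:approx} applied to $\Phi=\mathrm{id}$ (nuclear by hypothesis) to produce a compactly supported factorable completely positive $\Psi\colon C^*_r(\cG)\to C^*_r(\cG)$ with $\|\Psi(f_\gamma)-f_\gamma\|$ small for $\gamma\in K$, and proper supportedness of $f$ to force compact support of the output.

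The gap is in the last step, the definition of $h$. With a \emph{fixed} state $\omega$, positive definiteness of $h=\omega\circ\Psi\circ F$ is indeed free, but the requirement $\omega(f_\gamma)\approx f(\gamma,\gamma)$ on $K$ is in general unattainable and, worse, begs the question. Take $\cG=G$ a discrete group and $f=\mathbf 1_{\Delta}$, the characteristic function of the diagonal (the canonical witness of weak inner amenability). Then $f_s=\lambda_s$, and a state $\omega$ on $C^*_r(G)$ with $\omega(\lambda_s)\approx 1$ on arbitrarily large finite sets $K$ exists \emph{if and only if} the trivial representation is weakly contained in the regular one, i.e.\ if and only if $G$ is amenable --- which is the conclusion you are trying to reach. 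So the ``careful choice of $\omega$'' you defer is exactly as hard as the theorem. Conversely, once you let the evaluation functional depend on $\gamma$ (as your closing sentence suggests, via vectors ``built from data extracted from $f$''), the claim that positive definiteness is automatic because states are completely positive no longer applies: for a $\gamma$-dependent family of functionals the matrix $[h(\gamma_i^{-1}\gamma_j)]$ is not obtained by applying a single completely positive map entrywise. These two requirements pull in opposite directions, and your sketch does not resolve the tension.

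What the techniques of Theorem~\ref{thm:equiv} actually give here is the following. For $a\in C^*_r(\cG)$ and $\cG$ \'etale, set $\hat a(\gamma)=\scal{\delta_{r(\gamma)},\Lambda_{r(\gamma)}(a)\delta_\gamma}$, so that $\hat a=a$ on $\cC_c(\cG)$, and define $h(\gamma)=\widehat{\Psi(f_\gamma)}(\gamma)$. Then $|h(\gamma)-f(\gamma,\gamma)|\le\norm{\Psi(f_\gamma)-f_\gamma}$, the support of $h$ is controlled exactly as in your proposal, and positive definiteness follows from the identity $h(\gamma_i^{-1}\gamma_j)=\scal{\delta_{\gamma_i},\Lambda_x\big(\Psi(f_{\gamma_i^{-1}\gamma_j})\big)\delta_{\gamma_j}}$ for $\gamma_1,\dots,\gamma_n\in\cG^x$: all vectors now lie in the single fibre $\ell^2(\cG^x)$ and depend only on the index $i$, so positivity of $[f_{\gamma_i^{-1}\gamma_j}]$ in $M_n(C^*_r(\cG))$ (Renault's second lemma), complete positivity of $\Psi$, and compression by the $\delta_{\gamma_i}$ yield the result --- this is precisely the mechanism of the sandwich $\scal{\xi_\varphi,\Phi(f_{\gamma^{-1}\gamma_1})\xi_{\varphi_1}}$ in Theorem~\ref{thm:equiv}, specialised to the diagonal. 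Note finally that the proposition is stated for arbitrary second countable locally compact groupoids with Haar system, not only \'etale ones; there the Dirac vectors $\delta_\gamma$ must be replaced by $L^2$-normalised bumps around $\gamma$, with continuity of $f$ controlling the averaging error on compact sets. Your sketch does not address this case either.
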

 
 For a locally compact group this result was proved in \cite{LP}. The case of a transformation groupoid was dealt with in \cite[Theorem 5.8]{AD02}.

\section{\textbf{\textsc{Weak containment, exactness and amenability}}}\label{sec:WC}

\begin{defn}\label{def:weakamen} Let $(\cG,\lambda)$ be a locally compact groupoid equipped with a Haar system. We say that $\cG$ has the {\it weak containement property}, \index{weak containment property}(WCP) \index{(WCP)} in short, if the canonical surjection from its full $C^*$-algebra $C^*(\cG)$ onto its reduced one $C^*_{r}(\cG)$ is injective, {\it i.e.}, the two completions $C^*(\cG)$ and $C^*_{r}(\cG)$  of $\cC_c(\cG)$ are the same.
\end{defn}

A very useful theorem of Hulanicki \cite{Hul} asserts that a locally compact group $G$ has the (WCP) if and only if it is amenable. While it has long been known that every amenable locally compact groupoid has the (WCP) \cite{Ren91, AD-R}, whether the converse holds was a longstanding open problem (see \cite[Remark 6.1.9]{AD-R} where this problem is related to the behaviour of the (WCP) with respect to reduction). 

The goal of this section is to study the scope of the family  \index{$\mathscr C$} $\mathscr C$ of locally compact groupoids for which the (WCP) implies amenability.  

 Let $\cG$  be a locally compact groupoid, $U$ an invariant open subset  of $\cG^{(0)}$, and $F = \cG^{(0)}\setminus U$.  If the reduced groupoids $\cG(U)$ and $\cG(F)$ have the (WCP), then $\cG$ inherits this property (see Lemma \ref{lem:WCPinexact}). If $\cG$  has the (WCP), then  $\cG(U)$ has the (WCP),  while it is not always true that $\cG(F)$  has the (WCP).  Such an example was given  by Willett \cite{Wil15}, showing at the same time that for groupoids the (WCP) does not always imply amenability.

\begin{thm}\label{thm:Wil} $($\cite{Wil15}$)$ Let $\F_2$ be the free group on two generators. Let us set $N_k = \cap \set{\ker \phi}$ where $\phi$ ranges over all group homomorphisms from $\F_2$ into all groups of cardinality $\leq k$. Then the pair $(\F_2, (N_k))$ is an approximated group whose corresponding HLS-groupoid  $\cG$ has the (WCP), and is not amenable since $\cG(\infty) = \F_2$ is not amenable (and also does not have  the (WCP)).
\end{thm}

Let us remark that this groupoid has the (WCP) but is not inner exact (see Proposition \ref{prop:HLS} or Lemma \ref{lem:closed WCP}  below).
\subsection{The influence of inner exactness}\label{subsec:ia} 

\begin{lem}\label{lem:closed WCP}  Let $\cG$ be a locally compact groupoid that has  the (WCP) and let $F$ be a closed invariant subset of $X =\cG^{(0)}$. Then $\cG(F)$ has the (WCP) if and only if the sequence
\begin{equation}\label{eq:exact}
0\to C^*_{r}\big(\cG(X\setminus F) \big) \to C^*_{r}(\cG) \to C^*_{r}(\cG(F)) \to 0
\end{equation}
is exact.

In particular if $F = \set{x}$ this is equivalent to the amenability of $\cG(x)$.
\end{lem}
\begin{proof} This follows from Lemma \ref{lem:WCPinexact}. \end{proof}

Therefore the {\it a priori} assumption of inner exactness removes an obstacle for the fact that the (WCP) could imply amenability (because the amenability of a groupoid $\cG$ implies the amenability of $\cG(F)$  for every closed invariant subset $F$ of $\cG^{(0)}$). It is indeed sufficient in the following examples.  Let us remark that Hulanicki's theorem plays an important role in their proofs.

\begin{prop}\label{prop:WCP} Let $\beta: \Gamma \actson X$ be a partial action of a discrete group having a fixed point $x\in X$. Then   the semi-direct product $\cG=\Gamma \ltimes_\beta X$  is amenable if and only if it has the (WCP) and the  sequence \eqref{eq:exact}, with $F= \set{x}$, is exact.
\end{prop}

\begin{proof} Indeed, if $\cG$ has the (WCP) and if the sequence  \eqref{eq:exact}, with $F= \set{x}$,  is exact then $\Gamma = \cG(x)$ is amenable, and then we use the proposition \ref{cor:amen-pa}.
\end{proof}

\begin{rem} This result applies in particular to the groupoid $\cG_S$ where $S$ is a $E$-unitary inverse semigroup without zero, because, keeping the notation of the subsection \ref{par:invers}, we know that $\cG_S$ is isomorphic to the semi-direct product of a partial action of $\Gamma_S$ on $\wh E$ and that $\Gamma_S$ is the isotropy group of $\chi_\infty$ which is $\cG_S$-invariant (see Lemma \ref{lem: Li} and Remark \ref{rem:inverse_E}). Therefore, we get the following equivalences: 
\begin{itemize}
\item[(a)] $\cG_S$ has the (WCP) and the sequence below is exact,
\begin{equation}\label{eq:exact2}
0\to C^*_{r}\big(\cG_S(\wh E\setminus \set{\chi_\infty}) \big) \to C^*_{r}(\cG_S) \to \Gamma_S \to 0.
\end{equation}
\item[(b)] the group $\Gamma_S$ is amenable.
\item[(c)] the groupoid $\cG_S$ is amenable.
\end{itemize}

An inverse semigroup is said to be $F$- inverse if $\sigma^{-1}(g)$ has a maximum element for all $g\in \Gamma_S$ (see \cite{Law}). In this case,  we note that the sequence \eqref{eq:exact2}  is always exact (see \cite[Theorem 83]{F-S}).
\end{rem}

\begin{prop}\label{prop:WCPbundle} Let $\cG$ be an inner exact  group bundle groupoid  over a locally compact space $X$. Then $\cG$ has the (WCP) if and only if it is amenable.
\end{prop}

\begin{proof} We observe that all the units of $\cG$ are invariant. Therefore, if $\cG$ has the (WCP), the group $\cG(x)$ is amenable for every unit and then we use the fact that $\cG$ is amenable if all the groups $\cG(x)$ are amenable.
\end{proof}

\begin{prop}\label{prop:trans} \cite{Bun} Let $\cG$ be a transitive locally compact groupoid. The following conditions are equivalent:
\begin{itemize}
\item[(i)] $\cG$ is amenable;
\item[(ii)] any isotropy group is amenable;
\item[(iii)] $\cG$ has the (WCP).
\end{itemize}
\end{prop}

\begin{proof} Let $x\in \cG^{(0)}$. We have already recalled  in Subsection \ref{sub:tg} that the groupoids $\cG$ and $\cG(x)$ are equivalent. Since amenability is invariant under equivalence  of groupoids (see \cite[Theorem 2.2.17]{AD-R}), we see that (i) $\Leftrightarrow$ (ii). We have also already recalled that (i) $\Rightarrow$ (iii). That (iii) $\Rightarrow$ (i) is proved in \cite{Bun}. One can also use  the fact that the (WCP) is preserved under equivalence of groupoids \cite[Theorem 5.3]{SW12}.
\end{proof}
 
 In fact, both previous propositions are particular cases of the  following result, due to Christian B\"onicke \cite{Bon}. We reproduce below his unpublished proof  and we thank him for giving permission to do so.

\begin{thm}\cite{Bon}\label{thm:bon} Let $\cG$ be a  locally compact groupoid\footnote{Recall again that we implicitely assume that our groupoids are second countable.} with Haar system $\lambda$, such that the orbit space $\cG\setminus \cG^{(0)}$ equipped with the quotient topology is $T_0$. Then the following conditions are equivalent:
\begin{itemize}
\item[(i)] $\cG$ is  amenable;
\item[(ii)] $\cG$ has the (WCP) and is inner exact.
\end{itemize}
\end{thm}

\begin{proof}   

Let us  show that (ii) $\Rightarrow$ (i).  We assume that (ii) is satisfied. By \cite[Theorem 9.86]{Will19}, it suffices to show that for every $x\in X= \cG^{(0)}$, the isotropy group $\cG(x)$ is amenable. By \cite[Theorem 2.27]{Will19}, we know that the orbit $\cG x$ of $x$ is locally closed, hence the intersection of an open subset and a closed subset  of $X$. It is therefore an open subset of its closure, and it is a locally compact space.  

Since $\cG$ is inner exact and since $\overline{\cG x}$ is a closed invariant subset of $\cG^{(0)}$, we see, by Lemma \ref{lem:closed WCP}, 
 that the reduced groupoid  $\cG(\overline{\cG x})$ has the (WCP).

Now, having observed that $\cG x$ is an open  $\cG(\overline{\cG x})$-invariant subset of $\overline{\cG x}$, we have a commutative diagram
$$\xymatrix{C^*(\cG({\cG x}))\ar[d]\ar[rr]&& C^* (\cG(\overline{\cG x}))
\ar[d]\\
C^*_{r}(\cG({\cG x}))\ar[rr]&&C^*_{r}(\cG(\overline{\cG x}))}$$
where both horizontal maps are injective and where the right vertical map is injective. It follows that $C^*(\cG({\cG x})) = C^*_{r}(\cG({\cG x}))$. But the groupoid $\cG({\cG x})$ is transitive, and so, since it has the  (WCP) its  isotropy group $\cG(x)$ is amenable.
\end{proof}

We end this subsection with two permanence properties of the family   $\mathscr C$.

\begin{lem}\label{lem:WCPna} The familiy $\mathscr C$ is preserved under equivalence of groupoids.
\end{lem}
\begin{proof} This follows from the fact that the (WCP) is preserved under equivalence of groupoids \cite[Theorem 5.3]{SW12} as well as amenability \cite[Theorem 2.2.17]{AD-R}. 
\end{proof}

\begin{prop}  Let $\cG$ be a  locally compact groupoid. We assume the existence of a countable set $(F_k)$ of closed invariant subsets of $X= \cG^{(0)}$ such that $X = \bigcup_k F_k$ and such that each reduced groupoid $\cG(F_k)$ is in $\mathscr C$. Then $\cG\in \mathscr C$.
\end{prop}

\begin{proof} Using \cite[Theorem 2.14]{Ren_13} its suffices to show that $\cG$ is Borel amenable. Thus, we have to show the existence of a sequence $(g_n)_{n\in \N}$ of nonnegative Borel functions on $\cG$ such that
\begin{itemize}
\item[(a)] $\int g_n \rd\lambda^{x} \leq 1$ for every $x\in \cG^{(0)}$;
\item[(b)] $\lim_n \int g_n \rd\lambda^{x} = 1$ every  $x\in\cG^{(0)}$;
\item[(c)] $\lim_n \int \abs{g_i(\gamma^{-1}\gamma_1) -  g_n(\gamma_1)}\rd\lambda^{r(\gamma)}(\gamma_1) = 0$ for every  $\gamma\in \cG$.
\end{itemize}
For each $k$, we choose a sequence $(g_n^{k})$ satisfying the conditions (a), (b), (c) with respect to the groupoid $\cG(F_k)$. Next, we set $F'_{k} = F_k\setminus \bigcup_{h< k} F_{h}$. The  family $(F'_{k})$ form a partition  of $X$ into Borel invariant subsets. Given $n\in \N$ and $\gamma\in \cG$ we set $g_n(\gamma) = g^{k}_n(\gamma)$ where $k$ is the unique integer such that $\gamma\in \cG(F'_{k})$. This sequence $(g_n)_{n\in \N}$ satisfies the above conditions (a), (b), (c).
\end{proof} 

\subsection{The case of transformation groupoids}\label{subsec:tg} Let us recall first a few facts concerning exactness for transformation groupoids. Let $G\actson X$ be an action of a locally compact group $G$ on a locally compact space $X$. Suppose that $G$ is KW-exact, or equivalently that $G$ is amenable at infinity (see \cite{BCL, OS20}). It follows from Proposition \ref{prop:inv_sd} that the groupoid $X\rtimes G$ is strongly amenable at infinity. Conversely, if $X\rtimes G$ is amenable at infinity and if $X$ is compact, then $G$ is amenable at infinity (see Proposition \ref{prop:inv_sd}).

With an assumption stronger than the inner exactness of the groupoid $X\rtimes G$, namely the fact that $G$ is KW-exact,  the following positive result was proved in  \cite{Mat} when $G$ is discrete and $X$ is compact; later, this result was extended in \cite[Theorem 5.2]{BEW} to the case where $X$ is  locally compact and finally in  \cite[Theorem 5.15]{BEW20} to the case where $G$ and $X$ are locally compact.  

\begin{thm}\label{thm:WCPtg} Let $G\actson X$ be an action of a locally compact group $G$ on a locally compact space $X$. Assume that $G$ is KW-exact and that $X\rtimes G$ has the (WCP). Then the groupoid $X\rtimes G$ is amenable.
\end{thm}

From this, we deduce the more general following result.

\begin{cor}\label{cor:WCPtg}  Let $(Z,\rho,\sigma)$ be a locally proper and faithful generalized morphism from a locally compact groupoid $\cG$  into a locally compact KW-exact group $G$.   If $\cG$ has the (WCP) then it is amenable.
\end{cor}

\begin{proof}  By Proposition \ref{prop:homo1}, there is an action of $G$ on a locally compact space $Y$ such that $\cG$ is equivalent to $Y\rtimes G$.  Since the (WCP) is preserved under equivalence of groupoids we see that $Y\rtimes G$ has the (WCP), and so is amenable by the previous theorem and it follows that $\cG$ is amenable .\end{proof}

This result applies in particular when there is a  locally  proper and faithful continuous homomorphism from $\cG$ into $G$. It is possible that the faithfulness hypothesis is not necessary (see Corollary \ref{cor:exactetale} when $\cG$ is \'etale).

The theorem \ref{thm:WCPtg}  is also generalized as follows.

\begin{cor}\label{cor:WCPtg1} Let $c: \cG \to G$ be a faithful strongly surjective continuous homomorphism into a KW-exact locally compact group such that $\ker(c)$ is a proper groupoid. If $\cG$ has the (WCP) then it is amenable.
\end{cor}

\begin{proof} This follows from the previous observation, since $c$ is locally proper by the proposition \ref{prop:locprop}.
\end{proof}

\subsection{The case of \'etale groupoids} The result of the theorem \ref{thm:WCPtg} relative to the groupoid $X\rtimes G$ in the case where $G$ is discrete has been extended by Julian Kranz  to any \'etale groupoid which is strongly amenable at infinity.

\begin{thm}\cite{Kra}\label{thm:Kra} Let $\cG$ be an \'etale locally compact groupoid. The following conditions are equivalent:
\begin{itemize}
\item[(i)] $\cG$ is amenable;
\item[(ii)]  $\cG$ has the (WCP) and is strongly amenable at infinity. 
\end{itemize}
\end{thm}

\begin{proof} For simplicity, we give a proof when $\cG$ is the transformation groupoid $X\rtimes \Gamma$ defined by a left action of a discrete group $\Gamma$ on a locally compact space $X$. This is a result obtained in \cite{BEW} but with a different approach. The  proof given below is similar to that given in \cite{Kra} for the general case of an \'etale groupoid  where $\Gamma$ is replaced by the inverse semigroup of open bisections of $\cG$,  which is the source of  several technical difficulties. As in \cite{Mat} the key ingredients of the proof are the use of the Haagerup standard representation for  the bidual of a $C^*$-algebra, and the bimodule property of a contractive completely positive  map (see \cite[Proposition 1.5.7]{BO}). The conclusion will follow from the two lemmas below.

\begin{lem}\label{lem:Mat2}$($\cite[Lemma 5.4]{Kra}$)$ Let $\Gamma\actson A$ be an action of a discrete group $\Gamma$ on a $C^*$-algebra $A$. There is a homomorphism from $ A^{**}\rtimes \Gamma$ into $(A\rtimes \Gamma)^{**}$ such that the composition $A\rtimes \Gamma \to A^{**}\rtimes \Gamma \to (A\rtimes \Gamma)^{**}$ is the canonical inclusion of $A\rtimes \Gamma$ into its bidual.
\end{lem}

\begin{lem}\label{lem:Mat1} $($\cite[Lemma 5.3]{Kra}$)$ We keep the above notation and we assume that $\cC_0 (X) \rtimes \Gamma) = \cC_0 (X) \rtimes_r \Gamma))$. Then there exists a $\Gamma$-equivariant contractive completely positive map $\Phi: \cC_0(\beta_r \cG) \to \cC_0(X)^{**}$ that extends the canonical inclusion of $\cC_0(X)$ into  $\cC_0(\beta_r \cG)$.
\end{lem}

These statements require some explanation. First, since $\Gamma$ is discrete, by bi\-duality all its actions on a $C^*$-algebra $A$ extend to its bidual $A^{**}$. Second, to any left action of $\cG = X\rtimes \Gamma$ on a locally compact space $Y$ is canonically associated a left action of $\Gamma$ on $Y$  for which the  groupoids $Y\rtimes \Gamma$ and $Y\rtimes \cG$ are canonically isomorphic. Moreover the moment map  $p$ from $Y$ to $X= \cG^{(0)}$  relative to the $\cG$-action on $Y$ is $\Gamma$-equivariant (see Lemma \ref{lem:idengr}). Recall that $ty = (tp(y), t)y$ for $(y,t) \in Y\times \Gamma$. We will apply this observation to the action of $\cG$ on $Y = \beta_r \cG$.

With the help of these two lemmas, let us proceed to the proof of (ii) $\Rightarrow$ (i) in Theorem \ref{thm:Kra} when $\cG = X\rtimes \Gamma$.   We denote by $\iota$ and $\iota_X$ the canonical $\Gamma$-equivariant inclusions from $\cC_0(X)$ into $\cC_0(\beta_r\cG)$ and into $\cC_0(X)^{**}$ respectively. For $f \in \cC_0(X)$, we have $\iota(f) = f\circ p_\beta$, where $p_\beta: \beta_r\cG \to X$ is the moment map of the $\cG$-action. By functoriality, $\iota$ and $\iota_X$  induce homomorphisms 
$$\tilde\iota :  \cC_0(X)\rtimes \Gamma \to \cC_0(\beta_r\cG)\rtimes \Gamma$$ 
and 
$$\widetilde{\iota_X} : \cC_0(X) \rtimes \Gamma \to \cC_0(X)^{**} \rtimes \Gamma.$$ Thanks to Lemma \ref{lem:Mat1} and to the functoriality of full crossed products with respect to contractive completely positive  maps we get from $\Phi$ a contractive completely positive  map $\tilde\Phi :  \cC_0(\beta_r\cG)\rtimes \Gamma$ into $\cC_0(X)^{**}\rtimes \Gamma$ such that $\widetilde{\iota_X} = \tilde\Phi\circ \tilde\iota$. Moreover by Lemma \ref{lem:Mat2} we get a homomorphism 
$$\rho : ,\cC_0(X)^{**} \rtimes \Gamma \to  (\cC_0(X)\rtimes \Gamma)^{**}$$
 such that $\rho\circ\widetilde{\iota_X}$ is the inclusion of  of $\cC_0(X)\rtimes \Gamma$ in its bidual. This inclusion is $\rho\circ\tilde\Phi\circ \tilde\iota$. Now, using the isomorphisms
$$ \cC_0(\beta_r\cG)\rtimes \Gamma= C^*(\beta_r\cG\rtimes \Gamma) \simeq C^*(\beta_r\cG\rtimes \cG )\simeq  C^*_{r}(\beta_r\cG\rtimes \cG )$$
and the fact that $C^*_{r}(\beta_r\cG\rtimes \cG )$ is nuclear, since $\cG$ is amenable at infinity (see Proposition \ref{prop:nonsep_nuc} for these observations), we see that $\tilde\iota$ is nuclear, and so is the inclusion from $\cC_0(X)\rtimes \Gamma$ in its bidual. Then it follows from \cite[Proposition 2.3.8]{BO} that the $C^*$-algebra $\cC_0(X)\rtimes \Gamma$ is nuclear, and so $C^*_{r}(X\rtimes \Gamma)$ is also nuclear. This implies that the action of $\Gamma$ on $X$ is amenable by \cite[Theorem 5.3]{AD02}.
\end{proof}

\noindent{\it Proof of Lemma \ref{lem:Mat2}.}  We choose a faithful, normal, unital representation of \\$(A\rtimes \Gamma)^{**}$ in some Hilbert space $H$. Its restriction to $A\rtimes \Gamma$ is the integrated form of a covariant representation $(\pi,u)$. Denote by $\pi^{**}$ the normal extension of $\pi$ to $A^{**}$. Then $(\pi^{**},u)$ is a covariant representation of  $\Gamma \actson A^{**}$ and $\pi^{**}\rtimes u$ sends $A^{**}\rtimes \Gamma$ into $(A\rtimes \Gamma)^{**}$. \qed

\noindent{\it Proof of Lemma \ref{lem:Mat1}.}  We represent $\cC_0(X)^{**}$ in standard form on a Hilbert space $H$. We denote by $s\mapsto u_s$ the unitary representation of $\Gamma$ that implements the action of $\Gamma$ on $\cC_0(X)^{**}$. By restriction to  $\cC_0(X)$ we obtain a covariant representation $(\pi,u)$ of $\Gamma \actson \cC_0(X)$. By integration, we get a representation 
$$\pi\rtimes u :\cC_0(X)\rtimes_r \Gamma  = \cC_0(X)\rtimes \Gamma\to \cB(H).$$
 Denote by $\iota$ the inclusion of $\cC_0(X)$ into $\cC_0(\beta_r\cG)$ and by $\tilde\iota$ the corresponding inclusion of $\cC_0(X)\rtimes_r \Gamma$ into $\cC_0(\beta_r\cG)\rtimes_r \Gamma$. By the Arveson extension theorem, there is a completely positive contraction $\phi$ such that the following diagram commutes:
$$\xymatrix{\cC_0(X)\rtimes_r \Gamma \ar@{^{(}->}[d]_{\tilde\iota}\ar[rd]^{\pi\rtimes u}  &\\
\cC_0(\beta_r\cG)\rtimes_r \Gamma\ar@{.>}[r]^-{\phi} &\cB(H).}$$
Since $\iota(\cC_0(X))$ is in the multiplicative domain of $\phi$ and since $\cC_0(\beta_r\cG)$ is commutative we see that $\phi(\cC_0(\beta_r\cG))$ is contained into $\pi(\cC_0(X))' = \cC_0(X)^{**}$.

It remains to see that $\phi$ restricted to $\cC_0(\beta_r\cG)$ is $\Gamma$-equivariant. We choose an approximate unit $(e_\kappa)$  for $\cC_0(X)$. We observe that $\iota(e_\kappa)$ is an approximate unit for $\cC_0(\beta_r\cG)$ and also for $\cC_0(\beta_r\cG) \rtimes_r \cG$. Let $f\in \cC_0(\beta_r\cG)$ and let $s\in \Gamma$. Since $\iota(e_\kappa) u_s$ is in the multiplicative domain of $\phi$ we  have
\begin{align*}
\phi(u_s f u_{s}^*)& = \lim_\kappa\phi(\iota(e_\kappa) u_s f u_{s}^*)= \lim_\kappa \phi(\iota(e_\kappa) u_s)\phi(f u_{s}^*)\\
 &= \lim_\kappa \pi(e_\kappa) u_s \phi(f u_{s}^*) = u_s \phi(f u_{s}^*).
 \end{align*}
Similarly, we see that $\phi(f u_{s}^*) = \phi(f) u_{s}^*$ and so $\phi(u_s f u_{s}^*) = u_s\phi(f)u_{s}^*$. \qed

\begin{rem} When $\Gamma$ is exact and  $X$ is compact, there is a more direct proof of the fact that the (WCP) of $\cG = X\rtimes \Gamma$ implies  its amenability. Indeed in this case $\beta_r \cG = \beta(X\times \Gamma)$ and  $\ell^\infty \Gamma$ is a $\Gamma$-equivariant  unital sub-$C^*$-algebra of $\cC(\beta_r\cG) = \cC_b(X\times \Gamma)$. It follows from Lemma \ref{lem:Mat1} that there an unital $\Gamma$-equivariant completely positive map $\Psi$ from $\ell^\infty\Gamma$ into $\cC_0(X)^{**}$. Since the action of $\Gamma$ on $\ell^\infty\Gamma$ is amenable, there is a net $(h_i)$ of positive definite (with respect to the $\Gamma$-action on $\ell^\infty\Gamma$\footnote{If $\alpha : \Gamma \actson A$, recall that a function $h:\Gamma \to A$ is said to be positive definite with respect to $\alpha$ if for every finite subset $F$ of $\Gamma$ the matrix $[\alpha_s(h(s^{-1}t))]_{s,t\in F}$ is nonnegative.}),  finitely supported, functions  from  $\Gamma$ to $ \ell^\infty\Gamma$, such that $h_i(e)\leq 1$ for all $i$ and $\lim_i h_i(s) = 1$ in norm, for all $s\in \Gamma$ (see \cite{AD87}).  Then $(\Psi\circ h_i)$ is a net of positive definite  (with respect to the $\Gamma$-action on $\cC_0(X)^{**}$), finitely supported, functions from $\Gamma$ to $\cC_0(X)^{**}$ that converges pointwise and in norm to $1$. By \cite{AD87} this implies that 
the action of $\Gamma$ on $X$ is amenable.
\end{rem}

As  applications one gets the following results. The first one was also obtained in \cite[Theorem 4.12]{BFS}.

\begin{cor}\label{cor:exactpart}  Let $\Gamma\actson X$ be a partial action of an exact  discrete group on a locally compact space $X$. Then the partial transformation groupoid $\Gamma\ltimes X$ has the (WCP) if and only if it is amenable.  
\end{cor}

\begin{proof} It is an immediate consequence of Theorem \ref{thm:Kra} and of Proposition \ref{prop:partial}.
\end{proof}

\begin{cor}\label{cor:exactetale} Let $\cG$ be  an \'etale groupoid.  We assume that there exists a locally proper homomorphism from $\cG$ into a KW-exact locally compact group $G$. Then $\cG$ has the (WCP) if and only if it is amenable.  
\end{cor}

\begin{proof} Since the group $G$ is amenable at infinity, using Proposition \ref{prop:locpropinv} we see that $\cG$ is strongly amenable at infinity, and we conclude thanks to  Theorem \ref{thm:Kra}.
\end{proof}

\section{\textbf{\textsc{Open questions and comments}}}\label{sec:open} 
\subsection{About exactness} Recall that in Subsection \ref{subsec:comparison} we have denoted by $\mathscr {E}$ (resp. $\mathscr {E}'$) the family of locally compact groupoids for which $C^*$-exactness (resp. KW-exactness) implies amenability at infinity.

The family $\mathscr {E}$ contains all inner amenable locally compact groups \cite[Theorem 7.3]{AD02} and also all locally compact groups $G$ such that $C^*_{r}(G)$ has a tracial state \cite{Man}. None of these two classes of locally compact groups is included into the other one \cite{Man, AD23}. Note that many locally compact groups are not inner amenable, for instance all connected non-amenable ones \cite[Remark 5.10]{AD02}. All locally compact groups are in the  family $\mathscr {E}'$  \cite{BCL, OS20}.
To the author's knowledge, whether $C^*$-exactness implies KW-exactness  is still an open problem for locally compact groups. By \cite{CZ},  it suffices to consider the case of unimodular totally disconnected groups.

The family $\mathscr {E}$ also contains all inner amenable \'etale groupoids (Theorem \ref{thm:equiv}). While inner amenability admits several equivalent characterizations in the case of locally compact groups \cite{LR87, LP88, LP, CT}, almost nothing is known for groupoids. In particular it is not known whether all \'etale locally compact groupoids  (the analogue of discrete groups!) are inner amenable. It would be interesting to study  the case of HLS-groupoids and semi-direct products relative to partial actions of discrete groups.

The family $\mathscr {E}'$ contains, besides all locally compact groups, all transitive locally compact groupoids (by invariance under equivalence). We don't know whether all locally compact groupoids are in $\mathscr {E}'$. This is certainly a fairly technical issue.

\subsection{(WCP) vs amenability}
Recall that \index{$\mathscr C$} $\mathscr C$ is the family of locally compact groupoids for which the (WCP) implies amenability. 

This family  includes all locally compact groups and more generally all inner exact locally compact groupoids $\cG$ such that $\cG\setminus \cG^{(0)}$ is a $T_0$ space (Theorem \ref{thm:bon}). In particular, all transitive locally compact  groupoids and all inner exact group bundle groupoids are in $\mathscr C$.

\'Etale strongly amenable at infinity groupoids are also in $\mathscr C$  (Theorem \ref{thm:Kra}). Does Theorem \ref{thm:Kra} remain true for all locally compact strongly amenable at infinity groupoids, and even for all inner exact locally compact groupoids? 

A first approach would be to study the case of  a semi-direct product groupoid $X\rtimes G$ where $X$ is a locally compact space and $G$ is a locally compact group. One has some indications of a positive response in this direction:
 if $X$ is compact or if $G$ is discrete we have $X\rtimes G$ amenable if and only if $X\rtimes G$ has the (WCP) and is strongly amenable at infinity. The second case follows from Theorem \ref{thm:Kra}. As for the first case, the strong amenability at infinity implies that $G$ is KW-exact by Proposition \ref{prop:inv_sd} and the conclusion follows from \cite[Theorem 5.15]{BEW20}. 
 
 In the more general case where $X$ is not necessarily compact, the strong amenabi\-lity at infinity of $X\rtimes G$ does not imply that $G$ is KW-exact.    As already said, by  \cite[Theorem 5.15]{BEW20},  under the  assumption that $G$ is KW-exact, we have $X\rtimes G\in \mathscr C$. It would be interesting to see whether we can replace the KW-exactness of $G$ by the weaker assumption that $X\rtimes G$ is strongly amenable at infinity.

\newpage
\part{}
\begin{appendix}

\section{\textbf{\textsc{Fibrewise compactifications}}}
 In this appendix, we are given  a {\it fibre space} $(Y,p)$ over a locally compact space $X$ (as defined in Section \ref{subsec-Action}). For $x\in X$ we denote by $Y^x$ the {\it fibre} $p^{-1}(x)$. We say  $(Y,p)$ is  {\it fibrewise compact } if $p$ is proper. Our goal is the construction of fibrewise compactifications of fibre spaces. It extends the usual study of compactifications of locally compact spaces, corresponding to the case where $X$ is reduced to a point.

\begin{defn}\label{def:fibComp}
 A {\it fibrewise compactification} \index{fibrewise compactification}
of $(Y,p)$ is a triple $(Z, \varphi, q)$
where $Z$ is a locally compact space, $q: Z \rightarrow X$ is a continuous
{\it proper} map and $\varphi : Y \rightarrow Z$ is a homeomorphism onto an open
dense subset of $Z$ such that $p = q \circ \varphi$. Then  $q(Z)$ is closed and $p(Y)$ is dense into $q(Z)$.

If $(Z,\varphi,q)$ and $(Z_1,\varphi_1,q_1)$ are
two fibrewise compactifications of $(Y,p)$,  we say that  $(Z,\varphi,q)$ is {\it smaller than} $(Z_1,\varphi_1, q_1)$ if there exists a continuous map  $\psi : Z_1 \to Z$  is a  such that  $\psi\circ \varphi_1 = \varphi$. 
\end{defn}

Note that $q\circ\psi \circ\varphi_1 = q\circ \varphi = p =q_1\circ\varphi_1$ and so $q\circ\psi = q_1$, that is, $\psi$ is a morphism of fibre spaces.  Moreover, it is easily seen that $\psi$ is proper and surjective.

\subsection{Construction of fibrewise compactifications}\label{subsec:fc}

Recall that $\cC_b(Y)$ (resp. $\cC_0(Y)$) is the $C^*$-algebra of continuous bounded (resp. vanishing  at infinity) functions from $Y$ to $\C$, and  that $\cC_c(Y)$ is the involutive subalgebra of continuous functions with compact support. We set\footnote{We warn the reader that $p^*{\mathcal C}_0(X)$  is not the same as $p^*(\cC_c(X))$ introduced in  Subsection \ref{subsect:Gaction}. We think it will not be confusing.}
$$p^*{\mathcal C}_0(X) = \set{f\circ p:f\in \cC_0(X)}.$$

 Given a fibrewise compactification $(Z, \varphi, q)$ of $(Y,p)$, we usually identify $Y$ with the open subset $\varphi(Y)$ of $Z$.  Then
$\varphi^* : g \mapsto g|_Y$ is an isomorphism from ${\mathcal C}_0(Z)$ onto
a $C^*$-subalgebra of ${\mathcal C}_b(Y)$ which obviously contains ${\mathcal C}_0(Y)$
and also $p^*{\mathcal C}_0(X)$ since $q$ is proper. We will often identify ${\mathcal C}_0(Z)$ to $\varphi^*\cC_0(Z)$. In particular, for $f\in \cC_0(X)$, $f\circ p$ is viewed as defined on $Y$ or on $Z$.

Let us denote by $\cC_0(Y,p)$ the closure of \index{$\cC_0(Y,p)$} \index{$\cC_c(Y,p)$}
$$\cC_c(Y,p) = \{g \in {\mathcal C}_b(Y) : \exists K\,\, \hbox{compact}\,\,\subset X, \,\supp(g) \subset
p^{-1}(K) \},$$
where $\supp(g) $ is the support of $g$. Note that $\cC_0(Y,p)$ is the $C^*$-algebra of conti\-nuous bounded functions $g$ on $Y$ such that for every $\varepsilon >0$ there exists a compact subset $K$ of $X$ satisfying $\abs{g(y)} \leq \varepsilon$ if $y \notin p^{-1}(K)$.  

If $(Z, \varphi, q)$ is a fibrewise compactification of $(Y,p)$ one  immediately checks that 
$$p^*{\mathcal C}_0(X) + {\mathcal C}_0(Y) \subset \varphi^*\cC_0(Z) \subset \cC_0(Y,p).$$

\begin{lem}\label{lem:C-alg} Let $(Y,p)$ be a fibre space.
\begin{itemize}
\item[(i)] Let $\mathcal K$ be the family of compact subsets of $X$, ordered by inclusion. For $K\in \mathcal K$ we choose $u_K : X\to [0,1]$ in $\cC_c(X)$ such that $u_K(x) =1$ if $x\in K$. Then  $(u_K\circ p)_{K\in \mathcal K}$ is an approximate unit of $\cC_0(Y,p)$.
\item[(ii)] Let $(Z,\varphi,p)$ be a fibrewise compactification of $(Y,p)$. Then $\cC_0(Z)$ has a natural structure of $\cC_0(X)$-algebra in the sense of Definition \ref{def:C-alg}. It is given by the homomorphism $f\mapsto f\circ p$ from $\cC_0(X)$ into $\cC_0(Z)$.
\end{itemize}
\end{lem}

\begin{proof} The first assertion is immediate and it implies that the homomorphism $f\mapsto f\circ p$ is non-degenerate.
\end{proof}

\begin{prop} \label{prop:f_c} The fibrewise compactifications of $(Y,p)$ are in bijective correspondence
with the $C^*$-subalgebras $A$ of  $\cC_0(Y,p)$ containing $p^*{\mathcal C}_0(X) + {\mathcal C}_0(Y)$.
More precisely, to $A$ we associate $(Z, \varphi, q)$, where $Z$ is the Gelfand spectrum of $A$,
$\varphi$ is defined by the essential embedding of ${\mathcal C}_0(Y)$ into $A$, and
$q$ is defined by the embedding of  ${\mathcal C}_0(X)$ into ${\mathcal C}_0(Z)$ via $p^*$. The inverse map
sends $(Z, \varphi, q)$ to $\varphi^*\cC_0(Z)$.
\end{prop}

\begin{proof} Let $A$ be as in the above statement and denote by $Z$ its Gelfand spectrum. Observe that $\cC_0(Y)$ is an ideal  of $A$ and therefore $Y$ is canonically identified to an open subspace of $Z$. Moreover, $Y$ is dense in $Z$ since $\cC_0(Y)$ is an essential ideal of $A$.

Now, let us fix $z\in Z$ and consider the homomorphism $f\mapsto (f\circ p)(z)$ defined on $\cC_0(X)$, where $f\circ p$ is viewed as defined on $Z$. Let us check that it is non-zero, hence a character of $\cC_0(X)$. Indeed,  the previous lemma shows that $A$  has an approximate unit of the form $(u_K\circ p)$.  There is at least one compact subset $K$ of $X$ such that $u_K\circ p(z)\not = 0$. It follows that there exists a unique element in $X$, that we denote by $q(z)$, such that $(f\circ p)(z) = f(q(z))$ for every $f\in \cC_0(X)$. In particular, we get $p(z) = q(z)$ when $z\in Y$. Moreover, since $f\circ q$ is continuous for every $f\in \cC_0(X)$, we see that $q$ is continuous. Finally, $q$ is a proper map because $f\circ q$ vanishes at infinity for every $f\in \cC_0(X)$.

The other assertions of the proposition are immediate.
\end{proof}

\begin{rems}\label{rem:f_c} (a) Let $(Z,q)$ be a fibrewise compactification of $(Y,p)$. When $\overline{p(Y)}$ is compact, then $Z$ is compact since $q(Z) = \overline{p(Y)}$.

 When $X$ is reduced to a point, the fibrewise compactifications are the usual compactifications. They are in bijective correspondence, via the Gelfand duality, with the unital $C^*$-subalgebras of $\cC_b(Y)$ containing $\cC_0(Y)$. 

(b) Observe that $p^*{\mathcal C}_0(X) + {\mathcal C}_0(Y)$ is a $C^*$-algebra (see \cite[Corollary 1.8.4]{Dix}). So, there is a smallest fibrewise compactification, which is  given by the spectrum
of $p^*{\mathcal C}_0(X) + {\mathcal C}_0(Y)$. We denote it by $(Y_{p}^+,p^+)$ \index{$(Y_{p}^+,p^+)$} and call it the {\it  Alexandroff fibrewise compactification}. \index{Alexandroff fibrewise compactification} The largest  fibrewise compactification is given by the spectrum of
 $\cC_0(Y,p)$. We denote it by $(\beta_p Y,p_\beta)$ \index{$(\beta_p Y, p_\beta)$} and call it the {\it  Stone-\v Cech fibrewise compactification}. \index{Stone-\v Cech fibrewise  compactification}

When $\overline{p(Y)}$ is compact, we have $\cC_0(Y,p) = \cC_b(Y)$. Therefore $\beta_p Y$ is the usual Stone-\v Cech
compactification $\beta Y$. When $Y = X\times V$ is a product with $X$ compact and $V$  locally compact,  the Alexandroff fibrewise compactification of $Y$  with respect to the first projection is $X\times V^+$. On the other hand, when $X$ is infinite, compact, we have $\beta_p (X\times V) = \beta(X\times V)$, which can differ from $X\times \beta V$ if $V$ is not compact (see \cite{Glic}). Finally, let us observe that whenever $p$ is proper we have $\cC_0(Y) = \cC_0(Y,p)$. So, in this case $(Y,p)$ has only one fibrewise compactification, namely itself.
\end{rems}

\begin{rem}\label{rem:notopen} The $C^*$-algebra $\cC_0(Y,p)$ has be introduced independently in \cite{Las} where it  is implicitely considered that (with our notation) the map $p_\beta: \beta_p Y \to X$ is open. However, this is not always true, as shown by the following example. We take  $X= [0,1]$ and
$Y =  (X\times \set{0} )\sqcup (]1/2,1]\times\set{1}) \subset \R^2$
 with the topology induced by $\R^2$, and $p$ is the projection onto $X$. Then 
 $$\beta_p Y =  \beta Y = (X\times\set{0}) \sqcup (\beta ]1/2,1])\times \set{1})$$
  since $X$ is compact, and $p_\beta$ is not open since it  sends the open subset $\beta ]1/2,1]\times \set{1}$ of $\beta_p Y$ onto $[1/2,1]$. Note that the fibre of $\beta_p Y$ over $1/2$ is 
  $$(\beta_p Y)^{1/2} =\set{(1/2,0)} \sqcup (\beta]1/2,1]\setminus ]1/2,1])\times\set{1}.$$
   So $Y^{1/2}=\set{(1/2,0)}$ is not dense into $(\beta_p Y)^{1/2}$.
 
 The Alexandroff  fibrewise compactification of $Y$ is 
 $$Y_p^{+} = (X\times \set{0}) \sqcup ([1/2,1]\times\set{1}).$$
  Here also $p^+$ is not open, and the fibre of $Y^+$ over $1/2$ is $\set{(1/2,i): i=0, 1}$.
\end{rem}

 Let $(Z,\varphi,q)$ be a fibrewise compactification of $(Y,p)$. Although $Y^x$ is not dense into $Z^x$ in general, we have the following useful easy result.

\begin{lem}\label{lem:up}  Let $(Z,\varphi,q)$ and $(Y,p)$ as above. Let $V$ be an open subset of $X$. Then $p^{-1}(V)$ is dense into $q^{-1}(V)$.
\end{lem}

\begin{proof} This follows immediately from the density of $Y$ in $Z$ and  the fact that $q^{-1}(V)$ is open.
\end{proof}

\subsection{Some properties of fibrewise compactifications}\label{subsec:pfc}

 Let $(Y,p)$ be a fibre space over $X$. We  observe that {\it there is a greatest open subset $U$ of $X$ such that the restriction of $p$ to $p^{-1}(U)$ is proper}. Indeed, let $(U_i)_{i\in I}$ be a family of open subsets with this property. Then $\cup_{i\in I} U_i$ still has this property. To show this fact, let $K$ be a compact subset of $\cup_{i\in I} U_i$. Every $x\in K$ has a compact neighborhood which is contained in some $U_i$. Then we can cover $K$ by  finitely many compact sets $C_1,\dots, C_n$ with $C_k$ contained in some $U_{i_k}$, for $k=1,\dots,n$. It follows that $p^{-1}(K) = \cup_{k=1}^n p^{-1}(K\cap C_k)$ is compact.

Note that $U$ is described as the set of elements of $X$ having a compact neighborhood $K$ such that $p^{-1}(K)$ is compact. It can be empty.

\begin{prop}\label{prop:proper} Let $(Y,p)$ be a fibre space over $X$ and $U$ as above. Let $(Z,\varphi,q)$ be a fibrewise compactification of $(Y,p)$. Then $q^{-1}(U) = p^{-1}(U)$. So, $Z$ and $Y$ have the same fibres over $x\in U$.
\end{prop}

\begin{proof} Let $V$ be an open set contained in a compact subset $K$ of $U$. Then we have $p^{-1}(V) \subset p^{-1}(K)$. Observe that $p^{-1}(K)\subset p^{-1}(U)$ is  compact  and that $p^{-1}(V)$ is dense into $q^{-1}(V)$. It follows that $q^{-1}(V) \subset p^{-1}(K)$ and so $q^{-1}(U) = p^{-1}(U)$ since $U$ is the union of such open sets $V$.
\end{proof}

 The  most important example for us is that of the fibre space $(Y,p) = (\cG,r)$ where $\cG$ is an  \'etale groupoid and $r: \cG\to\cG^{(0)}$ is its range map, which is a local homeomorphism. In the  general case of an \'etale fibre space, we give in Proposition \ref{lem:U} a concrete description the above open subset $U$ of $X$.

\begin{defn}\label{def:etale} We say that a fibre space $(Y,p)$ over $X$ is {\it \'etale}\footnote{In \cite[Definition 1.19]{James} such fibre spaces are called fibrewise discrete.} if every $y\in Y$ has an open neighborhood  $S$ such that $p(S)$ is open and the restriction of $p$ to $S$ is a homeomorphism onto $p(S)$.  We denote by $p_S^{-1}$ its inverse map, which is defined on $p(S)$. Such a set $S$ is called an {\it open  section}.
\end{defn}

We collect  several consequences of the definition. First, $p$ is an open map and the fibres $Y^x$ with their induced topology are discrete. Moreover, the topology of $Y$ has a basis of open sections. Finally, for every $g\in \cC_c(Y)$, let us set $h(x) = \sum_{y\in Y^x} g(y)$. Then $h\in \cC_c(X)$. Indeed, using a partition of unity, it suffices to consider the case where $g$ has its compact support in some open section $S$. Then we have $h(x) = g\circ p_S^{-1}(x)$ if $x\in p(S)$ and $h(x) = 0$ otherwise. The conclusion follows immediately.

\begin{prop}\label{lem:U} Let $(Y,p)$ be an \'etale fibre space over $X$. Let $W$ be the open subset of $X$ formed by the elements $x$ such that the cardinal of the fibres of $Y$ is finite and constant in a neighborhood  of $x$.
 Then,  $W$ is the greatest open subset $U$ of $X$ such that the restriction of $p$ to $p^{-1}(U)$ is proper.
\end{prop}

\begin{proof} Assume that $x\in W$ and write $Y^x = \set{y_1,\dots, y_n}$. We choose disjoint open sections $S_1,\dots, S_n$ with $y_i\in S_i$ for $i= 1,\dots,n$.
We can choose $S_1,\dots, S_n$ small enough so that they have the same image $V$ under $p$ and that $p^{-1}(V) = \cup_{i=1}^n S_i$ since the fibres of $Y$ have the same cardinal $n$ in a neighborhood of $x$. Let $K$ be a compact neighborhood of $x$ contained in $V$. Then $p^{-1}(K) = \cup_{i=1}^n p^{-1}(K) \cap S_i$ is a finite union of compact sets and therefore is compact. Therefore $x\in U$. Conversely, assume that $x\in U$. It has a compact neighborhood $K$ such that $p^{-1}(K)$ is compact. Let $f\in \cC_c(X)$ be with support in $K$ and equal to $1$ in a neighborhood of $x$. Then $f\circ p$ belongs to $\cC_c(Y)$. For $x_1\in X$, let us set $h(x_1) = \sum_{y\in Y^{x_1}} f\circ p(y)$. Then $h : X\to [0,+\infty[$ is continuous and is equal to the cardinal of $Y^{x_1}$ in a neighborhood of $x$. Thus, we have $x\in W$.
\end{proof}

\subsection{The Alexandroff fibrewise compactification}\label{subset:afc} Its fibres have a simple description, in contrast with the Stone-\v Cech  situation.

\begin{prop}\label{prop:alex} Let $(Y,p)$ be a  fibre space over $X$ and let $U$ be the greatest open subset of $X$ such that the restriction of $p$ to $p^{-1}(U)$ is proper.  We assume that $p$ is surjective. Then:
\begin{itemize}
\item[(i)] the fibre $(Y^{+}_p)^x$ of the Alexandroff fibrewise compactification of $(Y,p)$ is of the following form:
\begin{itemize}
\item $(Y^{+}_p)^x = Y^x$ if $x\in U$;
\item $(Y^{+}_p)^x = (Y^{x})^+ = Y^x\cup \set{\omega_x}$, the Alexandroff compactification of $Y^x$, if $Y^x$ is not compact;
\item $(Y^{+}_p)^x$ is the disjoint union of $Y^x$ and a singleton $\set{\omega_x}$ when $x\notin U$ with $Y^x$ compact. \end{itemize}
\item[(ii)] $Y^{+}_p$ is the disjoint union of $Y$ and $F = \set{\omega_x : x\in X\setminus U}$. Moreover, $Y$ is a dense open subset of $Y^{+}_p$ and $F$, with the induced topology, is canonically homeomorphic to $X\setminus U$.
\end{itemize}
\end{prop}

\begin{proof} Recall that $Y^{+}_p$ is the spectrum of the the abelian $C^*$-algebra   $A = I + B$ where $I$ is the ideal $\cC_0(Y)$ and $B = p^*\cC_0(X)$. It follows that the spectrum $Y$ of $I$ is identified with the open subset of characters of $A$ that are non-zero on $I$, and $F = Y^{+}_p\setminus Y$ is the set of characters whose kernel contains $I$, that is, the set of characters of $A/I$. Since $A/I$ is isomorphic to $B/(B\cap I)$, we have first to determine  $B\cap I$.

Let $g = f\circ p$ with $f\in \cC_0(X)$ and assume that $g\in \in \cC_0(Y)$. We claim that $f(x_0) = 0$ when $x_0\notin U$. Indeed, assume on the contrary that $f(x_0) \not=0$. Set $\varepsilon = \abs{f(x_0)}/2$ and $V= \set{x\in X: \abs{f(x)}\geq \varepsilon}$. Then $p^{-1}(V)$ is not compact, in contradiction with the fact that $f\circ p\in \cC_0(Y)$.  It follows that $B\cap I$ is contained into $p^*\cC_0(U)$.

On the other hand, since the restriction of $p$ to $p^{-1}(U)$ is proper we have  $p^*\cC_0(U)\subset \cC_0(Y)$. It follows that $B\cap I = p^*\cC_0(U)$. 

Then $B/(B\cap I) = p^*\cC_0(X)/ p^*\cC_0(U)$ is isomorphic to $p^*\cC_0(X\setminus U)$ and thus to $\cC_0(X\setminus U)$. The spectrum $F$ of $B/(B\cap I)$ is thus homeomorphic to $X\setminus U$.  The inverse homeomorphism sends  $x \in X\setminus U$ onto the  character $\omega_x$ defined by  
$$\omega_x(h) = f(x)$$
for any decomposition $h = g+ f\circ p$ as a sum of an element  $g\in I$ and an element $f\circ p \in B$. This is not ambiguous since $B\cap I = p^*\cC_0(U)$. 

The rest of the proposition is now immediate.
\end{proof}

\subsection{The Stone-\v Cech  fibrewise compactification}\label{subset:SC}
The next proposition shows that $(\beta_p Y, p_\beta)$ is the solution of an universal problem.

\begin{prop}\label{prop:universal} Let $(Y,p)$ and $(Y_1,p_1)$ be two fibre spaces over $X$, where $(Y_1,p_1)$ is fibrewise compact. Let $\varphi_1: (Y,p) \to (Y_1,p_1)$ be a morphism of fibre spaces, that is $p_1\circ\varphi_1= p$. There exists 
a unique continuous map  $\Phi_1 : \beta_p Y \to Y_1$ that extends $\varphi_1$. Moreover, $\Phi_1$ is  a proper morphism of fibre spaces (that is, $\Phi_1$ is proper and $p_\beta = p_1\circ \Phi_1$) and we have $\Phi_1(\beta_p Y) = \overline{\varphi_1(Y)}$. 
\end{prop}

\begin{proof} Since $p_1$ is proper, we have $\quad p_{1}^*\cC_0(X) \subset \cC_0(Y_1)$, and therefore 
$$p^*\cC_0(X) = \varphi_{1}^*\big(p_{1}^*\cC_0(X)\big) \subset \varphi_{1}^*\cC_0(Y_1).$$ 
Moreover, we have $\varphi_{1}^* \cC_0(Y_1) \subset  \cC_0(Y,p)= \cC_0(\beta_p Y)$.

 For $z\in \beta_p Y$, we consider the homomorphism $f\mapsto (f\circ \varphi_1)(z)$ defined on  $\cC_0(Y_1)$ (where we view $f\circ \varphi_1$ as defined on $\beta_p Y$). 
 
Since $p^*\cC_0(X)$ contains an approximate unit for $\cC_0(Y,p)$, it follows that $f\mapsto (f\circ \varphi_1)(z)$  is a non-zero homomorphism, thus is of the form $f\mapsto f(\Phi_1(z))$ for a unique $\Phi_1(z)\in Y_1$.  Of course, $\Phi_1$ extends $\varphi_1$ and is continuous. Next, observe that for $y\in Y$  and $f\in \cC_0(X)$, we have 
$$f\circ p_\beta(y) = f\circ p(y) = f\circ p_1\circ \varphi_1(y) = f\circ p_1\circ\Phi_1(y)$$
and so $p_\beta(y) = p_1\circ\Phi_1(y)$. Since $Y$ is dense in $\beta_p Y$ we get $p_\beta = p_1\circ \Phi_1$. Finally, $\Phi_1$ is proper because $p_\beta$ is proper.
\end{proof}

\begin{rem}\label{subset:fibres} Let $(Y,p)$ be a fibre space over $X$ and let us consider its fibrewise Stone-\v Cech compactification $\beta_p Y$ that we also denote here by $Z$.  We assume that $Y$ is second countable, hence normal.
For $x\in X$, we denote by $\theta$ the homomorphism from $\cC_0(Z^x)$ into $\cC_b(Y^x)$ sending $g$ to its restriction to $Y^x$. This map is surjective. Indeed take $h\in\cC_b(Y^x)$ and extend it to a bounded function on Y. If $f\in \cC_c(X)$ is such that $f(x)= 1$ we see that $g=(f\circ p) h$ is in $\cC_0(Z)$. If $k$ denotes the restriction of  $g$ to $Z^x$  we have  $\theta(k) = h$. It follows that the inclusion map  from $Y^x$ into $Z^x$ extends continuously to an inclusion from $\beta(Y^x)$ into $Z^x =(\beta_p Y)^x$. This inclusion can be strict as seen in the remark \ref{rem:notopen}.

As already mentioned, even when $p:Y = X\times V\to X$ with $X$ compact infinite, the situation can be  complex.  Indeed in this case we have $\beta_p Y = \beta(X\times Z)$ which is different from $X\times \beta Z$ when  $V$ is a second countable non-compact locally compact space (see \cite[Theorem 1]{Glic}).
\end{rem}
\end{appendix}
\printindex

\bibliographystyle{plain}

\end{document}